\newcommand{\+}{\protect\nobreakdash-}
\renewcommand{\:}{\colon}
\newcommand{\ot}{\otimes}
\newcommand{\rarrow}{\longrightarrow}
\newcommand{\ocn}{\odot}
\newcommand{\bu}{{\text{\smaller\smaller$\scriptstyle\bullet$}}}
\newcommand{\lrarrow}{\mskip.5\thinmuskip\relbar\joinrel\relbar\joinrel
 \rightarrow\mskip.5\thinmuskip\relax}
\DeclareMathOperator{\Ext}{Ext}
\DeclareMathOperator{\Hom}{Hom}
\DeclareMathOperator{\Spec}{Spec}
\DeclareMathOperator{\Spf}{Spf}
\DeclareMathOperator{\qHom}{\mathcal H \mskip-.3\thinmuskip
  \text{\rmfamily\mdseries\fontshape{ui}\selectfont om}}
\DeclareMathOperator{\Cohom}{\mathfrak{Cohom}}
\DeclareMathOperator{\fHom}{\mathfrak{Hom}}
\newcommand{\qc}{{\operatorname{\mathrm{-qc}}}}
\DeclareMathOperator{\Com}{\mathsf{Com}}
\DeclareMathOperator{\Hot}{\mathsf{Hot}}
\DeclareMathOperator{\Ac}{\mathsf{Ac}}
\DeclareMathOperator{\Fil}{\mathsf{Fil}}
\newcommand{\Ab}{\mathsf{Ab}}
\newcommand{\A}{\mathcal A}
\newcommand{\C}{\mathcal C}
\newcommand{\D}{\mathcal D}
\newcommand{\F}{\mathcal F}
\newcommand{\G}{\mathcal G}
\newcommand{\J}{\mathcal J}
\newcommand{\M}{\mathcal M}
\newcommand{\cO}{\mathcal O}
\newcommand{\cP}{\mathcal P}
\newcommand{\fF}{\mathfrak F}
\newcommand{\fJ}{\mathfrak J}
\newcommand{\fM}{\mathfrak M}
\renewcommand{\P}{\mathfrak P}
\newcommand{\Q}{\mathfrak Q}
\newcommand{\fR}{\mathfrak R}
\newcommand{\fS}{\mathfrak S}
\newcommand{\U}{\mathfrak U}
\newcommand{\V}{\mathfrak V}
\newcommand{\X}{\mathfrak X}
\newcommand{\modl}{{\operatorname{\mathsf{--mod}}}}
\newcommand{\comodl}{{\operatorname{\mathsf{--comod}}}}
\newcommand{\contra}{{\operatorname{\mathsf{--contra}}}}
\newcommand{\qcoh}{{\operatorname{\mathsf{--qcoh}}}}
\newcommand{\ctrh}{{\operatorname{\mathsf{--ctrh}}}}
\newcommand{\lcth}{{\operatorname{\mathsf{--lcth}}}}
\newcommand{\tors}{{\operatorname{\mathsf{-tors}}}}
\newcommand{\ctra}{{\operatorname{\mathsf{--ctra}}}}
\newcommand{\secmp}{{\operatorname{\mathsf{-secmp}}}}
\newcommand{\qs}{{\mathsf{qs}}}
\newcommand{\pco}{{\mathsf{pco}}}
\newcommand{\pctr}{{\mathsf{pctr}}}
\newcommand{\bco}{{\mathsf{bco}}}
\newcommand{\bctr}{{\mathsf{bctr}}}
\newcommand{\bb}{{\mathsf{b}}}
\newcommand{\abs}{{\mathsf{abs}}}
\newcommand{\inj}{{\mathsf{inj}}}
\newcommand{\proj}{{\mathsf{proj}}}
\newcommand{\fl}{{\mathsf{fl}}}
\newcommand{\vfl}{{\mathsf{vfl}}}
\newcommand{\cta}{{\mathsf{cta}}}
\renewcommand{\cot}{{\mathsf{cot}}}
\newcommand{\lct}{{\mathsf{lct}}}
\newcommand{\lin}{{\mathsf{lin}}}
\newcommand{\al}{{\mathsf{al}}}
\newcommand{\clp}{{\mathsf{clp}}}
\newcommand{\sop}{\mathsf{op}}
\newcommand{\id}{\mathrm{id}}
\newcommand{\sA}{\mathsf A}
\newcommand{\sB}{\mathsf B}
\newcommand{\sC}{\mathsf C}
\newcommand{\sD}{\mathsf D}
\newcommand{\sE}{\mathsf E}
\newcommand{\sF}{\mathsf F}
\newcommand{\sS}{\mathsf S}
\newcommand{\bB}{\mathbf B}
\newcommand{\bW}{\mathbf W}
\newcommand{\boL}{\mathbb L}
\newcommand{\boZ}{\mathbb Z}
\newcommand{\boQ}{\mathbb Q}
\newcommand{\Section}[1]{\bigskip\section{#1}\medskip}
\theoremstyle{plain}
\newtheorem{thm}{Theorem}[section]
\newtheorem{lem}[thm]{Lemma}
\newtheorem{prop}[thm]{Proposition}
\newtheorem{cor}[thm]{Corollary}
\theoremstyle{definition}
\newtheorem{ex}[thm]{Example}
\newtheorem{rem}[thm]{Remark}
\begin{document}

\author{Leonid Positselski}

\address{Institute of Mathematics, Czech Academy of Sciences \\
\v Zitn\'a~25, 115~67 Praha~1 \\ Czech Republic}

\email{positselski@math.cas.cz}

\title{Philosophy of contraherent cosheaves}

\begin{abstract}
 Contraherent cosheaves are module objects over algebraic varieties
defined by gluing using the colocalization functors.
 Contraherent cosheaves are designed to be used for globalizing
contramodules and contraderived categories for the purposes of Koszul
duality and semi-infinite algebraic geometry.
 One major technical problem associated with contraherent cosheaves
is that the colocalization functors, unlike the localizations,
are not exact.
 The reason is that, given a commutative ring homomorphism $R\rarrow S$
arising in connection with a typical covering in algebraic geometry,
the ring $S$ is usually a flat, but not a projective $R$\+module.
 We argue that the relevant difference between projective and flat
modules, from the standpoint of homological algebra, is not that big,
as manifested by the flat/projective and cotorsion periodicity theorems.
 The difference becomes even smaller if one observes that the ring $S$
is often a very flat $R$\+module.
\end{abstract}

\maketitle

\tableofcontents

\section*{Introduction}
\medskip

\setcounter{subsection}{-1}
\subsection{{}}
 The co-contra duality is a fundamental phenomenon.
 In the context of commutative algebra, it manifests itself as
the duality between the local cohomology and the local homology,
or between the adic torsion and
completion~\cite{Harr,Mat,Mat2,GM,DG,PSY,Pmgm,Pcta,PMat,Pdc}.
 In the context of the theory of coalgebras, contramodules were
introduced by Eilenberg and Moore~\cite[Section~III.5]{EM}.
 Various generalizations, including in particular the generalization
to topological rings~\cite{Pweak,PR,PS1,Pproperf,Pcoun,PS3},
were studied in numerous recent publications, surveyed in
the paper~\cite{Prev}.

 In the context of homological algebra of cochain complexes,
the derived co-contra correspondence manifests itself, in particular,
in the form of triangulated equivalences between the homotopy
categories of unbounded complexes of injective and projective modules.
 This was studied in the papers~\cite{Jor,Kra,IK,Pfp,Pps}, with
quasi-coherent sheaf versions worked out in
the dissertation~\cite{Mur} and the book~\cite{Psemten}.

 One major discovery of the derived nonhomogeneous Koszul duality
theory~\cite{Pkoszul,Prel,Pksurv,GrLe} is that Koszul duality happens
on the comodule and contramodule sides.
 So does semi-infinite homological algebra, where comodules
are sufficient if one is only interested in the semi-infinite
\emph{homology}, but contramodules are required if one wants to
consider the semi-infinite \emph{cohomology}~\cite{Psemi}.

\subsection{{}}
 In the context of algebraic geometry, one would be interested in
globalizing Koszul duality and semi-infinite homological algebra to
schemes and stacks.
 On the comodule side, this can be done in the conventional way,
using quasi-coherent sheaves for gluing modules and comodules from
affine pieces.
 \emph{Contraherent cosheaves} were invented in~\cite{Pcosh} for
the purpose of globalizing Koszul duality and semi-infinite algebra
on the contramodule side.

 For example, the exposition on $\D$\+$\Omega$ duality
in~\cite[Appendix~B]{Pkoszul} was worked out over algebraic varieties
on the coderived side, but the variety was assumed to be affine
on the contraderived side.
 To fill this gap, the contraherent cosheaves would be needed.
 Similarly, the semi-infinite algebraic geometry was only
developed on the comodule/coderived side in the book~\cite{Psemten}
(as one can see, e.~g., by comparing the quadrality diagram
in~\cite[Theorem~5.7.1]{Pcosh} with the duality
in~\cite[Theorem~7.16]{Psemten}).
 Accordingly, no SemiExt (i.~e., double-sided derived SemiHom) functor
was constructed in~\cite{Psemten}, but only the SemiTor (i.~e.,
double-sided derived semitensor product); cf.\ the affine case
in the book~\cite{Psemi}.

\subsection{{}}
 One can observe that comodules are defined using the tensor product
functors, while coderived categories (in one of the approaches) are
defined using the infinite coproduct functors.
 Both the tensor products and the infinite coproducts commute with
localizations.
 Furthermore, the localization of a torsion module is a torsion module.
 This is one way to explain why the quasi-coherent sheaves, which
are glued from modules using the localization functors, are suitable
for use with the comodules, torsion modules, and coderived categories.

 Dual-analogously, contramodules over coalgebras are defined using
the covariant Hom functors, while contraderived categories (in one of
the approaches) are defined using the infinite product functors.
 Both the covariant Hom functors and the infinite products commute with
the colocalizations.
 Furthermore, the colocalization of a derived complete module (also
known as a contramodule) over a commutative ring with a fixed finitely 
generated ideal is again a derived complete module.
 This suggests that the contraherent cosheaves, which are glued from
modules using the colocalization functors, should be suitable for use
with the contramodules, derived complete modules, and contraderived
categories.

\subsection{{}} \label{comodules-and-contramodules-introd-subsecn}
 One can also observe that the quasi-coherent sheaves \emph{are}
comodules.
 Let $X$ be a scheme or stack endowed with a covering $U\rarrow X$ by
an affine scheme $U$ such that the Cartesian product $U\times_XU$ is
again an affine scheme.
 In particular, this holds for quasi-compact semi-separated schemes $X$
and their Zariski coverings.
 Then the ring of functions $\C=\cO(U\times_XU)$ has a natural structure
of a \emph{coring} over the ring of functions $A=\cO(U)$.
 The category of quasi-coherent sheaves on $X$ is naturally equivalent
to the category of $\C$\+comodules~\cite[Section~2]{KR}, \cite{KR2},
\cite[Example~2.5]{Pflcc}.
 Hence one possible (if not necessarily the optimal) way to define
contraherent cosheaves on $X$ is to view $\C$\+contramodules as such
contraherent cosheaves.

\subsection{{}}
 The aim of this paper is twofold.
 We explain the \emph{why} and the \emph{how}.
 Why are the contraherent cosheaves relevant?
 How does one overcome the main technical problem associated with
the concept?
 These are the questions we elaborate on.

 Having briefly discussed the key aspects of the \emph{why} in this
introduction above, let us now comment on the \emph{how}.

\subsection{{}}
 Let $R\rarrow S$ be a commutative ring homomorphism interpreted
geometrically as corresponding to a covering or a part of a covering.
 In the simple context of the Zariski topology on schemes, one
can suppose that the related morphism of affine schemes
$\Spec S\rarrow\Spec R$ is an open immersion.
 To make things even simpler and more explicit, let us assume
that $\Spec S$ is a \emph{principal} affine open subscheme in
$\Spec R$; in other words, we have $S=R[r^{-1}]$, where $r\in R$
is an element.

 Then the \emph{localization functor} $R\modl\rarrow S\modl$ takes
an $R$\+module $M$ to the $S$\+module $M[r^{-1}]=R[r^{-1}]\ot_RM$.
 The \emph{colocalization functor} $R\modl\rarrow S\modl$ takes
an $R$\+module $P$ to the $S$\+module $\Hom_R(R[r^{-1}],P)$.

 The quasi-coherent sheaves are glued from their affine module pieces
using the localization functors.
 The contraherent cosheaves are glued from their affine module pieces
using the colocalization functors.

 One immediately observes that the localization functor
$R[r^{-1}]\ot_R{-}$ is \emph{exact}, while the colocalization functor
$\Hom_R(R[r^{-1}],{-})$ is \emph{not exact}.
 Indeed, the $R$\+module $R[r^{-1}]$ is flat, but usually \emph{not} 
projective.

 The main technical problems of the theory of contraherent cosheaves
arise from this fact.
 The first consequence is that the category of quasi-coherent sheaves
$X\qcoh$ on a scheme $X$ is \emph{abelian}, but the category of
contraherent cosheaves $X\ctrh$ is only \emph{exact} (in Quillen's
sense).
 The unpleasant technical complication of the \emph{nonlocality of
contraherence}~\cite[Example~3.2.1]{Pcosh} also arises from here.

\subsection{{}}
 In fact, as far as flat modules go, the flat $R$\+module
$R[r^{-1}]$ has rather simple nature.
 In particular, its projective dimension never exceeds~$1$.
 The theory of \emph{very flat modules}~\cite{Pcosh,PSl1}, as well
as our work on a related topic of \emph{strongly flat} and
\emph{quite flat} modules~\cite{PSl2,Pcoun,HPS} (going back to
the papers~\cite{Trl0,Trl,BS,BS2}), is built around this observation.
 The branch of contraherent cosheaf theory based on very flatness is
called the \emph{locally contraadjusted contraherent cosheaves}.

 The main result in this direction is the \emph{Very Flat Conjecture},
formulated in~\cite{Pcosh} and proved in~\cite{PSl1}.
 It claims that the commutative ring homomorphisms $R\rarrow S$
such that $S$ is a very flat $R$\+module are ubiquitous in algebraic
geometry.
 In fact, if $S$ is a finitely presented $R$\+algebra and $S$ is
a flat $R$\+module, then $S$ is a very flat $R$\+module.
 So there are lots of commutative ring homomorphisms with this
property beyond the localizations $S=R[r^{-1}]$.

\subsection{{}}
 Another branch of the contraherent cosheaf theory, not relying on
very flatness but instead based on the belief that arbitrary flat
modules can be dealt with successfully, is called the \emph{locally
cotorsion contraherent cosheaves}.

 The main results supporting the thesis that flat modules are not
that much more complicated than projective ones are called
the \emph{periodicity theorems} in homological algebra.
 The two most important ones for the contraherent cosheaf purposes
are the \emph{flat/projective periodicity theorem}~\cite{BG,Neem}
and the \emph{cotorsion periodicity theorem}~\cite{BCE}.
 A series of the present author's recent papers on
the topic~\cite{BHP,PS6,Pal,Pgen,Pflcc,Pacc,Plce,Pres,Pcor} was intended
to achieve a better understanding of the subject, with the applications
to contraherent cosheaves in mind.

 So far, the point seems to be that the flat modules are the direct
limits of (finitely generated) projective ones.
 The known results on the periodicity theorems suggest that
the passage to the direct limit closure is a relatively harmless
operation from the homological algebra standpoint, tending not
to complicate things too much.

 In particular, even though the projective dimensions of flat modules
can well be infinite in general, the passage from projective modules
to flat ones as the direct limits behaves in many ways similarly
to a passage to an ambient exact category of objects of finite
resolution dimension with respect to the original one.
 This is one of the main points which the exposition in this paper
is intended to emphasize.

\subsection{{}}
 In Sections~\ref{comodules-and-contramodules-secn}\+-%
\ref{coderived-and-contraderived-secn} we explain the motivation for
introducing contraherent cosheaves.
 The contraherent cosheaves are important devices for globalizing
contramodules and contraderived categories over nonaffine schemes.

 The required preliminary material for the definition of a contraherent
cosheaf is introduced in Section~\ref{preliminaries-secn}.
 The main definitions concerning contraherent cosheaves are spelled
out in Section~\ref{main-definition-secn}, and the main technical
problem arising in the context of these definitions is discussed in
this section.

 Sections~\ref{very-flat-secn}\+-\ref{flat-modules-secn} purport to
explain how this technical problem can be overcome.
 The discussion in the two sections centers on the two branches of
the theory of contraherent cosheaves, viz., the locally contraadjusted
contraherent cosheaves and the locally cotorsion contraherent cosheaves,
and aims to convince the reader that both the approaches are viable
and can be expected to work well.

 Finally, in Section~\ref{cotorsion-periodicity-secn} we sketch
an application of contraherent proof: a local, or rather \emph{colocal}
proof of the cotorsion periodicity theorem for quasi-coherent sheaves
on quasi-compact semi-separated schemes.

\subsection*{Acknowledgement}
  The author is supported by the GA\v CR project 23-05148S and
the Czech Academy of Sciences (RVO~67985840).

\Section{Why Contraherent Cosheaves? Comodules and Contramodules}
\label{comodules-and-contramodules-secn}

\subsection{Basics} \label{comodules-contramodules-basics-subsecn}
 Let $R$ be a commutative ring.
 A \emph{coalgebra} $\C$ over $R$ is an $R$\+module endowed with
two maps of \emph{comultiplication} $\mu\:\C\rarrow\C\ot_R\C$ and
\emph{counit} $\epsilon\:\C\rarrow R$, which must be $R$\+module
maps satisfying the usual coassociativity and counitality axioms.
 A \emph{left\/ $\C$\+comodule} $\M$ is an $R$\+module endowed with
a \emph{left coaction} map $\nu\:\M\rarrow\C\ot_R\M$, which must
be an $R$\+module map satisfying the usual coassociativity and
counitality axioms.

 A \emph{left\/ $\C$\+contramodule} $\P$ is an $R$\+module endowed
a \emph{left contraaction} map $\pi\:\Hom_R(\C,\P)\rarrow\P$,
which must be an $R$\+module map satisfying the contraassociativity
and contraunitality axioms.
 We refer to~\cite[Section~8]{Pksurv} or~\cite[Sections~1.1\+-1.2
and~2.5]{Prev} for the details.

 The category of left $\C$\+comodules $\C\comodl$ is abelian with
an exact forgetful functor $\C\comodl\rarrow R\modl$ \emph{if and
only if} $\C$ is a flat $R$\+module.
 The category of left $\C$\+contramodules $\C\contra$ is abelian with
an exact forgetful functor $\C\contra\rarrow R\modl$ \emph{if and
only if} $\C$ is a projective $R$\+module~\cite[Proposition~2.12]{Prev}.

 If $\C$ is a finitely generated projective $R$\+module, then
the categories of left $\C$\+comodules and left $\C$\+contramodules
are equivalent to each other and to the category of left modules
over the $R$\+algebra $\Hom_R(\C,R)$.
 This is a trivial, uninteresting case from the coalgebra theory
perspective.

 Both comodules and contramodules play a crucial role in the contexts
of nonhomogeneous Koszul duality~\cite{Pkoszul,Prel,Pksurv} and
semi-infinite homological algebra~\cite{Psemi,Prev}.

\subsection{Discussion} \label{contramodules-discussion-subsecn}
 How does one globalize the definitions from
Section~\ref{comodules-contramodules-basics-subsecn} to a scheme $X$
replacing a commutative ring~$R$\,?

 Let us start with a coalgebra before passing to comodules and
contramodules.
 One can say that a quasi-coherent coalgebra $\C$ over a scheme $X$ is
a quasi-coherent $\cO_X$\+module endowed with two maps of
\emph{comultiplication} $\mu\:\C\rarrow\C\ot_{\cO_X}\C$ and counit
$\epsilon\:\C\rarrow\cO_X$, which must of course be
$\cO_X$\+module maps.
 The coassociativity and counitality axioms can be written down
in this context in a quite obvious way.

 Similarly, a quasi-coherent left comodule $\M$ over
a quasi-coherent coalgebra $\C$ over $X$ is a quasi-coherent
$\cO_X$\+mod\-ule endowed with a \emph{left coaction} map
$\nu\:\M\rarrow\C\ot_{\cO_X}\M$, which must be an $\cO_X$\+module map
satisfying the obvious coassociativity and counitality equations.
 Then the category of quasi-coherent left $\C$\+comodules $\C\comodl$
is abelian with an exact forgetful functor $\C\comodl\rarrow X\qcoh$
if and only if $\C$ is a flat quasi-coherent sheaf.

 But what is a \emph{$\C$\+contramodule}?
 We would argue that an attempt to define the notion of
a ``quasi-coherent sheaf on $X$ with a $\C$\+contramodule structure''
runs into difficulties (unless we are in the trivial special case
when $\C$ is a locally free $\cO_X$\+module of finite rank).

 What does it mean that a quasi-coherent sheaf $\cP$ on $X$ is
a $\C$\+contramodule?
 Following the definition in
Section~\ref{comodules-contramodules-basics-subsecn}, one has to
consider some kind of ``Hom object'' from $\C$ to~$\cP$.

 The internal $\qHom$ sheaf from a coherent sheaf to a quasi-coherent
one (say, on a Noetherian scheme) is quasi-coherent, but it is not
interesting for us to assume $\C$ to be a coherent sheaf.
 The internal $\qHom$ sheaf from one quasi-coherent sheaf to another
one is not even quasi-coherent.

 One can apply the coherator functor~\cite[Section~B.12]{TT} to
the internal Hom sheaf $\qHom_{\cO_X}(\C,\cP)$, and obtain
the quasi-coherent internal Hom sheaf $\qHom_{X\qc}(\C,\cP)$.
 What happens if one defines a quasi-coherent $\C$\+contramodule $\cP$
as a quasi-coherent sheaf on $X$ endowed with a contraaction map
$\pi\:\qHom_{X\qc}(\C,\cP)\rarrow\cP$\,?
 Notice that this is \emph{not} the same thing as having a map
of sheaves of $\cO_X$\+modules $\qHom_{\cO_X}(\C,\cP)\rarrow\cP$,
as the coherator functor is adjoint to the inclusion of the category
of quasi-coherent sheaves into the category of sheaves of
$\cO_X$\+modules \emph{on the right} and not not on the left side.

 Still, it appears that one can spell out the contraassociativity and
contraunitality axioms in this context without running into
any problems.
 Indeed, such definition of a contramodule over a coalgebra makes sense
in any closed monoidal category; and the category of quasi-coherent
sheaves $X\qcoh$ is a closed monoidal category with respect to
the tensor product~$\ot_{\cO_X}$ and $\qHom_{X\qc}$ operations.

 But the quasi-coherent internal Hom operation is not local with
respect to restrictions to open subschemes.
 Suppose given a quasi-coherent contramodule $\cP$ over $\C$ in
the sense of the definition above.
 We would argue that there is \emph{no} way to restrict $\cP$ to
an open subscheme $U\subset X$, at least, staying on the level of
abelian or exact (rather than derived) categories.

 Indeed, let $j\:U\rarrow X$ denote the open immersion morphism.
 Given a contraaction map $\pi\:\qHom_{X\qc}(\C,\cP)\rarrow\cP$, one
can apply the inverse image functor~$j^*$ and obtain the induced
map $j^*\pi\:j^*\qHom_{X\qc}(\C,\cP)\rarrow j^*\cP$.
 How does one pass from that to a map $\qHom_{U\qc}(j^*\C,j^*\cP)
\rarrow j^*\cP$\,?
 The natural map between the quasi-coherent sheaves
$j^*\qHom_{X\qc}(\C,\cP)$ and $\qHom_{U\qc}(j^*\C,j^*\cP)$ goes in
the other direction: $j^*\qHom_{X\qc}(\C,\cP)\rarrow
\qHom_{U\qc}(j^*\C,j^*\cP)$.

 Notice that, in any event, the category of quasi-coherent
contramodules over $\C$ (as defined above) is abelian with an exact
forgetful functor to $X\qcoh$ \emph{if and only if} the functor
$\Hom_{X\qc}(\C,{-})\:X\qcoh\rarrow X\qcoh$ is exact.
 The latter condition is seldom satisfied.
 For example, if $\C\simeq\coprod_{\xi\in\Xi}\cO_X$ is isomorphic, as
an $\cO_X$\+module, to a coproduct of an infinite number of copies of
the structure sheaf $\cO_X$, then the functor $\Hom_{X\qc}(\C,{-})$
is isomorphic to the functor of infinite product of copies of
a quasi-coherent sheaf $\prod_{\xi\in\Xi}({-})\:X\qcoh\rarrow X\qcoh$.
 The latter functor is rarely exact.
 So one is not likely to obtain an abelian category of quasi-coherent
contramodules.

 All in all, someone may want to try to pick up this definition of
a ``quasi-coherent contramodule over a quasi-coherent coalgebra''
and build a theory of it.
 But from our perspective it appears to be less promising than
the notion of a \emph{contraherent cosheaf} on $X$ with
a $\C$\+contramodule structure.
 The latter can be defined using the notion of the functor
$\Cohom$ from a quasi-coherent sheaf to a contraherent cosheaf;
see~\cite[Section~2.4]{Pcosh} and Section~\ref{cohom-subsecn} below.
 Notice that some conditions must be imposed either on
a quasi-coherent sheaf $\F$, or on a contraherent cosheaf $\P$,
or on both, to make the construction of the contraherent cosheaf
$\Cohom_X(\F,\P)$ well-defined.

\Section{Why Contraherent Cosheaves? Torsion and Derived Complete
Modules}

\subsection{Basics} \label{torsion-completion-basics-subsecn}
 Let $R$ be a commutative ring and $I\subset R$ be an ideal.
 Then an $R$\+module $M$ is said to be \emph{$I$\+torsion} if
for each elements $m\in M$ and $s\in I$ there exists an integer
$n\ge1$ such that $s^nm=0$.
 When the ideal $I$ is finitely generated, this condition can be
rephrased by saying that for every element $m\in M$ there exists
an integer $n\ge1$ such that $I^nm=0$.

 The full subcategory of $I$\+torsion $R$\+modules $R\modl_{I\tors}$
is closed under direct sums, submodules, quotients, and extensions
(in other words, it is a hereditary torsion class) in the ambient
abelian category $R\modl$.
 In particular, the category $R\modl_{I\tors}$ is abelian and
the inclusion functor $R\modl_{I\tors}\rarrow R\modl$ is exact.
 The inclusion functor has a right adjoint functor $\Gamma_I\:
R\modl\rarrow R\modl_{I\tors}$ assigning to every $R$\+module $M$
its maximal $I$\+torsion submodule $\Gamma_I(M)\subset M$.

 Assume that the ideal $I\subset R$ is finitely generated.
 Then it makes sense to consider the $I$\+adic completion functor
$P\longmapsto\Lambda_I(P)=\varprojlim_{n\ge1}P/I^nP$ on the category of
$R$\+modules (cf.~\cite[Theorem~3.6]{Yek}).
 An $R$\+module $P$ is said to be \emph{$I$\+adically separated}
if the natural map $\lambda_{I,P}\:P\rarrow\Lambda_I(P)$ is
injective; and $P$ is said to be \emph{$I$\+adically complete} if
the map~$\lambda_{I,P}$ is surjective.

 The full subcategory of $I$\+adically separated and complete
$R$\+modules $R\modl_{I\secmp}\allowbreak\subset R\modl$ is usually
\emph{not} abelian~\cite[Example~2.7(1)]{Pcta}, and generally not
well-behaved from the homological algebra perspective.
 This is related to the fact that the functor~$\Lambda_I$, constructed
as a composition of a right exact and a left exact functor, is not
even exact in the middle~\cite[Example~3.20]{Yek}.
 Well behaved (abelian) replacements of the category $R\modl_{I\secmp}$
are certain slightly larger full subcategories in $R\modl$, i.~e.,
intermediate categories between $R\modl_{I\secmp}$ and $R\modl$.

 Two reasonably well-behaved replacements of the category of
$I$\+adically separated and complete $R$\+modules $R\modl_{I\secmp}$
are known.
 The more popular one is called the category of \emph{derived
$I$\+adically complete $R$\+modules in the sequential sense} in
the language of~\cite{Yek2} and the category of \emph{$I$\+contramodule
$R$\+modules} in the language of~\cite{Pmgm,Pcta,PSl1,Pdc}.
 The other one is called the category of \emph{derived $I$\+adically
complete $R$\+modules in the idealistic sense} in the language
of~\cite{Yek2} and the category of \emph{quotseparated
$I$\+contramodule $R$\+modules} in the language of~\cite{PSl1,Pdc}.

 An $R$\+module $P$ is said to be
an \emph{$I$\+contramodule}~\cite[Section~2]{Pmgm},
\cite[Sections~2, 5, 7, and~9]{Pcta}, \cite[Sections~0.2 and~1]{Pdc}
(or \emph{derived $I$\+complete} in the language
of~\cite[Section~3.4]{BhSch}, \cite[Section Tag~091N]{SP}) if,
for every element $s\in I$, one has $\Hom_R(R[s^{-1}],P)=0=
\Ext^1_R(R[s^{-1}],P)$.
 It suffices to check this condition for any given set of
generators $s_1$,~\dots, $s_m$ of the ideal~$I$
\,\cite[Theorem~5.1]{Pcta}.
 Notice that the projective dimension of the $R$\+module $R[s^{-1}]$
never exceeds~$1$ \,\cite[proof of Lemma~2.1]{Pcta}, so one need not
impose any higher Ext vanishing conditions.

 Every $I$\+contramodule $R$\+module is $I$\+adically
complete~\cite[Theorem~5.6]{Pcta}.
 But it need not be $I$\+adically separated, as a now-classical
counterexample~\cite[Example~2.5]{Sim}, \cite[Example~3.20]{Yek},
\cite[Section~A.1.1]{Psemi}, \cite[Section~1.5]{Prev},
\cite[Example~2.7(1)]{Pcta} shows.
 So an $I$\+adically separated and complete $R$\+module is the same
thing as an $I$\+adically separated $I$\+contramodule $R$\+module.

 The full subcategory of $I$\+contramodule $R$\+modules
$R\modl_{I\ctra}\subset R\modl$ is closed under direct products,
kernels, cokernels, and extensions in the ambient abelian
category $R\modl$.
 Hence the category $R\modl_{I\ctra}$ is abelian and the inclusion
functor $R\modl_{I\ctra}\rarrow R\modl$ is exact.
 This result can be obtained as a special case of the much more
general~\cite[Proposition~1.1]{GL} or~\cite[Theorem~1.2(a)]{Pcta}.

 An $I$\+contramodule $R$\+module is said to be
\emph{quotseparated}~\cite[Section~5.5]{PSl1},
\cite[Sections~0.1 and~1]{Pdc} if it is a quotient $R$\+module
of an $I$\+adically separated $I$\+contramodule.
 The full subcategory of quotseparated $I$\+contramodule $R$\+modules
$R\modl_{I\ctra}^\qs\subset R\modl_{I\ctra}\subset R\modl$ is closed
under direct products, kernels, and cokernels in the ambient abelian
categories $R\modl_{I\ctra}$ and $R\modl$.
 In particular, the category $R\modl_{I\ctra}^\qs$ is abelian, and
both the inclusion functors $R\modl_{I\ctra}^\qs\rarrow
R\modl_{I\ctra}$ and $R\modl_{I\ctra}^\qs\rarrow R\modl$ are exact.
 Every $I$\+contramodule $R$\+module is an extension of two
quotseparated $I$\+contramodule
$R$\+modules~\cite[Proposition~1.6]{Pdc}.

 The inclusion functor $R\modl_{I\ctra}\rarrow R\modl$ has a left
adjoint functor $\Delta_I\:R\modl\rarrow R\modl_{I\ctra}$
\,\cite[Proposition~2.1]{Pmgm}, \cite[Theorem~7.2]{Pcta}.
 The inclusion functor $R\modl_{I\ctra}^\qs\rarrow R\modl$ also has
a left adjoint functor, which can be computed as the $0$\+th left
derived functor $\boL_0\Lambda_I$ of the (non-right-exact) functor
$\Lambda_I\:R\modl\rarrow R\modl_{I\secmp}\subset R\modl_{I\ctra}^\qs$
\,\cite[Proposition~1.3]{Pdc}.
{\hbadness=1250\par}

 When the ideal $I\subset R$ is \emph{weakly proregular}, every
$I$\+contramodule $R$\+module is
quotseparated~\cite[Corollary~3.7 and Remark~3.8]{Pdc}.
 In particular, this holds for any ideal $I$ in a Noetherian
commutative ring~$R$ (see~\cite[Section~3]{Yek2} for a historical
discussion).
 So one has $\Delta_I=\boL_0\Lambda_I$ in this case. 

 One can easily see that any $I$\+torsion $R$\+module $M$ is
a module over the ring $\varprojlim_{n\ge1}R/I^n=\Lambda_I(R)$.
 Similarly, any $I$\+contramodule $R$\+module is a module
over the ring~$\Delta_I(R)$ \,\cite[last paragraph of
Example~5.2(3)]{Pper}.
 Any quotseparated $I$\+contramodule $R$\+module is a module
over the ring~$\Lambda_I(R)$ \,\cite[Proposition~1.5 and
the subsequent paragraph]{Pdc}.

\subsection{Discussion}
 Let $\X$ be a Noetherian formal scheme~\cite[Section~II.9]{HartAG}.
 How does one globalize the definitions of the categories
$R\modl_{I\tors}$ and $R\modl_{I\ctra}=R\modl_{I\ctra}^\qs$ from
Section~\ref{torsion-completion-basics-subsecn} to a formal scheme
$\X$ replacing a Noetherian commutative ring $R$ with an ideal~$I$\,?

 To simplify the discussion, let us assume that $\X$ is covered by
two affine Noetherian formal schemes $\U_1$ and $\U_2$
(with an affine intersection $\V=\U_1\cap\U_2$).
 As a further simplification, assume that $\V$ is a principal open
affine both in $\U_1$ and in~$\U_2$.

 This means that we are given two Noetherian commutative rings
$R_1$ and $R_2$, two ideals $I_\alpha\subset R_\alpha$ and two elements
$r_\alpha\in R_\alpha$, \ $\alpha=1$,~$2$.
 We are also given an isomorphism of topological commutative rings
\begin{equation} \label{intersection-completion-isomorphism}
 \varprojlim\nolimits_{n\ge1}(R_1[r_1^{-1}]/I_1^n[r_1^{-1}])\simeq
 \varprojlim\nolimits_{n\ge1}(R_2[r_2^{-1}]/I_2^n[r_2^{-1}])
\end{equation}
with the adic topologies.
 The affine formal schemes $\U_\alpha$ are the formal spectra of
the topological rings $\varprojlim_{n\ge1}R_\alpha/I_\alpha^n$ with
their adic topologies, $\U_\alpha=
\Spf\varprojlim_{n\ge1}R_\alpha/I_\alpha^n$,
and the intersection $\V=\U_1\cap\U_2$ is the formal spectrum of
the topological ring~\eqref{intersection-completion-isomorphism}.
 The whole formal scheme $\X$ is the union $\U_1\cup\U_2$.

 What should one mean by a torsion module over~$\X$, and what should
one mean by a contramodule over~$\X$\,?

 Let us start with the torsion modules.
 Given an $I_\alpha$\+torsion $R_\alpha$\+module $M_\alpha$,
the localization $M_\alpha[r_\alpha^{-1}]=
R_\alpha[r_\alpha^{-1}]\ot_{R_\alpha}M_\alpha$ is
an $I_\alpha[r_\alpha^{-1}]$\+torsion
$R_\alpha[r_\alpha^{-1}]$\+module.
 So it seems to be quite natural to define a \emph{quasi-coherent
torsion sheaf\/ $\M$ over\/~$\X$} as a pair $\M=(M_1,M_2)$ of
$I_\alpha$\+torsion $R_\alpha$\+modules $M_\alpha$ together with
an isomorphism
$$
 M_1[r_1^{-1}]\simeq M_2[r_2^{-1}]
$$
of discrete/torsion modules over the topological
ring~\eqref{intersection-completion-isomorphism}.
 This definition of a quasi-coherent torsion sheaf generalizes to
many ind-schemes~\cite[Section~7.11.4]{BD2}, \cite[Chapter~2]{Psemten};
the whole theory developed in the book~\cite{Psemten} is based on
the ind-scheme version of this definition.

 Passing to the contramodules, we start with the observation that
the functor $R_\alpha[r_\alpha^{-1}]\ot_{R_\alpha}{-}\,\:R\modl
\rarrow R[r_\alpha^{-1}]\modl$ does \emph{not} take
$I_\alpha$\+contramodules to $I_\alpha[r_\alpha^{-1}]$\+contramodules.

\begin{ex} \label{noncomplete-localization-example}
 Let $k$~be a field and $R=k[x,y]$ be the ring of polynomials in two
variables over~$k$.
 Put $I=(x)\subset R$ and $r=y\in R$.
 Then $P=\varprojlim_{n\ge1}R/x^nR$ is an $I$\+contramodule $R$\+module
(in fact, it is the \emph{free $I$\+contramodule $R$\+module with one
generator}).
 Still, the $R$\+module $P[y^{-1}]$ is \emph{not} an $I$\+contramodule
$R$\+module, and the $R[y^{-1}]$\+module $P[y^{-1}]$ is \emph{not}
an $I[y^{-1}]$\+contramodule $R[y^{-1}]$\+module.

 The point is that $P[y^{-1}]$ is $x$\+adically separated, but it is
\emph{not} $x$\+adically complete.
 In fact, one has
\begin{equation} \label{completion-localization-noncommutativity}
 P[y^{-1}]=\bigl(\varprojlim\nolimits_{n\ge1}k[x,y]/x^nk[x,y]\bigr) 
 [y^{-1}]\subsetneq
 \varprojlim\nolimits_{n\ge1}k[x,y,y^{-1}]/x^nk[x,y,y^{-1}].
\end{equation}
 For example, the expression $\sum_{n=0}^\infty x^n/y^n$ is
element of the right-hand side, but not of the left-hand side
of~\eqref{completion-localization-noncommutativity}.
 The right-hand side of~\eqref{completion-localization-noncommutativity}
is the $x$\+adic completion of the left-hand side.

 Since all contramodules are adically complete (though they need not
be adically separated)~\cite[Theorem~5.6]{Pcta}, it follows that
$P[y^{-1}]$ \emph{cannot} be either an $I$\+contramodule $R$\+module
or an $I[y^{-1}]$\+contramodule $R[y^{-1}]$\+module.
 (See~\cite{Yek1} for a weaker result which is also sufficient for
the purposes of this example.)
\end{ex}

 What should be meant by a \emph{quasi-coherent sheaf of contramodules
over\/~$\X$}\,?
 Suppose given an $I_1$\+contramodule $R_1$\+module $P_1$ and
$I_2$\+contramodule $R_2$\+module~$P_2$.
 So one can say that $P_1$ is a contramodule over $\U_1$ and
$P_2$ is a contramodule over $\U_2$.
 What could it mean that the localizations of $P_1$ and $P_2$ agree
over $\V=\U_1\cap\U_2$\,?

 Asking for an isomorphism $P_1[r_1^{-1}]\simeq P_2[r_2^{-1}]$ does
not seem to make sense.
 Isomorphic as objects \emph{of what category}?
 Example~\ref{noncomplete-localization-example} clearly shows that
$P_\alpha[r_\alpha^{-1}]$ need not be even modules over
the ring~\eqref{intersection-completion-isomorphism}.

 One could ask for an isomorphism
$$
 \Delta_{I_1[r_1^{-1}]}\bigl(P_1[r_1^{-1}]\bigr)\simeq
 \Delta_{I_2[r_2^{-1}]}\bigl(P_2[r_2^{-1}]\bigr)
$$
of contramodules over the topological
ring~\eqref{intersection-completion-isomorphism}.
 This condition is, at least, well-defined; but is it well-behaved?

 If one wants to prove that the resulting category of ``quasi-coherent
sheaves of contramodules on~$\X$\,'' is abelian, it would be convenient
to know that the gluing functors are exact.
 The functor $\Delta_I$ is certainly \emph{not} exact in most cases
(but only right exact, being a left adjoint).
 Is the functor
\begin{equation} \label{gluing-qcoh-of-contra-functor}
 P\longmapsto\Delta_{I[r^{-1}]}\bigl(P[r^{-1}]\bigr)\:
 R\modl_{I\ctra}\lrarrow R[r^{-1}]\modl_{I[r^{-1}]\ctra}
\end{equation}
exact for every Noetherian commutative ring $R$ with an ideal
$I\subset R$ and an element $r\in R$\,?
 It would be interesting to find out.

 What can one say about
the functor~\eqref{gluing-qcoh-of-contra-functor}\,?
 It is the functor of contraextension of scalars
$f^\sharp\:\fR\contra\rarrow\fS\contra$ \,\cite[Section~2.9]{Pproperf}
with respect to the natural morphism of topological rings
$f\:\fR\rarrow\fS$, where $\fR=\Delta_I(R)=\Lambda_I(R)$ and
$\fS=\Lambda_{I[r^{-1}]}(R[r^{-1}])$.
 But this does not yet seem to help much.

 If the functor~\eqref{gluing-qcoh-of-contra-functor} is \emph{not}
exact, then it is not even clear how to define a reasonable exact
category structure on the category of ``quasi-coherent sheaves
of contramodules over~$\X$\,''.
 Perhaps one should better restrict the gluing construction to
the full subcategories of $I_\alpha$\+contramodule $R_\alpha$\+modules
adjusted to~\eqref{gluing-qcoh-of-contra-functor}.

 It is well-known and easy to show that the functor $\Lambda_I$
takes short exact sequences of flat modules to short exact
sequences~\cite[Lemma~3.5]{PSY}.
 Consequently, the natural morphism $\boL_0\Lambda_I(F)\rarrow
\Lambda_I(F)$ is an isomorphism for all flat $R$\+modules~$F$
\,\cite[Proposition~3.6]{PSY}; so the functor
$\Delta_I=\boL_0\Lambda_I$ also preserves exactness of short
exact sequences of flat modules.

 This allows one to define an exact category of \emph{quasi-coherent
sheaves of flat contramodules on\/~$\X$} by restricting the gluing
construction above to $R_\alpha$\+flat $I_\alpha$\+contramodule
$R_\alpha$\+modules.
 This definition generalizes naturally to
ind-schemes~\cite[Section~7.11.3]{BD2}; the resulting category is
called the category of \emph{flat pro-quasi-coherent pro-sheaves}
in~\cite[Section~3]{Psemten}.
 But flat modules are only a tiny subcategory in the category of
all modules. {\hbadness=1450\par}

 From our perspective, a more promising approach is to glue
$I_\alpha$\+contramodule $R_\alpha$\+modules using
the \emph{colocalization} rather than the localization functors.
 One starts with the following helpful observation.

\begin{lem}
 Let $R$ be a commutative ring, $I\subset R$ be a (finitely generated)
ideal, and $r\in R$ be an element.
 Then, for any $I$\+contramodule $R$\+module $P$, the colocalization
$\Hom_R(R[r^{-1}],P)$ is an $I[r^{-1}]$\+contramodule
$R[r^{-1}]$\+module.
\end{lem}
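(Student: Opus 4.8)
The plan is to reduce everything to the single assertion that the colocalization $Q=\Hom_R(R[r^{-1}],P)$ is still an $I$\+contramodule when regarded as an $R$\+module, and then to transport this statement along the flat homomorphism $R\rarrow S=R[r^{-1}]$. First I would choose generators $s_1,\dots,s_m$ of~$I$; their images generate $I[r^{-1}]=IS$ in~$S$, so by the criterion that the contramodule condition need only be tested on a generating set \cite[Theorem~5.1]{Pcta}, it suffices to fix one $s=s_i$ and check that $\Hom_S(S[s^{-1}],Q)=0=\Ext^1_S(S[s^{-1}],Q)$.

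Next I would transfer these $\Ext$ groups from $S$ to~$R$. Since $S[s^{-1}]=\varinjlim(S\xrightarrow{s}S\xrightarrow{s}\cdots)$, it carries the two\+term telescope resolution $0\rarrow S^{(\mathbb N)}\rarrow S^{(\mathbb N)}\rarrow S[s^{-1}]\rarrow0$ by free $S$\+modules; applying $\Hom_S(-,Q)$ turns it into the complex $\prod_nQ\xrightarrow{\,\id-s\sigma\,}\prod_nQ$, where $\sigma$ denotes the shift. The analogous telescope resolution of $R[s^{-1}]$ over~$R$, after $\Hom_R(-,Q)$, produces literally the same complex of abelian groups, because the operator $\id-s\sigma$ does not care whether $s$ acts through the $S$\+ or the $R$\+module structure on~$Q$. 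Hence $\Ext^i_S(S[s^{-1}],Q)\cong\Ext^i_R(R[s^{-1}],Q)$ for $i=0,1$, and the task reduces to showing that $Q$ is an $I$\+contramodule $R$\+module.

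The crux is the derived tensor--hom adjunction. As localizations are flat, $R[s^{-1}]\ot_R^{\boL}R[r^{-1}]=R[s^{-1}]\ot_RR[r^{-1}]=R[(rs)^{-1}]$, whence
\[
 \boR\Hom_R\bigl(R[s^{-1}],\,\boR\Hom_R(R[r^{-1}],P)\bigr)\simeq
 \boR\Hom_R\bigl(R[(rs)^{-1}],P\bigr).
\]
Since $s\in I$ gives $rs\in I$, the $I$\+contramodule property of~$P$ makes both $\Hom_R(R[(rs)^{-1}],P)$ and $\Ext^1_R(R[(rs)^{-1}],P)$ vanish, and because $R[(rs)^{-1}]$ has projective dimension at most~$1$ the right\+hand side is acyclic. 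To read off the vanishing I want, I would feed the two\+term complex $C=\boR\Hom_R(R[r^{-1}],P)$, whose only cohomology is $H^0=Q$ and $H^1=\Ext^1_R(R[r^{-1}],P)$, into the hyper\+Ext spectral sequence $E_2^{p,q}=\Ext^p_R(R[s^{-1}],H^q(C))\Rightarrow0$ (equivalently, into the canonical truncation triangle of~$C$). As $R[s^{-1}]$ also has projective dimension at most~$1$, every nonzero term sits in the square $p,q\in\{0,1\}$, leaving no room for a differential; the vanishing of the abutment then forces all of~$E_2$ to vanish, in particular $\Hom_R(R[s^{-1}],Q)=0=\Ext^1_R(R[s^{-1}],Q)$, as needed.

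The step I expect to be the real obstacle is handling the correction term $\Ext^1_R(R[r^{-1}],P)$. Because $r$ need not belong to~$I$, one cannot hope this group vanishes, so the naive underived adjunction $\Hom_S(S[s^{-1}],\Hom_R(S,P))\cong\Hom_R(R[(rs)^{-1}],P)$ settles the $\Hom$ part but says nothing about $\Ext^1$. It is exactly the bound that both $R[r^{-1}]$ and $R[s^{-1}]$ have projective dimension at most~$1$ which confines the spectral sequence to a $2\times2$ grid and thereby renders this correction term harmless.
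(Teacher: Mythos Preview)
Your proof is correct, but it takes a considerably more elaborate route than the paper's. Both arguments first reduce to showing that $Q=\Hom_R(R[r^{-1}],P)$ is an $I$\+contramodule as an $R$\+module; you do this explicitly via matching telescope resolutions, while the paper simply cites the fact that an $S$\+module is an $IS$\+contramodule iff its underlying $R$\+module is an $I$\+contramodule (\cite[Theorem~5.1]{Pcta}). The real divergence is in the second step. You invoke the derived tensor--Hom adjunction $\boR\Hom_R(R[s^{-1}],\boR\Hom_R(R[r^{-1}],P))\simeq\boR\Hom_R(R[(rs)^{-1}],P)$ and then run a hyper\+Ext spectral sequence confined to a $2\times2$ grid by the projective\+dimension~$\le1$ bounds, forcing all $E_2$ terms to vanish. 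The paper instead observes that for \emph{any} $R$\+module $M$, a free presentation $R^{(K)}\to R^{(J)}\to M\to0$ exhibits $\Hom_R(M,P)$ as the kernel of a map $P^J\to P^K$; since the full subcategory of $I$\+contramodules is closed under products and kernels in $R\modl$, one is done immediately. The paper's argument is shorter, avoids derived machinery entirely, and applies to $\Hom_R(M,P)$ for arbitrary $M$, not just localizations; your argument, while heavier, has the incidental virtue of also showing that the correction term $\Ext^1_R(R[r^{-1}],P)$ is an $I$\+contramodule.
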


\begin{proof}
 An $R[r^{-1}]$\+module is an $I[r^{-1}]$\+contramodule if and only if
its underlying $R$\+module is
an $I$\+contramodule~\cite[Theorem~5.1]{Pcta}; so it suffices to
show that $\Hom_R(R[r^{-1}],P)$ is an $I$\+contramodule $R$\+module.
 This is a particular case of~\cite[Lemma~6.1(b)]{Pcta}.
 Simply put, for any $R$\+module $M$, the $R$\+module $\Hom_R(M,P)$
can be obtained from $P$ using the operations of kernel and
infinite product; and the full subcategory $R\modl_{I\ctra}$ is
closed under kernels and infinite products in $R\modl$.
\end{proof}

 So one can define a \emph{contraherent cosheaf of
contramodules over\/~$\X$} as a pair $\P=(P_1,P_2)$ of
$I_\alpha$\+contramodule $R_\alpha$\+modules $P_\alpha$ together
with an isomorphism
$$
 \Hom_{R_1}(R_1[r_1^{-1}],P_1)\simeq\Hom_{R_2}(R_2[r_2^{-1}],P_2)
$$
of contramodules over
the ring~\eqref{intersection-completion-isomorphism} with
its defining ideal of the adic topology
(cf.\ the last paragraph of
Section~\ref{torsion-completion-basics-subsecn}).
 To make this definition well-behaved, one needs to assume adjustedness
to the colocalization, i.~e., impose the \emph{contraadjustedness}
condition~\cite[Section~1.1]{Pcosh}, \cite[Section~2]{Pcta}
$$
 \Ext^1_{R_1}(R_1[r_1^{-1}],P_1)=0=\Ext^1_{R_2}(R_2[r_2^{-1}],P_2).
$$

 The question can and should be asked as to why there are enough
$I_\alpha$\+contramodule $R_\alpha$\+modules satisfying
the contraadjustedness condition.
 Similar questions were addressed in~\cite[Sections~C.2\+-C.3
and~D.4\+-D.5]{Pcosh} and~\cite[Section~7]{PR};
see also~\cite[Section~5]{PSl1}.
 As the output of the definition above, one obtains
the \emph{exact category of contraherent cosheaves of contramodules
over\/~$\X$}.

 The reader should be \emph{warned} that the definition of
a ``contraherent cosheaf of contramodules on~$\X$\,'' as spelled out
above \emph{depends} on the choice of an affine open covering
$\X=\U_1\cup\U_2$.
 Actually, it depends on it for two reasons.

 Firstly, the (very weak) contraadjustedness condition as formulated
above depends on the covering.
 This can be resolved by making the contraadjustedness condition more
restrictive, or more specifically, assuming contraadjustedness with
respect to all elements $r\in R_\alpha$ rather than only $r=r_\alpha$.

 Secondly, and more unpleasantly, the contraherence property is
not local~\cite[Example~3.2.1]{Pcosh}.
 See the discussion of \emph{locally contraherent cosheaves}
in~\cite[Sections~3.1\+-3.2]{Pcosh} and
Remark~\ref{nonlocality-remark} below.
 So one needs to deal with the whole family of \emph{exact categories
of locally contraherent cosheaves of contramodules on\/~$\X$} with
respect to varying open coverings, establishing the fact that these
exact categories ``do not differ too much from each
other''~\cite[Lemma~4.6.1(b)]{Pcosh} and relying on it.

\Section{Why Contraherent Cosheaves? Coderived and Contraderived
Categories} \label{coderived-and-contraderived-secn}

\subsection{Basics} \label{co-contra-derived-basics-subsecn}
 Let $R$ be a ring.
 For the purposes of the discussion in
the next Section~\ref{co-contra-derived-discussion-subsecn},
the ring $R$ can be assumed to be commutative for simplicity,
though this assumption is not needed in the present section.

 There are two approaches to defining the \emph{coderived} and
the \emph{contraderived category} of $R$\+modules in the current
literature.
 The coderived and contraderived categories \emph{in the sense of
Positselski}~\cite{Psemi,Pkoszul,EP,Pweak,Pcosh,Pfp,Pps,PS2,Prel,Pedg}
have more elementary flavor, while the coderived and contraderived
categories \emph{in the sense of
Becker}~\cite{Jor,Kra,IK,Neem,Bec,Sto,PS4,PS5,PS7} have more
set-theoretic flavour.

 It is still an open problem whether the two approaches \emph{ever}
do not agree with each other within their common domain of
applicability~\cite[Examples~2.5(3) and~2.6(3)]{Pps}.
 We refer to~\cite[Remark~9.2]{PS4} and~\cite[Section~7]{Pksurv} for
a historical and philosophical discussion.
 \emph{Derived categories of the second kind} is a generic name for
the coderived and contraderived categories.

 Let $\Hot(R\modl)$ denote the homotopy category of unbounded
complexes of $R$\+modules.
 Any short exact sequence of complexes of $R$\+modules can be viewed
as a bicomplex with three rows, so a total complex (totalization)
can be attached to it.

 A complex of $R$\+modules is said to be \emph{coacyclic in the sense
of Positselski} if it belongs to the minimal full triangulated
subcategory of $\Hot(R\modl)$ containing the totalizations of short
exact sequences of $R$\+modules and \emph{closed under infinite direct
sums}.
 The full triangulated subcategory of Positselski-coacyclic complexes
is denoted by $\Ac^\pco(R\modl)\subset\Hot(R\modl)$, and the related
triangulated Verdier quotient category
$$
 \sD^\pco(R\modl)=\Hot(R\modl)/\Ac^\pco(R\modl)
$$
is called the \emph{coderived category of $R$\+modules in the sense
of Positselski}.

 A complex of $R$\+modules is said to be \emph{contraacyclic in
the sense of Positselski} if it belongs to the minimal full triangulated
subcategory of $\Hot(R\modl)$ containing the totalizations of short
exact sequences of $R$\+modules and \emph{closed under infinite direct
products}.
 The full triangulated subcategory of Positselski-contraacyclic
complexes is denoted by $\Ac^\pctr(R\modl)\subset\Hot(R\modl)$, and
the related triangulated Verdier quotient category
$$
 \sD^\pctr(R\modl)=\Hot(R\modl)/\Ac^\pctr(R\modl)
$$
is called the \emph{contraderived category of $R$\+modules in the sense
of Positselski}.

 The \emph{coderived category of $R$\+modules in the sense of Becker}
can be simply defined as the homotopy category of unbounded complexes
of injective $R$\+modules
\begin{equation} \label{becker-coderived-naive}
 \sD^\bco(R\modl)=\Hot(R\modl^\inj).
\end{equation}

 A more sophisticated approach is to define the \emph{coacyclic
complexes of $R$\+modules in the sense of Becker} as such complexes
$A^\bu\in\Hot(R\modl)$ that $\Hom_{\Hot(R\modl)}(A^\bu,J^\bu)
\allowbreak=0$ for all complexes of injective $R$\+modules
$J^\bu\in\Hot(R\modl^\inj)$.
 The full triangulated subcategory of Becker-coacyclic complexes
is denoted by $\Ac^\bco(R\modl)\subset\Hot(R\modl)$.
 Then the coderived category in the sense of Becker can be defined as
the related Verdier quotient category
\begin{equation} \label{becker-coderived-quotient}
 \sD^\bco(R\modl)=\Hot(R\modl)/\Ac^\bco(R\modl).
\end{equation}

 It is a known theorem that
the definitions~\eqref{becker-coderived-naive}
and~\eqref{becker-coderived-quotient} agree.
 In other words, the composition of functors
$$
 \Hot(R\modl^\inj)\lrarrow\Hot(R\modl)\lrarrow
 \Hot(R\modl)/\Ac^\bco(R\modl)
$$
is a triangulated equivalence~\cite[Proposition~1.3.6(2)]{Bec},
\cite[Theorem~2.13]{Neem2}, \cite[Corollary~5.13]{Kra2},
\cite[Corollary~9.5]{PS4}, \cite[Corollary~7.9]{PS5}.

 The \emph{contraderived category of $R$\+modules in the sense of
Becker} can be simply defined as the homotopy category of unbounded
complexes of projective $R$\+modules
\begin{equation} \label{becker-contraderived-naive}
 \sD^\bctr(R\modl)=\Hot(R\modl_\proj).
\end{equation}

 A more sophisticated approach is to define the \emph{contraacyclic
complexes of $R$\+modules in the sense of Becker} as such complexes
$B^\bu\in\Hot(R\modl)$ that $\Hom_{\Hot(R\modl)}(P^\bu,B^\bu)=0$
for all complexes of projective $R$\+modules
$P^\bu\in\Hot(R\modl_\proj)$.
 The full triangulated subcategory of Becker-contraacyclic complexes
is denoted by $\Ac^\bctr(R\modl)\subset\Hot(R\modl)$.
 Then the contraderived category in the sense of Becker can be defined
as the related Verdier quotient category {\hbadness=1050
\begin{equation} \label{becker-contraderived-quotient}
 \sD^\bctr(R\modl)=\Hot(R\modl)/\Ac^\bctr(R\modl).
\end{equation}

 It is} a known theorem that
the definitions~\eqref{becker-contraderived-naive}
and~\eqref{becker-contraderived-quotient} agree.
 In other words, the composition of functors
$$
 \Hot(R\modl_\proj)\lrarrow\Hot(R\modl)\lrarrow
 \Hot(R\modl)/\Ac^\bctr(R\modl)
$$
is a triangulated equivalence~\cite[Corollary~5.10]{Neem},
\cite[Proposition~1.3.6(1)]{Bec}, \cite[Corollary~7.4]{PS4},
\cite[Corollary~6.13]{PS5}.

 It is clear from~\cite[proof of Theorem~3.5(a)]{Pkoszul}
or~\cite[Lemma~9.1]{PS4} (see also~\cite[Lemma~7.8]{PS5}) that
any coacyclic complex in the sense of Positselski is also
coacyclic in the sense of Becker.
 Similarly, \cite[proof of Theorem~3.5(b)]{Pkoszul}
or~\cite[Lemma~7.1]{PS4} (see also~\cite[Lemma~6.12]{PS5}) tell
that any contraacyclic complex in the sense of Positselski is also
contraacyclic in the sense of Becker.
 The converse inclusions are open problems.

 The Positselski and Becker approaches are known to agree under
certain (somewhat restrictive) assumptions on a ring~$R$.
 See~\cite[Sections~3.6\+-3.8]{Pkoszul}, \cite[Proposition~2.3
and Theorem~2.4]{Pfp}, and~\cite[Section~4]{Pctrl}.
 The two approaches also agree for co/contramodules over a coalgebra
over a field~\cite[Section~4.4]{Pkoszul}.
 A discussion can be found in~\cite[Section~7.9]{Pksurv}.

 All (Becker or Positselski) coacyclic complexes of modules are
acyclic.
 All (Becker or Positselski) contraacyclic complexes of modules are
acyclic.
 So both the coderived and the contraderived category are larger
than the unbounded derived category (generally speaking), each in its
own way.
 Some elements of further philosophical discussion can be found
in~\cite[Section~0.2.7]{Psemi}.

 Both the coderived and the contraderived categories play a crucial
role in the contexts of nonhomogeneous Koszul
duality~\cite{Pkoszul,Prel,Pksurv} and semi-infinite homological
algebra~\cite{Psemi}.
 They are also of key importance for semi-infinite algebraic
geometry~\cite{Pfp,Psemten} and for the theory of curved DG\+modules,
such as matrix factorizations~\cite{PP2,Or,EP,Bec,BDFIK}.

\subsection{Discussion} \label{co-contra-derived-discussion-subsecn}
 How does one globalize the definitions from
Section~\ref{co-contra-derived-basics-subsecn} to a scheme $X$
replacing a commutative ring~$R$\,?
 What happens if one tries to plug the abelian category of
quasi-coherent sheaves $X\qcoh$ into these definitions instead of
the module category $R\modl$\,?

 The definition of the coderived category in the sense of Positselski
makes sense for any abelian (or even exact) category \emph{with exact
functors of infinite direct sum} in place of the module category
$R\modl$.
 Dually, the definition of the contraderived category in the sense
of Positselski makes sense for any abelian (or exact) category
\emph{with exact functors of infinite direct
product}~\cite[Sections~2.1 and~4.1]{Psemi}, \cite[Section~A.1]{Pcosh},
\cite[Appendix~A]{Pmgm}.
 See Section~\ref{exact-categories-fin-homol-dim-subsecn} below
for the details.

\begin{rem} \label{nonexact-co-products-discouraged}
 If the coproduct functors are not exact in an abelian category $\sA$,
then attempting to define the Positselski coderived category
$\sD^\pco(\sA)$ runs into the problem that the coproducts of short
exact sequences need not be exact, so some coacyclic complexes would
not be acyclic.
 Hence the functors of cohomology of complexes would not be defined
on the coderived category.

 Dually, if the product functors are not exact in an abelian category
$\sB$, then attempting to define the Positselski contraderived
category $\sD^\pctr(\sB)$ runs into the problem that some contraacyclic
complexes would not be acyclic.
 Therefore, the functors of cohomology of complexes would not be
defined on the contraderived category.
 To the present author, this appears as undesirable behavior.
\end{rem}

 Both the definitions of the coderived category in the sense of
Becker, \eqref{becker-coderived-naive}
and~\eqref{becker-coderived-quotient}, make sense for an abelian
or exact category $\sA$ plugged in instead of $R\modl$, provided
that $\sA$ has ``not too few'' injective objects in some vague sense.
 One can take this to mean having enough injective objects in
the sense of the usual formal definition.
 If one wants to prove the theorem that the definitions
\eqref{becker-coderived-naive} and~\eqref{becker-coderived-quotient}
agree, then it suffices to assume $\sA$ to be a Grothendieck abelian
category~\cite[Theorem~2.13]{Neem2}, \cite[Corollary~5.13]{Kra2},
\cite[Corollary~9.5]{PS4}, \cite[Corollary~7.9]{PS5}.

 Dually, both the definitions of the contraderived category in
the sense of Becker, \eqref{becker-contraderived-naive}
and~\eqref{becker-contraderived-quotient}, make sense for an abelian
or exact category $\sB$ plugged in instead of $R\modl$, provided
that $\sB$ has ``not too few'' projective objects in some vague sense.
 One can take this to mean having enough projective objects in
the sense of the usual formal definition.
 If one wants to prove the theorem that the definitions
\eqref{becker-contraderived-naive}
and~\eqref{becker-contraderived-quotient} agree, then it suffices to
assume $\sA$ to be a locally presentable abelian category with enough
projective objects~\cite[Corollary~7.4]{PS4},
\cite[Corollary~6.13]{PS5}.

 For any scheme $X$, the category $X\qcoh$ is a Grothendieck abelian
category.
 So both the constructions of the Positselski coderived category
$\sD^\pco(X\qcoh)$ and the Becker coderived category category
$\sD^\bco(X\qcoh)$ make perfect sense.
 We refer to~\cite[Remark~1.3]{EP} and~\cite[Section~A.2]{Psemten}
for a relevant discussion of \emph{locality of coacyclicity of
complexes} in both the contexts of Positselski and Becker coderived
categories.

 \cite[Appendix~A]{Psemten} is also relevant for a discussion of
a comparison between the Positselski and Becker approaches to
coderived categories of quasi-coherent sheaves.
 For Noetherian schemes, the Positselski and Becker coderived
categories of quasi-coherent sheaves agree~\cite[Lemma~1.7(b)]{EP},
\cite[Proposition~2.1]{Pfp}, \cite[Proposition~A.3]{Psemten}.

 Concerning the contraderived category of quasi-coherent sheaves,
it is clear that both the Positselski and the Becker definitions
do not work very well.
 Becker's definition runs into the problem that there are often
\emph{no nonzero projective objects} in the category $X\qcoh$;
so one would have $\sD^\bctr(X\qcoh)=0$.

 The infinite product functors are also usually \emph{not exact} in
$X\qcoh$, so Remark~\ref{nonexact-co-products-discouraged} applies.
 Someone may still want to try to compute the Positselski
contraderived category of quasi-coherent sheaves $\sD^\pctr(X\qcoh)$
in some examples, if only to find out whether it vanishes as well.

 One can modify the Becker definition of the contraderived category
for quasi-coherent sheaves by considering flat quasi-coherent sheaves
as a replacement of projective ones.
 This approach was taken in the dissertation~\cite{Mur}.
 Instead of the homotopy category of complexes of projective objects,
one has to consider the derived category of the exact category of flat
quasi-coherent sheaves (cf.\
Section~\ref{derived-of-flats-and-projectives-subsecn} below).
 This way one obtains a quasi-coherent version
of~\eqref{becker-contraderived-naive}.
 But what would be a quasi-coherent version
of~\eqref{becker-contraderived-quotient}\,?
 What are Becker-contraacyclic complexes of quasi-coherent sheaves?

 We suggest that the \emph{contraderived category of contraherent
cosheaves} $\sD^\pctr(X\ctrh)$ or $\sD^\bctr(X\ctrh)$ is a more
promising object of study than the contraderived category of
quasi-coherent sheaves.
 In all the three major versions of the exact category of
contraherent cosheaves considered in the preprint~\cite{Pcosh},
\begin{itemize}
\item the exact category of locally contraadjusted contraherent
cosheaves $X\ctrh$,
\item the exact category of locally cotorsion contraherent cosheaves
$X\ctrh^\lct$, and
\item the exact category of locally injective contraherent cosheaves
$X\ctrh^\lin$
\end{itemize}
(see~\cite[Section~2.2]{Pcosh} and
Sections~\ref{locally-contraadjusted-subsecn}\+-%
\ref{locally-cotorsion-subsecn} below), all the functors of infinite
product exist, and they take admissible short exact sequences to
admissible short exact sequences.
 So Positselski's contraderived category of contraherent cosheaves
appears to make perfect sense in all the three cases.

 Furthermore, assuming that the scheme $X$ is quasi-compact and
semi-separated, there are enough projective objects in all
the three exact categories $X\ctrh$, $X\ctrh^\lct$, and $X\ctrh^\lin$
\,\cite[Section~4.4 and paragraph before Corollary~4.2.8]{Pcosh}.

 In addition, on any locally Noetherian scheme $X$ there are
enough projective objects in the category of locally cotorsion
contraherent cosheaves $X\ctrh^\lct$.
 This is shown, and such projective objects are explicitly described,
in~\cite[Theorem~5.1.1]{Pcosh}.
 (This explicit description is based on the explicit description of
flat cotorsion modules over Noetherian commutative rings
in~\cite[Theorem in Section~2]{En2}, similarly to the explicit
description of injective quasi-coherent sheaves on locally Noetherian
schemes~\cite[Proposition~II.7.17]{HartRD} based on the explicit
description of injective modules over Noetherian commutative rings
in~\cite[Theorem~2.5, Proposition~2.7, and Proposition~3.1]{Mat0}.)
 There are also enough projective objects in the category of
locally contraadjusted contraherent cosheaves $X\ctrh$ on any
Noetherian scheme $X$ of finite Krull
dimension~\cite[Corollary~5.2.4]{Pcosh}.

 So the construction of the contraderived category in the sense
of Becker is also meaningfully applicable to the exact categories
of contraherent cosheaves, at least, under certain not-too-strong
assumptions.

 For nontrivial results involving the contraderived categories of
contraherent cosheaves, see the quadrality diagram of triangulated
equivalences in~\cite[Theorem~5.7.1]{Pcosh} (for any semi-separated
Noetherian scheme with a dualizing complex) or the triangulated
equivalence of~\cite[Theorem~5.8.1]{Pcosh} (for any Noetherian scheme
with a dualizing complex).

 One is often interested in quasi-coherent sheaves with an additional
structure, like an action of a quasi-coherent associative algebra.
 More generally, one can consider quasi-coherent sheaves on $X$ with
an action of a quasi-coherent quasi-algebra $\A$ over $X$, as
in~\cite[Section~2.3]{Pedg} (e.~g., the sheaf of rings of differential
operators $\D_X$ on a smooth algebraic variety $X$ over a field~$k$
is a quasi-coherent quasi-algebra over~$X$).

 One can define \emph{contraherent modules} $\fM$ over a quasi-coherent
quasi-algebra $\A$ over $X$ as contraherent cosheaves on $X$ endowed
with a morphism of contraherent cosheaves $\fM\rarrow\Cohom_X(\A,\fM)$
satisfying suitable associativity and unitality axioms.
 See~\cite[Section~2.4]{Pcosh} and Section~\ref{cohom-subsecn} below
for the definition of the $\Cohom$ functor (certain adjustedness
conditions need to be imposed either on $\A$, or on $\fM$, or on both
in order to make the $\Cohom$ well-defined).
 Then it is easy to see that the exact category $\A\ctrh$ of
contraherent $\A$\+modules has exact functors of infinite product.
 One may want to try to prove that it has enough projective objects,
under suitable assumptions on~$X$.

 For example, contraherent $\D_X$\+modules and their contraderived
category are relevant to a possible extension of the $\D$\+$\Omega$
duality theory of~\cite[Appendix~B]{Pkoszul} and~\cite{Prel} to
nonaffine schemes $X$ on the contramodule side.

\Section{Category-Theoretic and Module-Theoretic Preliminaries}
\label{preliminaries-secn}

 The notion of a \emph{cotorsion pair} (also known as
a \emph{cotorsion theory}) plays a key role in the theory of
contraherent cosheaves.
 In this section, we spell out the abstract definition in
category-theoretic context, as well as two main concrete examples:
the \emph{flat cotorsion pair} in the category of modules over
an associative ring, and the \emph{very flat cotorsion pair} in
the category of modules over a commutative ring.

\subsection{Cotorsion pairs in exact categories}
\label{cotorsion-pairs-subsecn}
 Let $\sE$ be an exact category (in the sense of Quillen); see
the survey~\cite{Bueh} for the background.
 A discussion of the Yoneda Ext groups in exact categories can be
found in the paper~\cite[Section~A.7]{Partin}.

 Let $\sF$ and $\sC\subset\sE$ be two classes of objects.
 The notation $\sF^{\perp_1}\subset\sE$ stands for the class of
all objects $X\in\sE$ such that $\Ext^1_\sE(F,X)=0$ for all $F\in\sF$.
 Dually, ${}^{\perp_1}\sC\subset\sE$ is the class of all objects
$Y\in\sE$ such that $\Ext^1_\sE(Y,C)=0$ for all $C\in\sC$.

 Similarly, $\sF^{\perp_{\ge1}}\subset\sE$ is the class of all objects
$X\in\sE$ such that $\Ext^n_\sE(F,X)=0$ for all $F\in\sF$ and $n\ge1$.
 Dually, ${}^{\perp_{\ge1}}\sC\subset\sE$ is the class of all objects
$Y\in\sE$ such that $\Ext^n_\sE(Y,C)=0$ for all $C\in\sC$ and $n\ge1$.

 A pair of classes of objects $\sF$, $\sC\subset\sE$ is called
a \emph{cotorsion pair} if $\sC=\sF^{\perp_1}$ and
$\sF={}^{\perp_1}\sC$.
 Informally, one thinks of a cotorsion pair as a pair of classes of
``somewhat projective objects'' and ``somewhat injective objects''
that fit together or complement each other in a suitable sense
(see the introduction to the paper~\cite{Pctrl} for a discussion).

 For example the pairs of classes (all objects, injective objects)
and (projective objects, all objects) are two trivial examples of
cotorsion pairs in an arbitrary exact category.
 The following definition tells what it means that ``there are
enough projective/injective objects'' in the context of
a cotorsion pair.

 A cotorsion pair $(\sF,\sC)$ in $\sE$ is said to be \emph{complete}
if for every object $E\in\sE$ there exist admissible short exact
sequences
\begin{gather}
 0\lrarrow C'\lrarrow F\lrarrow E\lrarrow0
 \label{spec-precover-sequence} \\
 0\lrarrow E\lrarrow C\lrarrow F'\lrarrow0
 \label{spec-preenvelope-sequence}
\end{gather}
in $\sE$ with $F$, $F'\in\sF$ and $C$, $C'\in\sC$.
 The sequence~\eqref{spec-precover-sequence} is known as
a \emph{special precover sequence}, and
the sequence~\eqref{spec-preenvelope-sequence} is called
a \emph{special preenvelope sequence}.
 Collectively, the sequences~(\ref{spec-precover-sequence}\+-%
\ref{spec-preenvelope-sequence}) are referred to as
\emph{approximation sequences}.

 When there are enough injective and projective objects in
an exact category $\sE$ (e.~g., $\sE=R\modl$ with the abelian
exact structure), the condition on a cotorsion pair in $\sE$ to be
complete is still nontrivial.
 But, at least, for any cotorsion pair $(\sF,\sC)$ in $\sE$, one can
say that all the projective objects of $\sE$ belong to $\sF$, and
all the injective objects of $\sE$ belong to~$\sC$.
 So every object of $\sE$ is an admissible quotient object of
an object from $\sF$ and an admissible subobject of an object
from~$\sC$.
 These weak forms of the completeness condition on a cotorsion pair
are not guaranteed in an abelian/exact category $\sE$ in general;
so they need to be assumed in the next definition.

 A class of objects $\sF$ in an exact category $\sE$ is said to be
\emph{generating} if for every object $E\in\sE$ there exists an object
$F\in\sF$ together with an admissible epimorphism
$F\twoheadrightarrow E$ in~$\sE$.
 Dually, a class of objects $\sC$ in an exact category $\sE$ is said
to be \emph{cogenerating} if for every object $E\in\sE$ there exists
an object $C\in\sC$ together with an admissible monomorphism
$E\rightarrowtail C$ in~$\sE$.

\begin{lem} \label{garcia-rozas}
 Let\/ $(\sF,\sC)$ be a cotorsion pair in an exact category\/~$\sE$.
 Assume that the class\/ $\sF$ is generating and the class\/ $\sC$ is
cogenerating in\/~$\sE$.
 Then the following four conditions are equivalent:
\begin{enumerate}
\item the class\/ $\sF$ is closed under kernels of admissible
epimorphisms in\/~$\sE$;
\item the class\/ $\sC$ is closed under cokernels of admissible
monomorphisms in\/~$\sE$;
\item $\Ext^2_\sE(F,C)=0$ for all $F\in\sF$ and $C\in\sC$;
\item $\Ext^n_\sE(F,C)=0$ for all $F\in\sF$, \ $C\in\sC$, and
$n\ge1$.
\end{enumerate}
\end{lem}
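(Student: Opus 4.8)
The plan is to run through the implications $(4)\Rightarrow(3)\Rightarrow(1)$, to obtain $(3)\Rightarrow(2)$ for free by the order-dual argument (passing to $\sE^{\sop}$ interchanges $\sF$ with $\sC$, admissible epimorphisms with admissible monomorphisms, and hence condition~(1) with condition~(2)), and then to close the circle with the substantive implication $(1)\Rightarrow(4)$ and its dual $(2)\Rightarrow(4)$. The implication $(4)\Rightarrow(3)$ is trivial, being the special case $n=2$. Everything rests on the Yoneda long exact sequences in $\sE$, which are available for exact categories.

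For $(3)\Rightarrow(1)$ I would use a single step of dimension shifting. Let $0\rarrow F''\rarrow F'\rarrow F\rarrow0$ be an admissible short exact sequence with $F'$, $F\in\sF$; I must show $F''\in\sF={}^{\perp_1}\sC$, i.e.\ $\Ext^1_\sE(F'',C)=0$ for all $C\in\sC$. The long exact sequence of $\Ext_\sE({-},C)$ supplies a fragment
$$\Ext^1_\sE(F',C)\rarrow\Ext^1_\sE(F'',C)\rarrow\Ext^2_\sE(F,C),$$
whose outer terms vanish: the left one because $(\sF,\sC)$ is a cotorsion pair, the right one by hypothesis~(3). Hence the middle term vanishes and $F''\in\sF$. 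The implication $(3)\Rightarrow(2)$ is this same computation read in $\sE^{\sop}$.

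The real content is $(1)\Rightarrow(4)$, and here I would finally use the hypotheses that $\sF$ is generating and $\sC$ is cogenerating. Using that $\sF$ is generating, resolve $F$ by a complex $\cdots\rarrow P_1\rarrow P_0\rarrow F\rarrow0$ with all $P_i\in\sF$; condition~(1) guarantees that every syzygy again lies in $\sF$, so this is an admissible resolution internal to $\sF$. The orthogonality $\Ext^1_\sE(\sF,\sC)=0$ makes each short exact sequence of syzygies remain exact after applying $\Hom_\sE({-},C)$, and splicing these shows that the complex $\Hom_\sE(P_\bu,C)$ is acyclic in positive cohomological degree. Dually, once (2) is in hand, coresolving $C$ inside $\sC$ shows $\Hom_\sE(F,I^\bu)$ is acyclic in positive degree; a balance (bicomplex) comparison of the two Hom-complexes then matches their common cohomology. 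The aim is to identify this cohomology with the Yoneda groups $\Ext^n_\sE(F,C)$, forcing the latter to vanish for $n\ge1$, which is~(4); specializing to $n=2$ returns~(3) and thereby~(2) as well.

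The step I expect to be the genuine obstacle is the identification of $H^n\Hom_\sE(P_\bu,C)$ with the honest Yoneda $\Ext^n_\sE(F,C)$. One-step dimension shifting along the $\sF$-resolution only shows that the pullback maps $\Ext^{n+1}_\sE(F,C)\rarrow\Ext^{n+1}_\sE(P_0,C)$ are \emph{injective}—never, by itself, that they vanish—precisely because the objects of $\sF$ need not be projective in $\sE$, so that the acyclicity of $\Hom_\sE(P_\bu,C)$ computes only a relative Ext a priori. What must be shown is that every Yoneda $n$-extension of $F\in\sF$ by $C\in\sC$ admits a comparison morphism out of the chosen resolution (equivalently, that such extensions are $\sF$-\emph{proper}); this is exactly where the closure condition~(1) and the orthogonality have to be combined, with the generating and cogenerating hypotheses doing the essential work. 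Granting this comparison, the acyclicity established above yields $\Ext^{\ge1}_\sE(F,C)=0$ and completes the equivalence of all four conditions.
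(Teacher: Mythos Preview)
The paper does not give its own argument here; it only cites the literature. So there is no detailed proof to compare against, and your sketch has to be assessed on its own merits. Your treatment of the easy implications $(4)\Rightarrow(3)\Rightarrow(1)$ and the dual $(3)\Rightarrow(2)$ is correct.

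For $(1)\Rightarrow(4)$ you correctly locate the crux, but the sketch does not close the gap and part of it is off track. The appeal to~(2) and to a bicomplex ``balance'' comparison in the middle of proving $(1)\Rightarrow(4)$ is circular (you do not yet have~(2)), and in any case such a comparison would only match two \emph{relative} Ext groups with each other, not with the Yoneda Ext. The identification you flag as needing to be ``granted'' is the entire content of the implication, and it is established not by balancing but by a direct induction on~$n$ at the level of Yoneda extensions. Given an $n$-extension $0\to C\to X_{n-1}\to\dotsb\to X_0\to F\to 0$ with $F\in\sF$ and $C\in\sC$, choose $P\in\sF$ together with an admissible epimorphism $P\twoheadrightarrow X_0$; then $K=\ker(P\to F)\in\sF$ by~(1), and pulling back $X_1$ along the induced map $K\to\ker(X_0\to F)$ produces a Yoneda-equivalent $n$-extension which is the splice of an element of $\Ext^1_\sE(F,K)$ with an element of $\Ext^{n-1}_\sE(K,C)$; the latter group vanishes by the inductive hypothesis. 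Note that only the generating hypothesis on~$\sF$ is used for $(1)\Rightarrow(4)$; the cogenerating hypothesis on~$\sC$ enters only in the dual implication $(2)\Rightarrow(4)$.
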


\begin{proof}
 This result, going back to~\cite[Theorem~1.2.10]{GR},
can be found in various formulations in~\cite[Lemma~5.24]{GT},
\cite[Lemma~4.25]{SaoSt}, \cite[Lemma~6.17]{Sto-ICRA},
and~\cite[Lemma~1.4]{PS4}.
 For a generalization, see~\cite[Lemma~6.1]{PS6}.
\end{proof}

 A cotorsion pair $(\sF,\sC)$ satisfying the equivalent conditions of
Lemma~\ref{garcia-rozas} is called \emph{hereditary}.

 For any class of objects $\sS\subset\sE$, the pair of classes
$(\sF,\sC)$ with $\sC=\sS^{\perp_1}$ and $\sF={}^{\perp_1}\sC$ is
a cotorsion pair in~$\sE$.
 The cotorsion pair $(\sF,\sC)=({}^{\perp_1}(\sS^{\perp_1}),\>
\sS^{\perp_1})$ is said to be \emph{generated by} the class of
objects $\sS\subset\sE$.

\subsection{Eklof--Trlifaj theorem} \label{eklof-trlifaj-subsecn}
 In this section, for the sake of simplicity of exposition we restrict
ourselves to the classical case of the module category $\sE=R\modl$
with the abelian exact structure.
 In fact, the main results hold in much greater generality.

 Let $R$ be an associative ring.
 Let $F$ be an $R$\+module and $\alpha$~be an ordinal.
 An \emph{$\alpha$\+indexed filtration} on $F$ is a family of submodules
$(F_\beta\subset F)_{0\le\beta\le\alpha}$ satisfying the following
conditions:
\begin{itemize}
\item $F_0=0$ and $F_\alpha=F$;
\item for all ordinals $0\le\gamma\le\beta\le\alpha$, one has
$F_\gamma\subset F_\beta$;
\item for all limit ordinals $0<\beta\le\alpha$, one has
$F_\beta=\bigcup_{\gamma<\beta}F_\gamma$.
\end{itemize}
 An $R$\+module $F$ endowed with an $\alpha$\+indexed filtration
$(F_\beta\subset F)_{0\le\beta\le\alpha}$ is said to be
\emph{filtered by} the quotient modules $F_{\beta+1}/F_\beta$,
\ $0\le\beta<\alpha$.

 Given a class of objects $\sS\subset R\modl$, an $R$\+module $F$
is said to be \emph{filtered by\/~$\sS$} if there exists
an ordinal~$\alpha$ and an $\alpha$\+indexed filtration
$(F_\beta\subset F)_{0\le\beta\le\alpha}$ on $F$ such that, for
every $0\le\beta<\alpha$, the $R$\+module $F_{\beta+1}/F_\beta$
is isomorphic to an $R$\+module from~$\sS$.
 In another language, the $R$\+module $F$ is said to be
a \emph{transfinitely iterated extension} (\emph{in the sense of
the direct limit}) of $R$\+modules from~$\sS$.

 The class of all $R$\+modules filtered by modules from~$\sS$
is denoted by $\Fil(\sS)\subset R\modl$.
 A class of $R$\+modules $\sF$ is said to be \emph{deconstructible}
if there exists a \emph{set} (rather than a proper class) of
$R$\+modules $\sS$ such that $\sF=\Fil(\sS)$.

 The following observation is known as the \emph{Eklof lemma}.

\begin{lem} \label{eklof-lemma}
 For any class of objects\/ $\sC\subset R\modl$, the class\/
${}^{\perp_1}\sC\subset R\modl$ is closed under transfinitely iterated
extensions (in the sense of the direct limit).
 So one has\/ $\Fil({}^{\perp_1}\sC)={}^{\perp_1}\sC$.
\end{lem}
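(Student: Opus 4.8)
The goal is to show that if each quotient $F_{\beta+1}/F_\beta$ in an $\alpha$-indexed filtration lies in ${}^{\perp_1}\sC$, then the top object $F=F_\alpha$ also lies in ${}^{\perp_1}\sC$; that is, $\Ext^1_R(F,C)=0$ for every $C\in\sC$. The natural strategy is transfinite induction on the length $\alpha$ of the filtration, fixing a single object $C\in\sC$ throughout. At each successor step $\beta+1$, I would use the short exact sequence
$$
 0\lrarrow F_\beta\lrarrow F_{\beta+1}\lrarrow F_{\beta+1}/F_\beta\lrarrow0
$$
and the long exact sequence of $\Ext^\bu_R(-,C)$ it induces, namely
$$
 \Ext^1_R(F_{\beta+1}/F_\beta,\,C)\lrarrow\Ext^1_R(F_{\beta+1},C)
 \lrarrow\Ext^1_R(F_\beta,C).
$$
By hypothesis the left term vanishes (the quotient is in ${}^{\perp_1}\sC$), and by the inductive hypothesis the right term vanishes, so the middle term vanishes. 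This disposes of successor ordinals cleanly.

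\emph{The main obstacle} is the limit step, where $F_\beta=\bigcup_{\gamma<\beta}F_\gamma=\varinjlim_{\gamma<\beta}F_\gamma$ and there is no single short exact sequence to feed into the long exact sequence. Here the standard device is to pass from the continuous chain of submodules to the \emph{telescope} short exact sequence
$$
 0\lrarrow\bigoplus_{\gamma<\beta}F_\gamma
 \lrarrow\bigoplus_{\gamma<\beta}F_\gamma
 \lrarrow F_\beta\lrarrow0,
$$
in which the first map is built from the inclusions $F_\gamma\hookrightarrow F_{\gamma+1}$ (the ``$1-\mathrm{shift}$'' map). The point is that the cokernel of this injective map is exactly the union $F_\beta$, because the filtration is continuous at $\beta$. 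Applying $\Ext^\bu_R(-,C)$ and using that $\Ext^1_R(\bigoplus_\gamma F_\gamma,C)=\prod_\gamma\Ext^1_R(F_\gamma,C)=0$ by the inductive hypothesis (each $F_\gamma$ with $\gamma<\beta$ already lies in ${}^{\perp_1}\sC$), I would read off $\Ext^1_R(F_\beta,C)=0$ from the resulting long exact sequence. The slightly delicate verification is that the telescope map is genuinely injective with the right cokernel; this is a routine but essential check.

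Assembling these pieces completes the transfinite induction and gives $\Ext^1_R(F,C)=0$ for the fixed $C$; since $C\in\sC$ was arbitrary, $F\in{}^{\perp_1}\sC$, which is the first assertion. The second assertion, $\Fil({}^{\perp_1}\sC)={}^{\perp_1}\sC$, then follows formally: the inclusion $\Fil({}^{\perp_1}\sC)\subseteq{}^{\perp_1}\sC$ is precisely what we just proved, while the reverse inclusion is trivial, as any module $M\in{}^{\perp_1}\sC$ admits the two-step filtration $0=M_0\subset M_1=M$ with single quotient $M_1/M_0\cong M\in{}^{\perp_1}\sC$, exhibiting $M$ as filtered by modules from ${}^{\perp_1}\sC$.

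One remark on generality: although I have phrased everything for $\sE=R\modl$, the only inputs are the long exact sequence of $\Ext^1$ in a short exact sequence and the exactness of coproducts together with $\Ext^1(\bigoplus,-)=\prod\Ext^1(-,-)$. These hold in any exact category with exact coproducts and enough structure to form the telescope sequence, so the argument transfers to the broader setting mentioned before the statement with no essential change.
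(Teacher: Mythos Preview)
Your limit-step argument has a genuine gap. From the telescope sequence $0\to\bigoplus_{\gamma<\beta}F_\gamma\to\bigoplus_{\gamma<\beta}F_\gamma\to F_\beta\to0$ (granting its exactness for the moment), applying $\Hom_R(-,C)$ yields
\[
\prod_\gamma\Hom_R(F_\gamma,C)\lrarrow\prod_\gamma\Hom_R(F_\gamma,C)\lrarrow\Ext^1_R(F_\beta,C)\lrarrow\prod_\gamma\Ext^1_R(F_\gamma,C)=0,
\]
which only identifies $\Ext^1_R(F_\beta,C)$ with the \emph{cokernel} of the first arrow---a $\varprojlim{}^1$-type term that need not vanish. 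Your limit step effectively asserts that the union of any continuous chain of submodules, each lying in ${}^{\perp_1}\sC$, again lies in ${}^{\perp_1}\sC$; this is false. For $R=\boZ$, $C=\boZ$, and $F_n=\frac{1}{n!}\boZ\subset\boQ$, each $F_n\cong\boZ$ has $\Ext^1_\boZ(F_n,\boZ)=0$, yet $F_\omega=\boQ$ satisfies $\Ext^1_\boZ(\boQ,\boZ)\ne0$. (This does not contradict the lemma, since the quotients $F_{n+1}/F_n\cong\boZ/(n{+}1)\boZ$ are not in ${}^{\perp_1}\{\boZ\}$.) There is also a secondary problem: for a limit ordinal $\beta>\omega$, the successor-shift map on $\bigoplus_{\gamma<\beta}F_\gamma$ does not have cokernel $F_\beta$; an element supported at positions $0$ and $\omega$ that maps to~$0$ in $F_\beta$ cannot lie in the image of a finitely supported element under $1-\text{shift}$.

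The classical argument (in the references the paper cites) avoids both issues by working directly with an extension $0\to C\to E\xrightarrow{p}F\to0$ and constructing, by transfinite recursion, a \emph{compatible} family of partial sections $s_\gamma\:F_\gamma\to E$ of~$p$. At a limit ordinal one simply sets $s_\beta=\bigcup_{\gamma<\beta}s_\gamma$; no $\Ext$ vanishing is needed here. The hypothesis $\Ext^1_R(F_{\gamma+1}/F_\gamma,C)=0$ is invoked only at successor steps, where the obstruction to extending $s_\gamma$ to $s_{\gamma+1}$ (seen by passing to $E/s_\gamma(F_\gamma)$) lives in $\Ext^1_R(F_{\gamma+1}/F_\gamma,C)$. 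The essential point is that the hypothesis concerns the \emph{successive quotients}, and your telescope reduction at limit stages discards precisely this information, remembering only that each $F_\gamma$ lies in ${}^{\perp_1}\sC$---which, as the example shows, is not enough.
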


\begin{proof}
 The classical references are~\cite[Lemma~1]{ET}
or~\cite[Lemma~6.2]{GT}.
 The result is actually much more general and, properly understood,
holds in any abelian category~\cite[Lemma~4.5]{PR},
\cite[Proposition~1.3]{PS4}, and even in any exact
category~\cite[Lemma~6.5]{PS6}.
\end{proof}

 Given a class of objects $\sF\subset R\modl$, let us denote by
$\sF^\oplus\subset R\modl$ the class of all direct summands of
objects from~$\sF$.
 The following important result is called the \emph{Eklof--Trlifaj
theorem}~\cite{ET}.

\begin{thm} \label{eklof-trlifaj}
 Let $\sS\subset R\modl$ be a \emph{set} (rather than a proper class)
of modules.  Then \par
\textup{(a)} the cotorsion pair $(\sF,\sC)=
({}^{\perp_1}(\sS^{\perp_1}),\>\sS^{\perp_1})$ generated by\/ $\sS$
in $R\modl$ is complete; \par
\textup{(b)} one has\/ $\sF=\Fil(\{{}_RR\}\cup\sS)^\oplus$, where
${}_RR$ is the free $R$\+module with one generator.
\end{thm}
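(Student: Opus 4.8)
The plan is to prove the completeness assertion (a) first, by the classical transfinite pushout construction, and then to deduce the structural description (b) as a formal consequence via a splitting argument. The observation that makes everything fit together is that $\sS\subseteq\sF$: since $\sC=\sS^{\perp_1}$, by definition $\Ext^1_R(S,C)=0$ for all $S\in\sS$ and $C\in\sC$, so $\sS\subseteq{}^{\perp_1}\sC=\sF$. Thus modules filtered by $\sS$ automatically land in $\sF$ by the Eklof lemma (Lemma~\ref{eklof-lemma}), which I will invoke repeatedly.

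First I would construct a special preenvelope sequence \eqref{spec-preenvelope-sequence} for an arbitrary module $M$. For each $S\in\sS$ fix a presentation $0\to K_S\to P_S\to S\to 0$ with $P_S$ free, so that a class in $\Ext^1_R(S,M)$ is represented by a map $K_S\to M$. Pushing out the direct sum of all inclusions $K_S\to P_S$ along the direct sum of \emph{all} maps $h\colon K_S\to M$ (ranging over $S\in\sS$ and $h\in\Hom_R(K_S,M)$) produces an embedding $M\hookrightarrow M_1$ whose cokernel is a direct sum of copies of modules from $\sS$, and along which every class of $\Ext^1_R(S,M)$ dies, because each such $h$ now factors through $P_S$ into $M_1$. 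Iterating continuously and transfinitely yields a chain $M=M_0\subseteq M_1\subseteq\cdots$, and I would set $N=\bigcup_{\beta<\kappa}M_\beta$. \textbf{The hard part is the set-theoretic bound} guaranteeing $N\in\sC$, and this is precisely where the hypothesis that $\sS$ is a \emph{set} (not a proper class) is used: choosing a regular cardinal $\kappa$ larger than the cardinality of every $K_S$, each $K_S$ is generated by fewer than $\kappa$ elements, so any map $K_S\to N$ factors through some $M_\beta$ with $\beta<\kappa$ and is therefore killed at stage $\beta+1$. Hence $\Ext^1_R(S,N)=0$ for all $S$, i.e.\ $N\in\sS^{\perp_1}=\sC$, while the cokernel $N/M$ is filtered by $\sS$ and so lies in $\Fil(\sS)\subseteq\sF$. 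This gives the sequence $0\to M\to N\to N/M\to 0$.

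To obtain the special precover \eqref{spec-precover-sequence} of a module $M$, I would choose a surjection from a free module $G\twoheadrightarrow M$ with kernel $Y$, apply the preceding construction to $Y$ to get $0\to Y\to C'\to D\to 0$ with $C'\in\sC$ and $D\in\Fil(\sS)\subseteq\sF$, and push out $G\twoheadrightarrow M$ along $Y\hookrightarrow C'$. The resulting bottom row $0\to C'\to F\to M\to 0$ has $C'\in\sC$, and $F$ sits in an extension $0\to G\to F\to D\to 0$; since $G$ is free (hence in $\sF$) and $D\in\sF$, and $\sF={}^{\perp_1}\sC$ is closed under extensions, we conclude $F\in\sF$. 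This establishes (a).

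For (b), one inclusion is immediate: $\{{}_RR\}\cup\sS\subseteq\sF$, so the Eklof lemma gives $\Fil(\{{}_RR\}\cup\sS)\subseteq\sF$, and as $\sF={}^{\perp_1}\sC$ is closed under direct summands we get $\Fil(\{{}_RR\}\cup\sS)^\oplus\subseteq\sF$. For the reverse, I note that in the precover construction the middle term $F$ is an extension of $D\in\Fil(\sS)$ by the free module $G\in\Fil(\{{}_RR\})$, and concatenating the two filtrations shows $F\in\Fil(\{{}_RR\}\cup\sS)$. Now given any $F_0\in\sF$, applying the precover construction yields $0\to C'\to F\to F_0\to 0$ with $C'\in\sC$ and $F\in\Fil(\{{}_RR\}\cup\sS)$; since $F_0\in{}^{\perp_1}\sC$ we have $\Ext^1_R(F_0,C')=0$, so the sequence splits and exhibits $F_0$ as a direct summand of $F$. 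Hence $F_0\in\Fil(\{{}_RR\}\cup\sS)^\oplus$, completing the proof.
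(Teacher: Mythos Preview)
Your argument is correct and is precisely the classical Eklof--Trlifaj proof (transfinite pushout/small object argument for the preenvelope, the Salce trick for the precover, and the splitting argument for the description of~$\sF$); the paper's own ``proof'' simply cites these original sources (\cite[Theorems~2 and~10]{ET}, \cite[Theorem~6.11 and Corollary~6.14]{GT}) rather than reproducing the construction, so your sketch is exactly what those references contain.
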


\begin{proof}
 The classical references are~\cite[Theorems~2 and~10]{ET}
or~\cite[Theorem~6.11 and Corollary~6.14]{GT}.
 The result is actually more general and, properly stated, holds in
any \emph{efficient exact category}~\cite[Corollary~2.15]{SaoSt},
\cite[Theorem~5.16]{Sto-ICRA}, and also in any locally presentable
abelian category~\cite[Corollary~3.6 and Theorem~4.8]{PR},
\cite[Theorems~3.3 and~3.4]{PS4}.
 See~\cite[Section~1.3]{PR} or~\cite[Theorem~1.2]{BHP} for a discussion.
\end{proof}

\subsection{Two cotorsion pairs in module categories}
\label{two-cotorsion-pairs-in-modules-subsecn}
 Let $R$ be an associative ring.
 A left $R$\+module $C$ is said to be
\emph{cotorsion}~\cite[Section~2]{En2} if $\Ext^1_R(F,C)=0$ for all
flat left $R$\+modules~$F$.

 We will denote the full subcategory of flat $R$\+modules
by $R\modl_\fl\subset R\modl$ and the full subcategory of
cotorsion $R$\+modules by $R\modl^\cot\subset R\modl$.

 The following theorem is essentially one of the formulations of
the \emph{Flat Cover Conjecture}.
 Suggested by Enochs in the 1981 paper~\cite[page~196]{En}, it was
proved in the 2001 paper~\cite{BBE}.

\begin{thm} \label{flat-cotorsion-pair}
 For any associative ring $R$, the pair of classes (flat $R$\+modules,
cotorsion $R$\+modules) is a hereditary complete cotorsion pair
in $R\modl$.
\end{thm}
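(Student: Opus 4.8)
The plan is to exhibit the flat cotorsion pair as one \emph{generated by a set} and then feed that set into the Eklof--Trlifaj theorem (Theorem~\ref{eklof-trlifaj}). By the very definition of a cotorsion module, one has $R\modl^\cot=(R\modl_\fl)^{\perp_1}$, so the substance of the claim that $(R\modl_\fl,R\modl^\cot)$ is a cotorsion pair lies in the reverse equality $R\modl_\fl={}^{\perp_1}(R\modl^\cot)$, together with completeness and hereditariness. I would treat these in turn.

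First I would record the decisive input: there is an infinite cardinal $\kappa$ (one may take $\kappa=|R|+\aleph_0$) and a \emph{set} $\sS$ of representatives of the isomorphism classes of flat $R$\+modules of cardinality at most $\kappa$ such that \emph{every} flat $R$\+module is filtered by $\sS$, that is, $R\modl_\fl\subseteq\Fil(\sS)$. This \emph{deconstructibility of the flat class} is the heart of the matter and the one genuinely hard step; it is a Kaplansky/Hill-type theorem resting on the fact that flat modules are direct limits of finitely generated projectives, proved by a transfinite ``building-up'' argument that controls the cardinality of the successive filtration quotients. It is precisely the content established in~\cite{BBE}.

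Granting this, the remainder is formal. Since every module in $\sS$ is flat, and the class of flat modules is closed under transfinite extensions and under direct summands, while conversely every flat module lies in $\Fil(\sS)$, the class $\Fil(\{{}_RR\}\cup\sS)^\oplus$ coincides with $R\modl_\fl$. On the other hand, if $C$ satisfies $\Ext^1_R(S,C)=0$ for all $S\in\sS$, then the Eklof lemma (Lemma~\ref{eklof-lemma}) gives $\Ext^1_R(F,C)=0$ for every $\sS$\+filtered module $F$, hence for every flat $F$; thus $\sS^{\perp_1}=R\modl^\cot$. Theorem~\ref{eklof-trlifaj} now applies to the set $\sS$: part~(a) yields that the generated cotorsion pair $({}^{\perp_1}(\sS^{\perp_1}),\,\sS^{\perp_1})$ is complete, and part~(b) identifies its left class as $\Fil(\{{}_RR\}\cup\sS)^\oplus=R\modl_\fl$. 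This simultaneously establishes $R\modl_\fl={}^{\perp_1}(R\modl^\cot)$ (so we genuinely have a cotorsion pair) and its completeness.

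It remains to verify hereditariness via Lemma~\ref{garcia-rozas}. The class $R\modl_\fl$ is generating, since every module is a quotient of a free module, and $R\modl^\cot$ is cogenerating, since every module embeds into an injective module and injective modules are cotorsion. I would then check condition~(1) of that lemma: given an admissible epimorphism $F\twoheadrightarrow G$ with $F$, $G$ flat and kernel $K$, the long exact sequence $\operatorname{Tor}_2^R(N,G)\rarrow\operatorname{Tor}_1^R(N,K)\rarrow\operatorname{Tor}_1^R(N,F)$ has vanishing outer terms for every right $R$\+module $N$ (because $G$ and $F$ are flat), forcing $\operatorname{Tor}_1^R(N,K)=0$ and hence $K$ flat. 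Thus $R\modl_\fl$ is closed under kernels of admissible epimorphisms, and Lemma~\ref{garcia-rozas} delivers hereditariness, completing the argument.
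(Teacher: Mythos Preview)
Your argument is correct and follows essentially the same route as the paper's own proof: deconstructibility of the class of flat modules (as established in~\cite{BBE}) feeds into the Eklof--Trlifaj theorem to give completeness and the identification of the left class, and hereditariness is checked via condition~(1) of Lemma~\ref{garcia-rozas}. You supply more detail (the explicit Tor argument, the generating/cogenerating hypotheses), but the strategy is identical.
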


 The pair of classes from Theorem~\ref{flat-cotorsion-pair} is called
the \emph{flat cotorsion pair} in $R\modl$.

\begin{proof}
 The nontrivial part is the complete cotorsion pair assertion;
it is~\cite[Proposition~2]{BBE} or~\cite[Theorem~8.1]{GT}.
 The proof is based on the Eklof--Trlifaj theorem
(Theorem~\ref{eklof-trlifaj}), which is applicable in view of
Enochs' observation that the class of all flat $R$\+modules is
deconstructible~\cite[Lemma~1]{BBE}.
 The flat cotorsion pair is hereditary, since condition~(1) of
Lemma~\ref{garcia-rozas} is obviously satisfied for it.
\end{proof}

 Now let $R$ be a commutative ring.
 An $R$\+module $C$ is called \emph{contraadjusted} (i.~e.,
``adjusted for the contraherent cosheaf
theory'')~\cite[Section~1.1]{Pcosh} if, for every element $s\in R$,
one has $\Ext^1_R(R[s^{-1}],C)=0$.
 An $R$\+module $F$ is called \emph{very flat} if, for every
contraadujsted $R$\+module $C$, one has $\Ext^1_R(F,C)=0$.
 So, by the definition, the pair of classes (very flat $R$\+modules,
contraadjusted $R$\+modules) is the cotorsion pair generated by
the set of modules $\{R[s^{-1}]\mid s\in R\}$ in $R\modl$.

 We will denote the full subcategory of very flat $R$\+modules by
$R\modl_\vfl\subset R\modl$ and the full subcategory of contraadjusted
$R$\+modules by $R\modl^\cta\subset R\modl$.

\begin{thm} \label{very-flat-cotorsion-pair}
 Then $R$ be a commutative ring.  Then \par
\textup{(a)} The pair of classes (very flat $R$\+modules,
contraadjusted $R$\+modules) is a hereditary complete cotorsion
pair in $R\modl$. \par
\textup{(b)} An $R$\+module is very flat if and only if it is a direct
summand of an $R$\+module filtered by the $R$\+modules $R[s^{-1}]$,
\,$s\in R$. \par
\textup{(c)} Any quotient module of a contraadujsted $R$\+module is
contraadjusted. \par
\textup{(d)} Any very flat $R$\+module has projective dimension
at most\/~$1$.
\end{thm}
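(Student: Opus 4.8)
The plan is to deduce all four assertions from a single structural input --- that each generator $R[s^{-1}]$ has projective dimension at most~$1$ over~$R$, as recalled in Section~\ref{torsion-completion-basics-subsecn} --- combined with the cotorsion-pair machinery of Section~\ref{preliminaries-secn}. I would begin by settling the completeness claim in part~(a) and the whole of part~(b) through a direct appeal to the Eklof--Trlifaj theorem. The very flat cotorsion pair is by construction generated by the \emph{set} $\sS=\{R[s^{-1}]\mid s\in R\}$, so Theorem~\ref{eklof-trlifaj}(a) applies verbatim and yields completeness. For part~(b), Theorem~\ref{eklof-trlifaj}(b) gives $\sF=\Fil(\{{}_RR\}\cup\sS)^\oplus$; but ${}_RR=R[1^{-1}]$ already belongs to~$\sS$, so $\{{}_RR\}\cup\sS=\sS$ and the description collapses to $\sF=\Fil(\sS)^\oplus$, which is precisely the assertion.

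Next I would prove part~(c), which I regard as the linchpin. Given a short exact sequence $0\to C'\to C\to C''\to0$ with $C$ contraadjusted and any element $s\in R$, the long exact sequence of $\Ext_R(R[s^{-1}],{-})$ supplies
\[
 \Ext^1_R(R[s^{-1}],C)\rarrow\Ext^1_R(R[s^{-1}],C'')\rarrow\Ext^2_R(R[s^{-1}],C').
\]
The left-hand term vanishes because $C$ is contraadjusted, while the right-hand term vanishes because $\Ext^{\ge2}_R(R[s^{-1}],{-})=0$ by the projective dimension bound. Hence $\Ext^1_R(R[s^{-1}],C'')=0$ for every~$s$, so $C''$ is contraadjusted.

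With part~(c) available, the two remaining assertions follow formally, and the projective-dimension input re-enters through it. For the hereditary property in part~(a), I would invoke Lemma~\ref{garcia-rozas}: the class $\sF$ is generating, since it contains all projective modules (free modules lie in $\Fil(\sS)$ and $\sF$ is closed under summands), and $\sC$ is cogenerating, since it contains all injective modules as the right-hand class of any cotorsion pair does; part~(c) then says exactly that $\sC$ is closed under cokernels of admissible monomorphisms, which is condition~(2) of that lemma. For part~(d), I would dimension-shift in the second variable: embedding an arbitrary $R$\+module $N$ into an injective module $J$ with cokernel $J/N$, the injectivity of $J$ yields $\Ext^2_R(F,N)\cong\Ext^1_R(F,J/N)$ for any very flat~$F$. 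Since $J$ is injective, hence contraadjusted, its quotient $J/N$ is contraadjusted by part~(c), so $\Ext^1_R(F,J/N)=0$ by the defining orthogonality of the cotorsion pair. Thus $\Ext^2_R(F,N)=0$ for all~$N$, i.e.\ $\operatorname{pd}_R F\le1$.

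The main difficulty here is organizational rather than computational: recognizing that part~(c) is the hinge from which both the hereditary property and the projective dimension bound descend, and that the single fact $\operatorname{pd}_R R[s^{-1}]\le1$ enters twice --- once as the vanishing of $\Ext^2_R(R[s^{-1}],{-})$ inside the proof of~(c), and again through the dimension-shift in~(d). Once this architecture is in place, no transfinite-induction argument (in the spirit of Lemma~\ref{eklof-lemma}) is required on the very flat side, and each step reduces to a one-line application of a long exact sequence or of a previously established result.
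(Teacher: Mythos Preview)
Your proof is correct and follows essentially the same approach as the paper's own proof: completeness and part~(b) via Eklof--Trlifaj applied to $\sS=\{R[s^{-1}]\mid s\in R\}$, part~(c) from the fact that the generators have projective dimension~$\le1$, the hereditary claim via condition~(2) of Lemma~\ref{garcia-rozas}, and part~(d) from part~(c). The paper also notes that part~(d) can alternatively be deduced from part~(b) together with the Eklof lemma (Lemma~\ref{eklof-lemma}), which you deliberately bypass.
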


 The pair of classes from Theorem~\ref{very-flat-cotorsion-pair} is
called the \emph{very flat cotorsion pair} in $R\modl$.

\begin{proof}
 The complete cotorsion pair claim in part~(a)
is~\cite[Theorem~1.1.1]{Pcosh}, part~(b)
is~\cite[Corollary~1.1.4]{Pcosh}, part~(c)
is~\cite[second paragraph of Section~1.1]{Pcosh}, part~(d)
is~\cite[fourth paragraph of Section~1.1]{Pcosh}, and
the hereditary cotorsion pair claim is~\cite[fifth paragraph of
Section~1.1]{Pcosh}.
 Essentially, the complete cotorsion pair claim in part~(a), as well
as part~(b), are particular cases of the Eklof--Trlifaj theorem
(Theorem~\ref{eklof-trlifaj}(a\+-b) above) for
$\sS=\{R[s^{-1}]\mid s\in R\}\subset R\modl$.
 Part~(c) holds for the right-hand part of any cotorsion pair generated
by a class of modules of projective dimension at most~$1$,
as one can easily see.
 The point is that the projective dimension of the $R$\+module
$R[s^{-1}]$ never exceeds~$1$ \,\cite[proof of Lemma~2.1]{Pcta}.
 Part~(c) implies condition~(2) of Lemma~\ref{garcia-rozas}, proving
that the very flat cotorsion pair is hereditary.
 Part~(d) also follows from part~(c); or it can be deduced from
part~(b) and Lemma~\ref{eklof-lemma}.
\end{proof}

 It is obvious from the definition that any cotorsion $R$\+module
is contraadjusted (because the $R$\+modules $R[s^{-1}]$ are flat).
 It is clear from Theorem~\ref{very-flat-cotorsion-pair}(b) that
all projective $R$\+modules are very flat, and all very flat
$R$\+modules are flat.

\begin{ex} \label{open-immersion-very-flat}
 Let $R\rarrow S$ be a homomorphism of commutative rings such that
the induced morphism of the spectra $\Spec S\rarrow\Spec R$ is
an open immersion of affine schemes.
 Then the $R$\+module $S$ is very flat~\cite[Lemma~1.2.4]{Pcosh}.
\end{ex}

 A variety of concrete examples of approximation
sequences~(\ref{spec-precover-sequence}\+-%
\ref{spec-preenvelope-sequence}) for the flat cotorsion pair in
the category of abelian groups $\Ab=\boZ\modl$ can be found
in~\cite[Section~12]{Pcta}, and a description of cotorsion modules
over a commutative Noetherian ring of Krull dimension~$1$ is
obtained in~\cite[Corollary~13.12]{Pcta}.
 A characterization of contraadjusted abelian groups is discussed
in~\cite[end of Section~12]{Pcta}, following~\cite[Section~4]{Sl}
and~\cite[Example~5.2]{ST} (see also~\cite[end of Section~13]{Pcta}).
 Approximation sequences for the very flat cotorsion pair are more
elusive: \emph{no} explicit nontrivial examples for $R=\boZ$ are
known~\cite[Remark~12.15]{Pcta}.

\Section{Main Definition and Main Technical Problem}
\label{main-definition-secn}

\subsection{Quasi-coherent sheaves} \label{quasi-coherent-subsecn}
 Let $X$ be a scheme.
 The definition of a contraherent cosheaf on~$X$ in
the preprint~\cite{Pcosh} is modeled after the following description
of quasi-coherent sheaves on~$X$ \,\cite[Section~2]{EE}.

 A quasi-coherent sheaf $\M$ on $X$ is the same thing as the following
set of data:
\begin{itemize}
\item to every affine open subscheme $U\subset X$, an $\cO(U)$\+module
$\M(U)$ is assigned;
\item to every pair of embedded affine open subschemes $V\subset U
\subset X$, a homomorphism of $\cO(U)$\+modules $\M(U)\rarrow\M(V)$
is assigned
\end{itemize}
satisfying the following conditions:
\begin{itemize}
\item for every pair of embedded affine open subschemes $V\subset U
\subset X$, the homomorphism $\M(U)\rarrow\M(V)$ induces an isomorphism
of $\cO(V)$\+modules
$$
 \cO(V)\ot_{\cO(U)}\M(U)\simeq\M(V);
$$
\item for every triple of embedded affine open subschemes $W\subset V
\subset U\subset X$, the triangular diagram of homomorphisms of
$\cO(U)$\+modules
$$
 \M(U)\lrarrow\M(V)\lrarrow\M(W)
$$
is commutative.
\end{itemize}

 The definition of a contraherent cosheaf on $X$ is dual to
this definition of a quasi-coherent sheaf.
 But there are delicate details and caveats.

\subsection{Terminological remark} \label{terminological-remark-subsecn}
 From today's perspective, some terminological choices in the long
preprint~\cite{Pcosh} appear to be not optimal.
 Apparently, the significance of the colocalization functors
$\Hom_R(S,{-})$ and the related terminology in the context of
the contraherent cosheaf theory, emphasized in the abstract and
the introduction to the present paper, was not properly realized at
the time when the book manuscript~\cite{Pcosh} was written.
{\hbadness=1700\par}

 What appears today as the proper terminological solutions were
suggested in the recent preprint~\cite{Pal}.
 Perhaps it would be better to use the terms ``colocally
contraadjusted'' and ``colocally cotorsion'' for what are called
``locally contraadjusted'' and ``locally cotorsion contraherent
cosheaves'' both in~\cite{Pcosh} and in the present paper.

 The terminology of ``colocally projective'' and ``colocally flat
contraherent cosheaves'' used in~\cite[Sections~4.2\+-4.3]{Pcosh}
is particularly problematic.
 The word ``colocal'' in these terms was not used in the sense
of a reference to the colocalization functors.
 Rather, it meant something like ``opposite or complementary to local''.
 The term ``antilocal'' was suggested for this purpose in~\cite{Pal}.
{\hbadness=1800\par}

 So, later in this paper
(in Section~\ref{antilocal-contraherent-subsecn}), we will use the term
\emph{antilocal contraherent cosheaves} for what were called
``colocally projective contraherent cosheaves'' in~\cite{Pcosh}.
 In the same vein, what were called ``colocally flat contraherent
cosheaves'' in~\cite{Pcosh} should be called \emph{antilocally flat}.

\subsection{Locally contraadjusted contraherent cosheaves}
\label{locally-contraadjusted-subsecn}
 Let $X$ be a scheme.
 A \emph{contraherent cosheaf} $\P$ on $X$ \,\cite[Section~2.2]{Pcosh}
is the following set of data:
\begin{itemize}
\item to every affine open subscheme $U\subset X$,
an $\cO(U)$\+module $\P[U]$ is assigned;
\item to every pair of embedded affine open subschemes $V\subset U
\subset X$, a homomorphism of $\cO(U)$\+modules $\P[V]\rarrow\P[U]$
is assigned
\end{itemize}
satisfying the following conditions:
\begin{enumerate}
\renewcommand{\theenumi}{\roman{enumi}}
\item for every pair of embedded affine open subschemes $V\subset U
\subset X$, the homomorphism $\P[V]\rarrow\P[U]$ induces
an isomorphism of $\cO(V)$\+modules
$$
 \P[V]\simeq\Hom_{\cO(U)}(\cO(V),\P[U]);
$$
\item for every pair of embedded affine open subschemes $V\subset U
\subset X$, one has
$$
 \Ext_{\cO(U)}^1(\cO(V),\P[U])=0;
$$
\item for every triple of embedded affine open subschemes $W\subset V
\subset U\subset X$, the triangular diagram of homomorphisms of
$\cO(U)$\+modules
$$
 \P[W]\lrarrow\P[V]\lrarrow\P[U]
$$
is commutative.
\end{enumerate}

\begin{rem}
 No higher Ext vanishing condition ($\Ext^n=0$ for $n\ge2$) is
mentioned in~(ii), because these conditions hold automatically.
 In fact, for any open immersion of affine schemes $V\rarrow U$,
the $\cO(U)$\+module $\cO(V)$ has projective dimension at most~$1$.
 This assertion can be explained in several ways.

 On the one hand, the $\cO(U)$\+module $\cO(V)$ is very flat by
Example~\ref{open-immersion-very-flat}.
 All very flat modules over a commutative ring $R$ have projective
dimension at most~$1$ by Theorem~\ref{very-flat-cotorsion-pair}(d).

 On the other hand, any countably presented flat module over
an associative ring $R$ has projective dimension at most~$1$
\,\cite[Corollary~2.23]{GT}.
 The assertion that the $\cO(U)$\+module $\cO(V)$ is countably
presented can be also explained in various ways.

 One approach is to observe that the $R$\+module $R[s^{-1}]$ is
flat and countably presented for any element~$s$ in a commutative
ring~$R$.
 Then use the \v Cech coresolution as in~\cite[proof of
Lemma~1.2.4]{Pcosh} and the \emph{flat coherence}
lemma~\cite[Lemma~4.1]{Pflcc}, \cite[Corollary~10.12]{Pacc},
\cite[Corollary~4.7]{Plce}.

 Alternatively, countable presentability of the $\cO(U)$\+module
$\cO(V)$ follows immediately from finite presentability of
the $\cO(U)$\+algebra~$\cO(V)$.
 For the latter claim, we refer to~\cite[Definition~I.6.2.1,
Proposition~I.6.2.6(i), and Proposition~I.6.2.9]{EGA1}.
\end{rem}

\begin{rem} \label{contraadjustedness-condition-remark}
 Condition~(ii) can be equivalently restated by saying that
the $\cO(U)$\+module $\P[U]$ has to be contraadjusted
(in the sense of the definition in
Section~\ref{two-cotorsion-pairs-in-modules-subsecn}).
 Indeed, put $R=\cO(U)$; then, for every element $s\in R$,
the scheme $V=\Spec R[s^{-1}]$ is an affine open subscheme in $U$,
and consequently in~$X$.
 So~(ii) implies the contraadjustedness of~$\P[U]$.
 Conversely, for any affine open subscheme $V\subset U$,
the $\cO(U)$\+module $\cO(V)$ is very flat by
Example~\ref{open-immersion-very-flat}.
 Hence $\Ext^1_{\cO(U)}(\cO(V),C)=0$ for every contraadjusted
$\cO(U)$\+module~$C$.

 For this reason, the contraherent cosheaves on $X$ in the sense
of the definition above are called \emph{locally contraadjusted}.
 Condition~(ii) is called the \emph{contraadjustedness condition},
while condition~(i) is called the \emph{contraherence condition}.
\end{rem}

\begin{rem}
 It needs to be explained what the word ``cosheaf'' means in
the definition of a contraherent cosheaf given above.
 No ``cosheaf axiom'' appears in that definition; there are only
the contraherence and contraadjustedness axioms.

 The same question can (and should) be asked about the definition
or description of quasi-coherent sheaves from
Section~\ref{quasi-coherent-subsecn} (going back
to~\cite[Section~2]{EE}), which contains no sheaf axiom, but only
a quasi-coherence axiom.

 The answer to this question (for contraherent cosheaves) is
explained in~\cite[Sections~2.1\+-2.2]{Pcosh}.
 Essentially, the point is that it suffices to define a (co)sheaf
of $\cO_X$\+modules on any topology base $\bB$ of a ringed space
$(X,\cO_X)$, and it suffices to check the (co)sheaf axiom for coverings
of open subsets belonging to $\bB$ by other open subsets belonging
to~$\bB$ \,\cite[Section~0.3.2]{EGA1}, \cite[Theorem~2.1.2]{Pcosh}.
 In the situation at hand, one takes $\mathbf B$ to be the topology
base consisting of all affine open subschemes of a scheme~$X$.

 Furthermore, restricted to coverings of affine open subschemes by
afine open subschemes, the sheaf/cosheaf axiom follows from
the quasi-coherence/contraherence axiom.
 In the case of the cosheaves, one also needs to use
the contraadjustedness axiom~\cite[Theorem~2.2.1]{Pcosh}.
 So any contraherent cosheaf on a scheme $X$ in the sense of
the definition above can be uniquely extended a cosheaf of
$\cO_X$\+modules defined on all open subsets of $X$ and satisfying
the cosheaf axiom for all coverings of open subsets by other
open subsets.
\end{rem}

\begin{rem} \label{nonlocality-remark}
 So one can start with a cosheaf of $\cO_X$\+modules $\P$ on
a scheme~$X$, and consider the contraherence and contraadjustedness
axioms~(i\+-ii) as conditions imposed on a cosheaf~$\P$.
 This is the approach taken in the exposition in~\cite{Pcosh}.
 Then the question about a local or nonlocal nature of these axioms
arises.

 In this context, it turns out that the notion of quasi-coherent
sheaf is indeed local, but the notion of a contraherent cosheaf
is \emph{not}.
 Specifically, the contraadjustedness axiom~(ii) is still local, but
\emph{not} the contraherence axiom~(i).
 This is explained in~\cite[Sections~3.1\+-3.2]{Pcosh}; see
specifically~\cite[Example~3.2.1]{Pcosh}.

 In other words, given an open covering $X=U\cup V$ of a scheme $X$
by two open subschemes $U$ and $V$, any two quasi-coherent sheaves
on $U$ and $V$ that agree on the intersection $U\cap V$ can be glued
together into a quasi-coherent sheaf on~$X$.
 But two contraherent cosheaves on $U$ and $V$ that agree on $U\cap V$
can be only glued into a \emph{locally contraherent cosheaf} on $X$,
generally speaking.

 Locally contraherent cosheaves on a scheme $X$ are still cosheaves
of $\cO_X$\+modules, which can be viewed as well-defined on and
satisfying the cosheaf axiom for all the open subsets of~$X$.
 The category of $\bW$\+locally contraherent cosheaves $X\lcth_\bW$
depends on a chosen open covering $\bW$ of a scheme~$X$.
 It can be defined in the spirit of the definition of a contraherent
cosheaves by three conditions~(i\+-iii) above; the only difference is
that one considers the base $\bB$ of all affine open subschemes
$U$ in $X$ \emph{subordinate to} the open covering~$\bW$.
 This means that $U$ ranges over all the affine open subschemes of $X$
such that $U\subset W$ for some $W\in\bW$.

 The category of all locally contraherent cosheaves $X\lcth=\bigcup_\bW
X\lcth_\bW$ has a natural exact category structure.
 The full subcategories $X\ctrh$ and $X\lcth_\bW$ are closed under
extensions and kernels of admissible epimorphisms in $X\lcth$; so
they are also exact categories~\cite[Section~3.2]{Pcosh}.
 For a quasi-compact semi-separated scheme $X$, the categories $X\ctrh$
and $X\lcth_\bW$ are resolving subcategories in $X\lcth$.
 The resolution dimensions of all objects of $X\lcth_\bW$ with
respect to the resolving subcategory $X\ctrh$ are bounded by a constant
depending on the open covering~$\bW$ \,\cite[Lemma~4.6.1(b)]{Pcosh}.
\end{rem}

\subsection{Locally cotorsion contraherent cosheaves}
\label{locally-cotorsion-subsecn}
 A contraherent cosheaf $\P$ on a scheme $X$ is said to be
\emph{locally cotorsion} if the $\cO(U)$\+module $\P[U]$ is cotorsion
(in the sense of the definition in
Section~\ref{two-cotorsion-pairs-in-modules-subsecn})
for every affine open subscheme $U\subset X$.
 Being a locally cotorsion contraherent cosheaf is indeed a local
property: it suffices to check it for affine open subschemes
$U\subset X$ belonging to any fixed affine open covering of
a scheme~$X$ \,\cite[Lemma~1.3.6(a) and Section~2.2]{Pcosh}.

 The full subcategory of locally cotorsion contraherent cosheaves
$X\ctrh^\lct$ is closed under extensions and cokernels of
admissible monomorphisms in $X\ctrh$; so $X\ctrh^\lct$ is also
an exact category~\cite[Section~2.2]{Pcosh}.
 For a quasi-compact semi-separated scheme $X$, the full subcategory
$X\ctrh^\lct$ is coresolving (in fact, the right-hand side of
a hereditary complete cotorsion pair) in $X\ctrh$
\,\cite[Corollary~4.3.4(a)]{Pcosh}.
 See Section~\ref{exact-categories-of-contraherent-subsecn} below
for a further discussion.

 A contraherent cosheaf $\fJ$ on a scheme $X$ is said to be
\emph{locally injective} if the $\cO(U)$\+module $\fJ[U]$ is
injective for every affine open subscheme $U\subset X$.
 The formal properties of locally injective contrahent cosheaves
are similar to those of locally cotorsion ones (as per
the previous paragraphs).
 The exact category of locally injective contraherent cosheaves
is denoted by $X\ctrh^\lin\subset X\ctrh^\lct\subset X\ctrh$.

\subsection{The functor $\Cohom$} \label{cohom-subsecn}
 The functor $\Cohom_X$ from a quasi-coherent sheaf to a contraherent
cosheaf on a scheme $X$ is dual-analogous to the functor of
tensor product of quasi-coherent sheaves, $\ot_{\cO_X}\:X\qcoh
\times X\qcoh\rarrow X\qcoh$, in the same way as the Hom functor of
modules over a ring is dual-analogous to the tensor product functor
of modules.
 The notation $\Cohom$ comes from the interpretation of
quasi-coherent sheaves as comodules over a coring $\C$ over
a commutative ring $A$ and quasi-coherent sheaves as (approximately) 
contramodules over the same coring.
 See Section~\ref{comodules-and-contramodules-introd-subsecn};
cf.\ the definition of the Cohom functor for comodules and
contramodules in~\cite[Sections~2.5\+2.6]{Prev}
or~\cite[Sections~0.2.4 and~3.2]{Psemi}.

 Let $X$ be a scheme, $\M$ be a quasi-coherent sheaf on $X$, and
$\P$ be a contraherent cosheaf on~$X$.
 Then the contraherent cosheaf $\Cohom_X(\M,\P)$ is defined by
the rule
$$
 \Cohom_X(\M,\P)[U]=\Hom_{\cO(U)}(\M(U),\P[U])
$$
for all affine open subschemes $U\subset X$.
 One can easily check that the contraherence condition~(i) from
Section~\ref{locally-contraadjusted-subsecn} holds for
$\Cohom_X(\M,\P)$ for any quasi-coherent sheaf $\M$ and
contraherent cosheaf $\P$ on~$X$ \,\cite[Section~2.4]{Pcosh}.
 The problem is that the contraadjustedness condition~(ii) need
not hold in general.

\begin{ex}
 Here is an example of a commutative ring $R$, an $R$\+module $M$,
and a cotorsion $R$\+module $P$ such that the $R$\+module
$\Hom_R(M,P)$ is not even contraadjusted.
 The argument is based on the following observations.
 Let $f\:A\rarrow B$ be a homomorphism of commutative rings and $C$
be a $B$\+module.
 Then
\begin{enumerate}
\renewcommand{\theenumi}{\alph{enumi}}
\item if $C$ is a contraadjusted $B$\+module, then $C$ is also
a contraadjusted $A$\+mod\-ule~\cite[Lemma~1.2.2(a)]{Pcosh};
\item if $C$ is a cotorsion $B$\+module, then $C$ is also
a cotorsion $A$\+module~\cite[Lemma~1.3.4(a)]{Pcosh}.
\end{enumerate}

 Let $R=\boZ[x]$ be the ring of polynomials in one variable~$x$ with
integer coefficients.
 Pick a prime number~$p$, and consider the $\boZ$\+module of $p$\+adic
integers~$\boZ_p$.
 Then $\boZ\subset\boZ_p$ is a $\boZ$\+submodule.
 Consider the quotient $\boZ$\+module $\boZ_p/\boZ$, and pick
an embedding $i\:\boZ_p/\boZ\rarrow J$ of the $\boZ$\+module
$\boZ_p/\boZ$ into an injective $\boZ$\+module (i.~e., a divisible
abelian group)~$J$.
 Denote by $g\:\boZ_p\rarrow J$ the composition $\boZ_p
\twoheadrightarrow\boZ_p/\boZ\overset i\rightarrowtail J$.
 Put $P=\boZ_p\oplus J$, and let the generator $x\in\boZ[x]$ act
in $P$ by the operator
$\left(\begin{smallmatrix}0 & 0 \\ g & 0 \end{smallmatrix}\right)\:
\boZ_p\oplus J\rarrow\boZ_p\oplus J$.
 This defines a $\boZ[x]$\+module structure on~$P$.

 We claim that the $\boZ[x]$\+module $P$ is cotorsion.
 Indeed, there is a short exact sequence of $\boZ[x]$\+modules
$0\rarrow J\rarrow P\rarrow\boZ_p\rarrow0$, where $x$~acts by zero
in $J$ and in~$\boZ_p$.
 In other words, if $f\:\boZ[x]\rarrow\boZ$ is the ring homomorphism
for which $f(x)=0$, then the $\boZ[x]$\+module structures on $J$
and $\boZ_p$ are obtained from their $\boZ$\+module structures by
the restriction of scalars via~$f$.
 Now $J$ is an injective, hence cotorsion $\boZ$\+module; and
$\boZ_p=\Hom_\boZ(\boQ_p/\boZ_p,\>\boQ_p/\boZ_p)$ is
a cotorsion $\boZ$\+module by~\cite[Lemma~2.1]{En2},
\cite[Lemma~1.3.2(b) or~1.3.3(b)]{Pcosh},
\cite[Proposition~B.10.1]{Pweak}, or~\cite[Theorem~9.5
or Example~12.1]{Pcta}.
 According to assertion~(b) above, it follows that $J$ and $\boZ_p$ are
cotorsion $\boZ[x]$\+modules; hence so is their extension~$P$.

 Put $M=\boZ$, and let the generator $x\in\boZ[x]$ act in $M$ by
the zero map.
 This makes $M$ a $\boZ[x]$\+module.
 One easily computes $\Hom_{\boZ[x]}(M,P)\simeq\ker(x\:P\to P)=
\boZ\oplus J\subset\boZ_p\oplus J$.
 Now the restriction of scalars with respect to the ring homomorphism
$f\:\boZ\rarrow\boZ[x]$ takes the $\boZ[x]$\+module $\boZ\oplus J$ to
the $\boZ$\+module $\boZ\oplus J$.
 The $\boZ$\+module $\boZ$ is \emph{not} contraadjusted, as
$\Ext^1_\boZ(\boZ[1/p],\boZ)\simeq\boQ_p/(\boZ[1/p])\ne0$.
 Hence the $\boZ$\+module $\boZ\oplus J$ is not contraadjusted.
 According to assertion~(a) above, it follows that the $\boZ[x]$\+module
$\Hom_{\boZ[x]}(M,P)=\boZ\oplus J$ is not contraadjusted.
\end{ex}

 The contraherent cosheaf $\Cohom_X({-},{-})$ is well-defined in
the following cases~\cite[Section~2.4]{Pcosh}:
\begin{enumerate}
\item if $\F$ is a very flat quasi-coherent sheaf (as defined
in~\cite[Section~1.7]{Pcosh} and
Section~\ref{very-flat-sheaves-subsecn} below) and $\P$ is a locally
contraadjusted (i.~e., arbitrary) contraherent cosheaf, then
$\Cohom_X(\F,\P)$ is a locally contraadjusted contraherent cosheaf;
\item if $\F$ is a flat quasi-coherent sheaf and $\P$ is a locally
cotorsion contraherent cosheaf, then $\Cohom_X(\F,\P)$ is a locally
cotorsion contraherent cosheaf;
\item if $\M$ is an arbitrary quasi-coherent sheaf and $\fJ$ is
a locally injective contraherent cosheaf, then $\Cohom_X(\M,\fJ)$
is a locally cotorsion contraherent cosheaf;
\item if $\F$ is a flat quasi-coherent sheaf and $\fJ$ is
a locally injective contraherent cosheaf, then $\Cohom_X(\F,\fJ)$
is a locally injective contraherent cosheaf.
\end{enumerate}
 So $\Cohom_X\:X\qcoh^\sop\times X\ctrh\dasharrow X\ctrh$ is
a partially defined functor of two arguments.

\subsection{Discussion} \label{main-definition-discussion-subsecn}
 The category of quasi-coherent sheaves $X\qcoh$ on any scheme $X$ is
abelian.
 An inspection of the description of quasi-coherent sheaves in
Section~\ref{quasi-coherent-subsecn} leads one to the conclusion
that this fortunate state of affairs holds due to the fact that,
for any affine scheme $U$ and affine open subscheme $V\subset U$,
the tensor product functor $\cO(V)\ot_{\cO(U)}{-}\,\:\cO(U)\modl
\rarrow\cO(V)\modl$ is exact.
 In other words, it is important that the $\cO(U)$\+module $\cO(V)$
is flat.

 In the definition of a contraherent cosheaf in
Section~\ref{locally-contraadjusted-subsecn}, the localization
functor $\cO(V)\ot_{\cO(U)}{-}$ is replaced by the colocalization
functor $\Hom_{\cO(U)}(\cO(V),{-})\:\cO(U)\modl\allowbreak
\rarrow\cO(V)\modl$.
 The \emph{main technical problem} of the theory of contraherent
cosheaves is that the functor $\Hom_{\cO(U)}(\cO(V),{-})$ is not exact
as a functor between the abelian categories of modules.
 In other words, the $\cO(U)$\+module $\cO(V)$ is usually \emph{not}
projective.

 As a consequence, one needs to impose the contraadjustedness
condition~(ii) in the definition of a contraherent cosheaf
(see Remark~\ref{contraadjustedness-condition-remark}).
 This condition has no analogue in the definition of a quasi-coherent
sheaf.
 Imposing this condition makes the category of contraherent cosheaves
\emph{not abelian, but only exact} (in Quillen's sense).
 Furthermore, there are several versions of the condition, giving
rise to several versions of the contraherent cosheaf category.

 In particular, let $U$ be an affine scheme with the ring of
functions $R=\cO(U)$.
 Then the category of locally contraadjusted contraherent cosheaves
on $U$ (as defined in Section~\ref{locally-contraadjusted-subsecn})
is equivalent to the category of contraadjusted $R$\+modules, while
the categories of locally cotorsion and locally injective contraherent
cosheaves (as defined in Section~\ref{locally-cotorsion-subsecn}) are
equivalent to the categories of cotorsion $R$\+modules and injective
$R$\+modules, respectively:
\begin{align}
 U\ctrh &\simeq R\modl^\cta, \label{contraadjusted-on-affine} \\
 U\ctrh^\lct &\simeq R\modl^\cot, \label{cotorsion-on-affine} \\
 U\ctrh^\lin &\simeq R\modl^\inj. \label{injective-on-affine}
\end{align}
 These are exact, but not abelian categories.

 One may want to try dispensing with the contraadjustedness
condition altogether by simply dropping it from the definition of
a contraherent cosheaf, while keeping conditions~(i) and~(iii) only.
 The present author expects that the resulting additive category
will \emph{not} be abelian (on a nonaffine scheme), and generally not
well-behaved from the homological algebra point of view.
 But it might be worth trying.

 Various specific technical problems of the theory of contraherent
cosheaves usually arise as consequences of the main technical problem.
 One class of such specific technical problems is that various
functors are only partially defined.

 A functor taking values in an abelian category may fail to be exact,
but it is usually everywhere defined.
 For example, if $F\:\sD(\sA)\rarrow\sD(\sB)$ is a triangulated
functor between the derived categories of $\sA$ and $\sB$, then
one can compose $F$ with the inclusion functor $\sA\rarrow\sD(\sA)$
and the degree-zero cohomology functor $H^0\:\sD(\sB)\rarrow\sB$,
producing a (nonexact) functor $\sA\rarrow\sB$.

 In contrast, complexes in an exact category $\sE$ have \emph{no}
cohomology objects in $\sE$, generally speaking.
 Whenever a functor of abelian categories is not exact, a similar
functor of exact categories tends to be only partially defined.
 We have seen an example of this phenomenon in
Section~\ref{cohom-subsecn}, for the functor $\Cohom_X$.

 Another unpleasant specific technical problem of the theory of
contraherent cosheaves, also arising from nonexactness of
the colocalization functors, is the nonlocality of contraherence;
see Remark~\ref{nonlocality-remark}.

\subsection{Two branches of the theory}
 So, technical problems of the theory of contraherent cosheaves over
schemes arise from the fact that, for an affine open subscheme $V$
in an affine scheme $U$, the $\cO(U)$\+module $\cO(V)$ is usually only
flat, but not projective.
 What can be done to mitigate the problems?

 There are two approaches, hence two branches of the theory of
contraherent cosheaves.
 One point of view is that, as far as flat modules go,
the $\cO(U)$\+module $\cO(V)$ is not that bad.
 It belongs to a class of flat modules over commutative rings with
particularly simple homological behavior, viz., the \emph{very flat
modules}; see Section~\ref{two-cotorsion-pairs-in-modules-subsecn}
and specifically Example~\ref{open-immersion-very-flat}.
 Taking this point of view leads one to consider
the \emph{locally contraadjusted contraherent cosheaves},
see Section~\ref{locally-contraadjusted-subsecn}.

 The other point of view is that flat modules generally are
``not that bad'', or not that much different from projective modules.
 Adopting this approach means restricting oneself to
the \emph{locally cotorsion contraherent cosheaves}, as defined
in Section~\ref{locally-cotorsion-subsecn}.

 Each one of the two approaches comes with its own set of
technical problems.
 At the time of writing of the long preprint~\cite{Pcosh},
I~thought that locally cotorsion contraherent cosheaves were more
suited for Noetherian schemes of finite Krull dimension, while on
non-Noetherian schemes one had to consider locally contraadjusted
contraherent cosheaves.
 Ten years have passed, some work has been done, and now it seems
that both the locally contraadjusted and locally cotorsion
contraherent cosheaves behave reasonably well on all schemes;
certainly better than it was originally expected.
 The aim of the next
Sections~\ref{very-flat-secn}\+-\ref{flat-modules-secn} is to
explain why this is the case.

\Section{Very Flat Morphisms are Ubiquitous}
\label{very-flat-secn}

 The discussion of the $\Cohom$ functor in Section~\ref{cohom-subsecn}
(see item~(1)) illustrates the point that, when dealing with locally
contraadjusted contraherent cosheaves, one often needs one's modules
and sheaves to be very flat (and not just flat).
 An even more compelling manifestation of this phenomenon in
the context of inverse images of contraherent cosheaves will be
discussed in Section~\ref{inverse-images-of-contrah-subsecn} below.

 In this section we discuss the problem that very flat modules,
morphisms, and sheaves might be too few, and the solution in the form
of a theorem telling that they are actually quite common and numerous.

\subsection{Change of scalars and tensor product}
\label{change-of-scalars-subsecn}
 Let $f\:R\rarrow S$ be a homomorphism of commutative rings.
 Then, for any flat $R$\+module $F$, the $S$\+module $S\ot_RF$ is flat.
 Similarly, for any very flat $R$\+module $F$, the $S$\+module
$S\ot_RF$ is very flat~\cite[Lemma~1.2.2(b)]{Pcosh}.

 On the other hand, if $G$ is a flat $S$\+module and $f$~makes $S$
a flat $R$\+module, then $G$ is also a flat $R$\+module.
 The analogue of this result for very flat modules is more complicated.

 A ring homomorphism $f\:R\rarrow S$ is called \emph{very flat} if,
for every element $s\in S$, the $R$\+module $S[s^{-1}]$ is
very flat~\cite[Section~9]{PSl1}.

\begin{lem}
 The following two conditions on a commutative ring homomorphism
$f\:R\rarrow S$ are equivalent:
\begin{enumerate}
\item every very flat $S$\+module is also very flat as an $R$\+module;
\item the ring homomorphism $f$~is very flat.
\end{enumerate}
\end{lem}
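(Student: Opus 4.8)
The plan is to treat the two implications separately; the forward one is essentially immediate, and all of the content resides in (2)$\Rightarrow$(1).

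First, for (1)$\Rightarrow$(2), I would observe that for each $s\in S$ the localization $S\rarrow S[s^{-1}]$ realizes $\Spec S[s^{-1}]$ as a principal affine open subscheme of $\Spec S$, so by Example~\ref{open-immersion-very-flat} the module $S[s^{-1}]$ is very flat over~$S$. Applying hypothesis~(1) to this very flat $S$\+module yields that $S[s^{-1}]$ is very flat over~$R$; since $s\in S$ was arbitrary, this is exactly the assertion that $f$ is very flat.

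For the converse (2)$\Rightarrow$(1), I would start from a very flat $S$\+module $G$ and use Theorem~\ref{very-flat-cotorsion-pair}(b) to present $G$ as a direct summand of an $S$\+module $\widetilde G$ admitting a filtration $(\widetilde G_\beta)_{0\le\beta\le\alpha}$ whose successive quotients $\widetilde G_{\beta+1}/\widetilde G_\beta$ are isomorphic to modules of the form $S[s^{-1}]$, $s\in S$. Restriction of scalars along $f$ is exact and preserves inclusions and unions, so the same family $(\widetilde G_\beta)$ is a filtration of $\widetilde G$ by $R$\+submodules whose successive quotients, viewed as $R$\+modules, are the very same modules $S[s^{-1}]$. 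By hypothesis~(2) these are very flat $R$\+modules. Now the class of very flat $R$\+modules coincides with ${}^{\perp_1}(R\modl^\cta)$, hence is closed under transfinitely iterated extensions by the Eklof lemma (Lemma~\ref{eklof-lemma}) and under direct summands (being the left-hand class of a cotorsion pair). Therefore $\widetilde G$ is very flat over~$R$, and so is its direct summand~$G$.

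The argument is short once the earlier structural results are in hand; the only point requiring care---and the real content of the statement---is the step where restriction of scalars converts the $S$\+module filtration of $\widetilde G$ into an $R$\+module filtration with the same subquotients $S[s^{-1}]$, so that the very flatness hypothesis on $f$ feeds directly into the Eklof lemma. I do not expect any genuine obstacle beyond bookkeeping: exactness of restriction of scalars guarantees that the inclusions $\widetilde G_\beta\rarrow\widetilde G_{\beta+1}$ remain admissible monomorphisms over~$R$ and that their cokernels are computed as over~$S$.
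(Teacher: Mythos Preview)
Your proof is correct and follows the expected line. The paper's own proof of (1)$\Rightarrow$(2) is the same as yours (it just says ``by the definition'' where you invoke Example~\ref{open-immersion-very-flat}, but these amount to the same observation that $S[s^{-1}]$ is very flat over~$S$), and for (2)$\Rightarrow$(1) the paper merely cites \cite[Lemma~1.2.3(b)]{Pcosh} without details; your argument via the filtration description of very flat modules (Theorem~\ref{very-flat-cotorsion-pair}(b)) and the Eklof lemma is precisely the standard proof one would give for that referenced lemma.
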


\begin{proof}
 (1)\,$\Longrightarrow$\,(2) For any element $s\in S$, the $S$\+module
$S[s^{-1}]$ is very flat by the definition.
 Assuming~(1), it follows that $S[s^{-1}]$ has to be a very flat
$R$\+module.

 (2)\,$\Longrightarrow$\,(1) This is~\cite[Lemma~1.2.3(b)]{Pcosh}.
\end{proof}

 The condition that the ring $S$ itself is a very flat $R$\+module
is \emph{not} sufficient for a ring homomorphism $f\:R\rarrow S$
to be very flat.
 For a counterexample, see~\cite[Example~9.7]{PSl1}.

 Here is another manifestation of more complicated behavior of
very flat modules as compared to flat ones.
 Given two flat modules $F$ and $G$ over a commutative ring $R$,
the tensor product $F\ot_RG$ is also a flat $R$\+module.
 Similarly, if $F$ and $G$ are very flat $R$\+modules, then
$F\ot_RG$ is a very flat $R$\+module~\cite[Lemma~1.2.1(a)]{Pcosh}.

 However, let $R$ and $S$ be two commutative rings.
 Let $F$ be an $R$\+$S$\+bimodule and $G$ be an $S$\+module.
 In this context, if $F$ is a flat $R$\+module and $G$ is
a flat $S$\+module, then $F\ot_SG$ is a flat $R$\+module.
 The similar assertion holds for flat (bi)modules over arbitrary
associative rings.
 But the similar assertion for very flat modules is \emph{not} true.

\begin{ex}
 Consider the commutative $R$\+algebra $S$ from~\cite[Example~9.7]{PSl1}
mentioned above.
 So $S$ is a free $R$\+module, but there is an element $s\in S$
such that $S[s^{-1}]$ is \emph{not} a very flat $R$\+module.
 Consider the $R$\+$S$\+bimodule $F=S$ and the $S$\+module
$G=S[s^{-1}]$.
 Then the $R$\+module $F$ is very flat, and the $S$\+module $G$ is
very flat.
 But the $R$\+module $F\ot_SG=S[s^{-1}]$ is not very flat.
\end{ex}

\subsection{Very flat quasi-coherent sheaves and their direct images}
\label{very-flat-sheaves-subsecn}
 Let $X$ be a scheme.
 A quasi-coherent sheaf $\F$ on $X$ is said to be \emph{very flat}
if, for every affine open subscheme $U\subset X$, the $\cO(U)$\+module
$\F(U)$ is very flat.
 It suffices to check this condition for affine open subschemes
$U\subset X$ belonging to any given affine open covering of
the scheme~$X$ \,\cite[Lemma~1.2.6(a) and Section~1.7]{Pcosh}.
 So the very flatness property of quasi-coherent sheaves is local
(cf.~\cite[Example~2.5]{Pal}).

 Let $f\:Y\rarrow X$ be a morphism of schemes.
 Then, for any flat quasi-coherent sheaf $\F$ on $X$,
the inverse image $f^*\F$ on $Y$ of the quasi-coherent sheaf $\F$
on $X$ is also flat.
 Similarly, for any very flat quasi-coherent sheaf $\F$ on $X$,
the quasi-coherent sheaf $f^*\F$ on $Y$ is also very flat.
 This follows from the stability of the classes of flat and
very flat modules under the extension of scalars with respect to
homomorphisms of commutative rings, as per the discussion in
Section~\ref{change-of-scalars-subsecn}.

 Now let $f\:Y\rarrow X$ be an affine morphism of schemes
(i.~e., for every affine open subscheme $U\subset X$, the open
subscheme $f^{-1}(U)\subset Y$ is affine).
 The morphism~$f$ is said to be \emph{flat} if, for every
pair of affine open subschemes $U\subset X$ and $V\subset Y$
such that $f(V)\subset U$, the $\cO_X(U)$\+module $\cO_Y(V)$ is flat.
 This is equivalent to the $\cO_X(U)$\+module $\cO_Y(f^{-1}(U))$
being flat for all affine open subschemes $U\subset X$.
 If this is the case then, for every flat quasi-coherent sheaf
$\G$ on $Y$, the direct image $f_*\G$ on $X$ of the quasi-coherent
sheaf $\G$ on $Y$ is also flat.

 A morphism of schemes $Y\rarrow X$ is said to be \emph{very flat}
if, for every pair of affine open subschemes $U\subset X$ and
$V\subset Y$ such that $f(V)\subset U$, the $\cO_X(U)$\+module
$\cO_Y(V)$ is very flat~\cite[Section~1.7]{Pcosh}.
 For an affine morphism~$f$, very flatness of the $\cO_X(U)$\+modules
$\cO_Y(f^{-1}(U))$ is \emph{not} sufficient for the morphism~$f$ to be
very flat.
 For example, a morphism of affine schemes $f\:\Spec S\rarrow\Spec R$
is very flat if and only if the related morphism of rings $R\rarrow S$
is very flat in the sense of
Section~\ref{change-of-scalars-subsecn}, i.~e., the $R$\+modules
$S[s^{-1}]$ are very flat for all $s\in S$.

 Very flatness of the $R$\+module $S$ would imply very flatness of
the $R'$\+modules $S'=R'\ot_RS$ for all commutative ring
homomorphisms $R\rarrow R'$, including in particular such ring
homomorphisms corresponding to open immersions of affine schemes
$U\rarrow\Spec R$.
 So the $\cO_{\Spec R}(U)$\+modules $\cO_{\Spec S}(f^{-1}(U))$ would be 
very flat in this case.
 But this is \emph{not} sufficient for very flatness of the ring
homomorphism $R\rarrow S$ or the scheme morphism
$f\:\Spec S\rarrow\Spec R$, as explained in
Section~\ref{change-of-scalars-subsecn}.

 Let $f\:Y\rarrow X$ be a very flat affine morphism of schemes.
 Then, for every very flat quasi-coherent sheaf $\G$ on $Y$,
the quasi-coherent sheaf $f_*\G$ on $X$ is also very flat.
 This follows from the stability of the class of very flat modules
under the restriction of scalars with respect to very flat morphisms
of commutative rings, as per Section~\ref{change-of-scalars-subsecn}.

\subsection{Inverse images of contraherent cosheaves}
\label{inverse-images-of-contrah-subsecn}
 Let $f\:Y\rarrow X$ be a morphism of schemes.
 Given a contraherent cosheaf $\P$ on $X$, one would like to construct
a contraherent cosheaf $f^!\P$ on~$Y$ (in the notation of~\cite{Pcosh}).
 There are two problems arising in this connection.

 A minor issue is the nonlocality of contraherence (as per
Remark~\ref{nonlocality-remark}).
 Contraherent cosheaves on the scheme $X$ are defined in terms of
affine open subschemes $U\subset X$, and similarly contraherent
cosheaves on $Y$ are defined in terms of affine open subschemes
$V\subset Y$.
 A morphism of schemes~$f$ is called \emph{coaffine} if, for every
affine open subscheme $V\subset Y$, there exists an affine open
subscheme $U\subset X$ such that $f(V)\subset U$.
 This is \emph{not} always the case.
 Cf.\ the ``Jouanolou trick'' and its generalization by
Thomason~\cite[Proposition~4.3 or~4.4]{Wei}.

 The major issue is that the inverse image $f^!\P$ can be only
defined in the following cases (cf.\ the discussion of partially
defined functors between exact categories in
Section~\ref{main-definition-discussion-subsecn}):
\begin{enumerate}
\item if $\P$ is a locally contraadjusted (locally) contraherent
cosheaf and $f$~is a very flat morphism of schemes, then
$f^!\P$ is a locally contraadjusted locally contraherent cosheaf;
\item if $\P$ is a locally cotorsion (locally) contraherent
cosheaf and $f$~is a flat morphism of schemes, then
$f^!\P$ is a locally cotorsion locally contraherent cosheaf;
\item if $\fJ$ is a locally injective (locally) contraherent
cosheaf and $f$~is an arbitrary morphism of schemes, then $f^!\fJ$
is locally injective locally contraherent cosheaf.
\end{enumerate}

 To illustrate the nature of the major issue, let us consider
the case of a morphism of affine schemes $f\:\Spec S\rarrow\Spec R$.
 Then the contraherent cosheaves on $\Spec R$ are the same things
as $R$\+modules with suitable adjustedness properties, as in
formulas~(\ref{contraadjusted-on-affine}\+-\ref{injective-on-affine})
from Section~\ref{main-definition-discussion-subsecn}.

 The inverse image functor~$f^!$ assigns to an $R$\+module $P$
the $S$\+module $\Hom_R(S,P)$.
 If $P$ is a contraadjusted $R$\+module and $R\rarrow S$ is a very
flat homomorphism, then $\Hom_R(S,P)$ is a contraadjusted
$S$\+module~\cite[Lemma~1.2.3(a)]{Pcosh}.
 If $P$ is a cotorsion $R$\+module and $S$ is a flat $R$\+module,
then $\Hom_R(S,P)$ is a cotorsion
$S$\+module~\cite[Lemma~1.3.5(a)]{Pcosh}.
 If $J$ is an injective $R$\+module, then $\Hom_R(S,J)$ is
an injective $S$\+module~\cite[Lemma~1.3.4(b)]{Pcosh}.

 More generally, for any very flat coaffine morphism of schemes
$f\:Y\rarrow X$, there is a well-defined inverse image functor
$f^!\:X\ctrh\rarrow Y\ctrh$.
 For any flat coaffine morphism of schemes~$f$, there is
a well-defined functor $f^!\:X\ctrh^\lct\rarrow Y\ctrh^\lct$.
 For any coaffine morphism of schemes~$f$, there is
a well-define functor $f^!\:X\ctrh^\lin\rarrow Y\ctrh^\lin$
\,\cite[Section~2.3]{Pcosh}.

 For a noncoaffine morphism of schemes $f\:Y\rarrow X$, the inverse
image functors~$f^!$ in the respective cases~(1\+-3) take contraherent
cosheaves on $X$ to $\bW$\+locally contraherent cosheaves on $Y$,
where $\bW$ is the open covering of $Y$ by the preimages
$f^{-1}(U)\subset Y$ of the affine open subschemes $U\subset X$
\,\cite[Section~3.3]{Pcosh}.

\subsection{The problem}
 The discussion in
Sections~\ref{contramodules-discussion-subsecn} and
and~\ref{co-contra-derived-discussion-subsecn} illustrates the point
that the functor $\Cohom_X$ from quasi-coherent sheaves to contraherent
cosheaves is important.
 According to item~(1) in Section~\ref{cohom-subsecn},
for a locally contraadjusted contraherent cosheaf $\P$ on $X$,
the contraherent cosheaf $\Cohom_X(\F,\P)$ is well-defined whenever
$\F$ is a very flat quasi-coherent sheaf.
 Thus, for the theory of locally contraadjusted contraherent cosheaves
to be useful enough, one needs a plentiful supply of very flat
quasi-coherent sheaves.

 What does it mean ``to have a plentiful supply'' in this context?
 The result of~\cite[Lemma~4.1.1]{Pcosh} tells that, on
a quasi-compact semi-separated scheme $X$, every quasi-coherent
sheaf is a quotient sheaf of a very flat one.
 This is what one usually means by ``having enough flat objects''
(of a particular kind).
 Is that enough for our purposes, or what else is needed?

 It would be helpful to know that quasi-coherent sheaves arising
under various constructions are very flat, at, least, in the contexts
where the ones arising under comparable constructions are flat.
 In particular, the preservation of very flatness of quasi-coherent
sheaves by direct and inverse images is discussed in
Section~\ref{very-flat-sheaves-subsecn}.
 For the direct image functor $f_*\:Y\qcoh\rarrow X\qcoh$ with
respect to an affine morphism of schemes $f\:Y\rarrow X$ to preserve
very flatness, the morphism~$f$ should be very flat.
 Thus, we are interested in having an abundant supply of very flat
morphisms of schemes.

 The discussion of inverse images of contraherent cosheaves in
Section~\ref{inverse-images-of-contrah-subsecn} provides a more
compelling argument supporting the same point.
 According to item~(1), for an inverse image functor $f^!\:X\ctrh
\rarrow Y\lcth_\bW$ to be well-defined (for a suitable open covering
$\bW$ of the scheme~$Y$), the morphism~$f$ should be very flat.
 Surely the inverse image functors are of utmost importance.
 Thus, for the theory of locally contraadjusted (locally) contraherent
cosheaves to be substantial enough, one needs an abundant supply
of very flat morphisms of schemes.

\subsection{The solution} \label{very-flat-solution-subsecn}
 In the context of more geometric chapters of algebraic geometry,
flat morphisms of finite type between Noetherian schemes describe
deformations of algebraic varieties.
 Are all such morphisms very flat?
 This was conjectured in the February~2014 version of the long
prepring~\cite{Pcosh}; see~\cite[Conjecture~1.7.2]{Pcosh}.
 The assertion became known as the \emph{Very Flat Conjecture}.

 The Very Flat Conjecture was proved in the paper~\cite{PSl1}.
 The main result of that paper, \cite[Main Theorem~1.1]{PSl1},
claims the following.
 Let $R\rarrow S$ be a morphism of commutative rings making $S$
a finitely presented $R$\+algebra.
 Assume that $S$ is a flat $R$\+module.
 Then $S$ is a very flat $R$\+module.

 More generally, let $R\rarrow S$ be a morphism of commutative rings
making $S$ a finitely presented $R$\+algebra; and let $F$ be
a finitely presented $S$\+module.
 Assume that $F$ is a flat $R$\+module.
 Then $F$ is a very flat $R$\+module~\cite[Main Theorem~1.2]{PSl1}.

 The following corollary is easily deduced~\cite[Corollary~9.1]{PSl1}.
 Let $R$ be a commutative ring, and let $f\:R\rarrow S$ be a morphism
of commutative rings making $S$ a finitely presented, flat $R$\+algebra.
 Then $f$~is a very flat morphism of commutative rings (in the
sense of Section~\ref{change-of-scalars-subsecn}).

 So very flat morphisms of schemes are, at least, as common as flat
morphisms of finite presentation, which is to say, quite common.
 To give another example, for any (possibly infinite-dimensional)
schemes $X$ and $Y$ over a field~$k$, the natural projection
$X\times_k Y\rarrow X$ is a very flat morphism of schemes.
 This surprisingly nontrivial assertion (cf.\ an early geometric proof
of a particular case in~\cite[Corollary~1.7.15]{Pcosh}) is known to be
true in view of~\cite[Corollary~9.8]{PSl1}.

 Alongside with the class of very flat $R$\+modules, there is
the class of \emph{finitely very flat
$R$\+modules}~\cite[Section~1.3]{PSl1}, which is a subclass of very
flat modules having some technical advantages and disadvantages.
 For any commutative ring $R$, a finitely presented commutative
$R$\+algebra $S$, and a finitely presented $S$\+module $F$ that is
flat over $R$, the $R$\+module $F$ is actually finitely very
flat~\cite[Main Theorem~1.5]{PSl1}.

 The assertions of~\cite[Noetherian Main Lemma~1.4]{PSl1} and
\cite[Finitely Very Flat Main Lemma~1.6]{PSl1}, as well as their
generalizations in~\cite[Theorems~1.11 and~1.12]{PSl1}, provide
potentially useful criteria for (finite) very flatness of flat
modules over commutative rings and flat quasi-coherent sheaves.
 In particular~\cite[Theorem~1.11]{PSl1} tells that the property
of very flatness of flat quasi-coherent sheaves satisfies descent
with respect to surjective morphisms of finite type between
Noetherian schemes.

\subsection{Discussion} \label{very-flat-discussion-subsecn}
 What are the advantages of very flat modules/quasi-coherent sheaves
as compared to the flat ones?
 A closely related question is: What are the advantages of locally
contraadjusted contraherent cosheaves as compared to the locally
cotorsion ones?

 One answer to this question (cf.\ the discussion in
Section~\ref{flat-modules-secn} below) is that the class of all
very flat modules over commutative rings $R$ combines four properties:
\begin{enumerate}
\renewcommand{\theenumi}{\roman{enumi}}
\item all $R$\+modules in this class are flat and have uniformly bounded
finite projective dimension (in fact, projective dimension~$\le1$);
\item this class of $R$\+modules is preserved by transfinitely iterated
extensions (in the sense of Section~\ref{eklof-trlifaj-subsecn}),
direct summands, kernels of surjective morphisms, extensions of scalars,
and tensor products over~$R$;
\item all $R$\+modules are quotient modules of modules from this class;
\item for every affine open subscheme $V$ in an affine scheme $U$,
the $\cO(U)$\+module $\cO(V)$ belong to this class.
\end{enumerate}
 Property~(i) is Theorem~\ref{very-flat-cotorsion-pair}(d)
and~\cite[beginning of Section~1.1]{Pcosh}, property~(ii) was discussed
in Theorem~\ref{very-flat-cotorsion-pair}(a) and
Section~\ref{change-of-scalars-subsecn}, property~(iii) holds because
all projective $R$\+modules are very flat, and property~(iv) is
Example~\ref{open-immersion-very-flat}.
 From this point of view, the results of
Section~\ref{very-flat-solution-subsecn} (the Very Flat Conjecture)
come as a bonus.

 The class of flat modules satisfies~(ii), (iii), and~(iv), but it
need not satisfy~(i).
 Only over Noetherian commutative rings of finite Krull
dimension~\cite[Corollaire~II.3.2.7]{RG} and rings of
cardinality~$\le\aleph_n$, \,$n\in\boZ_{\ge0}$
\,\cite[Th\'eor\`eme~7.10]{GJ} (cf.~\cite[Corollaire~II.3.3.2]{RG})
flat modules have finite projective dimensions.
 Are there any other alternatives?

 For example, the class of all flat $R$\+modules of projective
dimension~$\le n$ has the properties~(i), (iii), and~(iv),
but is unlikely to be preserved by tensor products.
 Still, one can brute force all the four properties by considering
the class of all direct summands of $R$\+modules filtered by
countably presented flat $R$\+modules (in the sense of
Section~\ref{eklof-trlifaj-subsecn}).
 Such modules have projective dimensions at most~$1$
by~\cite[Corollary~2.23]{GT}, and their class is closed under
tensor products.
 Furthermore, all finitely presented $R$\+algebras are obviously
countably presented as $R$\+modules.

 A perhaps less ugly alternative to the class of very flat modules is
the class of \emph{quite flat} modules over commutative rings, which
was introduced in~\cite[Section~8]{PSl2} and studied in
the paper~\cite{HPS}.
 All the four properties~(i\+-iv) are satisfied for this class,
which is intermediate between the class of very flat modules and
the class of direct summands of modules filtered by countably presented
flat modules.
 For Noetherian commutative rings $R$, \,\cite[Theorem~2.4]{HPS} tells
that all (direct summands of modules filtered by) countably generated
flat $R$\+modules are quite flat.

\subsection{Conclusion}
 The class of very flat modules over commutative rings is
the minimal class closed under transfinitely iterated extensions,
direct summands, and containing the $R$\+modules $R[r^{-1}]$ for
all commutative rings $R$ and elements $r\in R$.
 These are weak versions of properties~(ii) and~(iv) from
Section~\ref{very-flat-discussion-subsecn}.
 It is also the minimal \emph{very local} class of modules in
the sense of~\cite[Section~2]{Pal} that is the left-hand class of
a cotorsion pair in $R\modl$ for every commutative ring~$R$.

 Having a large category of contraherent cosheaves is a mixed blessing.
 The larger is the right-hand class of a cotorsion pair, the smaller
is the left-hand class.
 The right-hand class of all contraadjusted modules is very large, and
its corresponding left-hand class of all very flat modules is
the smallest one as compared to possible alternatives.
 But one needs very flat modules, morphisms, and sheaves in order to
perform constructions with the locally contraadjusted contraherent
cosheaves.

 Still, very flat homomorphisms of commutative rings are surprisingly
ubiquitous, as (the proof of) the Very Flat Conjecture tells.
 That is what makes the theory of locally contraadjusted contraherent
cosheaves viable.

\Section{Flat Modules are Not That Much Different From Projectives}
\label{flat-modules-secn}

 Let $U$ be an affine scheme with the ring of functions $R=\cO(U)$.
 One would like to think of the exact category of contraherent
cosheaves on $U$ as being ``essentially'' equivalent to
the abelian category of $R$\+modules, but this not literally true.
 Actually, three versions of the exact category of contraherent
cosheaves on $U$ are described by
formulas~(\ref{contraadjusted-on-affine}\+-\ref{injective-on-affine})
from Section~\ref{main-definition-discussion-subsecn}.

 What about the derived categories?
 Do the exact categories $U\ctrh=R\modl^\cta$, \
$U\ctrh^\lct=R\modl^\cot$, and $U\ctrh^\lin=R\modl^\inj$ have the same
derived categories as the ambient abelian category $R\modl$\,?
 Furthermore, what about the contraderived categories?
 The aim of this section is to discuss these questions by putting
them in a wider context.
 Briefly, the answers are positive for the contraadjusted and
cotorsion modules, and negative for the injective ones.

\subsection{Exact categories of finite homological dimension}
\label{exact-categories-fin-homol-dim-subsecn}
 An exact category $\sE$ is said to have \emph{homological
dimension\/~$\le d$} (for an integer $d\ge-1$) if
$\Ext_\sE^{d+1}(A,B)=0$ for all objects $A$, $B\in\sE$.

 The \emph{coderived} and \emph{contraderived categories}
(\emph{in the sense of Positselski} and \emph{in the sense of Becker})
were discussed in Section~\ref{coderived-and-contraderived-secn}
in the context of the abelian categories of modules over a ring.
 Let us revisit these definitions in the context of exact categories.

 We denote by $\Hot(\sE)$ the homotopy category of unbounded
complexes in an additive category~$\sE$.
 The homotopy categories of bounded below, bounded above, and
two-sided bounded complexes are denoted by $\Hot^+(\sE)$,
\,$\Hot^-(\sE)$, and $\Hot^\bb(\sE)$, as usual.

 A discussion of the acyclic complexes and the derived categories
$\sD^\star(\sE)$ for an exact category $\sE$ (where $\star=\bb$, $+$,
$-$, or~$\varnothing$ is a conventional derived category symbol)
can be found in the papers~\cite{Neem0} and~\cite[Section~10]{Bueh}.
 We refer to~\cite[Section~2.1]{Psemi} for our idiosyncratic terminology
of \emph{exact} vs.\ \emph{acyclic} complexes in exact categories.
 This distinction is mostly not relevant to our tasks in this paper,
as the exact categories we are really interested in are
idempotent-complete.

 A complex in an exact category $\sE$ is said to be \emph{absolutely
acyclic}~\cite[Sections~3.3 and~4.2]{Pkoszul},
\cite[Section~A.1]{Pcosh}, \cite[Appendix~A]{Pmgm},
\cite[Section~7.6]{Pksurv} if it belongs to the minimal thick
subcategory of $\Hot(\sE)$ containing the totalizations of
(termwise admissible) short exact sequences of complexes in~$\sE$.
 The full subcategory of absolutely acyclic complexes is denoted
by $\Ac^\abs(\sE)\subset\Hot(\sE)$.
 The related triangulated Verdier quotient category is denoted by
$$
 \sD^\abs(\sE)=\Hot(\sE)/\Ac^\abs(\sE)
$$
and called the \emph{absolute derived category} of~$\sE$.

\begin{lem} \label{acyclic-absolutely-acyclic}
 Let\/ $\sE$ be an exact category.  Then \par
\textup{(a)} every absolutely acyclic complex in\/ $\sE$ is acyclic;
\par
\textup{(b)} if\/ $\sE$ has finite homological dimension, then every
acyclic complex in\/ $\sE$ is absolutely acyclic.
\end{lem}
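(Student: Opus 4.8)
The plan is to prove the two parts separately, leaning throughout on the standard fact that a termwise-admissible short exact sequence of complexes $0\rarrow A^\bu\rarrow B^\bu\rarrow C^\bu\rarrow0$ in $\sE$ induces a distinguished triangle $A^\bu\rarrow B^\bu\rarrow C^\bu\rarrow A^\bu[1]$ in the absolute derived category $\sD^\abs(\sE)$. Indeed, the cone of $A^\bu\rarrow B^\bu$ maps naturally to $C^\bu$, and the cone of \emph{that} map is, up to shift, the totalization of the original short exact sequence, hence one of the generating objects of $\Ac^\abs(\sE)$ and in particular absolutely acyclic; so the triangle becomes distinguished after passing to $\sD^\abs(\sE)$. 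In particular $\Ac^\abs(\sE)$ enjoys the two-out-of-three property with respect to such short exact sequences of complexes, and this will be the main engine of the argument.

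For part~(a) I would verify that the class $\Ac(\sE)$ of acyclic complexes is a thick subcategory of $\Hot(\sE)$ containing all totalizations of short exact sequences of complexes; the minimality of $\Ac^\abs(\sE)$ then yields $\Ac^\abs(\sE)\subseteq\Ac(\sE)$ at once. That a totalization of a short exact sequence is acyclic is a direct check, and that $\Ac(\sE)$ is triangulated and (in the idempotent-complete case relevant to us) closed under direct summands is standard; see \cite{Neem0} and \cite[Section~10]{Bueh}.

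For part~(b) I would first dispose of the case of a \emph{bounded} acyclic complex $E^\bu$ by induction on its length, which requires no assumption on the homological dimension. Writing the cocycle objects as $Z^n$ and peeling the top conflation $0\rarrow Z^{b-1}\rarrow E^{b-1}\rarrow E^b\rarrow0$ off the end, one produces a termwise-admissible short exact sequence of complexes $0\rarrow{E'}^\bu\rarrow E^\bu\rarrow{E''}^\bu\rarrow0$ in which ${E''}^\bu$ is a two-term complex whose differential is an isomorphism (hence contractible, hence absolutely acyclic), while ${E'}^\bu$ is a strictly shorter acyclic complex, absolutely acyclic by induction. The two-out-of-three property then gives that $E^\bu$ is absolutely acyclic; the base case is a single conflation, which is itself a totalization.

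The main work—and the only place where finite homological dimension is essential—is the passage from bounded to arbitrary acyclic complexes. Here the plan is a dévissage organized around the cocycle objects $Z^n$ and the vanishing $\Ext^{d+1}_\sE(Z^{n+d+1},Z^n)=0$. Each window $0\rarrow Z^n\rarrow E^n\rarrow\cdots\rarrow E^{n+d}\rarrow Z^{n+d+1}\rarrow0$ is a bounded acyclic complex, hence absolutely acyclic by the previous step. The obstacle is that $E^\bu$ is the splice of infinitely many such windows along the shared cocycles, whereas $\Ac^\abs(\sE)$ is closed only under the finitary operations of a thick subcategory (cones, shifts, and direct summands), and \emph{not} under infinite direct sums; naive truncation fails because the objects $E^n$ need not be projective or injective, so half-infinite resolutions of the $Z^n$ cannot simply be cut. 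The crux is therefore to convert this a priori infinite splicing into a finite combination of totalizations, and the homological dimension bound is exactly what permits it, by forcing the degree-$(d{+}1)$ Yoneda obstructions relating distant cocycles (in the sense of \cite[Section~A.7]{Partin}) to vanish. Concretely, I would aim to present $E^\bu$, up to homotopy, as the totalization of a bicomplex with on the order of $d+1$ nonzero rows assembled from the windows, so that $E^\bu$ appears as a finite iterated extension of totalizations of short exact sequences. I expect this finite bookkeeping—turning the infinite splice into boundedly many thick-subcategory operations—to be the principal difficulty, and the single step genuinely consuming the hypothesis $\Ext^{>d}_\sE=0$.
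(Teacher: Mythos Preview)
Your approach is correct and matches the paper's: part~(a) is exactly the observation that the class of exact complexes is closed under extensions (hence thick) and contains the generating totalizations, while for part~(b) you correctly handle bounded acyclic complexes by induction and identify the passage to unbounded complexes via a finite-row bicomplex as the crux. The paper's own proof of~(b) is nothing more than a citation of \cite[Remark~2.1]{Psemi}, so your acknowledged gap in executing that final construction is no worse than what the paper offers; the strategy you sketch---using the vanishing $\Ext^{>d}_\sE=0$ to replace the infinite splice by finitely many thick-subcategory operations---is indeed the shape of the argument carried out in that reference.
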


\begin{proof}
 In part~(a), one proves that the class of all exact complexes in
an exact category is closed under extensions.
 Part~(b) is~\cite[Remark~2.1]{Psemi}.
\end{proof}

 An exact category $\sA$ is said to have \emph{exact coproducts} if
the infinite coproduct functors are everywhere defined in $\sA$ and
preserve admissible short exact sequences.
 Dually, an exact category $\sB$ is said to have \emph{exact products}
if all the infinite products exist in $\sB$ and preserve admissible
short exact sequences.

 A complex in an exact category $\sA$ is said to be \emph{coacyclic
in the sense of Positselski} if it belongs to the minimal triangulated
subcategory $\Ac^\pco(\sA)$ of the homotopy category $\Hot(\sA)$
containing all the absolutely acyclic complexes and closed under
coproducts.
 The related triangulated Verdier quotient category is denoted by
$$
 \sD^\pco(\sA)=\Hot(\sA)/\Ac^\pco(\sA)
$$
and called the \emph{coderived category of\/ $\sA$ in the sense of
Positselski}.

 Dually, a complex in an exact category $\sB$ is said to be
\emph{contraacyclic in the sense of Positselski} if it belongs to
the minimal triangulated subcategory $\Ac^\pctr(\sB)$ of the homotopy
category $\Hot(\sB)$ containing all the absolutely acyclic complexes
and closed under products.
 The related triangulated Verdier quotient category is denoted by
$$
 \sD^\pctr(\sB)=\Hot(\sB)/\Ac^\pctr(\sB)
$$
and called the \emph{contraderived category of\/ $\sB$ in the sense of
Positselski}.

\begin{lem} \label{Positselski-co-contra-acyclic-vs-acyclic}
\textup{(a)} In an exact category with exact coproducts, every
Positselski-coacyclic complex is acyclic. \par
\textup{(b)} In an exact category of finite homological dimension,
every acyclic complex is Positselski-coacyclic. \par
\textup{(c)} In an exact category with exact products, every
Positselski-contraacyclic complex is acyclic. \par
\textup{(d)} In an exact category of finite homological dimension,
every acyclic complex is Positselski-contraacyclic.
\end{lem}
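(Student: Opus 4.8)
The plan is to reduce everything to parts~(a) and~(b), since parts~(c) and~(d) are exactly the opposite-category versions of~(a) and~(b): passing from $\sB$ to the opposite exact category $\sB^\sop$ interchanges products with coproducts and sends the contraacyclic subcategory $\Ac^\pctr(\sB)$ to the coacyclic subcategory $\Ac^\pco(\sB^\sop)$, while acyclicity, absolute acyclicity, and finiteness of the homological dimension are all self-dual notions. So once~(a) and~(b) are established I would simply apply them in $\sB^\sop$ to deduce~(c) and~(d).

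Part~(b) is almost immediate. By the very definition of the Positselski coderived category, the triangulated subcategory $\Ac^\pco(\sA)$ contains all absolutely acyclic complexes, that is $\Ac^\abs(\sA)\subseteq\Ac^\pco(\sA)$. On the other hand, when $\sA$ has finite homological dimension, Lemma~\ref{acyclic-absolutely-acyclic}(b) says that every acyclic complex is absolutely acyclic. Chaining the two inclusions shows that every acyclic complex lies in $\Ac^\pco(\sA)$, i.e.\ is Positselski-coacyclic.

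For part~(a) the strategy is to show that the full subcategory of acyclic complexes inside $\Hot(\sA)$ is a triangulated subcategory that contains $\Ac^\abs(\sA)$ and is closed under coproducts; by the minimality built into the definition of $\Ac^\pco(\sA)$, this forces $\Ac^\pco(\sA)$ to be contained in it, which is precisely the assertion. First I would invoke the standard fact that the acyclic complexes in any exact category form a triangulated subcategory of the homotopy category, closed under shifts and cones (see~\cite{Neem0}, \cite[Section~10]{Bueh}); the idempotent-completeness recorded in Section~\ref{exact-categories-fin-homol-dim-subsecn} removes the usual exact-versus-acyclic subtleties here. That this subcategory already contains $\Ac^\abs(\sA)$ is exactly Lemma~\ref{acyclic-absolutely-acyclic}(a).

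The one substantive point — and the main obstacle — is closure under coproducts, which is where the hypothesis of \emph{exact} coproducts is used. An acyclic complex $A^\bu$ is a splicing of admissible short exact sequences $Z^n\rightarrowtail A^n\twoheadrightarrow Z^{n+1}$. Given a family $(A_i^\bu)_{i}$ of acyclic complexes with such factorizations, the defining property of exact coproducts guarantees that each $\coprod_i Z_i^n\rightarrowtail\coprod_i A_i^n\twoheadrightarrow\coprod_i Z_i^{n+1}$ is again an admissible short exact sequence; since coproducts commute with composition of morphisms, these splice together to realize $\coprod_i A_i^\bu$ as an acyclic complex. Thus acyclicity is preserved by coproducts, completing part~(a), and part~(c) then follows by the duality above applied to the exact products in $\sB$.
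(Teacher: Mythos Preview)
Your proof is correct and follows the same approach as the paper, which simply records ``Follows from Lemma~\ref{acyclic-absolutely-acyclic}''; you have spelled out the details that the paper leaves implicit, namely the closure of the class of acyclic complexes under (co)products in the presence of exact (co)products, and the minimality argument. One small caveat: your appeal to idempotent-completeness ``recorded in Section~\ref{exact-categories-fin-homol-dim-subsecn}'' is slightly imprecise, since that section only remarks that the exact categories \emph{of interest} are idempotent-complete rather than imposing it as a standing hypothesis, but the paper itself treats this point loosely (cf.\ the proof of Lemma~\ref{acyclic-absolutely-acyclic}(a), which speaks of exact complexes being closed under extensions), so your level of rigor matches the paper's.
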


\begin{proof}
 Follows from Lemma~\ref{acyclic-absolutely-acyclic}.
\end{proof}

 A complex $A^\bu$ in an exact category $\sA$ is said to be
\emph{coacyclic in the sense of Becker} if, for every complex of
injective objects $J^\bu$ in $\sA$, the complex of abelian groups
$\Hom_\sA(A^\bu,J^\bu)$ is acyclic.
 The full subcategory of Becker-coacyclic complexes is denoted by
$\Ac^\bco(\sA)\subset\Hot(\sA)$.
 The related triangulated Verdier quotient category is denoted by
$$
 \sD^\bco(\sA)=\Hot(\sA)/\Ac^\bco(\sA)
$$
and called the \emph{coderived category of\/ $\sA$ in the sense of
Becker}.

 An alternative definition of the coderived category in the sense of
Becker is simply the homotopy category of complexes of injective
objects, $\Hot(\sA^\inj)$.
 There is an obvious fully faithful triangulated functor
$\Hot(\sA^\inj)\rarrow\sD^\bco(\sA)$, constructed as the composition
$\Hot(\sA^\inj)\rarrow\Hot(\sA)\rarrow\sD^\bco(\sA)$.
 For Grothendieck abelian categories $\sA$, this functor is
a triangulated equivalence~\cite[Corollary~9.5]{PS4},
\cite[Corollary~7.9]{PS5}.

 A complex $B^\bu$ in an exact category $\sB$ is said to be
\emph{contraacyclic in the sense of Becker} if, for every complex of
projective objects $P^\bu$ in $\sB$, the complex of abelian groups
$\Hom_\sB(P^\bu,B^\bu)$ is acyclic.
 The full subcategory of Becker-contraacyclic complexes is denoted by
$\Ac^\bctr(\sB)\subset\Hot(\sB)$.
 The related triangulated Verdier quotient category is denoted by
$$
 \sD^\bctr(\sB)=\Hot(\sB)/\Ac^\bctr(\sB)
$$
and called the \emph{contraderived category of\/ $\sB$ in the sense of
Becker}.

 An alternative definition of the contraderived category in the sense of
Becker is simply the homotopy category of complexes of injective
objects, $\Hot(\sB_\proj)$.
 There is an obvious fully faithful triangulated functor
$\Hot(\sB_\proj)\rarrow\sD^\bctr(\sB)$, constructed as the composition
$\Hot(\sB_\proj)\rarrow\Hot(\sB)\rarrow\sD^\bctr(\sB)$.
 For locally presentable abelian categories $\sB$ with enough
projective objects, this functor is a triangulated
equivalence~\cite[Corollary~7.4]{PS4}, \cite[Corollary~6.13]{PS5}.

\begin{lem} \label{Positselski-trivial-implies-Becker-trivial}
\textup{(a)} Any Positselski-coacyclic complex in an exact category
is Becker-coacyclic. \par
\textup{(b)} Any Positselski-contraacyclic complex in an exact category
is Becker-contraacyclic. \hbadness=2150
\end{lem}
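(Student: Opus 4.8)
The plan is to prove part~(a) in full and obtain part~(b) by the evident dualization (injectives replaced by projectives, coproducts by products, and the contravariant $\Hom_\sA(-,J^\bu)$ replaced by the covariant $\Hom_\sB(P^\bu,-)$). First I would restate Becker-coacyclicity as a left-orthogonality condition in the homotopy category: a complex $A^\bu$ is Becker-coacyclic precisely when $\Hom_{\Hot(\sA)}(A^\bu,J^\bu[n])=0$ for every $n\in\boZ$ and every complex of injectives $J^\bu$, since the $n$\+th cohomology of the total Hom-complex $\Hom_\sA(A^\bu,J^\bu)$ computes $\Hom_{\Hot(\sA)}(A^\bu,J^\bu[n])$, and the class $\Hot(\sA^\inj)$ is stable under shifts. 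Because $\Ac^\pco(\sA)$ is by definition the \emph{minimal} triangulated subcategory of $\Hot(\sA)$ containing the absolutely acyclic complexes and closed under coproducts, it suffices to verify that the class $\Ac^\bco(\sA)$ of Becker-coacyclic complexes enjoys all three of these properties; the inclusion $\Ac^\pco(\sA)\subseteq\Ac^\bco(\sA)$ then follows formally.

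The first two properties are purely formal. That $\Ac^\bco(\sA)$ is a thick triangulated subcategory is the standard observation that a full subcategory cut out by the vanishing of $\Hom_{\Hot(\sA)}(-,J^\bu[n])$ against a class of objects stable under shifts is automatically closed under shifts, closed under cones (via the long exact sequence of Hom groups attached to a distinguished triangle, two of whose three terms vanish), and closed under direct summands. Closure under coproducts follows from the adjunction isomorphism $\Hom_{\Hot(\sA)}(\coprod_i A_i^\bu,\,J^\bu[n])\cong\prod_i\Hom_{\Hot(\sA)}(A_i^\bu,\,J^\bu[n])$, a product of zero groups being zero.

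The substantive step is to show that $\Ac^\bco(\sA)$ contains every absolutely acyclic complex. Since $\Ac^\bco(\sA)$ is already thick, and $\Ac^\abs(\sA)$ is the minimal thick subcategory generated by the totalizations of (termwise admissible) short exact sequences of complexes, I would reduce to checking that any such totalization $T^\bu$, say of $0\rarrow K^\bu\rarrow L^\bu\rarrow M^\bu\rarrow0$, is Becker-coacyclic. Fix a complex of injectives $J^\bu$. The point is that $\Hom_\sA(T^\bu,J^\bu)$ is the totalization of the short exact sequence of total Hom-complexes $0\rarrow\Hom_\sA(M^\bu,J^\bu)\rarrow\Hom_\sA(L^\bu,J^\bu)\rarrow\Hom_\sA(K^\bu,J^\bu)\rarrow0$ obtained by applying $\Hom_\sA(-,J^\bu)$. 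This is where injectivity enters: since each $J^n$ is injective, $\Hom_\sA(-,J^n)$ carries admissible short exact sequences to short exact sequences of abelian groups, so the displayed sequence is exact in every degree. The totalization of a degreewise exact short sequence of complexes of abelian groups is acyclic (acyclic assembly), whence $\Hom_\sA(T^\bu,J^\bu)$ is acyclic, as required.

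I expect the only step demanding genuine care to be this last one: the double-complex bookkeeping that identifies $\Hom_\sA(T^\bu,J^\bu)$ with the totalization of the short exact sequence of Hom-complexes, together with the attendant sign conventions and the invocation of the acyclicity of the total complex of a degreewise exact short sequence. Everything else is formal orthogonality manipulation. The same argument, dualized, yields part~(b), once one notes that $\Hom_\sB(P^n,-)$ is exact on admissible short exact sequences for each projective $P^n$, so that applying $\Hom_\sB(P^\bu,-)$ to $0\rarrow K^\bu\rarrow L^\bu\rarrow M^\bu\rarrow0$ again produces a degreewise exact short sequence of Hom-complexes whose totalization, namely $\Hom_\sB(P^\bu,T^\bu)$, is acyclic.
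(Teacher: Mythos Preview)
Your proposal is correct and follows essentially the same argument as the references the paper cites (\cite[proof of Theorem~3.5]{Pkoszul}, \cite[Lemmas~7.1 and~9.1]{PS4}): verify that $\Ac^\bco(\sA)$ is a thick triangulated subcategory closed under coproducts, then check that it contains the totalizations of termwise admissible short exact sequences by using exactness of $\Hom_\sA(-,J^n)$ for injective $J^n$. The paper itself gives no independent proof beyond these citations, so your write-up is in fact more detailed than what appears in the paper.
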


\begin{proof}
 Part~(a) is~\cite[proof of Theorem~3.5(a)]{Pkoszul},
\cite[Lemma~9.1]{PS4}, or~\cite[Theorem~5.5(a)]{Pedg}.
 Part~(b) is~\cite[proof of Theorem~3.5(b)]{Pkoszul},
\cite[Lemma~7.1]{PS4}, or~\cite[Theorem~5.5(b)]{Pedg}.
\end{proof}

\begin{lem}
\textup{(a)} In an abelian category with enough injective objects,
every Becker-coacyclic complex is acyclic. \par
\textup{(b)} In an exact category of finite homological dimension,
every acyclic complex is Becker-coacyclic. \par
\textup{(c)} In an abelian category with enough projective objects,
every Becker-contraacyclic complex is acyclic.
\par
\textup{(d)} In an exact category of finite homological dimension,
then every acyclic complex is Becker-contraacyclic.
\end{lem}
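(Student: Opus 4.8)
The plan is to dispatch parts~(b) and~(d) immediately by composing results already available. By Lemma~\ref{Positselski-co-contra-acyclic-vs-acyclic}(b), every acyclic complex in an exact category of finite homological dimension is Positselski-coacyclic, and by Lemma~\ref{Positselski-trivial-implies-Becker-trivial}(a), every Positselski-coacyclic complex is Becker-coacyclic; chaining the two gives~(b). Part~(d) follows in the same way from Lemma~\ref{Positselski-co-contra-acyclic-vs-acyclic}(d) and Lemma~\ref{Positselski-trivial-implies-Becker-trivial}(b). So the substantive content lies in parts~(a) and~(c), which are dual to one another.

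For part~(a), let $A^\bu$ be a Becker-coacyclic complex in an abelian category $\sA$ with enough injectives; I want $H^n(A^\bu)=0$ for all~$n$. The first step is to note that every injective object $J$, placed in a single degree, is in particular a complex of injective objects, so the definition of Becker-coacyclicity forces the complex of abelian groups $\Hom_\sA(A^\bu,J)$ to be acyclic for each such~$J$. I then argue by contradiction: suppose $H^n(A^\bu)\ne0$, write $Z^n=\ker(d^n)$ for the cocycle subobject, and choose a monomorphism $H^n(A^\bu)\rightarrowtail J$ into an injective. Composing $Z^n\twoheadrightarrow H^n(A^\bu)\rightarrowtail J$ and extending along the monomorphism $Z^n\rightarrowtail A^n$ (using injectivity of~$J$) yields a map $\tilde\phi\:A^n\to J$. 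The key computation is that $\tilde\phi\circ d^{n-1}=0$, so $\tilde\phi$ is a cocycle in $\Hom_\sA(A^\bu,J)$; whereas if $\tilde\phi=\chi\circ d^n$ were a coboundary, then $\tilde\phi|_{Z^n}=\chi\circ d^n|_{Z^n}=0$, contradicting that $\tilde\phi|_{Z^n}$ is the nonzero composite into~$J$. Hence $\Hom_\sA(A^\bu,J)$ fails to be acyclic, a contradiction.

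Part~(c) is carried out by the evident dualization, now using projectives. Given a Becker-contraacyclic complex $B^\bu$ in an abelian category $\sB$ with enough projectives, the stalk complexes of projective objects show that $\Hom_\sB(P,B^\bu)$ is acyclic for every projective~$P$. Assuming $H^n(B^\bu)\ne0$, I would choose an epimorphism $P\twoheadrightarrow H^n(B^\bu)$ from a projective, lift it along $Z^n\twoheadrightarrow H^n(B^\bu)$ to a map $\psi\:P\to Z^n$ (possible since $P$ is projective), and compose with $Z^n\rightarrowtail B^n$ to obtain $\tilde\psi\:P\to B^n$. Then $d^n\circ\tilde\psi=0$ exhibits $\tilde\psi$ as a cocycle, while $\tilde\psi=d^{n-1}\circ\chi$ would force the composite $P\xrightarrow{\psi}Z^n\twoheadrightarrow H^n(B^\bu)$ to vanish, contradicting that it is the chosen epimorphism onto a nonzero object.

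I expect no serious obstacle: parts~(b) and~(d) are purely formal, and the crux of parts~(a) and~(c) is the cocycle-but-not-coboundary verification, which amounts to the elementary fact that a nonvanishing cohomology class is detected already by a single stalk complex of an injective (respectively, projective) object. The only points demanding care are the correct invocation of the universal properties---extending across the monomorphism $Z^n\rightarrowtail A^n$ in the injective case, and lifting across the epimorphism $Z^n\twoheadrightarrow H^n$ in the projective case---together with the routine bookkeeping of the differentials in the Hom-complex, neither of which presents a genuine difficulty.
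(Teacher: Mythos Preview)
Your proof is correct and takes essentially the same approach as the paper. Parts~(b) and~(d) are handled identically by composing Lemma~\ref{Positselski-co-contra-acyclic-vs-acyclic} with Lemma~\ref{Positselski-trivial-implies-Becker-trivial}; for parts~(a) and~(c) the paper simply cites~\cite[Lemma~A.2]{Psemten} and its dual, whereas you spell out the standard argument (detect nonvanishing cohomology via a stalk complex of an injective or projective object), which is exactly what lies behind that citation.
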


\begin{proof}
 Part~(b) follows from
Lemmas~\ref{Positselski-co-contra-acyclic-vs-acyclic}(b)
and~\ref{Positselski-trivial-implies-Becker-trivial}(a).
 Part~(d) follows from
Lemmas~\ref{Positselski-co-contra-acyclic-vs-acyclic}(d)
and~\ref{Positselski-trivial-implies-Becker-trivial}(b).
 Part~(a) is~\cite[Lemma~A.2]{Psemten}, and part~(c) is dual.
\end{proof}

\begin{prop} \label{enough-inj-proj-finite-homol-dim-prop}
\textup{(a)} Let\/ $\sA$ be an exact category of finite homological
dimension with enough injective objects.
 Then the clases of acyclic, absolutely acyclic, and Becker-coacyclic
complexes in\/ $\sA$ coincide.
 The composition of triangulated functors\/ $\Hot(\sA^\inj)\rarrow
\Hot(\sA)\rarrow\sD^\abs(\sA)=\sD^\bco(\sA)=\sD(\sA)$ is
a triangulated equivalence
$$
 \Hot(\sA^\inj)\simeq\sD^\abs(\sA)=\sD^\bco(\sA)=\sD(\sA).
$$ \par
\textup{(b)} Let\/ $\sB$ be an exact category of finite homological
dimension with enough projective objects.
 Then the classes of acyclic, absolutely acyclic, and
Becker-contraacyclic complexes in\/ $\sB$ coincide.
 The composition of triangulated functors\/ $\Hot(\sB_\proj)\rarrow
\Hot(\sB)\rarrow\sD^\abs(\sB)=\sD^\bctr(\sB)=\sD(\sB)$ is
a triangulated equivalence
$$
 \Hot(\sB_\proj)\simeq\sD^\abs(\sB)=\sD^\bctr(\sB)=\sD(\sB).
$$
\end{prop}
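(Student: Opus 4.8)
The plan is to prove part~(a) by first showing that the three (in fact four) acyclicity classes coincide, then deducing the triangulated equivalence from full faithfulness together with a density argument; part~(b) will follow by the evident dualization.

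\emph{Coincidence of the acyclicity classes.} Since $\sA$ has finite homological dimension, Lemma~\ref{acyclic-absolutely-acyclic}(a,b) shows that the classes of acyclic and absolutely acyclic complexes coincide. Every absolutely acyclic complex is Positselski-coacyclic by the very definition of $\Ac^\pco(\sA)$ (which is the closure of $\Ac^\abs(\sA)$ under coproducts), and every Positselski-coacyclic complex is Becker-coacyclic by Lemma~\ref{Positselski-trivial-implies-Becker-trivial}(a). Finally, since $\sA$ has enough injective objects, the preceding Lemma, part~(a), shows that every Becker-coacyclic complex is acyclic. Assembling these, I would obtain the chain
\[
 \Ac(\sA)=\Ac^\abs(\sA)\subseteq\Ac^\pco(\sA)\subseteq\Ac^\bco(\sA)
 \subseteq\Ac(\sA),
\]
so all four classes agree. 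Consequently $\sD^\abs(\sA)$, $\sD^\bco(\sA)$, and $\sD(\sA)$ are Verdier quotients of $\Hot(\sA)$ by one and the same thick subcategory, hence coincide.

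\emph{Full faithfulness.} The composite $\Hot(\sA^\inj)\rarrow\Hot(\sA)\rarrow\sD^\bco(\sA)$ is fully faithful with no hypothesis on the homological dimension: if $A^\bu$ is Becker-coacyclic and $J^\bu$ is a complex of injectives, then by definition the Hom complex $\Hom^\bu_\sA(A^\bu,J^\bu)$ is acyclic, so $\Hom_{\Hot(\sA)}(A^\bu,J^\bu)=0$, and the same holds after shifts. Thus complexes of injectives are right-orthogonal to $\Ac^\bco(\sA)$ in $\Hot(\sA)$, and the calculus of fractions then identifies $\Hom_{\sD^\bco}(J^\bu,{J'}^\bu)$ with $\Hom_{\Hot}(J^\bu,{J'}^\bu)$ for any two complexes of injectives. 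Combined with the identification $\sD^\bco(\sA)=\sD(\sA)$ above, this makes $\Hot(\sA^\inj)\rarrow\sD(\sA)$ fully faithful.

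\emph{Density, and the main obstacle.} It then remains to show that every complex $A^\bu$ is isomorphic in $\sD(\sA)$ to a complex of injectives, i.e.\ admits a morphism to a complex of injectives with acyclic cone. The hard part will be to do this for \emph{unbounded} complexes, and this is exactly where the finite homological dimension is decisive: choosing a Cartan--Eilenberg injective resolution of $A^\bu$ (available since $\sA$ has enough injectives), the dimension bound lets one truncate every vertical injective resolution at a uniform finite length~$d$, so that the totalization $J^\bu$ of the resulting bicomplex has each term a \emph{finite} direct sum of injectives, hence is injective, and the augmentation $A^\bu\rarrow J^\bu$ has acyclic (indeed absolutely acyclic) cone. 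The technical care needed to set up Cartan--Eilenberg resolutions in an exact rather than abelian category, and to check that the truncated totalization is quasi-isomorphic to $A^\bu$, is where the real work of this part lies. Full faithfulness and density together yield $\Hot(\sA^\inj)\simeq\sD^\abs(\sA)=\sD^\bco(\sA)=\sD(\sA)$. For part~(b) I would dualize throughout, replacing injectives by projectives and coproducts by products, invoking Lemmas~\ref{acyclic-absolutely-acyclic}, \ref{Positselski-co-contra-acyclic-vs-acyclic}(c,d), \ref{Positselski-trivial-implies-Becker-trivial}(b), and the preceding Lemma, part~(c), to get $\Ac(\sB)=\Ac^\abs(\sB)=\Ac^\pctr(\sB)=\Ac^\bctr(\sB)$; complexes of projectives are left-orthogonal to $\Ac^\bctr(\sB)$ by definition, giving full faithfulness of $\Hot(\sB_\proj)\rarrow\sD^\bctr(\sB)=\sD(\sB)$, and finite-length projective Cartan--Eilenberg resolutions supply density.
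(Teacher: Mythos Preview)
Your density argument via finite-length Cartan--Eilenberg resolutions is essentially the paper's ``totalization of a finite acyclic complex of complexes'' argument, and your full-faithfulness observation is correct.  The gap lies in your first paragraph: you invoke ``the preceding Lemma, part~(a)'' (and part~(c) for the dual statement) to conclude that every Becker-coacyclic complex in $\sA$ is acyclic, but that lemma is stated only for \emph{abelian} categories, whereas the proposition concerns \emph{exact} categories.  The author explicitly remarks, in the Conclusions paragraph immediately following this proposition, that ``we do not know how to prove that all Becker co/contraacyclic complexes in an exact category are acyclic without making some restrictive assumptions.''  So the inclusion $\Ac^\bco(\sA)\subset\Ac(\sA)$ is not available to you as a starting point in the exact-category setting.

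The paper circumvents this by reversing the logical order.  It first establishes the equivalence $\Hot(\sA^\inj)\simeq\sD^\abs(\sA)$ directly via the totalization argument---your Cartan--Eilenberg density together with full faithfulness, both of which already work with $\sD^\abs$ in place of $\sD^\bco$, since $\Ac^\abs\subset\Ac^\bco$ makes complexes of injectives right-orthogonal to $\Ac^\abs$.  Only \emph{after} this equivalence is in hand does one deduce $\Ac^\abs(\sA)=\Ac^\bco(\sA)$: the inclusion $\Ac^\abs\subset\Ac^\bco$ gives a Verdier quotient functor $\sD^\abs(\sA)\to\sD^\bco(\sA)$, and the composite $\Hot(\sA^\inj)\to\sD^\abs(\sA)\to\sD^\bco(\sA)$ is fully faithful by the definition of Becker-coacyclicity; since the first arrow is now known to be an equivalence, the second must also be one, forcing $\Ac^\abs=\Ac^\bco$.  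Your argument is repaired by this reordering, with no new ingredients needed.
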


\begin{proof}
 One has $\Ac^\abs(\sA)=\Ac(\sA)$ and $\Ac^\abs(\sB)=\Ac(\sB)$ by
Lemma~\ref{acyclic-absolutely-acyclic}.
 Furthermore, $\Ac^\abs(\sA)\subset\Ac^\bco(\sA)$ and
$\Ac^\abs(\sB)\subset\Ac^\bctr(\sB)$ by
Lemma~\ref{Positselski-trivial-implies-Becker-trivial}.
 The argument with the totalization of a finite acyclic complex of
complexes similar to~\cite[proof of Theorem~3.6]{Pkoszul}
or~\cite[proof of Theorem~5.6]{Pedg} proves that the compositions
$\Hot(\sA^\inj)\rarrow\Hot(\sA)\rarrow\sD^\abs(\sA)$ and
$\Hot(\sB_\proj)\rarrow\Hot(\sB)\rarrow\sD^\abs(\sB)$ are
triangulated equivalences.
 Then it follows that $\Ac^\abs(\sA)=\Ac^\bco(\sA)$ and
$\Ac^\abs(\sB)=\Ac^\bctr(\sB)$.
 For further discussion, see~\cite[Theorem~7.8]{Pksurv}.
\end{proof}

\begin{prop}
\textup{(a)} Let\/ $\sA$ be an exact category with coproducts
and enough injective objects.
 Assume that countable coproducts of injective objects have finite
injective dimensions in\/~$\sA$.
 Then the clases of Positselski-coacyclic and Becker-coacyclic
complexes in\/ $\sA$ coincide.
 The composition of triangulated functors\/ $\Hot(\sA^\inj)\rarrow
\Hot(\sA)\rarrow\sD^\pco(\sA)=\sD^\bco(\sA)$ is
a triangulated equivalence
$$
 \Hot(\sA^\inj)\simeq\sD^\pco(\sA)=\sD^\bco(\sA).
$$ \par
\textup{(b)} Let\/ $\sB$ be an exact category with products
and enough projective objects.
 Assume that countable products of projective objects have finite
projective dimensions in\/~$\sB$.
 The composition of triangulated functors\/ $\Hot(\sB_\proj)\rarrow
\Hot(\sB)\rarrow\sD^\pctr(\sB)=\sD^\bctr(\sB)$ is
a triangulated equivalence
$$
 \Hot(\sB_\proj)\simeq\sD^\pco(\sB)=\sD^\bctr(\sB).
$$
\end{prop}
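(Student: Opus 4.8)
The plan is to prove~(a) by first showing that the functor $F\colon\Hot(\sA^\inj)\rarrow\sD^\pco(\sA)$ --- the composite $\Hot(\sA^\inj)\rarrow\Hot(\sA)\rarrow\sD^\pco(\sA)$ --- is a triangulated equivalence, and then deducing $\sD^\pco(\sA)=\sD^\bco(\sA)$ formally. Full faithfulness of $F$ is immediate: by Lemma~\ref{Positselski-trivial-implies-Becker-trivial}(a) every Positselski-coacyclic complex is Becker-coacyclic, so $\Hom_{\Hot(\sA)}(A^\bu,J^\bu)=0$ for all $A^\bu\in\Ac^\pco(\sA)$ and $J^\bu\in\Hot(\sA^\inj)$; that is, $\Hot(\sA^\inj)$ lies in the right orthogonal to $\Ac^\pco(\sA)$, whence the standard Verdier-localization criterion makes $F$ fully faithful. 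The very definition $\Ac^\bco(\sA)={}^{\perp}\Hot(\sA^\inj)$ likewise makes the composite $\Hot(\sA^\inj)\rarrow\sD^\bco(\sA)$ fully faithful.

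The heart of the matter is the essential surjectivity of $F$: every complex $X^\bu$ over $\sA$ must admit a morphism into a complex of injectives with Positselski-coacyclic cone, and I would build this in two steps. First, using that $\sA$ has enough injectives, I would form a Cartan--Eilenberg injective resolution $(I^{p,q})_{p\in\boZ,\,q\ge0}$ of $X^\bu$, an honest bicomplex of injectives whose $p$-th column is an injective resolution of $X^p$, and set $J^\bu=\mathrm{Tot}^\oplus(I^{\bu\bu})$ with the augmentation $X^\bu\rarrow J^\bu$. Filtering the cone of $X^\bu\rarrow J^\bu$ by the columns of the bicomplex, its associated graded pieces are the augmented columns $X^p\rarrow I^{p,\bu}$, which are bounded-below admissible-exact complexes and hence Positselski-coacyclic (each being the homotopy colimit of its bounded truncations); since $\Ac^\pco(\sA)$ is closed under coproducts, the coproduct-totalized cone lies in $\Ac^\pco(\sA)$.

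The complex $J^\bu$ is not yet a complex of injectives: in total degree $m$ its term is the \emph{countable} coproduct $J^m=\coprod_{q\ge0}I^{m-q,\,q}$. This is where the hypothesis enters decisively. Put $Z=\coprod_{p,q}I^{p,q}$, a countable coproduct of injectives, so that $\mathrm{injdim}\,Z=d<\infty$ by assumption; since each $J^m$ is a direct summand of $Z$, every $J^m$ has injective dimension $\le d$, \emph{uniformly in $m$}. This uniform bound makes the second step finite. Choosing length-$d$ injective resolutions $0\rarrow J^p\rarrow E^{p,0}\rarrow\cdots\rarrow E^{p,d}\rarrow0$ of the terms and assembling them with the vertical resolution differentials together with higher homotopies $\phi_i\colon E^{p,q}\rarrow E^{p+i,\,q-i+1}$ lifting the differential of $J^\bu$ (the standard inductive construction, which terminates because $E^{p,q}=0$ for $q>d$), I obtain $K^\bu$ with $K^n=\bigoplus_{q=0}^{d}E^{n-q,\,q}$ and a map $J^\bu\rarrow K^\bu$. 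Each $K^n$ is a \emph{finite} coproduct of injectives, hence injective, so $K^\bu\in\Hot(\sA^\inj)$; and the cone of $J^\bu\rarrow K^\bu$ is, by the same column-filtration argument, Positselski-coacyclic (now the graded pieces are \emph{bounded} exact complexes, so absolutely acyclic). Composing $X^\bu\rarrow J^\bu\rarrow K^\bu$ yields the required coresolution, so $F$ is essentially surjective.

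Finally I would assemble the conclusion formally. As $F$ is fully faithful and essentially surjective, it is a triangulated equivalence. The quotient functor $G\colon\sD^\pco(\sA)\rarrow\sD^\bco(\sA)$ exists because $\Ac^\pco(\sA)\subseteq\Ac^\bco(\sA)$, and it is essentially surjective. Writing arbitrary objects of $\sD^\pco(\sA)$ as $FJ,FK$ with $J,K\in\Hot(\sA^\inj)$, both $\Hom_{\sD^\pco(\sA)}(FJ,FK)$ and $\Hom_{\sD^\bco(\sA)}(GFJ,GFK)$ are canonically identified with $\Hom_{\Hot(\sA)}(J,K)$ --- the former by full faithfulness of $F$, the latter by full faithfulness of $\Hot(\sA^\inj)\rarrow\sD^\bco(\sA)$ --- and $G$ respects this identification, so $G$ is fully faithful, hence an equivalence. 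Thus $\Ac^\pco(\sA)=\Ac^\bco(\sA)$ and the composite $\Hot(\sA^\inj)\simeq\sD^\pco(\sA)=\sD^\bco(\sA)$ is the asserted equivalence. I expect the main obstacle to be precisely the passage from the unbounded coproduct-totalization $J^\bu$ to a genuine complex of injectives: it is only the \emph{uniform} finiteness of injective dimension --- forced by the hypothesis on countable coproducts through the direct-summand argument --- that keeps the second resolution of bounded length and so prevents the reappearance of the unboundedness one is trying to eliminate. Part~(b) is proved dually, with injectives, coproducts and $\mathrm{Tot}^\oplus$ replaced throughout by projectives, products and $\mathrm{Tot}^\Pi$.
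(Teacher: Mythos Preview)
Your proof is correct and follows essentially the same two-step resolution strategy as the references the paper cites, namely \cite[Theorems~3.7--3.8]{Pkoszul} (see also \cite[Theorem~5.10]{Pedg}): first pass to a coproduct-totalized Cartan--Eilenberg resolution to land in complexes whose terms are countable coproducts of injectives, then use the finite-injective-dimension hypothesis to replace this by a genuine complex of injectives via a bounded second resolution. The paper's own proof is essentially a pointer to those sources, so your write-up is in fact a fleshed-out version of the intended argument; your device of packaging all the $I^{p,q}$ into a single countable coproduct $Z$ to extract a \emph{uniform} bound $d$ is exactly the right way to make the second step finite.

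One small omission worth noting: the paper's proof begins by observing that existence of coproducts together with enough injectives already forces coproducts to be exact in $\sA$ (\cite[Remark~5.2]{Pedg}), which is what makes $\sD^\pco(\sA)$ well-defined in the first place and underpins your repeated use of ``$\Ac^\pco(\sA)$ is closed under coproducts''. You use this implicitly, and it is harmless, but it is worth stating. Also, in your first step the phrase ``the coproduct-totalized cone lies in $\Ac^\pco(\sA)$'' slightly elides the mechanism: the cone is not literally a coproduct of the augmented columns but a sequential homotopy colimit of finite extensions of them, and it is the closure of $\Ac^\pco(\sA)$ under cones and countable coproducts (hence under such homotopy colimits) that finishes the argument. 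These are matters of phrasing rather than substance.
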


\begin{proof}
 Existence of coproducts and enough injective objects implies exactness
of coproducts in an exact category, and dually, existence of products
and enough projective objects implies exactness of
products~\cite[Remark~5.2]{Pedg}.
 The assertions of the proposition are provable similarly
to~\cite[Theorems~3.7\+-3.8]{Pkoszul}.
 For a generalization, see~\cite[Theorem~5.10]{Pedg}.
 See also~\cite[Theorem~7.9]{Pksurv}.
\end{proof}

 Conclusions: as a general rule, all absolutely acyclic, coacyclic,
or contraacyclic complexes tend to be acyclic.
 However, \emph{we do not know} how to prove that all Becker
co/contraacyclic complexes in an exact category are acyclic without
making some restrictive assumptions, or what kind of restrictive
assumptions would be optimal for the task.
 This is not a problem for abelian categories.

 On the other hand, in any exact category of finite homological
dimension, all acyclic complexes are absolutely acyclic.
 Hence they are also coacyclic and contraacyclic both in the senses
of Positselski and Becker.

\subsection{Exact subcategories of finite (co)resolution dimension}
\label{finite-coresolution-dimension-subsecn}
 Let $\sE$ be an exact category.
 A full subcategory $\sF\subset\sE$ is said to be \emph{resolving} if
\begin{itemize}
\item the full subcategory $\sF$ is closed under extensions and kernels
of admissible epimorphisms in~$\sE$;
\item the full subcategory $\sF$ is generating in $\sE$ (in the sense
of Section~\ref{cotorsion-pairs-subsecn}).
\end{itemize}

 Dually, a full subcategory $\sC\subset\sE$ is said to be
\emph{coresolving} if
\begin{itemize}
\item the full subcategory $\sC$ is closed under extensions and
cokernels of admissible monomorphisms in~$\sE$;
\item the full subcategory $\sC$ is cogenerating in $\sE$ (in the sense
of Section~\ref{cotorsion-pairs-subsecn}).
\end{itemize}

 For example, the left-hand classes of hereditary cotorsion pairs are
resolving, and the right-hand classes are coresolving.
 Furthermore, any (co)resolving subcategory is closed under extensions;
so it inherits an exact category structure from the ambient exact
category.

 The classical concepts of projective, injective, and flat dimensions
generalize naturally to (co)resolving subcategories.
 Let us assume the exact category $\sE$ to be, at least, weakly
idempotent-complete (in the sense of~\cite[Section~7]{Bueh}).

 Given a resolving subcategory $\sF$ in an exact category $\sE$, one
says that an object $E\in\sE$ has \emph{resolution dimension~$\le d$}
with respect to $\sF$ if there exists an exact sequence 
$$
 0\lrarrow F_d\lrarrow F_{d-1}\lrarrow\dotsb\lrarrow F_0\lrarrow E
 \lrarrow 0
$$
in $\sE$ with $F_i\in\sF$ for all $0\le i\le d$.
 The \emph{coresolution dimension} with respect to a coresolving
subcategory is defined dually.

 The (co)resolution dimension of an object with respect to
a (co)resolving subcategory does not depend on the choice of
a (co)resolution, in the same sense as the projective/injective/flat
dimension of a module does not depend on the choice of a (co)resolution.
 This result can be found in~\cite[Proposition~2.3]{Sto1}
or~\cite[Corollary~A.5.2]{Pcosh}; see also~\cite[Proposition~6.2]{Pedg}.

\begin{prop} \label{one-sided-bounded-resolving-subcategory-prop}
 Let\/ $\sE$ be an exact category, $\sF\subset\sE$ be a resolving
subcategory, and\/ $\sC\subset\sE$ be a coresolving subcategory.
 Then \par
\textup{(a)} the inclusion of exact categories\/ $\sC\rarrow\sE$
induces a triangulated equivalence of bounded below derived
categories\/ $\sD^+(\sC)\rarrow\sD^+(\sE)$; \par
\textup{(b)} the inclusion of exact categories\/ $\sF\rarrow\sE$
induces a triangulated equivalence of bounded above derived
categories\/ $\sD^-(\sF)\rarrow\sD^-(\sE)$.
\end{prop}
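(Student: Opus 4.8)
The plan is to prove part~(a) in detail and to obtain part~(b) formally by passing to the opposite exact category $\sE^\sop$, in which the coresolving subcategory $\sC$ becomes resolving and bounded below complexes become bounded above, so that $\sD^+$ turns into $\sD^-$. Throughout, recall that $\sC$ inherits an exact structure from $\sE$ (being closed under extensions), that a conflation in $\sC$ is in particular a conflation in $\sE$, and hence that the inclusion $\sC\rarrow\sE$ is an exact functor carrying acyclic complexes over $\sC$ to acyclic complexes over $\sE$. Thus the induced functor $\Phi\:\sD^+(\sC)\rarrow\sD^+(\sE)$ is well defined. Note also that, since $\sC$ is a \emph{full} subcategory of $\sE$, the underlying functor of homotopy categories $\Hot^+(\sC)\rarrow\Hot^+(\sE)$ is fully faithful, because chain maps and homotopies only involve the groups $\Hom$ between terms, which agree whether computed in $\sC$ or in $\sE$. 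The proof then rests on two pillars: a lemma asserting that $\Phi$ reflects acyclicity, and the existence of $\sC$\+coresolutions.

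First I would establish the \textbf{reflection lemma}: a bounded below complex $C^\bu$ with terms in $\sC$ that is acyclic as a complex over $\sE$ is already acyclic as a complex over $\sC$. Writing $Z^i$ for the syzygy objects, acyclicity over $\sE$ provides admissible short exact sequences $0\rarrow Z^i\rarrow C^i\rarrow Z^{i+1}\rarrow0$ in $\sE$. Since $C^\bu$ is bounded below, the induction can be started: the lowest nonzero syzygy is isomorphic to a term of $C^\bu$ and hence lies in $\sC$. Proceeding upward along the complex and using that $\sC$ is closed under cokernels of admissible monomorphisms, one finds that every $Z^i$ lies in $\sC$; consequently all these sequences are conflations in $\sC$, so $C^\bu$ is acyclic over $\sC$. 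Applying this to the cone of a morphism shows that a morphism of bounded below complexes over $\sC$ is a quasi-isomorphism in $\sC$ if and only if it is one in $\sE$.

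Next I would prove \textbf{essential surjectivity up to quasi-isomorphism}: for every $E^\bu\in\Hot^+(\sE)$ there is a quasi-isomorphism $E^\bu\rarrow C^\bu$ with $C^\bu\in\Hot^+(\sC)$. For a single object one uses that $\sC$ is cogenerating to embed it admissibly into an object of $\sC$, and that $\sC$ is closed under cokernels of admissible monomorphisms to iterate, producing a $\sC$\+coresolution. For a complex one assembles these degreewise into a Cartan--Eilenberg type coresolution (the horseshoe construction being available because $\sC$ is extension closed) and passes to the total complex; boundedness below of $E^\bu$ guarantees that the total complex is again bounded below, and the canonical augmentation $E^\bu\rarrow C^\bu$ is a quasi-isomorphism.

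Finally, I would assemble these two facts into the asserted equivalence by the standard Verdier localization formalism for exact categories (see~\cite[Section~10]{Bueh}): $\Phi$ is induced on Verdier quotients by the fully faithful functor $\Hot^+(\sC)\rarrow\Hot^+(\sE)$, the reflection lemma identifies the preimage of the acyclic complexes over $\sE$ with the acyclic complexes over $\sC$, and the coresolution statement supplies the cofinality (Ore) condition needed to compute morphisms in $\sD^+(\sE)$ by roofs whose apex already lies in $\sC$. I expect the \textbf{main obstacle} to be precisely this last step, namely the full faithfulness of $\Phi$ at the level of derived categories: it requires checking that an arbitrary roof representing a morphism between complexes over $\sC$ can be rewritten, by coresolving its apex and invoking the reflection lemma, as a roof internal to $\Hot^+(\sC)$. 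Essential surjectivity of $\Phi$ is then immediate from the coresolution statement, and part~(b) follows from part~(a) by the opposite-category duality noted at the outset.
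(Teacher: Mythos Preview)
Your proposal is correct and spells out precisely the classical argument that the paper defers to the literature for (the paper's proof consists only of citations to \cite[Lemma~I.4.6]{HartRD}, \cite[Proposition~13.2.2]{KS}, and \cite[Proposition~A.3.1(a)]{Pcosh}); the reflection lemma, the coresolution step, and the calculus-of-fractions assembly are exactly what those references do.

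One small caution on wording: in a general exact category the literal Cartan--Eilenberg construction is unavailable, since the objects $Z^i(E^\bu)$, $B^i(E^\bu)$, $H^i(E^\bu)$ need not exist in~$\sE$, so there is nothing to resolve separately and horseshoe back together. What the cited sources actually do (and what your sketch is compatible with) is a direct degree-by-degree induction: having built $f^{<i}\:E^{<i}\rarrow C^{<i}$, one forms the pushout of $E^{i-1}\rarrow C^{i-1}$ along $E^{i-1}\rarrow E^i$, embeds the result admissibly into some $C^i\in\sC$, and continues. This produces a morphism $E^\bu\rarrow C^\bu$ whose mapping cone is acyclic term by term. The rest of your argument goes through unchanged.
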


\begin{proof}
 This well-known result goes back to~\cite[Lemma~I.4.6]{HartRD}.
 See~\cite[Proposition~13.2.2]{KS} or~\cite[Proposition~A.3.1(a)]{Pcosh}
for some further details.
\end{proof}

\begin{prop} \label{co-contra-resolving-subcategory-prop}
\textup{(a)} Let\/ $\sA$ be an exact category with exact coproducts
and\/ $\sC\subset\sA$ be a coresolving subcategory closed under
coproducts.
 Then the inclusion of exact categories\/ $\sC\rarrow\sA$ induces
a triangulated equivalence of the Positselski-coderived categories\/
$\sD^\pco(\sC)\rarrow\sD^\pco(\sA)$. \par
\textup{(b)} Let\/ $\sB$ be an exact category with exact products
and\/ $\sF\subset\sB$ be a resolving subcategory closed under products.
 Then the inclusion of exact categories\/ $\sF\rarrow\sB$ induces
a triangulated equivalence of the Positselski-contraderived categories\/
$\sD^\pctr(\sF)\rarrow\sD^\pctr(\sB)$.
\end{prop}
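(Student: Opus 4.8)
The plan is to prove part~(a) in full and to deduce part~(b) from it by passing to opposite categories. Passing to $\sB^\sop$ turns the exact products of $\sB$ into exact coproducts, turns the resolving subcategory $\sF$ into a coresolving subcategory $\sF^\sop\subset\sB^\sop$ closed under coproducts, and identifies $\Hot(\sB)^\sop$ with $\Hot(\sB^\sop)$ in such a way that $\Ac^\pctr(\sB)$ corresponds to $\Ac^\pco(\sB^\sop)$ (products become coproducts, and totalizations of admissible short exact sequences are self-dual). Thus $\sD^\pctr(\sB)=\sD^\pco(\sB^\sop)^\sop$, and the functor of part~(b) is the opposite of the functor of part~(a) for the pair $(\sB^\sop,\sF^\sop)$; since an opposite of an equivalence is an equivalence, (b) follows from (a).

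For~(a), the first step is to see that the inclusion induces a functor on coderived categories. As $\sC$ is closed under extensions, every admissible short exact sequence in $\sC$ is admissible in $\sA$, so the totalization of such a sequence of complexes is absolutely acyclic in $\sA$; and as $\sC$ is closed under coproducts, which agree with the exact coproducts of $\sA$, the functor $\Hot(\sC)\rarrow\Hot(\sA)$ carries $\Ac^\pco(\sC)$ into $\Ac^\pco(\sA)$. This produces the triangulated functor $\Phi\:\sD^\pco(\sC)\rarrow\sD^\pco(\sA)$. Moreover $\Hot(\sC)\rarrow\Hot(\sA)$ is fully faithful, $\sC$ being a full subcategory.

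The heart of the argument is a coresolution construction giving essential surjectivity. Given a complex $A^\bu$ over $\sA$, I would use that $\sC$ is cogenerating to build termwise a Cartan--Eilenberg coresolution: a bicomplex $C^{\bu,\bu}$ of objects of $\sC$ with $C^{n,i}=0$ for $i<0$ and an augmentation $A^\bu\rarrow C^{\bu,0}$ which in each column $n$ is an admissible coresolution $0\rarrow A^n\rarrow C^{n,0}\rarrow C^{n,1}\rarrow\dotsb$. Totalizing along coproducts, $C^\bu=\mathrm{Tot}^\oplus(C^{\bu,\bu})$, the coresolution direction is bounded below, so each $C^m=\coprod_{i\ge0}C^{m-i,i}$ is a coproduct of objects of $\sC$ and hence lies in $\sC$. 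The cone of the resulting morphism of complexes $A^\bu\rarrow C^\bu$ is the coproduct-totalization of the augmented columns, each of which is a bounded-below acyclic complex. Here I would insert the sublemma that, in an exact category with exact coproducts, every bounded-below acyclic complex is coacyclic: its canonical admissible short exact sequences $0\rarrow Z^i\rarrow X^i\rarrow Z^{i+1}\rarrow0$ exhibit it as an increasing union of subcomplexes with absolutely acyclic subquotients, and the telescope sequence $0\rarrow\coprod_k F_k\rarrow\coprod_k F_k\rarrow X^\bu\rarrow0$ (admissible because coproducts are exact) realizes $X^\bu$ as coacyclic. Granting this, the cone is coacyclic, so $\Phi(C^\bu)\cong A^\bu$ in $\sD^\pco(\sA)$.

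Full faithfulness is the main obstacle. The clean reduction is to the detection statement that a complex $C^\bu$ over $\sC$ which is coacyclic in $\sA$ is already coacyclic in $\sC$; combined with the coresolution above, which supplies the cofinality (Ore) condition needed to compare morphisms in the two Verdier quotients, this makes the fully faithful inclusion $\Hot(\sC)\rarrow\Hot(\sA)$ descend to a fully faithful $\Phi$. I expect the detection statement to be the genuinely hard point: one must take a complex over $\sC$ lying in the coproduct-closed triangulated subcategory generated by short-exact totalizations over $\sA$, and rebuild a coacyclicity witness entirely inside $\sC$. My approach would be to coresolve the ambient short exact sequences into short exact sequences of $\sC$ (available since $\sC$ is coresolving and closed under coproducts) and to check compatibility of this coresolution with coproducts and cones; the bounded-below case is controlled by Proposition~\ref{one-sided-bounded-resolving-subcategory-prop}(a), and the passage to the unbounded, coproduct-closed setting is precisely where exactness of coproducts in $\sA$ and the closure of $\sC$ under them must be used. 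Throughout, Lemmas~\ref{acyclic-absolutely-acyclic} and~\ref{Positselski-co-contra-acyclic-vs-acyclic} keep the various notions of acyclicity aligned.
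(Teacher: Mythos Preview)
The paper's proof consists only of references to~\cite[Proposition and Remark~1.5]{EP}, \cite[Proposition~A.3.1(b)]{Pcosh}, and~\cite[Theorem~7.11]{Pedg}, so there is no detailed argument to compare against. Your overall approach---duality to reduce~(b) to~(a), then a coresolution-plus-telescope construction for essential surjectivity together with a detection statement for full faithfulness---is precisely the strategy in those references.

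There is, however, a concrete gap in your essential surjectivity argument. Your sublemma (bounded-below acyclic complexes are coacyclic) is correct and its telescope proof is fine, but it applies to a \emph{single} complex, whereas the cone of $A^\bu\to C^\bu$ is the coproduct totalization $\mathrm{Tot}^\oplus(\tilde D)$ of the augmented bicomplex~$\tilde D$, an unbounded complex carrying both horizontal and vertical differentials. Knowing that each \emph{column} of~$\tilde D$ is bounded-below acyclic (hence coacyclic) does not by itself make the total complex coacyclic; the sentence ``granting this, the cone is coacyclic'' is a non sequitur as written. The fix is a second application of the same idea: filter $\mathrm{Tot}^\oplus(\tilde D)$ by finite column windows $T_{[-k,k]}=\mathrm{Tot}(\tilde D|_{-k\le n\le k})$. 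Each $T_{[-k,k]}$ is a finitely iterated extension of shifted columns, hence coacyclic; the inclusions $T_{[-k,k]}\hookrightarrow T_{[-k-1,k+1]}$ are termwise split monomorphisms; and $\mathrm{Tot}^\oplus(\tilde D)=\varinjlim_k T_{[-k,k]}$, so a telescope sequence of exactly the kind you wrote for the sublemma finishes the job.

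On the detection statement, you are right that it is the crux, and your sketched induction on the coacyclicity witness is on track. One observation worth making explicit, since it is what makes the induction close: when $A^\bu$ already lies in $\Com(\sC)$, the entire coresolution---including all the cokernel complexes $B_i^\bu$---stays in~$\sC$, because $\sC$ is closed under cokernels of admissible monomorphisms. Hence for such $A^\bu$ the cone of $A^\bu\to C^\bu$ is coacyclic \emph{in~$\sC$}, not merely in~$\sA$. Combined with the (routine but necessary) verification that coresolutions can be chosen compatibly across a termwise-admissible short exact sequence of complexes and across coproducts, this yields: the coresolution of any complex coacyclic in~$\sA$ is coacyclic in~$\sC$, and a complex already in~$\sC$ differs from its coresolution by something coacyclic in~$\sC$.
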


\begin{proof}
 This result goes back to~\cite[Proposition and Remark~1.5]{EP}.
 Part~(b) in its stated form can be found
in~\cite[Proposition~A.3.1(b)]{Pcosh}.
 For a generalization, see~\cite[Theorem~7.11]{Pedg}.
\end{proof}

 There is not much we can say concerning the comparison of
the co/contraderived categories in the sense of Becker for an exact
category and its (co)resolving subcategory.
 All we can offer is the following lemma.

\begin{lem} \label{proj-inj-objects-in-co-resolving-lemma}
\textup{(a)} Let\/ $\sA$ be an exact category and\/ $\sC\subset\sA$
be a coresolving subcategory closed under direct summands.
 Then the classes of injective objects in\/ $\sA$ and\/ $\sC$
coincide, $\sC^\inj=\sA^\inj$.
 If there are enough injective objects in\/ $\sA$, then there are also
enough injective objects in\/~$\sC$. \par
\textup{(b)} Let\/ $\sB$ be an exact category and\/ $\sF\subset\sB$
be a resolving subcategory closed under direct summands.
 Then the classes of projective objects in\/ $\sB$ and\/ $\sF$
coincide, $\sF_\proj=\sB_\proj$.
 If there are enough projective objects in\/ $\sB$, then there are also
enough projective objects in\/~$\sF$.  \qed
\end{lem}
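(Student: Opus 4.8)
The plan is to prove part~(a) directly and then obtain part~(b) by passing to the opposite exact category, which interchanges resolving with coresolving subcategories, admissible monomorphisms with admissible epimorphisms, and injective with projective objects; since being closed under direct summands is a self-dual condition, part~(b) for $\sB$ is literally part~(a) applied to $\sB^\sop$. So all the work is in part~(a), which I would organize as an auxiliary inclusion, a two-way containment, and finally the statement about enough injectives.

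First I would record that $\sA^\inj\subseteq\sC$. Given $I\in\sA^\inj$, the cogenerating property of $\sC$ supplies an admissible monomorphism $I\rightarrowtail C$ with $C\in\sC$; since $I$ is injective in $\sA$, this admissible monomorphism splits, so $I$ is a direct summand of $C$, and closure of $\sC$ under direct summands forces $I\in\sC$. This is the only place the direct-summand hypothesis enters. Next, for the containment $\sA^\inj\subseteq\sC^\inj$, I would observe that any admissible monomorphism out of $I$ inside $\sC$ is in particular an admissible monomorphism in $\sA$, hence splits because $I\in\sA^\inj$; thus $I$ is injective in $\sC$ as well.

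For the reverse containment $\sC^\inj\subseteq\sA^\inj$, take $I\in\sC^\inj$ and an arbitrary admissible monomorphism $I\rightarrowtail X$ in $\sA$; I would show it splits. Using that $\sC$ is cogenerating, choose an admissible monomorphism $X\rightarrowtail C$ with $C\in\sC$. The composite $I\rightarrowtail X\rightarrowtail C$ is again an admissible monomorphism (admissible monomorphisms are closed under composition), both of whose ends lie in $\sC$, so closure of $\sC$ under cokernels of admissible monomorphisms puts $C/I$ in $\sC$ and makes $0\to I\to C\to C/I\to 0$ admissible exact in $\sC$. Injectivity of $I$ in $\sC$ splits it, yielding a retraction $r\colon C\to I$; restricting $r$ along $X\rightarrowtail C$ produces a retraction $X\to I$ of $I\rightarrowtail X$. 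Hence $I\in\sA^\inj$, and the two classes coincide. For the enough-injectives clause I would then start from an object $C\in\sC$, use the hypothesis in $\sA$ to embed it by an admissible monomorphism $C\rightarrowtail I$ with $I\in\sA^\inj$, invoke the auxiliary inclusion to get $I\in\sC$, and invoke closure under cokernels of admissible monomorphisms to put $I/C\in\sC$; the resulting sequence is admissible in $\sC$ with $I$ injective in $\sC$ by the established equality, so $\sC$ has enough injective objects.

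I expect the only genuinely delicate point to be the reverse containment $\sC^\inj\subseteq\sA^\inj$: one must upgrade $\sC$-injectivity, which a priori controls only admissible short exact sequences all of whose terms lie in $\sC$, to $\sA$-injectivity tested against arbitrary admissible monomorphisms of $\sA$. The mechanism that makes this go through is the combination of the cogenerating property with closure under cokernels of admissible monomorphisms, which together let one replace an arbitrary admissible monomorphism $I\rightarrowtail X$ by one landing inside $\sC$ and then factor the desired splitting through it; everything else is a routine dualization or a direct application of the definitions.
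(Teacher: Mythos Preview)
Your proof is correct and is precisely the natural argument; the paper itself omits the proof entirely (the lemma is simply marked with \qed), treating the statement as routine, so there is no alternative approach to compare.
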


 The two propositions and lemma above exhaust our list of results
about the derived/coderived/contraderived categories for
\emph{arbitrary} (co)resolving subcategories.
 The assertions below assume \emph{finite (co)resolution dimension}.

\begin{prop} \label{finite-co-resol-dim-conventional-derived}
 Let\/ $\sE$ be an exact category, $\sF\subset\sE$ be a resolving
subcategory, and\/ $\sC\subset\sE$ be a coresolving subcategory.
 Assume that there exists a finite integer $d\ge0$ such that all
objects of\/ $\sE$ have resolution dimensions\/~$\le d$ with respect
to\/ $\sF$ and coresolution dimensions\/~$\le d$ with respect
to\/~$\sC$.
 Then\/ \par
\textup{(a)} for every conventional or absolute derived category
symbol\/ $\star=\bb$, $+$, $-$, $\varnothing$, or\/~$\abs$,
the inclusion of exact categories\/ $\sC\rarrow\sE$ induces
a triangulated equivalence of the respective derived categories\/
$\sD^\star(\sC)\rarrow\sD^\star(\sE)$; \par
\textup{(b)} for every conventional or absolute derived category
symbol\/ $\star=\bb$, $+$, $-$, $\varnothing$, or\/~$\abs$,
the inclusion of exact categories\/ $\sF\rarrow\sE$ induces
a triangulated equivalence of derived categories\/
$\sD^\star(\sF)\rarrow\sD^\star(\sE)$.
\end{prop}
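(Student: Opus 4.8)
The plan is to reduce part~(b) to part~(a) by passing to the opposite exact category: a resolving subcategory $\sF\subset\sE$ becomes a coresolving subcategory $\sF^\sop\subset\sE^\sop$, finite resolution dimension becomes finite coresolution dimension, and $\sD^\star(\sE^\sop)\simeq\sD^\star(\sE)^\sop$ for each symbol $\star\in\{\bb,+,-,\varnothing,\abs\}$ (with $+$ and $-$ interchanged, but the claim ranges over all symbols symmetrically). So I focus on~(a). The bounded-below case $\sD^+(\sC)\rarrow\sD^+(\sE)$ is already supplied by Proposition~\ref{one-sided-bounded-resolving-subcategory-prop}(a), which needs no finiteness hypothesis; the new content is to leverage the uniform bound~$d$ to treat the symbols $\bb$, $-$, $\varnothing$, and~$\abs$ as well.

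The engine is a bounded-width coresolution. First I would show that every complex $X^\bu$ in $\sE$ (of any boundedness type) admits a morphism $X^\bu\rarrow C^\bu$ with $C^\bu$ a complex in $\sC$ of the same boundedness type and with \emph{absolutely acyclic} cone. I build this as a Cartan--Eilenberg-type $\sC$-coresolution: applying finite coresolution dimension termwise and assembling the pieces into a complex of complexes, I obtain a bicomplex with entries in $\sC$ whose columns are the coresolutions of the $X^i$, all of length $\le d$. Totalizing produces $C^\bu$, and the cone of $X^\bu\rarrow C^\bu$ is the totalization of a \emph{finite} (length $\le d+1$) acyclic complex of complexes, hence absolutely acyclic by the very definition of $\Ac^\abs(\sE)$. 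Since absolutely acyclic complexes are acyclic (Lemma~\ref{acyclic-absolutely-acyclic}(a)), this cone is trivial in every $\sD^\star(\sE)$, giving essential surjectivity of $\sD^\star(\sC)\rarrow\sD^\star(\sE)$ for all $\star$ at once. The crucial role of the hypothesis is that the width~$d$ is finite and uniform, so the construction survives for unbounded $X^\bu$.

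For full faithfulness I would use that $\Hot^\star(\sC)\rarrow\Hot^\star(\sE)$ is already fully faithful (a full additive subcategory induces the same homotopy classes of chain maps). It therefore suffices to match the subcategories of trivial objects, i.e.\ to prove $\Ac^\star(\sC)=\Hot^\star(\sC)\cap\Ac^\star(\sE)$; the inclusion $\subseteq$ is immediate from exactness of $\sC\rarrow\sE$. For $\supseteq$ with $\star\in\{\bb,+,-,\varnothing\}$, take $C^\bu\in\Hot^\star(\sC)$ acyclic in $\sE$. Each brutal truncation $0\rarrow Z^i\rarrow C^i\rarrow C^{i+1}\rarrow\dotsb$ is a $\sC$-coresolution of the cocycle $Z^i$, and since the $\sC$-coresolution dimension of $Z^i$ is $\le d$, a sufficiently high cosyzygy of it lies in $\sC$ by the independence-of-coresolution result (\cite[Proposition~2.3]{Sto1}, \cite[Corollary~A.5.2]{Pcosh}). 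As $i$ ranges over all integers, every cocycle $Z^j$ lies in $\sC$, so the admissible short exact sequences $0\rarrow Z^j\rarrow C^j\rarrow Z^{j+1}\rarrow0$ live in $\sC$ and $C^\bu$ is acyclic in $\sC$. Combined with essential surjectivity, this yields the equivalence for these four symbols.

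The hard part will be the absolute case $\star=\abs$, where knowing that $C^\bu\in\Hot(\sC)$ is merely \emph{acyclic} in $\sC$ is not enough, since acyclic complexes need not be absolutely acyclic when $\sC$ has infinite homological dimension (and the hypotheses do \emph{not} force finite homological dimension of $\sE$ or $\sC$). To upgrade acyclicity to absolute acyclicity inside $\sC$, I would return to the presentation of $C^\bu$ as an object of the thick subcategory of $\Hot(\sE)$ generated by totalizations of short exact sequences of complexes, and apply the bounded-width coresolution functorially to each generating short exact sequence, turning it into a short exact sequence of complexes in $\sC$ up to an error that is absolutely acyclic in $\sC$. Here the finite-and-uniform bound~$d$ is again indispensable: it is exactly what makes the ``totalization of a finite acyclic complex of complexes'' argument of \cite[proof of Theorem~3.6]{Pkoszul} (already invoked in Proposition~\ref{enough-inj-proj-finite-homol-dim-prop}) applicable, keeping the discrepancies within $\Ac^\abs(\sC)$ rather than merely $\Ac(\sC)$. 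Verifying that this transfer closes up --- that the coresolved generators genuinely witness $C^\bu\in\Ac^\abs(\sC)$ --- is the step I expect to require the most care.
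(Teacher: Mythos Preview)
Your overall architecture---duality to reduce (b) to (a), bounded-width coresolution with absolutely acyclic cone for essential surjectivity, and the cocycle-in-$\sC$ argument for full faithfulness when $\star\in\{\bb,+,-,\varnothing\}$---is exactly the approach of the references the paper cites (\cite[Proposition~A.5.6]{Pcosh}, \cite[Theorem~6.6]{Pedg}, \cite[Theorem~1.4]{EP}). The paper's own proof consists only of those citations, so on that level you have matched it and in fact written out substantially more than the paper does.

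The genuine gap is in your treatment of $\star=\abs$. Your plan is to take a presentation of $C^\bu\in\Hot(\sC)\cap\Ac^\abs(\sE)$ as an iterated cone/summand of totalizations $\mathrm{Tot}(S_i)$ of short exact sequences $S_i$ in $\sE$, and to coresolve each $S_i$ to a short exact sequence $S_i'$ in $\sC$. Two things go wrong. First, the ``error'' you produce---the cone of $\mathrm{Tot}(S_i)\to\mathrm{Tot}(S_i')$---has terms coming from $S_i$, which lives in $\sE$ and not in $\sC$; so it makes no sense to say this error is absolutely acyclic \emph{in~$\sC$}. At best it lies in $\Ac^\abs(\sE)$. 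Second, and more structurally, even if each generator $\mathrm{Tot}(S_i)$ is replaced by something in $\Ac^\abs(\sC)$, the thick-subcategory operations (cones, direct summands) assembling $C^\bu$ from these generators take place in $\Hot(\sE)$, and the intermediate objects need not lie in $\Hot(\sC)$ at all. Knowing that the generators land in $\Ac^\abs(\sC)$ does not, by itself, drag $C^\bu$ into $\Ac^\abs(\sC)$.

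What the cited proofs actually do for $\star=\abs$ is not to chase a presentation of $C^\bu$, but to argue on the side of the \emph{approximation} construction itself. The bounded-width coresolution $X^\bu\to T(X^\bu)$ can be built compatibly with termwise-exact short sequences (a horseshoe at the level of complexes), and the key point is that the resulting finite exact ``column'' complexes $0\to X^\bu\to C_0^\bu\to\dotsb\to C_d^\bu\to0$ are what witness the cone as absolutely acyclic. One then shows the induced functor $\Hot(\sC)/\bigl(\Hot(\sC)\cap\Ac^\abs(\sE)\bigr)\to\sD^\abs(\sE)$ is an equivalence by the standard Verdier-localization lemma \cite[Lemma~1.6]{Pkoszul}, and separately verifies $\Hot(\sC)\cap\Ac^\abs(\sE)=\Ac^\abs(\sC)$; for this last equality the references use that the approximation, applied to a complex already in $\sC$, yields a cone lying in $\Ac^\abs(\sC)$, together with a cofinality argument for right fractions. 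Your instinct that this step ``requires the most care'' is right; the fix is to work with the coresolution of $C^\bu$ itself (and of the roofs computing morphisms), not with a hypothetical presentation of $C^\bu$ by generators of $\Ac^\abs(\sE)$.
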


\begin{proof}
 Some cases are covered by
Proposition~\ref{one-sided-bounded-resolving-subcategory-prop} and
do not need the bounded (co)resolution dimension assumption.
 Otherwise, the case $\star=\varnothing$ can be found
in~\cite[Proposition~5.14]{Sto1}.
 The case $\star=\abs$ goes back, essentially,
to~\cite[Theorem~7.2.2(a)]{Psemi}; see also~\cite[Theorem~1.4]{EP}.
 In the form stated above, all cases of part~(b) can be found
in~\cite[Proposition~A.5.6]{Pcosh}.
 For a generalization, see~\cite[Theorem~6.6]{Pedg}.
\end{proof}

\begin{ex} \label{thematic-counterex}
 With the finite (co)resolution dimension conditions dropped,
the assertions of
Proposition~\ref{finite-co-resol-dim-conventional-derived}
certainly do \emph{not} stay true.
 It suffices to consider the abelian category $\sE=R\modl$
for a ring $R$ of infinite homological dimension, the resolving
subcategory of projective modules $\sF=R\modl_\proj$,
the coresolving subcategory of injective modules $\sC=R\modl^\inj$,
and $\star=\bb$ or~$\varnothing$.

 Indeed, no $R$\+module of infinite projective dimension belongs
to the essential image of the functor $\Hot^\bb(R\modl_\proj)=
\sD^\bb(R\modl_\proj)\rarrow\sD^\bb(R\modl)$, and no $R$\+module
of infinite injective dimension belongs to the essential image of
the functor $\Hot^\bb(R\modl^\inj)=\sD^\bb(R\modl^\inj)\rarrow
\sD^\bb(R\modl)$.
 On the other hand, the thematic example of an unbounded complex of
projective-injective $R$\+modules representing nonzero objects
in $\Hot(R\modl_\proj)$ and $\Hot(R\modl^\inj)$, but vanishing
as an object of $\sD(R\modl)$, can be found in~\cite[formula~(2)
in Section~7.4]{Pksurv}.
\end{ex}

 The following proposition contains a full list of cases for
the coderived and contraderived categories in the sense of
Positselski in the context of (co)resolving subcategories with
finite (co)resolution dimension.

\begin{prop} \label{finite-co-resol-dim-Positselski-second-kind}
 Let\/ $\sE$ be an exact category, $\sF\subset\sE$ be a resolving
subcategory, and\/ $\sC\subset\sE$ be a coresolving subcategory.
 Assume that there exists a finite integer $d\ge0$ such that all
objects of\/ $\sE$ have resolution dimensions\/~$\le d$ with respect
to\/ $\sF$ and coresolution dimensions\/~$\le d$ with respect
to\/~$\sC$.
 Then\/ \par
\textup{(a)} assuming that the exact category\/ $\sE$ has exact
products and the full subcategory $\sC\subset\sE$ is closed under
products, the inclusion of exact categories\/ $\sC\rarrow\sE$ induces
a triangulated equivalence of the Positselski-contraderived
categories\/ $\sD^\pctr(\sC)\rarrow\sD^\pctr(\sE)$; \par
\textup{(b)} assuming that the exact category\/ $\sE$ has exact
coproducts and the full subcategory $\sC\subset\sE$ is closed under
coproducts, the inclusion of exact categories\/ $\sC\rarrow\sE$ induces
a triangulated equivalence of the Positselski-coderived categories\/
$\sD^\pco(\sC)\rarrow\sD^\pco(\sE)$; \par
\textup{(c)} assuming that the exact category\/ $\sE$ has exact
products and the full subcategory $\sF\subset\sE$ is closed under
products, the inclusion of exact categories\/ $\sF\rarrow\sE$ induces
a triangulated equivalence of the Positselski-contraderived
categories\/ $\sD^\pctr(\sF)\rarrow\sD^\pctr(\sE)$; \par
\textup{(d)} assuming that the exact category\/ $\sE$ has exact
coproducts and the full subcategory $\sF\subset\sE$ is closed under
coproducts, the inclusion of exact categories\/ $\sF\rarrow\sE$ induces
a triangulated equivalence of the Positselski-coderived categories\/
$\sD^\pco(\sF)\rarrow\sD^\pco(\sE)$.
\end{prop}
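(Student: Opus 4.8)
The plan is to treat the four assertions in two groups. Parts (b) and (c) require no new ideas and, in fact, no finite-dimension hypothesis: part (b) is exactly Proposition~\ref{co-contra-resolving-subcategory-prop}(a) applied with $\sA=\sE$ (a coresolving subcategory closed under coproducts in a category with exact coproducts), and part (c) is exactly Proposition~\ref{co-contra-resolving-subcategory-prop}(b) applied with $\sB=\sE$ (a resolving subcategory closed under products in a category with exact products). So I would dispose of these two immediately and concentrate on the two \emph{crossed} cases (a) and (d), in which the variance of the (co)resolving subcategory is opposite to the one naturally matched with the product/coproduct closure defining the derived category of the second kind. These are precisely the cases where the finite (co)resolution dimension is essential.

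For (a), I would take a complex $E^\bu$ in $\sE$ and coresolve each term inside $\sC$; since every object of $\sE$ has $\sC$\+coresolution dimension $\le d$, these coresolutions can be arranged into a Cartan--Eilenberg type bicomplex of width $\le d$ in the coresolving direction. Totalizing produces a complex $C^\bu$ whose terms are \emph{finite} direct sums of objects of $\sC$, hence again objects of $\sC$ (a coresolving subcategory is closed under extensions and therefore under finite direct sums), together with a morphism $E^\bu\rarrow C^\bu$ whose cone is built from finite exact sequences and is thus absolutely acyclic. The decisive point is that an absolutely acyclic complex is simultaneously Positselski-coacyclic and Positselski-contraacyclic (the inclusions $\Ac^\abs\subseteq\Ac^\pco$ and $\Ac^\abs\subseteq\Ac^\pctr$ implicit in Lemma~\ref{Positselski-co-contra-acyclic-vs-acyclic}); so, even though $\sC$ is coresolving, this \emph{right} approximation is valid in the contraderived category. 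This yields essential surjectivity of $\sD^\pctr(\sC)\rarrow\sD^\pctr(\sE)$. Case (d) is handled symmetrically: a length-$\le d$ \,$\sF$\+resolution of each term totalizes to a complex $F^\bu$ in $\sF$ with a morphism $F^\bu\rarrow E^\bu$ whose cone is absolutely acyclic, giving essential surjectivity of $\sD^\pco(\sF)\rarrow\sD^\pco(\sE)$.

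For full faithfulness I would reduce to a statement about detecting second-kind acyclicity inside the subcategory. First, Proposition~\ref{finite-co-resol-dim-conventional-derived} already gives that the inclusion induces an equivalence $\sD^\abs(\sC)\xrightarrow{\sim}\sD^\abs(\sE)$ (respectively $\sD^\abs(\sF)\xrightarrow{\sim}\sD^\abs(\sE)$); being an equivalence, it reflects absolute acyclicity, so $\Ac^\abs(\sE)\cap\Hot(\sC)=\Ac^\abs(\sC)$ (respectively for $\sF$). I would then upgrade this to the second-kind acyclics, i.e.\ prove that a complex with terms in $\sC$ that is Positselski-contraacyclic in $\sE$ is already Positselski-contraacyclic in $\sC$ (and dually for $\sF$ and coacyclicity). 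Here one uses that $\sC$ is closed under products in $\sE$ and that products are exact in both categories, so that the product closures defining $\Ac^\pctr(\sC)$ and $\Ac^\pctr(\sE)$ are compatible, combined with the width-$\le d$ coresolution of the previous paragraph to stay inside $\Hot(\sC)$. Granting the identity $\Ac^\pctr(\sE)\cap\Hot(\sC)=\Ac^\pctr(\sC)$ together with essential surjectivity, the desired triangulated equivalence follows from the standard comparison lemma for Verdier quotients of a triangulated subcategory, as in the proof of Proposition~\ref{co-contra-resolving-subcategory-prop} and in~\cite[Proposition~A.5.6]{Pcosh}.

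I expect the main obstacle to be exactly this last upgrade --- the reverse inclusion $\Ac^\pctr(\sE)\cap\Hot(\sC)\subseteq\Ac^\pctr(\sC)$ and its coacyclic analogue --- because it must reconcile the \emph{infinite} product (respectively coproduct) closure built into the definition of the derived category of the second kind with the \emph{finite} length of the (co)resolutions that keep us inside $\sC$ (respectively $\sF$). The absolute-acyclicity comparison handles the bounded part of the argument cleanly, but transporting it across arbitrary products or coproducts is the delicate step, and it is where the hypotheses that $\sE$ has exact products (respectively coproducts) and that the subcategory is closed under them are used in full force. This is also the point at which the finite (co)resolution dimension, rather than mere (co)resolvability, cannot be dispensed with, as Example~\ref{thematic-counterex} shows.
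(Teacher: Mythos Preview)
Your proposal is correct and follows essentially the same approach as the paper: both immediately reduce parts~(b) and~(c) to Proposition~\ref{co-contra-resolving-subcategory-prop} without invoking the finite-dimension hypothesis, and for the ``crossed'' cases~(a) and~(d) the paper simply cites \cite[Theorem~7.2.2(a)]{Psemi}, \cite[Theorem~1.4]{EP}, and \cite[Proposition~A.5.6]{Pcosh}, whose content is precisely the finite-width Cartan--Eilenberg totalization argument you sketch. Your flagged obstacle---the reflection $\Ac^\pctr(\sE)\cap\Hot(\sC)=\Ac^\pctr(\sC)$ (and its dual)---is indeed the substantive step handled in those references; your reduction of it to the absolute case via Proposition~\ref{finite-co-resol-dim-conventional-derived} is the right starting point.
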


\begin{proof}
 Parts~(b) and~(c) are particular cases of
Proposition~\ref{co-contra-resolving-subcategory-prop}.
 Part~(d) goes back to~\cite[Theorem~7.2.2(a)]{Psemi}
and~\cite[Theorem~1.4]{EP}.
 In the form stated above, it can be found
in~\cite[Proposition~A.5.6]{Pcosh}.
 For a generalization, see~\cite[Theorem~6.6]{Pedg}.
\end{proof}

 Conclusions: as a general rule, the passage to a (co)resolving
subcategory with uniformly bounded (co)resolution dimension tends to
not change the derived, coderived, and contraderived categories.
 This has been established for the conventional derived categories
and the co/contraderived categories in the sense of Positselski.
 For the co/contraderived categories in the sense of Becker,
the formulations and proofs of such assertions remain to be worked out.
 When the condition of finite (co)resolution dimension is dropped,
the situation becomes more varied.

\subsection{Derived categories of flat and projective modules}
\label{derived-of-flats-and-projectives-subsecn}
 Let $R$ be an associative ring.
 Consider the exact category of flat $R$\+modules $\sE=R\modl_\fl$
(with the exact structure inherited from the abelian exact structure
of the ambient abelian category $R\modl$).
 Then the full subcategory of projective $R$\+modules $\sF=R\modl_\proj
\subset R\modl_\fl$ is a resolving subcategory in $R\modl_\fl$.

 The resolution dimensions of objects of $R\modl_\fl$ with respect to
$R\modl_\proj$ (i.~e., the projective dimensions of flat $R$\+modules)
are finite when $R$ is a Noetherian commutative ring of finite
Krull dimension~\cite[Corollaire~II.3.2.7]{RG}, when the ring $R$
has cardinality~$\le\aleph_n$, \,$n\in\boZ_{\ge0}$
\,\cite[Th\'eor\`eme~7.10]{GJ} (cf.~\cite[Corollaire~II.3.3.2]{RG}),
and for some noncommutative rings with dualizing complexes~\cite[first
assertion of Proposition~1.5]{CFH}, \cite[Proposition~4.3]{Pfp}.
 But, generally speaking, the projective dimensions of flat modules
can be infinite.
 Nevertheless, the following theorem holds.

\begin{thm} \label{projectives-in-flats-resolving-derived}
 For any ring $R$, the inclusion of exact categories
$R\modl_\proj\rarrow R\modl_\fl$ induces an equivalence of their
unbounded derived categories,
$$
 \Hot(R\modl_\proj)=\sD(R\modl_\proj)\simeq\sD(R\modl_\fl).
$$
\end{thm}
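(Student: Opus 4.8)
The plan is to realize the asserted equivalence as the functor $\Phi$ induced on homotopy categories by the inclusion of $R\modl_\proj$ into $R\modl_\fl$, and to prove it fully faithful and essentially surjective. First I would pin down the two sides. The exact structure that $R\modl_\proj$ inherits from $R\modl_\fl$ is \emph{split}: an admissible short exact sequence in $R\modl_\fl$ is an exact sequence of modules all of whose terms are flat, and any short exact sequence of projective modules splits because its cokernel is projective. Hence every acyclic complex in $R\modl_\proj$ is contractible, $\Ac(R\modl_\proj)=0$, and $\sD(R\modl_\proj)=\Hot(R\modl_\proj)$, as the statement already records. On the other side, a complex $F^\bu$ of flat modules is acyclic in the exact category $R\modl_\fl$ exactly when it is exact as a complex of $R$\+modules with all cycles $Z^n$ flat; since a short exact sequence with flat cokernel is pure, these are precisely the \emph{pure acyclic} complexes of flat modules, and $\sD(R\modl_\fl)=\Hot(R\modl_\fl)/\Ac(R\modl_\fl)$ is their Verdier localization.

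The second preliminary is that $R\modl_\proj$ is a generating resolving subcategory of $R\modl_\fl$ consisting of the projective objects. Given a flat module $F$, pick a surjection $P\twoheadrightarrow F$ from a projective module and let $K$ be its kernel. Because $F$ and $P$ are flat, the long exact sequence of $\operatorname{Tor}^R_\bu({-},M)$ gives $\operatorname{Tor}^R_1(K,M)=0$ for every $M$, so $K$ is flat and $0\to K\to P\to F\to0$ is admissible in $R\modl_\fl$. Thus one has admissible projective covers with \emph{flat syzygies}, which is what the two halves of the proof will consume.

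For \emph{full faithfulness} I would reduce to the orthogonality assertion that $\Hom_{\Hot(R\modl_\fl)}(P^\bu,E^\bu)=0$ for every complex $P^\bu$ of projectives and every pure acyclic complex $E^\bu$. Granting it, $\Hot(R\modl_\proj)$ lies in the left orthogonal ${}^{\perp}\Ac(R\modl_\fl)$ inside $\Hot(R\modl_\fl)$, and the standard calculus of fractions shows a localization is fully faithful on its left orthogonal; so $\Phi$ is fully faithful. For \emph{essential surjectivity} I would resolve: for any complex $F^\bu$ of flats I would build, using the admissible projective covers above degreewise and totalizing, a complex $P^\bu$ of projectives with a map $P^\bu\to F^\bu$ whose cone is a complex of flats that is exact with flat cycles, hence pure acyclic; then $\Phi(P^\bu)\cong F^\bu$ in $\sD(R\modl_\fl)$. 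For bounded-above $F^\bu$ this is the classical projective resolution of a complex; the unbounded case requires a homotopy-limit/telescope argument to keep the cone acyclic in $R\modl_\fl$, and is the more technical, though not the deepest, ingredient.

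The hard part, and the whole reason the theorem holds despite the projective dimensions of flat modules being infinite in general, is the orthogonality $\Hom_{\Hot(R\modl_\fl)}(P^\bu,E^\bu)=0$. The naive attempt fails for unbounded complexes: for each fixed projective $P^i$ the complex $\Hom_R(P^i,E^\bu)$ is acyclic, since $\Hom_R(P^i,{-})$ is exact and $E^\bu$ is exact, but assembling these into one contracting homotopy across an unbounded complex has a genuine obstruction; indeed, with ``pure acyclic'' weakened to merely ``acyclic'' the analogous vanishing is false, as Example~\ref{thematic-counterex} shows. What rescues the pure acyclic case is the flat/projective periodicity phenomenon: I would deduce the orthogonality from Neeman's analysis of the homotopy category of flat modules~\cite{Neem}, whose engine is the Benson--Goodearl periodicity result~\cite{BG} that a flat module $F$ fitting into a short exact sequence $0\to F\to P\to F\to0$ with $P$ projective is itself projective. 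Specializing the orthogonality to $E^\bu=P^\bu$ recovers the clean statement that a pure acyclic complex of projectives is contractible, which is the precise sense in which here ``passing from projectives to flats as direct limits'' does not enlarge the homotopy category. This periodicity input is the crux; everything else is formal or a standard resolution.
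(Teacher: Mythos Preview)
Your proposal is correct and matches the paper's proof, which simply cites~\cite[Proposition~8.1 and Theorem~8.6]{Neem}; you have accurately unpacked the two ingredients---the orthogonality $\Hom_{\Hot}(P^\bu,E^\bu)=0$ for pure acyclic $E^\bu$ (via Benson--Goodearl periodicity) giving full faithfulness, and the approximation of an arbitrary complex of flats by a complex of projectives modulo pure acyclics giving essential surjectivity. One terminological quibble: the unbounded approximation is a homotopy \emph{colimit} (telescope), not a homotopy limit, since countable coproducts of projectives remain projective while products generally do not; note also that building the compatible tower of bounded-above resolutions already consumes the orthogonality you established for full faithfulness.
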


\begin{proof}
 This is~\cite[Proposition~8.1 and
Theorem~8.6(i)\,$\Leftrightarrow$\,(iii)]{Neem}.
 Notice that, by the definition, the acyclic complexes in the exact
category $R\modl_\fl$ are precisely all the acyclic complexes of
flat $R$\+modules \emph{with flat $R$\+modules of cocycles} (rather
than arbitrary acyclic complexes of flat $R$\+modules).
\end{proof}

 Thus, even though the assumption of
Proposition~\ref{finite-co-resol-dim-conventional-derived}
does \emph{not} hold for the resolving subcategory $\sF=R\modl_\proj$
in the exact category $\sE=R\modl_\fl$, the conclusion of
Proposition~\ref{finite-co-resol-dim-conventional-derived}(b) still
holds true in this case for the derived category symbols $\star=-$
(by Proposition~\ref{one-sided-bounded-resolving-subcategory-prop}(b))
and, much more nontrivially, $\star=\varnothing$ (by
Theorem~\ref{projectives-in-flats-resolving-derived}).
 In this sense, one can say that the resolving subcategory
$R\modl_\proj$ in the exact category $R\modl_\fl$ behaves ``as if
the resolution dimensions were finite''.

 For a generalization of
Theorem~\ref{projectives-in-flats-resolving-derived}
to graded-flat and graded-projective (curved) DG\+modules over
a (curved) DG\+ring, see~\cite[Theorem~4.12]{PS7}.

\subsection{Becker-contraderived category of flat modules}
\label{contraderived-of-flats-subsecn}
 The following theorem is largely a restatement of
Theorem~\ref{projectives-in-flats-resolving-derived}.
 Notice that, by Lemma~\ref{proj-inj-objects-in-co-resolving-lemma}(b),
the projective objects of the exact category $R\modl_\fl$ are
precisely all the projective $R$\+modules, $(R\modl_\fl)_\proj=
R\modl_\proj$.
 There are enough projective objects in $R\modl_\fl$.

\begin{thm} \label{contraderived-category-of-flats}
 For any ring $R$, the class of all Becker-contraacyclic complexes
in the exact category\/ $\sB=R\modl_\fl$ coincides with the class of
all acyclic complexes in this exact category.
 The composition of triangulated functors\/ $\Hot(R\modl_\proj)\rarrow
\Hot(R\modl_\fl)\rarrow\sD^\bctr(R\modl_\fl)=\sD(R\modl_\fl)$ is
a triangulated equivalence
$$
 \Hot(R\modl_\proj)\simeq\sD^\bctr(R\modl_\fl)=\sD(R\modl_\fl).
$$
\end{thm}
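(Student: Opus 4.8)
The plan is to deduce everything from Neeman's Theorem~\ref{projectives-in-flats-resolving-derived} by a purely formal triangulated argument, once one isolates the single nonformal input. Write $\sB=R\modl_\fl$ with its inherited exact structure, so that $\Ac(\sB)$ is the class of acyclic complexes of flat modules \emph{with flat modules of cocycles}, and recall from Lemma~\ref{proj-inj-objects-in-co-resolving-lemma}(b) that the projective objects of $\sB$ are exactly the projective $R$\+modules, $\sB_\proj=R\modl_\proj$, of which there are enough. Since for complexes $P^\bu$ of projectives and $B^\bu$ of flats the Hom complex $\Hom_\sB(P^\bu,B^\bu)$ only sees the underlying complexes, the Becker-contraacyclic class is, by definition, the right Hom-orthogonal of $\Hot(R\modl_\proj)$ inside $\Hot(\sB)$: a complex $B^\bu$ lies in $\Ac^\bctr(\sB)$ iff $\Hom_{\Hot(\sB)}(P^\bu,B^\bu[k])=0$ for all $P^\bu\in\Hot(R\modl_\proj)$ and all $k\in\boZ$. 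The whole task is to identify this orthogonal with $\Ac(\sB)$; the equivalence will then drop out, since $\sD^\bctr(\sB)=\Hot(\sB)/\Ac^\bctr(\sB)=\Hot(\sB)/\Ac(\sB)=\sD(\sB)$ and the composite $\Hot(R\modl_\proj)\rarrow\sD(\sB)$ is the equivalence of Theorem~\ref{projectives-in-flats-resolving-derived}.

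The one substantive ingredient — and the step I expect to be the main obstacle — is the \emph{left} orthogonality: $\Hom_{\Hot(\sB)}(P^\bu,A^\bu[k])=0$ for every complex of projectives $P^\bu$, every $A^\bu\in\Ac(\sB)$, and every $k$; equivalently, $\Ac(\sB)\subseteq\Ac^\bctr(\sB)$. This is not formal. For unbounded $P^\bu$ the naive degreewise construction of a contracting homotopy runs into a $\varprojlim^1$\+obstruction, and the vanishing genuinely uses that the cocycles of $A^\bu$ are flat (a general acyclic complex of flats is \emph{not} so orthogonal). This is precisely the assertion that complexes of projectives are homotopy-projective relative to the pure-acyclic complexes of flat modules, which underlies Neeman's Theorem~\ref{projectives-in-flats-resolving-derived}; I would invoke it from there rather than reprove it.

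Granting this, I would build a semiorthogonal decomposition of $\Hot(\sB)$ with $\Ac(\sB)$ as kernel and $\Hot(R\modl_\proj)$ as complementary piece. For any $X^\bu\in\Hot(\sB)$, Theorem~\ref{projectives-in-flats-resolving-derived} furnishes a complex of projectives $P_X^\bu$ with $P_X^\bu\simeq X^\bu$ in $\sD(\sB)$; the left orthogonality identifies $\Hom_{\sD(\sB)}(P_X^\bu,X^\bu)$ with $\Hom_{\Hot(\sB)}(P_X^\bu,X^\bu)$, so this isomorphism is represented by an honest morphism $\beta\:P_X^\bu\rarrow X^\bu$ in $\Hot(\sB)$, necessarily with cone $C_X^\bu\in\Ac(\sB)$. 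This yields a triangle $P_X^\bu\rarrow X^\bu\rarrow C_X^\bu\rarrow P_X^\bu[1]$ with $P_X^\bu\in\Hot(R\modl_\proj)$ and $C_X^\bu\in\Ac(\sB)$. For the remaining inclusion $\Ac^\bctr(\sB)\subseteq\Ac(\sB)$, take $B^\bu\in\Ac^\bctr(\sB)$ and apply this triangle with $X^\bu=B^\bu$. Applying $\Hom_{\Hot(\sB)}(P_B^\bu,{-})$ and using that $\Hom_{\Hot(\sB)}(P_B^\bu,C_B^\bu[\ast])=0$ by left orthogonality, while $\Hom_{\Hot(\sB)}(P_B^\bu,B^\bu)=0$ because $B^\bu$ is Becker-contraacyclic, forces the isomorphism $\Hom_{\Hot(\sB)}(P_B^\bu,P_B^\bu)\cong\Hom_{\Hot(\sB)}(P_B^\bu,B^\bu)=0$; hence $\id_{P_B^\bu}=0$, so $P_B^\bu$ is contractible and $B^\bu\simeq C_B^\bu\in\Ac(\sB)$.

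Combining the two inclusions gives $\Ac^\bctr(\sB)=\Ac(\sB)$, which is the first assertion. The second assertion is then immediate: $\sD^\bctr(R\modl_\fl)=\sD(R\modl_\fl)$ as Verdier quotients of $\Hot(R\modl_\fl)$, and the functor $\Hot(R\modl_\proj)\rarrow\Hot(R\modl_\fl)\rarrow\sD^\bctr(R\modl_\fl)$ coincides with the equivalence of Theorem~\ref{projectives-in-flats-resolving-derived}. The argument uses Neeman's theorem twice — for essential surjectivity (to produce $P_X^\bu$) and, in its strong form, for the left orthogonality — which is what the phrase ``largely a restatement'' reflects.
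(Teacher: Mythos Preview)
Your proposal is correct and takes essentially the same approach as the paper: the paper's proof simply cites Neeman's Proposition~8.1 and Theorem~8.6(i)\,$\Leftrightarrow$\,(iii), which are the very same results underlying Theorem~\ref{projectives-in-flats-resolving-derived}, so the theorem is indeed ``largely a restatement''. Your writeup unpacks in detail the formal semiorthogonal-decomposition argument connecting the two formulations, correctly isolating the one nonformal input (orthogonality of $\Hot(R\modl_\proj)$ to $\Ac(R\modl_\fl)$, i.e., Neeman's~8.6) and invoking it where needed.
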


\begin{proof}
 The first assertion is
precisely~\cite[Theorem~8.6(i)\,$\Leftrightarrow$\,(iii)]{Neem}.
 The second assertion is~\cite[Proposition~8.1]{Neem}.
\end{proof}

 Thus, even though the assumptions of
Proposition~\ref{enough-inj-proj-finite-homol-dim-prop}(b)
do \emph{not} hold for the exact category $\sB=R\modl_\fl$,
the conclusion of this proposition still partly holds true for
this exact category.
 In this sense, one can say that the exact category $R\modl_\fl$
behaves ``as if its homological dimension were finite''
(see also Section~\ref{coderived-of-flats-subsecn} below).

\subsection{Derived categories of cotorsion and all modules}
\label{derived-of-cotorsion-and-all-subsecn}
 For any ring $R$, the full subcategory of cotorsion $R$\+modules
$\sC=R\modl^\cot$ is a coresolving subcategory in the abelian
category of $R$\+modules $\sE=R\modl$.
 By the \emph{cotorsion dimension} of an $R$\+module one means its
coresolution dimension with respect to the coresolving subcategory
$R\modl^\cot\subset R\modl$.

 Let $\sE$ be an exact category, $A$, $B\in\sE$ be two objects,
and $d\ge-1$ be an integer.
 One says that the object $A$ has \emph{projective dimension\/~$\le d$}
in $\sE$ if $\Ext_\sE^{d+1}(A,E)=0$ for all objects $E\in\sE$.
 Dually, one says that the object $B$ has \emph{injective
dimension\/~$\le d$} in $\sE$ if $\Ext_\sE^{d+1}(E,B)=0$ for all
objects $E\in\sE$.

 We start with a straightforward lemma and corollary.
 
\begin{lem} \label{projective-and-coresolution-dimensions}
 Let $\sE$ be an exact category, $(\sF,\sC)$ be a hereditary
cotorsion pair in\/ $\sE$, and $d\ge0$ be an integer.
 Then all objects of\/ $\sF$ have projective dimensions\/~$\le d$
in\/ $\sE$ if and only if all objects of\/ $\sE$ have coresolution
dimensions\/~$\le d$ with respect to\/~$\sC$. \qed
\end{lem}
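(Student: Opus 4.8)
The plan is to prove both implications by the standard dimension-shifting technique, drawing on two features of a hereditary cotorsion pair: the higher Ext-vanishing $\Ext^n_\sE(F,C)=0$ for all $F\in\sF$, $C\in\sC$, and $n\ge1$ (condition~(4) of Lemma~\ref{garcia-rozas}), and the defining identity $\sC=\sF^{\perp_1}$, which lets one detect membership in $\sC$ by testing $\Ext^1_\sE(F,{-})$ against all $F\in\sF$. The only additional tool needed is the long exact sequence of Yoneda Ext groups attached to an admissible short exact sequence in either argument, which is available in the exact-category setting (as in~\cite[Section~A.7]{Partin}). I would treat the two directions separately, in each case cutting a (co)resolution into short exact pieces and propagating an Ext computation along the cosyzygies.

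For the implication that bounded projective dimension of $\sF$ forces bounded $\sC$-coresolution dimension, I would fix $E\in\sE$ and use that $\sC$ is cogenerating to build an a~priori unbounded coresolution: choose an admissible monomorphism $E=E^0\rightarrowtail C^0$ with $C^0\in\sC$, let $E^1$ be its cokernel, and iterate, producing admissible short exact sequences $0\to E^i\to C^i\to E^{i+1}\to0$ with each $C^i\in\sC$. Shifting $\Ext_\sE(F,{-})$ along these and using $\Ext^n_\sE(F,C^i)=0$ for $n\ge1$ gives isomorphisms $\Ext^1_\sE(F,E^d)\cong\Ext^{d+1}_\sE(F,E)$ for every $F\in\sF$. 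By the hypothesis that every object of $\sF$ has projective dimension $\le d$, the right-hand side vanishes, so $\Ext^1_\sE(F,E^d)=0$ for all $F\in\sF$; since $\sC=\sF^{\perp_1}$, this forces $E^d\in\sC$. Splicing the first $d$ short exact sequences then exhibits $0\to E\to C^0\to\dotsb\to C^{d-1}\to E^d\to0$ as a $\sC$-coresolution of length $d$, proving coresolution dimension $\le d$.

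For the converse, I would fix $F\in\sF$ and an arbitrary $E\in\sE$, take a $\sC$-coresolution $0\to E\to C^0\to\dotsb\to C^d\to0$ of length $\le d$ supplied by the hypothesis, and cut it into admissible short exact sequences $0\to W^i\to C^i\to W^{i+1}\to0$ with $W^0=E$ and $W^d=C^d\in\sC$. Shifting $\Ext_\sE(F,{-})$ along these and again invoking $\Ext^n_\sE(F,C^i)=0$ for $n\ge1$ yields $\Ext^{d+1}_\sE(F,E)\cong\Ext^1_\sE(F,C^d)=0$. As $E$ and $F$ were arbitrary, every object of $\sF$ has projective dimension $\le d$ in~$\sE$.

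The two arguments are symmetric and essentially formal, so I do not expect a genuine obstacle; the only points demanding care are bookkeeping ones. I would confirm that the long exact Ext sequences are legitimately available in the exact (rather than abelian) category and that splicing admissible short exact sequences into a single exact coresolution is valid, and I would flag where the hypothesis that $\sC$ is cogenerating is actually used — namely only in the first direction, to start the coresolution at all. The detection step $E^d\in\sC\Leftrightarrow\Ext^1_\sE(F,E^d)=0$ for all $F\in\sF$ is precisely where the \emph{cotorsion pair} structure (rather than mere closure properties of $\sC$) enters, and it is the conceptual heart of the equivalence.
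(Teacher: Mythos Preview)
Your argument is correct and is precisely the standard dimension-shifting proof the paper has in mind; the paper itself gives no proof at all (the lemma is stated with a bare \qed), so your write-up supplies exactly the routine verification that was left to the reader. The only remark worth adding is that the cogenerating property of~$\sC$ you invoke in the first direction is not explicitly stated in the lemma but is implicit in the paper's convention that a hereditary cotorsion pair is one satisfying the equivalent conditions of Lemma~\ref{garcia-rozas}, whose hypotheses include $\sF$ generating and $\sC$ cogenerating.
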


\begin{cor} \label{projdims-of-flats-and-cotorsion-dimensions}
 For any ring $R$ and integer $d\ge0$, the following two conditions
are equivalent:
\begin{enumerate}
\item the projective dimensions of all flat $R$\+modules do not
exceed~$d$;
\item the cotorsion dimensions of all $R$\+modules do not exceed~$d$.
\qed
\end{enumerate}
\end{cor}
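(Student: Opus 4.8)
The plan is to derive Corollary~\ref{projdims-of-flats-and-cotorsion-dimensions} directly from Lemma~\ref{projective-and-coresolution-dimensions}, since the corollary is simply its specialization to the flat cotorsion pair. First I would recall that for any ring $R$, the pair of classes (flat $R$\+modules, cotorsion $R$\+modules) forms a hereditary complete cotorsion pair in the abelian category $\sE=R\modl$; this is Theorem~\ref{flat-cotorsion-pair}. With $\sF=R\modl_\fl$ and $\sC=R\modl^\cot$, the hypotheses of Lemma~\ref{projective-and-coresolution-dimensions} are met, so I can invoke that lemma verbatim.

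The translation is then immediate. Condition~(1) of the corollary, that the projective dimensions of all flat $R$\+modules are $\le d$, is precisely the statement that all objects of $\sF$ have projective dimension $\le d$ in $\sE=R\modl$. By Lemma~\ref{projective-and-coresolution-dimensions}, this is equivalent to all objects of $\sE$ having coresolution dimension $\le d$ with respect to $\sC=R\modl^\cot$. By the definition of cotorsion dimension as coresolution dimension with respect to the coresolving subcategory $R\modl^\cot\subset R\modl$ (given in Section~\ref{derived-of-cotorsion-and-all-subsecn}), this last condition is exactly condition~(2), that the cotorsion dimensions of all $R$\+modules are $\le d$. This completes the equivalence.

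I would note one subtlety worth checking: the notion of projective dimension $\le d$ \emph{in the exact category} $\sE$ is defined via $\Ext_\sE^{d+1}(A,E)=0$ for all $E\in\sE$, and for $\sE=R\modl$ with the abelian exact structure this coincides with the usual projective dimension of the $R$\+module $A$, since the Yoneda and derived-functor $\Ext$ groups agree in an abelian category. Hence condition~(1) as phrased in terms of ordinary projective dimensions of flat modules matches the exact-categorical formulation required by the lemma. Since the whole argument is a direct substitution into an already-proved lemma, there is essentially no obstacle; the only thing to verify carefully is that the defining ingredients (the flat cotorsion pair being hereditary, and cotorsion dimension being coresolution dimension with respect to $R\modl^\cot$) line up exactly with the lemma's hypotheses and conclusion, which they do by Theorem~\ref{flat-cotorsion-pair} and the definition in Section~\ref{derived-of-cotorsion-and-all-subsecn} respectively.
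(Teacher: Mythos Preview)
Your proposal is correct and matches the paper's intended approach exactly: the corollary carries a \qed\ in its statement because it is an immediate specialization of Lemma~\ref{projective-and-coresolution-dimensions} to the flat cotorsion pair of Theorem~\ref{flat-cotorsion-pair}, which is precisely the derivation you spell out.
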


 So the cotorsion dimensions of modules can be infinite, generally
speaking.
 Nevertheless, the following theorem holds.

\begin{thm} \label{cotorsion-in-all-coresolving-derived}
 For any ring $R$, the inclusion of exact categories $R\modl^\cot
\rarrow R\modl$ induces an equivalence of their unbounded derived
categories,
$$
 \sD(R\modl^\cot)\simeq\sD(R\modl).
$$
\end{thm}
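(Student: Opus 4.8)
The plan is to exhibit the triangulated functor $\Phi\:\sD(R\modl^\cot)\rarrow\sD(R\modl)$ induced by the inclusion and to produce an explicit quasi-inverse by cotorsion coresolution. First I would record the structural facts furnished by the flat cotorsion pair (Theorem~\ref{flat-cotorsion-pair}): being the right-hand class of a hereditary complete cotorsion pair, $R\modl^\cot$ is a coresolving subcategory of $R\modl$ and is closed under infinite products and direct summands. The bounded-below case $\sD^+(R\modl^\cot)\simeq\sD^+(R\modl)$ is already supplied by Proposition~\ref{one-sided-bounded-resolving-subcategory-prop}(a); the entire difficulty lies in removing the boundedness restriction. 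Note that $\Phi$ is well defined with no further input: any complex acyclic in the exact category $R\modl^\cot$ (assembled from admissible short exact sequences of cotorsion modules) is in particular acyclic as a complex of $R$\+modules.

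The key new ingredient is the cotorsion periodicity theorem~\cite{BCE}: every acyclic complex of cotorsion $R$\+modules has cotorsion modules of cocycles. Its role is to reconcile the two a priori different notions of acyclicity. Concretely, it tells us that a complex with terms in $R\modl^\cot$ which is acyclic in $R\modl$ is already acyclic in the exact category $R\modl^\cot$. Equivalently, the cone of any morphism of complexes of cotorsion modules that happens to be a quasi-isomorphism in $R\modl$ is acyclic in $R\modl^\cot$, hence zero in $\sD(R\modl^\cot)$. This is the statement I expect to carry all the weight.

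Next I would build the coresolution functor. Using the small object argument underlying the Eklof--Trlifaj theorem (Theorem~\ref{eklof-trlifaj}) to make the completeness of the flat cotorsion pair functorial, I would fix a functorial special preenvelope of each module into a cotorsion module with flat cokernel, and iterate it to obtain a functorial cotorsion coresolution $0\rarrow M\rarrow C^0(M)\rarrow C^1(M)\rarrow\dotsb$. Applied termwise to a complex $M^\bu$, with the horizontal differentials induced by functoriality from the differential of $M^\bu$, this yields a bicomplex $C^{\bu\bu}$ of cotorsion modules whose augmented columns (in the coresolution direction) are exact. Since that direction is bounded below and infinite products are exact in $R\modl$, the acyclic assembly lemma shows that the product-totalization $\mathrm{Tot}^{\sqcap}(C^{\bu\bu})$ is quasi-isomorphic to $M^\bu$; its terms $\prod_{p+q=n}C^{p,q}$ are cotorsion because $R\modl^\cot$ is closed under products. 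This defines a functor $\rho\:\Com(R\modl)\rarrow\Com(R\modl^\cot)$ together with a natural quasi-isomorphism $M^\bu\rarrow\rho(M^\bu)$, and in particular establishes essential surjectivity of $\Phi$.

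Finally I would assemble these into the equivalence. By periodicity, $\rho$ carries quasi-isomorphisms in $\Com(R\modl)$ to isomorphisms in $\sD(R\modl^\cot)$ (the cone of the $\rho$\+image of a quasi-isomorphism is an acyclic complex of cotorsion modules, hence acyclic in $R\modl^\cot$), so the composite $\Com(R\modl)\xrightarrow{\rho}\Com(R\modl^\cot)\rarrow\sD(R\modl^\cot)$ factors through $\sD(R\modl)$. The natural map $M^\bu\rarrow\rho(M^\bu)$ gives $\Phi\circ\rho\simeq\id$, while for a complex $C^\bu$ of cotorsion modules the same map $C^\bu\rarrow\rho(C^\bu)$ is a quasi-isomorphism between cotorsion complexes, hence an isomorphism in $\sD(R\modl^\cot)$ by periodicity, so $\rho\circ\Phi\simeq\id$; thus $\Phi$ is an equivalence. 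I expect the genuine obstacle to be precisely the input from~\cite{BCE}: without cotorsion periodicity one cannot match acyclicity in $R\modl^\cot$ with acyclicity in $R\modl$, and the cotorsion dimension of modules may be infinite (Corollary~\ref{projdims-of-flats-and-cotorsion-dimensions}), so the finite-coresolution-dimension machinery of Proposition~\ref{finite-co-resol-dim-conventional-derived} is unavailable. In this respect the statement is exactly parallel to Theorem~\ref{projectives-in-flats-resolving-derived}, where the flat/projective periodicity theorem plays the analogous decisive role.
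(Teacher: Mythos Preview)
Your proposal is correct and follows the same essential approach that the paper defers to by citation: the paper's proof simply refers to~\cite[Theorem~5.1(2)]{BCE} and~\cite[Proposition~9.6]{PS6}, and your argument is a faithful reconstruction of what lies behind those references, with cotorsion periodicity as the decisive input and a product-totalized functorial cotorsion coresolution supplying the quasi-inverse. The only remark worth making is that your identification of the obstacle is exactly right---without periodicity the two notions of acyclicity need not coincide and Proposition~\ref{finite-co-resol-dim-conventional-derived} is unavailable---and the paper emphasizes this same point in the surrounding discussion.
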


\begin{proof}
 This is essentially~\cite[first assertion of Theorem~5.1(2)]{BCE}.
 See~\cite[Proposition~9.6]{PS6} for additional details.
\end{proof}

 Thus, even though the assumption of
Proposition~\ref{finite-co-resol-dim-conventional-derived}
does \emph{not} hold for the coresolving subcategory $\sC=R\modl^\cot$
in the exact category $\sE=R\modl$, the conclusion of
Proposition~\ref{finite-co-resol-dim-conventional-derived}(a) still
holds true in this case for the derived category symbols $\star=+$
(by Proposition~\ref{one-sided-bounded-resolving-subcategory-prop}(a))
and, much more nontrivially, $\star=\varnothing$ (by
Theorem~\ref{cotorsion-in-all-coresolving-derived}).
 In this sense, one can say that the coresolving subcategory
$R\modl^\cot$ in the abelian category $R\modl$ behaves ``as if
the coresolution dimensions were finite''.

 For a version of Theorem~\ref{cotorsion-in-all-coresolving-derived}
for (curved) DG\+modules over a (curved) DG\+ring,
see~\cite[Theorem~3.7]{PS7}.

\subsection{Becker-coderived category of flat modules}
\label{coderived-of-flats-subsecn}
 An $R$\+module is said to be \emph{flat cotorsion} if it is
simultaneously flat and cotorsion.
 The full subcategory of flat cotorsion $R$\+modules is denoted by
$R\modl_\fl^\cot=R\modl_\fl\cap R\modl^\cot\subset R\modl$.

 The flat cotorsion $R$\+modules are precisely all the injective
objects of the exact category of flat $R$\+modules $R\modl_\fl$,
and there are enough such injective objects.
 The flat cotorsion $R$\+modules are also precisely all the projective
objects of the exact category of cotorsion $R$\+modules $R\modl^\cot$,
and there are enough such projective objects.
 These assertions follow from the fact that the flat cotorsion pair
in $R\modl$ is complete (by Theorem~\ref{flat-cotorsion-pair});
see~\cite[Lemma~2.1]{Pgen}.

\begin{thm} \label{coderived-category-of-flats}
 For any ring $R$, the class of all Becker-coacyclic complexes in
the exact category\/ $\sA=R\modl_\fl$ coincides with the class of
all acyclic complexes in this exact category.
 The composition of triangulated functors\/ $\Hot(R\modl_\fl^\cot)
\rarrow\Hot(R\modl_\fl)\rarrow\sD^\bco(R\modl_\fl)=\sD(R\modl_\fl)$
is a triangulated equivalence
$$
 \Hot(R\modl_\fl^\cot)\simeq\sD^\bco(R\modl_\fl)=\sD(R\modl_\fl).
$$
\end{thm}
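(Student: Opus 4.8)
The plan is to run the injective-side mirror of the argument that gives the Becker contraderived category in Theorem~\ref{contraderived-category-of-flats}, working inside the exact category $\sA=R\modl_\fl$. Recall from the discussion preceding the theorem that the injective objects of $\sA$ are exactly the flat cotorsion modules, $\sA^\inj=R\modl_\fl^\cot$, and that there are enough of them; as noted in Section~\ref{exact-categories-fin-homol-dim-subsecn}, the triangulated functor $\Hot(\sA^\inj)\rarrow\sD^\bco(\sA)$ is then fully faithful. So the two assertions split as follows: first, that the classes of acyclic and of Becker-coacyclic complexes in $\sA$ coincide (this already yields $\sD^\bco(\sA)=\sD(\sA)$, since the two quotients of $\Hot(\sA)$ are then literally equal), and second, that the fully faithful functor above is essentially surjective.

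First I would treat the inclusion $\Ac^\bco(\sA)\subset\Ac(\sA)$, i.e.\ Becker-coacyclic $\Rightarrow$ acyclic. This is the formal direction: it holds in any exact category with enough injective objects, by the exact-category analogue of the fact recorded in Section~\ref{exact-categories-fin-homol-dim-subsecn} that, with enough injectives present, Becker-coacyclic complexes are acyclic. For the reverse inclusion, let $F^\bu$ be acyclic in $\sA$, so $F^\bu$ is a complex of flat modules, acyclic as a complex of $R$\+modules, with flat modules of cocycles $Z^n$; hence there are admissible short exact sequences $0\rarrow Z^n\rarrow F^n\rarrow Z^{n+1}\rarrow0$ of flat modules. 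For a single flat cotorsion module $J$ the vanishing $\Ext^1_R(Z^{n+1},J)=0$ (flat against cotorsion) shows that $\Hom_R(-,J)$ keeps these sequences exact, so $\Hom_R(F^\bu,J)$ is acyclic. A d\'evissage over brutal truncations then handles an arbitrary \emph{bounded} complex $J^\bu$ of flat cotorsion modules, and a passage to the limit is meant to extend this to unbounded $J^\bu$.

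The second assertion requires, beyond $\sD^\bco(\sA)=\sD(\sA)$, essential surjectivity of $\Hot(\sA^\inj)\rarrow\sD(\sA)$, i.e.\ that every complex of flat modules admits a coresolution by a complex of flat cotorsion modules with acyclic (equivalently, by the first part, Becker-coacyclic) cone. Here I would use completeness of the flat cotorsion pair (Theorem~\ref{flat-cotorsion-pair}): every flat module $F$ fits into an admissible sequence $0\rarrow F\rarrow C\rarrow F'\rarrow0$ with $C$ cotorsion and $F'$ flat, whence $C$ is flat cotorsion; iterating gives each object of $\sA$ a flat cotorsion coresolution inside $\sA$. For a bounded-below complex these assemble into a Cartan--Eilenberg-type coresolution in the usual way, and the issue is again to make the construction converge for an unbounded complex.

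The hard part, in both the reverse inclusion above and the essential surjectivity, is precisely the passage from bounded to unbounded complexes: a naive inductive construction of a contracting homotopy, or of a coresolution, has no starting end, and one must control a potential $\varprojlim^1$ obstruction. I expect to dispatch this exactly as in the contraderived case, the relevant vanishing $\Ext^1_R(\text{flat},\text{cotorsion})=0$ together with the fact that $R\modl_\fl$ is closed under direct limits and is deconstructible (hence efficient) making the towers behave well enough for the limits to be exact. Concretely I would invoke the purity techniques of~\cite{Neem} and their extensions on the injective (flat cotorsion) side to obtain both $\Ac^\bco(R\modl_\fl)=\Ac(R\modl_\fl)$ and the equivalence $\Hot(R\modl_\fl^\cot)\simeq\sD(R\modl_\fl)$, and then read off $\sD^\bco(R\modl_\fl)=\sD(R\modl_\fl)$ from the equality of acyclicity classes.
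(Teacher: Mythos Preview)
There are two genuine gaps.

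First, your ``formal direction'' $\Ac^\bco(\sA)\subset\Ac(\sA)$ is \emph{not} formal for exact categories.  The result you cite from Section~\ref{exact-categories-fin-homol-dim-subsecn} is stated for \emph{abelian} categories with enough injectives; the paper's conclusions to that section say explicitly that it is not known in general how to prove Becker-coacyclic $\Rightarrow$ acyclic in an exact category with enough injectives.  Since $R\modl_\fl$ is not abelian, you cannot invoke this.  In the paper's proof this direction is not argued separately at all: once one has $\Ac(\sA)\subset\Ac^\bco(\sA)$ together with the approximation of every complex by a complex of flat cotorsion modules with cone in $\Ac(\sA)$, the full faithfulness of $\Hot(\sA^\inj)\rarrow\sD^\bco(\sA)$ forces any Becker-coacyclic complex to lie in $\Ac(\sA)$ a posteriori.

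Second, the steps you flag as ``the hard part'' are not merely convergence bookkeeping: they are precisely where the substantial input lives, and your proposed tools do not supply it.  For $\Ac(\sA)\subset\Ac^\bco(\sA)$ the paper invokes~\cite[Theorem~5.3]{BCE}, which says that $\Hom_R(F^\bu,Q^\bu)$ is acyclic for any $F^\bu\in\Ac(R\modl_\fl)$ and \emph{any} complex of cotorsion modules $Q^\bu$; this statement is essentially equivalent to cotorsion periodicity, and no d\'evissage from the single-object case reaches it---the $\varprojlim^1$ obstruction you mention is real and is what cotorsion periodicity dissolves.  For essential surjectivity the paper uses Gillespie's complete cotorsion pair on $\Com(R\modl)$ \cite[Corollary~4.10]{Gil}: a special preenvelope sequence $0\rarrow G^\bu\rarrow Q^\bu\rarrow F^\bu\rarrow0$ with $Q^\bu$ termwise cotorsion and $F^\bu\in\Ac(R\modl_\fl)$ exists in one stroke; if $G^\bu$ is termwise flat then $Q^\bu$ is termwise flat cotorsion.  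Your Cartan--Eilenberg construction would face the same unbounded-complex obstruction, and the purity techniques of~\cite{Neem} are tailored to the projective side (Theorem~\ref{contraderived-category-of-flats}); the right references on the injective/cotorsion side are~\cite{BCE} and~\cite{Gil}.
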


\begin{proof}
 The inclusion $\Ac(R\modl_\fl)\subset\Ac^\bco(R\modl_\fl)$
means that, for any acyclic complex of flat $R$\+modules $F^\bu$
with flat modules of cocycles, and for any complex of flat
cotorsion $R$\+modules $Q^\bu$, the complex of abelian groups
$\Hom_R(F^\bu,Q^\bu)$ is acyclic.
 More generally, this assertion holds for any complex of (not
necessarily flat) cotorsion $R$\+modules~$Q^\bu$.
 This is the result of~\cite[Theorem~5.3]{BCE}.

 It remains to show that for every complex of flat $R$\+modules
$G^\bu$ there exists a complex of flat cotorsion $R$\+modules
$Q^\bu$ together with a closed morphism of complexes of $R$\+modules
$G^\bu\rarrow Q^\bu$ whose cone is an acyclic complex of flat
$R$\+modules with flat $R$\+modules of cocycles.
 Indeed, by~\cite[Definition~3.3 and Corollary~4.10]{Gil}, for
any complex of $R$\+modules $G^\bu$, there exists a short exact
sequence of complexes of $R$\+modules $0\rarrow G^\bu\rarrow
Q^\bu\rarrow F^\bu\rarrow0$, where $Q^\bu$ is a complex of
cotorsion $R$\+modules and $F^\bu$ is an acyclic complex of flat
$R$\+modules with flat $R$\+modules of cocycles.
 Now if $G^\bu$ is a complex of flat $R$\+modules, then $Q^\bu$
is a complex of flat cotorsion $R$\+modules.
 Since $F^\bu\in\Ac(R\modl_\fl)$ and the totalization of the short
exact sequence $0\rarrow G^\bu\rarrow Q^\bu\rarrow F^\bu\rarrow0$
belongs to $\Ac^\abs(R\modl_\fl)\subset\Ac(R\modl_\fl)$ (see
Lemma~\ref{acyclic-absolutely-acyclic}(a)), the cone of the morphism
$G^\bu\rarrow Q^\bu$ also belongs to the triangulated subcategory
$\Ac(R\modl_\fl)\subset\Hot(R\modl_\fl)$.
\end{proof}

 Thus, even though the assumptions of
Proposition~\ref{enough-inj-proj-finite-homol-dim-prop}(a)
do \emph{not} hold for the exact category $\sB=R\modl_\fl$,
the conclusion of this proposition still partly holds true for
this exact category.
 In this sense, one can say once again that the exact category
$R\modl_\fl$ behaves ``as if its homological dimension were finite''
(cf.\ Section~\ref{contraderived-of-flats-subsecn}).

 For a version of Theorem~\ref{coderived-category-of-flats} for
(curved) DG\+modules over a (curved) DG\+ring,
see~\cite[Theorem~4.13]{PS7}.

\subsection{Becker-contraderived categories of cotorsion and
all modules} \label{contraderived-of-cotorsion-subsecn}
 As mentioned in the beginning of
Section~\ref{coderived-of-flats-subsecn}, the flat cotorsion
$R$\+modules are the projective objects of the exact category of
cotorsion $R$\+modules $R\modl^\cot$.
 For any exact category $\sE$, we denote by $\Com(\sE)$ the category
of all cochain complexes in~$\sE$.

\begin{thm} \label{cotorsion-in-all-coresolving-contraderived-thm}
 For any ring $R$, the class of all Becker-contraacyclic complexes
in the exact category\/ $R\modl^\cot$ coincides with the class
of all complexes of cotorsion $R$\+modules that are
Becker-contraacyclic as complexes in the abelian category $R\modl$,
$$
 \Ac^\bctr(R\modl^\cot)=\Com(R\modl^\cot)\cap\Ac^\bctr(R\modl).
$$
 The composition of triangulated functors\/ $\Hot(R\modl_\fl^\cot)
\rarrow\Hot(R\modl^\cot)\rarrow\sD^\bctr(R\modl^\cot)$ is
a triangulated equivalence
$$
 \Hot(R\modl_\fl^\cot)\simeq\sD^\bctr(R\modl^\cot).
$$
\end{thm}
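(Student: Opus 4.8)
The plan is to establish the displayed identification of the Becker-contraacyclic complexes first, and then to deduce the triangulated equivalence from it. The workhorse throughout will be the orthogonality of \cite[Theorem~5.3]{BCE}: for every acyclic complex $F^\bu$ of flat $R$\+modules with flat modules of cocycles and every complex $B^\bu$ of cotorsion $R$\+modules, the complex $\Hom_R(F^\bu,B^\bu)$ is acyclic. Fixing $B^\bu\in\Com(R\modl^\cot)$, I would consider, for a complex of flat modules $X^\bu$, the property that $\Hom_R(X^\bu,B^\bu)$ is acyclic. This property is homotopy-invariant and, by the quoted orthogonality, holds for every $X^\bu\in\Ac(R\modl_\fl)$; feeding a morphism whose cone lies in $\Ac(R\modl_\fl)$ into the triangle obtained from $\Hom_R(-,B^\bu)$, the property is therefore invariant under isomorphism in $\sD(R\modl_\fl)=\Hot(R\modl_\fl)/\Ac(R\modl_\fl)$. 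Now $B^\bu\in\Ac^\bctr(R\modl)$ means exactly that the property holds for all complexes of projective modules, while $B^\bu\in\Ac^\bctr(R\modl^\cot)$ means it holds for all complexes of flat cotorsion modules. Since every object of $\sD(R\modl_\fl)$ is isomorphic both to a complex of projective modules (Theorem~\ref{projectives-in-flats-resolving-derived}) and to a complex of flat cotorsion modules (Theorem~\ref{coderived-category-of-flats}), both memberships reduce to the single condition that the property hold on all of $\sD(R\modl_\fl)$, which yields the first assertion.

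For the equivalence, full faithfulness is the easy half. The complexes of flat cotorsion modules are precisely the complexes of projective objects of the exact category $R\modl^\cot$, so by the definition of Becker contraacyclicity they are left orthogonal in $\Hot(R\modl^\cot)$ to $\Ac^\bctr(R\modl^\cot)$; the standard Verdier localization lemma then makes the composite $\Hot(R\modl_\fl^\cot)\rarrow\Hot(R\modl^\cot)\rarrow\sD^\bctr(R\modl^\cot)$ fully faithful. The substance is essential surjectivity, which I would obtain by resolving an arbitrary $B^\bu\in\Com(R\modl^\cot)$ by a complex of flat cotorsion modules.

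Concretely, since $R\modl$ is a locally presentable abelian category with enough projective objects, the equivalence $\Hot(R\modl_\proj)\simeq\sD^\bctr(R\modl)$ \,\cite[Corollary~7.4]{PS4}, \cite[Corollary~6.13]{PS5} supplies a chain map $g\colon Q^\bu\rarrow B^\bu$ out of a complex of projective modules whose cone lies in $\Ac^\bctr(R\modl)$ (realized as a genuine chain map because $Q^\bu$ is left orthogonal to $\Ac^\bctr(R\modl)$), so that $g$ is an isomorphism in $\sD^\bctr(R\modl)$. The Gillespie special preenvelope of $Q^\bu$ for the cotorsion pair $(\Ac(R\modl_\fl),\Com(R\modl^\cot))$ in $\Com(R\modl)$ \,\cite[Definition~3.3 and Corollary~4.10]{Gil} --- whose right-hand class is all of $\Com(R\modl^\cot)$ by the orthogonality above --- is a short exact sequence $0\rarrow Q^\bu\rarrow W^\bu\rarrow F^\bu\rarrow0$ with $F^\bu\in\Ac(R\modl_\fl)$; write $\iota\colon Q^\bu\rarrow W^\bu$ for its monomorphism. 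Each $W^i$ is flat, being an extension of the flat modules $Q^i$ and $F^i$, and cotorsion, so $W^\bu$ is a complex of flat cotorsion modules. The full-faithfulness half of Theorem~\ref{projectives-in-flats-resolving-derived} gives $\Ac(R\modl_\fl)\subseteq\Ac^\bctr(R\modl)$, so the triangle $Q^\bu\rarrow W^\bu\rarrow F^\bu\rarrow Q^\bu[1]$ arising from the short exact sequence (whose totalization is absolutely acyclic) shows that $\iota$ is an isomorphism in $\sD^\bctr(R\modl)$. Since $\Ext^1_{\Com(R\modl)}(F^\bu,B^\bu)=0$ (the two complexes lie in the opposite classes of the cotorsion pair), the map $g$ factors as $g=\bar g\circ\iota$ for some chain map $\bar g\colon W^\bu\rarrow B^\bu$; then $\bar g=g\circ\iota^{-1}$ is an isomorphism in $\sD^\bctr(R\modl)$, so its cone is a complex of cotorsion modules lying in $\Ac^\bctr(R\modl)$. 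By the first assertion this cone lies in $\Ac^\bctr(R\modl^\cot)$, and therefore $\bar g$ is an isomorphism in $\sD^\bctr(R\modl^\cot)$, exhibiting $B^\bu$ as isomorphic to the complex of flat cotorsion modules $W^\bu$.

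The step I expect to be the crux is this last one. The resolution $g$ is manufactured in the ambient abelian category $R\modl$ and runs through a complex $Q^\bu$ of projective modules, which are not cotorsion and hence lie outside $\Hot(R\modl^\cot)$, so $g$ cannot be read directly inside the relative contraderived category $\sD^\bctr(R\modl^\cot)$. The preenvelope-extension device replaces $g$ by an honest morphism $\bar g$ between complexes of cotorsion modules, and the first assertion is precisely the bridge that transports the statement ``$\bar g$ is an isomorphism'' from $\sD^\bctr(R\modl)$ to $\sD^\bctr(R\modl^\cot)$. Everything feeding into this --- Neeman's theorem, the Becker coderived description of $R\modl_\fl$, and the \cite{BCE} orthogonality --- is already available, so I anticipate no further obstruction.
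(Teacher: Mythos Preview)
Your argument for the first assertion is essentially the paper's: both pass through the identification of $\Hot(R\modl_\proj)$ and $\Hot(R\modl_\fl^\cot)$ with $\sD(R\modl_\fl)$ (Theorems~\ref{projectives-in-flats-resolving-derived} and~\ref{coderived-category-of-flats}) and invoke \cite[Theorem~5.3]{BCE} to see that the property ``$\Hom_R(X^\bu,B^\bu)$ is acyclic'' descends to $\sD(R\modl_\fl)$.

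For the second assertion your route genuinely differs from the paper's.  The paper builds a \emph{new} complete cotorsion pair $(\Com(R\modl_\fl),\sC)$ in $\Com(R\modl)$ via deconstructibility of $\Com(R\modl_\fl)$ \cite[Proposition~4.3]{Sto0} and Eklof--Trlifaj; for any $D^\bu\in\Com(R\modl^\cot)$ it takes a special \emph{precover} $0\to C^\bu\to G^\bu\to D^\bu\to0$, observes that $G^\bu$ is then flat cotorsion and $C^\bu\in\sC\subset\Ac^\bctr(R\modl^\cot)$ directly (via \cite[Lemma~5.1]{PS4}), so the cone of $G^\bu\to D^\bu$ lands in $\Ac^\bctr(R\modl^\cot)$ without appealing to the first assertion.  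You instead reuse only ingredients already in play---the Becker equivalence $\Hot(R\modl_\proj)\simeq\sD^\bctr(R\modl)$ and Gillespie's cotorsion pair $(\Ac(R\modl_\fl),\Com(R\modl^\cot))$---composing a projective resolution with a special \emph{preenvelope}, and then lifting the resolution along the preenvelope by $\Ext^1$-vanishing.  Your approach makes the first assertion the bridge carrying contraacyclicity from $R\modl$ to $R\modl^\cot$, whereas the paper's one-step precover argument is more self-contained and independent of the first assertion.  Both are correct; the paper's is shorter, while yours avoids introducing an additional cotorsion pair.  (One small remark: the inclusion $\Ac(R\modl_\fl)\subset\Ac^\bctr(R\modl)$ you need is literally Theorem~\ref{contraderived-category-of-flats} rather than the full-faithfulness half of Theorem~\ref{projectives-in-flats-resolving-derived}, though of course the two are essentially the same statement.)
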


\begin{proof}
 It follows from Theorems~\ref{contraderived-category-of-flats}
and~\ref{coderived-category-of-flats} that, for every complex of
projective $R$\+modules $P^\bu$ there exists a complex of flat
cotorsion $R$\+modules $Q^\bu$ together with a morphism of complexes
of $R$\+modules $P^\bu\rarrow Q^\bu$ whose cone is acyclic in
the exact category $R\modl_\fl$.
 Conversely, for every complex of flat cotorsion $R$\+modules $Q^\bu$
there exists a complex of projective $R$\+modules $P^\bu$ together
with a morphism of complexes of $R$\+modules $P^\bu\rarrow Q^\bu$
with the same property.
 Furthermore, by~\cite[Theorem~5.3]{BCE}, the complex
$\Hom_R(F^\bu,C^\bu)$ is acyclic for all complexes
$F^\bu\in\Ac(R\modl_\fl)$ and $C^\bu\in\Com(R\modl^\cot)$.
 Hence the complex $\Hom_R(P^\bu,C^\bu)$ is acyclic if and only if
the complex $\Hom_R(Q^\bu,C^\bu)$ is.
 This proves the first assertion of the theorem.

 To prove the second assertion, notice that the class of all complexes
of flat modules $\Com(R\modl_\fl)$ is deconstructible in the abelian
category of all complexes of modules $\Com(R\modl)$
by~\cite[Proposition~4.3]{Sto0}.
 There are enough projective objects in the Grothendieck abelian
category $\Com(R\modl)$, and they belong to $\Com(R\modl_\fl)$.
 By a suitable version of the Eklof--Trlifaj theorem (cf.\
Theorem~\ref{eklof-trlifaj}), there exists a complete cotorsion pair
$(\sF,\sC)$ in $\Com(R\modl)$ with $\sF=\Com(R\modl_\fl)$.

 In particular, all the contractible two-term complexes of flat
modules $\dotsb\rarrow0\rarrow F\overset{\id}\rarrow F\rarrow0\
\rarrow\dotsb$, \,$F\in R\modl_\fl$, belong to $\sF$, and it follows
that all the objects of $\sC$ are complexes of cotorsion $R$\+modules
(cf.~\cite[fourth paragraph of the proof of Theorem~6.6]{PS4}).
 Furthermore, all the objects $C^\bu\in\sC$ are Becker-contraacyclic
as complexes in $R\modl^\cot$, since the complex $\Hom_R(G^\bu,C^\bu)$
is acyclic for all $G^\bu\in\Com(R\modl_\fl)$, and in particular for
all $G^\bu\in\Com(R\modl_\fl^\cot)$, by~\cite[Lemma~5.1]{PS4}.

 Now let $D^\bu\in\Com(R\modl^\cot)$ be a complex of cotorsion
$R$\+modules, and let $0\rarrow C^\bu\rarrow G^\bu\rarrow D^\bu
\rarrow0$ be a special precover sequence in $\Com(R\modl)$ with
$C^\bu\in\sC$ and $G^\bu\in\sF$.
 Then $C^\bu$ is a complex of cotorsion $R$\+modules, hence
$G^\bu$ is a complex of flat cotorsion $R$\+modules.
 Finally, we have $C^\bu\in\Ac^\bctr(R\modl^\cot)$, while
the totalization of the short exact sequence $0\rarrow C^\bu\rarrow
G^\bu\rarrow D^\bu\rarrow0$ belongs to $\Ac^\abs(R\modl^\cot)
\subset\Ac^\bctr(R\modl^\cot)$.
 Hence the cone of the morphism $G^\bu\rarrow D^\bu$ belongs to
$\Ac^\bctr(R\modl^\cot)$.
 This proves the second assertion of the theorem.
\end{proof}

\begin{lem} \label{homotopy-of-flat-cotorsion-as-contraderived-of-all}
 For any ring $R$, the composition of triangulated functors
$\Hot(R\modl_\fl^\cot)\allowbreak\rarrow\Hot(R\modl)\rarrow
\sD^\bctr(R\modl)$ is a triangulated equivalence \hfuzz=6pt
$$
 \Hot(R\modl_\fl^\cot)\simeq\sD^\bctr(R\modl).
$$
\end{lem}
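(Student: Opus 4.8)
The plan is to show that the functor $\Psi\colon\Hot(R\modl_\fl^\cot)\rarrow\sD^\bctr(R\modl)$ induced by the inclusion $R\modl_\fl^\cot\hookrightarrow R\modl$ and the localization $\Hot(R\modl)\rarrow\sD^\bctr(R\modl)$ is a triangulated equivalence, by verifying directly that it is essentially surjective and fully faithful. I prefer this to stringing together a chain of previously established equivalences (through $\sD(R\modl_\fl)$), because the latter would force a delicate identification of the composite with the inclusion-induced functor. The inputs I would assemble are: the Becker equivalence $\Hot(R\modl_\proj)\simeq\sD^\bctr(R\modl)$ recalled in Section~\ref{co-contra-derived-basics-subsecn}; the fact from Section~\ref{coderived-of-flats-subsecn} that the flat cotorsion modules are exactly the injective objects of the exact category $R\modl_\fl$; Theorems~\ref{contraderived-category-of-flats} and~\ref{coderived-category-of-flats}; and the two resolution constructions appearing in the proof of Theorem~\ref{cotorsion-in-all-coresolving-contraderived-thm}.

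First I would record three orthogonality and acyclicity facts. (1) Since the projective objects of $R\modl$ are precisely the projective modules, a complex $B^\bu$ lies in $\Ac^\bctr(R\modl)$ exactly when $\Hom_{\Hot(R\modl)}(P^\bu,B^\bu[n])=0$ for every complex of projectives $P^\bu$ and every $n$; hence complexes of projectives are left orthogonal to $\Ac^\bctr(R\modl)$, so that $\Hom_{\sD^\bctr(R\modl)}(P^\bu,Z^\bu)=\Hom_{\Hot(R\modl)}(P^\bu,Z^\bu)$ for all $Z^\bu$. (2) Because a complex of flat modules is tested for Becker-contraacyclicity against the very same complexes of projectives whether regarded in $R\modl$ or in $R\modl_\fl$, Theorem~\ref{contraderived-category-of-flats} gives $\Ac(R\modl_\fl)=\Ac^\bctr(R\modl_\fl)\subseteq\Ac^\bctr(R\modl)$. (3) By Theorem~\ref{coderived-category-of-flats} one has $\Ac^\bco(R\modl_\fl)=\Ac(R\modl_\fl)$, so any complex of flat cotorsion modules, being a complex of injective objects of $R\modl_\fl$, is right orthogonal to $\Ac(R\modl_\fl)$ inside $\Hot(R\modl_\fl)$.

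For essential surjectivity I would represent an arbitrary object of $\sD^\bctr(R\modl)$ by a complex of projectives $P^\bu$ using the Becker equivalence, then invoke the construction in the proof of Theorem~\ref{cotorsion-in-all-coresolving-contraderived-thm} producing a complex of flat cotorsion modules $Q^\bu$ and a morphism $P^\bu\rarrow Q^\bu$ whose cone lies in $\Ac(R\modl_\fl)$; by fact~(2) this cone is Becker-contraacyclic in $R\modl$, so $P^\bu\cong\Psi(Q^\bu)$ in $\sD^\bctr(R\modl)$. For full faithfulness, given complexes of flat cotorsion modules $X^\bu$ and $Y^\bu$, I would choose (again from that proof) a complex of projectives $P^\bu\rarrow X^\bu$ with cone $C^\bu\in\Ac(R\modl_\fl)$. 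On the target side, fact~(2) makes $P^\bu\rarrow X^\bu$ an isomorphism in $\sD^\bctr(R\modl)$, and fact~(1) then gives $\Hom_{\sD^\bctr(R\modl)}(X^\bu,Y^\bu)\cong\Hom_{\Hot(R\modl)}(P^\bu,Y^\bu)$. On the source side, using that $R\modl_\fl^\cot$ is full in $R\modl_\fl$ and applying fact~(3) to the triangle $P^\bu\rarrow X^\bu\rarrow C^\bu\rarrow$, I obtain $\Hom_{\Hot(R\modl_\fl^\cot)}(X^\bu,Y^\bu)=\Hom_{\Hot(R\modl_\fl)}(X^\bu,Y^\bu)\cong\Hom_{\Hot(R\modl_\fl)}(P^\bu,Y^\bu)=\Hom_{\Hot(R\modl)}(P^\bu,Y^\bu)$.

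The step I expect to be the main obstacle is the bookkeeping that matches these two computations: one must check that the source-side isomorphism (from orthogonality of the injectives of $R\modl_\fl$) and the target-side isomorphism (from orthogonality of projectives against $\Ac^\bctr$) are both realized by precomposition with the one map $P^\bu\rarrow X^\bu$, so that the resulting isomorphism of $\Hom$ groups is exactly the one induced by $\Psi$. The genuinely nontrivial mathematical content is already packaged in Theorems~\ref{contraderived-category-of-flats} and~\ref{coderived-category-of-flats}; what remains is the inclusion $\Ac(R\modl_\fl)\subseteq\Ac^\bctr(R\modl)$ together with the orthogonality arguments above. As a closing remark I would observe that these same facts show the inclusion-induced functor $\sD^\bctr(R\modl^\cot)\rarrow\sD^\bctr(R\modl)$ to be an equivalence, so that the lemma may be read, via Theorem~\ref{cotorsion-in-all-coresolving-contraderived-thm}, as the composite $\Hot(R\modl_\fl^\cot)\simeq\sD^\bctr(R\modl^\cot)\simeq\sD^\bctr(R\modl)$.
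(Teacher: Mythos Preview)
Your argument is correct, but it takes a different organizational route from the paper. The paper does precisely what you say you prefer to avoid: it composes the Becker equivalence $\Hot(R\modl_\proj)\simeq\sD^\bctr(R\modl)$ with the equivalence $\Hot(R\modl_\proj)\simeq\Hot(R\modl_\fl^\cot)$ obtained by juxtaposing Theorems~\ref{contraderived-category-of-flats} and~\ref{coderived-category-of-flats}, and then observes in one sentence that the resulting triangle commutes because $\Ac(R\modl_\fl)\subset\Ac^\bctr(R\modl)$ (your fact~(2)). So the ``delicate identification'' you worried about is dispatched by exactly the inclusion you already isolated. Your direct verification of essential surjectivity and full faithfulness uses the same ingredients---Theorems~\ref{contraderived-category-of-flats} and~\ref{coderived-category-of-flats}, the Becker equivalence, and that same inclusion---and the orthogonality bookkeeping you flag as the main obstacle is genuinely routine (both isomorphisms are literally precomposition with $P^\bu\to X^\bu$). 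The paper's approach is shorter; yours makes the role of each orthogonality statement more explicit and avoids having to name the intermediate equivalence through $\sD(R\modl_\fl)$.
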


\begin{proof}
 The composition of triangulated functors $\Hot(R\modl_\proj)
\rarrow\Hot(R\modl)\allowbreak\rarrow\sD^\bctr(R\modl)$ is
a triangulated equivalence
\begin{equation} \label{contraderived-of-all-modules}
 \Hot(R\modl_\proj)\simeq\sD^\bctr(R\modl)
\end{equation}
by~\cite[Corollary~5.10]{Neem}, \cite[Proposition~1.3.6(1)]{Bec},
or~\cite[Corollary~7.4]{PS4}.
 Comparing the results of Theorems~\ref{contraderived-category-of-flats}
and~\ref{coderived-category-of-flats}, one comes to a triangulated
equivalence
\begin{equation} \label{co-contra-derived-of-flats}
 \Hot(R\modl_\proj)\simeq\Hot(R\modl_\fl^\cot).
\end{equation}
 Finally, the two equivalences~\eqref{contraderived-of-all-modules}
and~\eqref{co-contra-derived-of-flats} form a commutative diagram
with the desired triangulated functor $\Hot(R\modl_\fl^\cot)\rarrow
\sD^\bctr(R\modl)$, because all acyclic complexes in the exact
category $R\modl_\fl$ are Becker-contraacyclic in $R\modl_\fl$, hence
also in $R\modl$, by Theorem~\ref{contraderived-category-of-flats}.
\end{proof}

\begin{cor} \label{cotorsion-in-all-coresolving-contraderived-cor}
 For any ring $R$, the inclusion of exact categories
$R\modl^\cot\rarrow R\modl$ induces a well-defined triangulated
functor between the Verdier quotient categories\/
$\sD^\bctr(R\modl^\cot)\rarrow\sD^\bctr(R\modl)$.
 The resulting functor is a triangulated equivalence of
Becker-contraderived categories
$$
 \sD^\bctr(R\modl^\cot)\simeq\sD^\bctr(R\modl).
$$
\end{cor}

\begin{proof}
 The first assertion holds, because the inclusion
$\Ac^\bctr(R\modl^\cot)\subset\Ac^\bctr\allowbreak(R\modl)$
follows from the first assertion of
Theorem~\ref{cotorsion-in-all-coresolving-contraderived-thm}.
 To deduce the second assertion, compare the second assertion
of Theorem~\ref{cotorsion-in-all-coresolving-contraderived-thm}
with Lemma~\ref{homotopy-of-flat-cotorsion-as-contraderived-of-all}.
\end{proof}

\subsection{Periodicity theorems} \label{periodicity-subsecn}
 The most nontrivial aspects of the results of
Sections~\ref{derived-of-flats-and-projectives-subsecn}\+-%
\ref{contraderived-of-cotorsion-subsecn} are based on the class of
phenomena called the \emph{periodicity theorems}, or more specifically
the \emph{flat and projective periodicity} and the \emph{cotorsion
periodicity}.

 Let $L$ be an $R$\+module.
 An $R$\+module $M$ is said to be \emph{$L$\+periodic} if there is
a short exact sequence of $R$\+modules $0\rarrow M\rarrow L\rarrow M
\rarrow0$.
 Periodicity theorems are results claiming that if a module $L$
belongs to a certain class, a module $M$ belongs to a certain class,
and the exact sequence $0\rarrow M\rarrow L\rarrow M\rarrow0$ belongs
to a certain class of exact sequences, then the module $M$ actually
has to belong to a certain (more narrow) class of modules.

 The \emph{flat and projective periodicity theorem} claims that if $L$
is a projective $R$\+module and $M$ is a flat $R$\+module, then $M$
is actually a projective $R$\+module~\cite[Theorem~2.5]{BG}.
 The \emph{cotorsion periodicity theorem} claims that if $L$ is
a cotorsion $R$\+module, then $M$ is also a cotorsion
$R$\+module~\cite[Theorem~1.2(2) or Proposition~4.8(2)]{BCE}.

 Periodicity theorems are closely related to results about modules of
cocycles in acyclic complexes.
 A general discussion of this connection can be found
in~\cite[proof of Proposition~7.6]{CH},
\cite[Propositions~1 and~2]{EFI}, and~\cite[Proposition~2.4]{BCE};
see~\cite[Proposition~1.1]{Pgen} and~\cite[Proposition~8.4]{PS6}
for generalizations.
 In particular, in any acyclic complex of projective modules with
flat modules of cocycles, the modules of cocycles are actually
projective~\cite[Remark~2.15]{Neem}, while in any acyclic complex
of cotorsion modules, the modules of cocycles are also
cotorsion~\cite[Theorem~5.1(2)]{BCE}.

 A general discussion of known periodicity theorems with a long list
of such theorems and connections between them can be found in
the introduction to the paper~\cite{BHP}.
 In particular, beyond the flat/projective periodicity theorem and
cotorsion periodicity theorem, there is also
the fp\+injective/injective periodicity theorem~\cite[Theorem~5.4
and Corollary~5.5]{Sto} and the fp\+projective periodicity
theorem~\cite[Example~4.3]{SarSt}, \cite[Theorems~0.7\+-0.8
or~4.1\+-4.2]{BHP}.
 Subsequent generalizations can be found in
the preprint~\cite[Theorems~0, A, and~B]{Pgen}.

\subsection{Flat things as direct limits}
\label{flats-as-direct-limits-subsecn}
 The classical \emph{Govorov--Lazard theorem} tells that flat
$R$\+modules are precisely the direct limits of projective $R$\+modules,
or equivalently, of finitely generated free $R$\+modules~\cite{Gov,Laz},
\cite[Corollary~2.22]{GT}.
 This result can be generalized very widely if one agrees to replace
``finitely generated free/projective modules'' by ``countably presented
flat modules''.
 As stated in the abstract to the paper~\cite{Pres}, a general
principle seems to be that ``anything flat is a direct limit of
countably presented flats''.

 In particular, all at most countable diagrams of flat modules, such
as, e.~g., complexes of flat modules, are direct limits of
diagrams/complexes of countably presented flat
modules~\cite[Corollaries~10.3\+-10.4]{Pacc}.
 All acyclic complexes of flat modules with flat modules of
cocycles are direct limits of such complexes of countably presented
flat modules~\cite[Corollary~10.14]{Pacc}.
 All graded-flat (curved) DG\+modules are direct limits of countably
presented graded-flat (curved) DG\+modules~\cite[Proposition~4.7]{PS7}.

 Over a countably coherent ring (e.~g., over a coherent ring or
a countably Noetherian ring), all acyclic complexes of flat modules
(with arbitrary modules of cocycles) are direct limits of such
complexes of countably presented flat modules~\cite[Theorem~4.2]{Pres}.
 Furthermore, for any given integer $m\ge0$, all modules of flat
dimension~$m$ over such a ring are direct limits of countably presented
modules of flat dimension at most~$m$ \,\cite[Corollary~5.2]{Pres}.

 All flat quasi-coherent sheaves on countably quasi-compact,
countably quasi-separated schemes (e.~g., on the usual quasi-compact
quasi-separated schemes) are direct limits of locally countably
presented flat quasi-coherent sheaves~\cite[Theorems~2.4 and~3.5]{PS6}.
 All complexes of flat quasi-coherent sheaves on such schemes are
direct limits of complexes of locally countably presented flat
quasi-coherent sheaves~\cite[Theorem~4.1]{PS6}.
 All acyclic complexes of flat quasi-coherent sheaves with flat
sheaves of cocycles on such schemes are direct limits of complexes
of locally countably presented flat quasi-coherent sheaves from
the same class~\cite[Theorem~4.2]{PS6}.
 The proofs of these assertions are based on category-theoretic
results of~\cite[Proposition~3.1]{CR},
\cite[Pseudopullback Theorem~2.2]{RR}, and the paper~\cite{Pacc}
(going back to~\cite[Theorem~3.8, Corollary~3.9, and
Remark~3.11(II)]{Ulm}).

 All homotopy flat complexes of flat modules are direct limits of
bounded complexes of finitely generated projective
modules~\cite[Theorem~1.1]{CH}.
 It follows that all homotopy flat complexes of flat quasi-coherent
sheaves on countably quasi-compact, semi-separated schemes (e.~g.,
on the usual quasi-compact semi-separated schemes) are direct limits
of homotopy flat complexes of locally countably presented flat
quasi-coherent sheaves~\cite[Theorem~4.5]{PS6}.

 Furthermore, all flat quasi-coherent sheaves over certain stacks
are direct limits of locally countably presented
ones~\cite[Theorem~3.1]{Pflcc}, and so are flat pro-quasi-coherent
pro-sheaves over certain ind-schemes~\cite[Theorem~10.1]{Pflcc}.
 All acyclic complex of flat quasi-coherent sheaves with flat sheaves
of cocycles over such stacks are direct limits of complexes of
locally countably presented flat quasi-coherent sheaves with flat
sheaves of cocycles, and the same applies to such
ind-schemes~\cite[Corollaries~4.5 and~11.4]{Pflcc}.

 Given a commutative ring $R$, all (say, coassociative and counital;
or coassociative, cocommutative, and counital)  coalgebras over $R$
with flat underlying $R$\+modules are direct limits  of coalgebras
from the same class with flat and countably presented underlying
$R$\+modules~\cite[Theorem~3.1 and Remark~3.2]{Pcor}.

 The importance of countably presented flat modules lies in
the fact that they have projective dimensions at most~$1$
\,\cite[Corollary~2.23]{GT}, \cite[Corollary~2.4]{Pres}.
 The proof of \emph{quasi-coherent cotorsion periodicity}
in~\cite[Theorem~9.2]{PS6} is based on this fact.
 Similarly one obtains cotorsion periodicity theorems for
quasi-coherent sheaves over stacks and pro-quasi-coherent pro-sheaves
over ind-schemes~\cite[Theorems~5.4 and~12.3]{Pflcc}, as well as
certain versions of the ``derived $=$ coderived category''  theorem
for flat quasi-coherent sheaves and flat pro-quasi-coherent
pro-sheaves~\cite[Theorems~6.5 and~13.2]{Pflcc}.

 \emph{Flat contramodules over topological rings} is another name for
flat pro-quasi-coherent pro-sheaves over ind-affine ind-schemes
(under suitable assumptions; see~\cite[Example~3.8(2)]{Psemten}).
 The exposition in~\cite{Pflcc} is written in the language of comodules
and contramodules.
 In fact, the comparison between the quasi-coherent sheaves over
stacks and the comodules over corings is relatively unproblematic;
but (nonflat) pro-quasi-coherent pro-sheaves form a badly behaved
full subcategory in the abelian category of
contramodules~\cite[Examples~3.1 and~3.8(1)]{Psemten}.
 So the cotorsion periodicity theorem for ind-affine
ind-schemes~\cite[Theorem~12.3]{Pflcc} should be stated in the language
of contramodules rather than pro-quasi-coherent pro-sheaves
(and it is done this way in~\cite{Pflcc}).
 We do \emph{not} know what, if anything, should a quasi-coherent
cotorsion periodicity theorem for non-ind-affine ind-schemes say.
{\hbadness=1400\par}

\subsection{Periodicity theorems~II} \label{periodicity-II-subsecn}
 The aim of this section is to discuss the nature of periodicity
theorems as it appears in our present understanding.

 It appears that both the flat/projective and cotorsion periodicity
theorems stem from the fact that all flat modules are direct limits
of (finitely generated) projective ones.
 Similarly, both the fp\+projective and fp\+injective/injective
periodicity theorems stem from the fact that all modules are direct
limits of finitely presented ones.

 Concerning the cotorsion and fp\+injective/injective periodicities,
this connection is expressed in the form of a general category-theoretic
theorem in~\cite[Theorem~8.1 and Corollary~8.2]{PS6}.
 A common generalization of the cotorsion and fp\+injective/injective
periodicity theorems based on these results from~\cite{PS6} is
worked out in~\cite[Theorem~0(b) in Section~0.1 and Theorem~B in
Section~0.3]{Pgen}.
 The quasi-coherent cotorsion periodicity on quasi-compact
semi-separated schemes~\cite[Theorem~9.2]{PS6}, as well as
generalizations to stacks and ind-schemes~\cite[Theorems~5.4
and~12.3]{Pflcc}, are corollaries of~\cite[Corollary~8.2]{PS6} (cf.\
the last paragraph of Section~\ref{flats-as-direct-limits-subsecn}).

 Concerning the flat/projective and fp\+projective periodicities,
a common generalization of both, expressing their interpretation as
properties of the direct limit closure, can be found
in~\cite[Theorem~0(a) in Section~0.1 and Theorem~A in
Section~0.2]{Pgen}.

 A general concept of the direct limit completion of an exact category
is worked out in the preprint~\cite{Plce}.
 Some related periodicity theorems can be found in~\cite[Theorems~7.1,
7.6, and~8.3]{Plce}.

\subsection{Becker-contraderived category of very flat modules}
\label{contraderived-of-very-flats-subsecn}
 We start with a very flat version of
Theorem~\ref{projectives-in-flats-resolving-derived} before
passing to a very flat version of
Theorem~\ref{contraderived-category-of-flats}.

 For any commutative ring $R$, the full subcategory of projective
$R$\+modules $\sF=R\modl_\proj$ is a resolving subcategory in
the exact category of very flat $R$\+modules $\sE=R\modl_\vfl$.
 Moreover, the projective objects of the exact category $R\modl_\vfl$
are precisely all the projective $R$\+modules, $(R\modl_\vfl)_\proj=
R\modl$, and there are enough such projective objects.

\begin{cor} \label{projectives-in-very-flats-resolving-derived}
 For any commutative ring $R$ and any conventional derived category
symbol\/ $\star=\bb$, $+$, $-$, or\/~$\varnothing$, the inclusion of
exact categories $R\modl_\proj\rarrow R\modl_\vfl$ induces
an equivalence of their derived categories
$$
 \Hot^\star(R\modl_\proj)=\sD^\star(R\modl_\proj)\simeq
 \sD^\star(R\modl_\vfl).
$$
\end{cor}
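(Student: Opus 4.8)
The plan is to deduce the corollary as a direct application of Proposition~\ref{finite-co-resol-dim-conventional-derived}(b), the whole point being that, in sharp contrast to the flat case treated in Theorem~\ref{projectives-in-flats-resolving-derived}, the resolving subcategory $R\modl_\proj\subset R\modl_\vfl$ has \emph{uniformly bounded} resolution dimension. First I would record that every very flat $R$\+module has projective dimension at most~$1$ by Theorem~\ref{very-flat-cotorsion-pair}(d). Combined with the fact that the very flat cotorsion pair is hereditary (so that, by condition~(1) of Lemma~\ref{garcia-rozas}, the class $R\modl_\vfl$ is closed under kernels of admissible epimorphisms in $R\modl$), this shows that every object $V\in R\modl_\vfl$ admits a two-term projective resolution $0\rarrow P_1\rarrow P_0\rarrow V\rarrow 0$ whose term $P_1=\ker(P_0\rarrow V)$ is again very flat. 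Hence this is an admissible exact sequence of the exact category $R\modl_\vfl$, and the resolution dimension of every object of $R\modl_\vfl$ with respect to the resolving subcategory $R\modl_\proj$ is at most~$1$.

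Next I would invoke Proposition~\ref{finite-co-resol-dim-conventional-derived}(b) with $\sE=R\modl_\vfl$, \,$\sF=R\modl_\proj$, and $d=1$. That proposition also requires a coresolving subcategory of bounded coresolution dimension, but this is harmless: one simply takes $\sC=\sE=R\modl_\vfl$, which is trivially coresolving in itself (cogenerating via the identity admissible monomorphisms, and closed under extensions and cokernels of admissible monomorphisms), every object having coresolution dimension~$0$. With these choices the hypotheses are satisfied, and the conclusion of part~(b) yields a triangulated equivalence $\sD^\star(R\modl_\proj)\rarrow\sD^\star(R\modl_\vfl)$ for each conventional derived category symbol $\star=\bb$, $+$, $-$, $\varnothing$ (the case $\star=-$ in fact already following from Proposition~\ref{one-sided-bounded-resolving-subcategory-prop}(b) without any finiteness assumption). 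Finally, the equality $\Hot^\star(R\modl_\proj)=\sD^\star(R\modl_\proj)$ holds because the exact structure inherited by $R\modl_\proj$ from $R\modl$ is the split one: any admissible short exact sequence with projective cokernel splits, so every acyclic complex in $R\modl_\proj$ is contractible and the homotopy category coincides with the derived category.

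As for the difficulty, I expect essentially no genuine obstacle here; the entire content is the bound on projective dimension supplied by Theorem~\ref{very-flat-cotorsion-pair}(d). The only points worth double-checking are bookkeeping ones: that $R\modl_\vfl$ is weakly idempotent-complete (as demanded by the (co)resolution-dimension machinery), which holds since very flat modules are closed under direct summands by Theorem~\ref{very-flat-cotorsion-pair}(b); and that there are enough projective objects in $R\modl_\vfl$, which is recorded in the discussion preceding the corollary. The instructive comparison is with Theorem~\ref{projectives-in-flats-resolving-derived}, where the resolution dimension of $R\modl_\proj$ in $R\modl_\fl$ is genuinely unbounded, so the analogous statement for $\star=\varnothing$ cannot be obtained by this elementary route and instead lies considerably deeper.
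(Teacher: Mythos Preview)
Your proposal is correct and follows essentially the same route as the paper: both invoke Theorem~\ref{very-flat-cotorsion-pair}(d) to bound the projective dimension of very flat modules by~$1$, and then apply Proposition~\ref{finite-co-resol-dim-conventional-derived}(b). Your write-up is more detailed in spelling out the bookkeeping (that the two-term resolution lies in $R\modl_\vfl$, the trivial choice $\sC=\sE$, weak idempotent-completeness, and the split exact structure on $R\modl_\proj$), but the argument is the same.
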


\begin{proof}
 Projective dimensions of very flat modules do not exceed~$1$ by
Theorem~\ref{very-flat-cotorsion-pair}(d).
 Hence the assertion of the theorem is a particular case of
Proposition~\ref{finite-co-resol-dim-conventional-derived}(b).
\end{proof}

\begin{cor} \label{contraderived-category-of-very-flats}
 For any commutative ring $R$, the classes of all absolutely acyclic
complexes, Becker-contraacyclic complexes, and acyclic complexes in
the exact category $\sB=R\modl_\vfl$ coincide.
 The composition of triangulated functors\/ $\Hot(R\modl_\proj)\rarrow
\Hot(R\modl_\vfl)\rarrow\sD^\abs(R\modl_\vfl)=\sD^\bctr(R\modl_\vfl)=
\sD(R\modl_\vfl)$ is a triangulated equivalence \hbadness=1300
$$
 \Hot(R\modl_\proj)\simeq\sD^\abs(R\modl_\vfl)=
 \sD^\bctr(R\modl_\vfl)=\sD(R\modl_\vfl).
$$
\end{cor}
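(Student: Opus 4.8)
The plan is to recognize the statement as the special case $\sB=R\modl_\vfl$ of Proposition~\ref{enough-inj-proj-finite-homol-dim-prop}(b), and thereby to reduce the whole corollary to checking the two hypotheses of that proposition for the exact category $R\modl_\vfl$: that it has enough projective objects, and that it has finite homological dimension. Once these are in place, the coincidence of the three classes of acyclic complexes and the asserted triangulated equivalence follow by invoking Proposition~\ref{enough-inj-proj-finite-homol-dim-prop}(b) verbatim, in exactly the same spirit in which Corollary~\ref{projectives-in-very-flats-resolving-derived} was deduced from Proposition~\ref{finite-co-resol-dim-conventional-derived}(b).

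The first hypothesis is already recorded in the paragraph preceding the corollary: the projective objects of $R\modl_\vfl$ are precisely the projective $R$\+modules, and there are enough of them. I would briefly recall the reason, namely that given any very flat module $F$, a surjection onto $F$ from a free $R$\+module has very flat kernel, since the left-hand class of the hereditary very flat cotorsion pair of Theorem~\ref{very-flat-cotorsion-pair}(a) is closed under kernels of admissible epimorphisms by Lemma~\ref{garcia-rozas}. It is also worth noting, as a sanity check against the absolute derived category, that $R\modl_\vfl$ is idempotent-complete, because the class of very flat modules is closed under direct summands by Theorem~\ref{very-flat-cotorsion-pair}(b).

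The crux is the second hypothesis, finite homological dimension, and this is where the genuine content lies. Here I would invoke Theorem~\ref{very-flat-cotorsion-pair}(d): every very flat $R$\+module $F$ has projective dimension at most~$1$ over $R$, so it fits into a short exact sequence $0\rarrow Q_1\rarrow Q_0\rarrow F\rarrow0$ with $Q_0$, $Q_1$ projective $R$\+modules. Since all three terms are then very flat, this is an admissible short exact sequence in $R\modl_\vfl$, and it exhibits a resolution of $F$ of length~$\le1$ by projective objects of the exact category. Consequently every object of $R\modl_\vfl$ has projective dimension~$\le1$ in the exact-category sense, so $\Ext^2_{R\modl_\vfl}(F,G)=0$ for all $F$, $G\in R\modl_\vfl$; that is, $R\modl_\vfl$ has homological dimension~$\le1$, in particular finite.

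With both hypotheses verified, Proposition~\ref{enough-inj-proj-finite-homol-dim-prop}(b) applies directly and produces the equality of the classes of acyclic, absolutely acyclic, and Becker\+contraacyclic complexes, together with the triangulated equivalence $\Hot(R\modl_\proj)\simeq\sD^\abs(R\modl_\vfl)=\sD^\bctr(R\modl_\vfl)=\sD(R\modl_\vfl)$. I do not anticipate any real obstacle: the only step carrying mathematical weight is extracting the homological dimension bound from the projective dimension estimate of Theorem~\ref{very-flat-cotorsion-pair}(d), after which the corollary is a routine specialization of the earlier proposition.
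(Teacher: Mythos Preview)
Your proposal is correct and follows essentially the same approach as the paper: the paper's proof consists of the single observation that the homological dimension of $R\modl_\vfl$ does not exceed~$1$, and then invokes Proposition~\ref{enough-inj-proj-finite-homol-dim-prop}(b). Your version spells out more carefully why the homological dimension bound holds (via Theorem~\ref{very-flat-cotorsion-pair}(d)) and why there are enough projectives, but these are exactly the ingredients the paper's one-line proof is appealing to.
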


\begin{proof}
 The homological dimension of the exact category $R\modl_\vfl$
does not exceed~$1$, so
Proposition~\ref{enough-inj-proj-finite-homol-dim-prop}(b)
is applicable.
\end{proof}

\subsection{Derived categories of contraadjusted and all modules}
\label{derived-of-contraadjusted-and-all-subsecn}
 Let $R$ be a commutative ring.
 Then the full subcategory of contraadjusted $R$\+modules
$\sC=R\modl^\cta$ is a coresolving subcategory in the abelian
category of $R$\+modules $\sE=R\modl$.

\begin{cor} \label{contraadjusted-in-all-coresolving-derived}
 For any commutative ring $R$ and any conventional or absolute
derived category symbol\/ $\star=\bb$, $+$, $-$, $\varnothing$,
or\/~$\abs$, the inclusion of exact categories $R\modl^\cta\rarrow
R\modl$ induces an equivalence of their derived categories
$$
 \sD^\star(R\modl^\cta)\simeq\sD^\star(R\modl).
$$
\end{cor}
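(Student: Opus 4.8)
The plan is to reduce the statement to the finite-coresolution-dimension case handled by Proposition~\ref{finite-co-resol-dim-conventional-derived}(a), exploiting the fact that the left-hand class of the very flat cotorsion pair consists of modules of projective dimension at most~$1$. This is the decisive difference from the cotorsion situation of Theorem~\ref{cotorsion-in-all-coresolving-derived}: there the left-hand class (flat modules) can have infinite projective dimension, the coresolution dimension with respect to $R\modl^\cot$ can be infinite, and the cotorsion periodicity theorem is unavoidable. Here, by contrast, the relevant dimension is bounded by~$1$, so no periodicity input is needed and the corollary follows from the general finite-dimension machinery.

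First I would invoke Theorem~\ref{very-flat-cotorsion-pair}(a), which asserts that (very flat $R$\+modules, contraadjusted $R$\+modules) is a hereditary complete cotorsion pair $(R\modl_\vfl,\>R\modl^\cta)$ in the abelian category $R\modl$; in particular $R\modl^\cta$ is a coresolving subcategory of $R\modl$, as already noted for right-hand classes of hereditary cotorsion pairs. Next I would record the key numerical input: every very flat $R$\+module has projective dimension at most~$1$, which is Theorem~\ref{very-flat-cotorsion-pair}(d).

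The central step is then to apply Lemma~\ref{projective-and-coresolution-dimensions} to the hereditary cotorsion pair $(R\modl_\vfl,\>R\modl^\cta)$ with $d=1$. Since all objects of the left-hand class $R\modl_\vfl$ have projective dimension $\le 1$ in $R\modl$, the lemma yields that \emph{every} $R$\+module has coresolution dimension $\le 1$ with respect to the coresolving subcategory $R\modl^\cta$. This is exactly the uniform finiteness of coresolution dimension that the finite-dimension comparison proposition demands.

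Finally I would feed this into Proposition~\ref{finite-co-resol-dim-conventional-derived}(a), taking $\sE=R\modl$, the coresolving subcategory $\sC=R\modl^\cta$, and, for the formally required resolving subcategory, simply $\sF=R\modl$ itself, with respect to which every object has resolution dimension~$0$. With $d=1$ both dimension hypotheses hold, so the inclusion $R\modl^\cta\rarrow R\modl$ induces a triangulated equivalence $\sD^\star(R\modl^\cta)\rarrow\sD^\star(R\modl)$ for each symbol $\star=\bb$, $+$, $-$, $\varnothing$, or~$\abs$, as claimed. The only thing to verify carefully is that $R\modl$ genuinely qualifies as a resolving subcategory of itself (it is closed under extensions and kernels of admissible epimorphisms and is trivially generating) and that the ambient category is weakly idempotent-complete, which holds since $R\modl$ is abelian. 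I do not anticipate any real obstacle here: the entire content of the corollary is the observation that, unlike in the cotorsion case, finite coresolution dimension is available, so the heavy periodicity machinery plays no role whatsoever.
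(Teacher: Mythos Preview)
Your proof is correct and follows essentially the same approach as the paper: establish that every $R$\+module has contraadjusted coresolution dimension at most~$1$, and then invoke Proposition~\ref{finite-co-resol-dim-conventional-derived}(a). The only cosmetic difference is that the paper reaches the dimension bound directly from Theorem~\ref{very-flat-cotorsion-pair}(c) (quotients of contraadjusted modules are contraadjusted), whereas you go via Theorem~\ref{very-flat-cotorsion-pair}(d) together with Lemma~\ref{projective-and-coresolution-dimensions}; these two routes are equivalent and already linked in the proof of Theorem~\ref{very-flat-cotorsion-pair}.
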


\begin{proof}
 Contraadjusted dimensions of $R$\+modules do not exceed~$1$ by
Theorem~\ref{very-flat-cotorsion-pair}(c).
 Hence the assertion of the theorem is a particular case of
Proposition~\ref{finite-co-resol-dim-conventional-derived}(a).
\end{proof}

\subsection{Coderived category of very flat modules}
\label{coderived-of-very-flats-subsecn}
 Let $R$ be a commutative ring.
 An $R$\+module is said to be \emph{very flat contraadjusted} if it is
simultaneously very flat and contraadjusted.
 The full subcategory of very flat contraadjusted $R$\+modules is
denoted by $R\modl_\vfl^\cta=R\modl_\vfl\cap R\modl^\cta\subset R\modl$.

 The very flat contraadjusted $R$\+modules are precisely all
the injective objects of the exact category of very flat $R$\+modules
$R\modl_\vfl$, and there are enough such injective objects.
 The very flat contraadjusted $R$\+modules are also precisely all
the the projective objects of the exact category of contraadjusted
$R$\+modules $R\modl^\cta$, and there are enough such projective
objects.
 These assertions follow from the fact that the very flat cotorsion
pair in $R\modl$ is complete (by
Theorem~\ref{very-flat-cotorsion-pair}(a));
see~\cite[Lemma~2.1]{Pgen}.
 
\begin{cor} \label{coderived-category-of-very-flats}
 For any commutative ring $R$, the classes of all absolutely acyclic
complexes, Positselski-coacyclic complexes, Becker-coacyclic complexes,
and acyclic complexes in the exact category\/ $\sA=R\modl_\vfl$
coincide.
 The composition of triangulated functors\/ $\Hot(R\modl_\vfl^\cta)
\rarrow\Hot(R\modl_\vfl)\rarrow\sD^\abs(R\modl_\vfl)=
\sD^\pco(R\modl_\vfl)=\sD^\bco(R\modl_\vfl)=\sD(R\modl_\vfl)$
is a triangulated equivalence
\begin{multline*}
 \Hot(R\modl_\vfl^\cta) \\ \simeq\sD^\abs(R\modl_\vfl)=
 \sD^\pco(R\modl_\vfl)=\sD^\bco(R\modl_\vfl)=\sD(R\modl_\vfl).
\end{multline*}
\end{cor}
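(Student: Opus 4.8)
The plan is to argue in close parallel with the proof of Corollary~\ref{contraderived-category-of-very-flats}, invoking Proposition~\ref{enough-inj-proj-finite-homol-dim-prop}(a) in place of part~(b), and then to fold the Positselski-coacyclic class into the chain of equalities by a short sandwich argument. The one genuinely new feature compared with the contraderived corollary is the presence of $\sD^\pco(R\modl_\vfl)$ in the statement: this is legitimate precisely because $R\modl_\vfl$ has \emph{exact coproducts}, whereas it need not have (exact) products. That asymmetry is exactly why the Positselski-contraacyclic class was \emph{absent} from Corollary~\ref{contraderived-category-of-very-flats}, and identifying it is the main conceptual point of the present proof.

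First I would record three structural facts about the exact category $\sA=R\modl_\vfl$. Fact~(1): its homological dimension does not exceed~$1$. By Theorem~\ref{very-flat-cotorsion-pair}(d) every very flat $R$\+module has projective dimension $\le1$; since the very flat cotorsion pair is hereditary, the Ext groups computed in $\sA$ agree with those in $R\modl$, so $\Ext^2_\sA(F,G)=0$ for all $F,G\in\sA$. Fact~(2): the injective objects of $\sA$ are precisely the very flat contraadjusted modules $R\modl_\vfl^\cta$, and there are enough of them; this is recalled in the paragraph preceding the corollary and follows from completeness of the very flat cotorsion pair (Theorem~\ref{very-flat-cotorsion-pair}(a)) via \cite[Lemma~2.1]{Pgen}. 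Fact~(3): $\sA$ has exact coproducts, because a coproduct of very flat modules is again very flat (the left-hand class of the cotorsion pair generated by a set is closed under direct sums by the Eklof lemma, Lemma~\ref{eklof-lemma}), and coproducts are exact in $R\modl$ and restrict to admissible short exact sequences in $\sA$.

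Next I would apply Proposition~\ref{enough-inj-proj-finite-homol-dim-prop}(a) to $\sA$, which is justified by facts~(1) and~(2). This immediately yields that the classes of acyclic, absolutely acyclic, and Becker-coacyclic complexes in $\sA$ coincide, and that the composition $\Hot(\sA^\inj)\rarrow\Hot(\sA)\rarrow\sD^\abs(\sA)$ is a triangulated equivalence, that is $\Hot(R\modl_\vfl^\cta)\simeq\sD^\abs(R\modl_\vfl)=\sD^\bco(R\modl_\vfl)=\sD(R\modl_\vfl)$, using $\sA^\inj=R\modl_\vfl^\cta$ from fact~(2).

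Finally I would incorporate the Positselski-coacyclic class. By the very definition of $\Ac^\pco(\sA)$ as the minimal triangulated subcategory of $\Hot(\sA)$ containing the absolutely acyclic complexes and closed under coproducts, one has $\Ac^\abs(\sA)\subseteq\Ac^\pco(\sA)$; and by Lemma~\ref{Positselski-trivial-implies-Becker-trivial}(a) one has $\Ac^\pco(\sA)\subseteq\Ac^\bco(\sA)$. Since the two outer classes coincide by the previous step, the sandwich forces $\Ac^\pco(\sA)=\Ac^\abs(\sA)=\Ac^\bco(\sA)=\Ac(\sA)$, whence $\sD^\pco(R\modl_\vfl)=\sD^\abs(R\modl_\vfl)$ and the statement follows. (Alternatively $\Ac^\pco=\Ac$ can be obtained directly from Lemma~\ref{Positselski-co-contra-acyclic-vs-acyclic}(a) and~(b), using facts~(1) and~(3).) I do not expect a serious obstacle: the only step requiring real attention is fact~(3), exactness of coproducts, and it is exactly this feature of $R\modl_\vfl$ that makes the coderived statement stronger than, and its proof slightly longer than, that of Corollary~\ref{contraderived-category-of-very-flats}.
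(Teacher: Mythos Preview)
Your proposal is correct and follows essentially the same route as the paper's proof: apply Proposition~\ref{enough-inj-proj-finite-homol-dim-prop}(a) using finite homological dimension and enough injectives, then deduce the Positselski-coacyclic equality from the sandwich $\Ac^\abs\subset\Ac^\pco\subset\Ac^\bco$ via Lemma~\ref{Positselski-trivial-implies-Becker-trivial}(a). Your explicit discussion of exact coproducts in $R\modl_\vfl$ (fact~(3)) is a useful amplification that the paper's terse proof leaves implicit.
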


\begin{proof}
 The homological dimension of the exact category $R\modl_\vfl$
does not exceed~$1$, so
Proposition~\ref{enough-inj-proj-finite-homol-dim-prop}(a)
is applicable and proves all the assertions except the one
concerning Positselski-coacyclic complexes.
 To deduce the latter one, observe that the inclusions
$\Ac^\abs(\sE)\subset\Ac^\pco(\sE)\subset\Ac^\bco(\sE)$ hold for
any exact category $\sE$ by the definition and by
Lemma~\ref{Positselski-trivial-implies-Becker-trivial}(a).
\end{proof}

\subsection{Becker-contraderived categories of contraadjusted and
all modules} \label{contraderived-of-contraadjusted-and-all-subsecn}
 We start with the result about Positselski-contraderived
categories before passing to Becker-contraderived ones.

\begin{cor} \label{contraadjusted-in-all-coresolving-pos-contrader-cor}
 For any commutative ring $R$, the class of all
Positselski-contraacyclic complexes in the exact category
$R\modl^\cta$ coincides with the class of all complexes of
contraadjusted $R$\+modules that are Positselski-contraacyclic
as complexes in the abelian category $R\modl$, \hbadness=1500
$$
 \Ac^\pctr(R\modl^\cta)=\Com(R\modl^\cta)\cap\Ac^\pctr(R\modl).
$$
 The inclusion of exact categories $R\modl^\cta\rarrow R\modl$
induces a triangulated equivalence of the Positselski-contraderived
categories
$$
 \sD^\pctr(R\modl^\cta)\simeq\sD^\pctr(R\modl).
$$
\end{cor}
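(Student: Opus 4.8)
The plan is to treat this as a direct application of the finite-coresolution-dimension machinery of Section~\ref{finite-coresolution-dimension-subsecn}, specialized to $\sE=R\modl$ and the coresolving subcategory $\sC=R\modl^\cta$. First I would verify the three structural facts that feed into that machinery. The subcategory $R\modl^\cta$ is coresolving in $R\modl$, as recorded at the beginning of this subsection. It is closed under products in $R\modl$, because $\Ext^1_R(R[s^{-1}],\prod_i C_i)\simeq\prod_i\Ext^1_R(R[s^{-1}],C_i)$ vanishes as soon as every $C_i$ is contraadjusted. And the coresolution dimension of an arbitrary $R$-module $M$ with respect to $R\modl^\cta$ is at most~$1$: embedding $M$ into a contraadjusted (for instance, injective) module $C^0$, the cokernel $C^0/M$ is contraadjusted by Theorem~\ref{very-flat-cotorsion-pair}(c), so $0\rarrow M\rarrow C^0\rarrow C^0/M\rarrow0$ is a contraadjusted coresolution of length~$1$.

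With these facts in hand, I would obtain the triangulated equivalence by invoking Proposition~\ref{finite-co-resol-dim-Positselski-second-kind}(a). Its hypotheses are met: $R\modl$ has exact products and $R\modl^\cta$ is closed under products, while the finite-dimension condition holds with $d=1$ if one takes the ambient resolving subcategory $\sF$ to be all of $R\modl$ (trivially resolving, every object of resolution dimension~$0$) and the coresolving subcategory $\sC=R\modl^\cta$ (coresolution dimension $\le1$ by the previous step). This produces the asserted equivalence $\sD^\pctr(R\modl^\cta)\simeq\sD^\pctr(R\modl)$ induced by the inclusion.

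For the equation of contraacyclic classes, the inclusion $\Ac^\pctr(R\modl^\cta)\subseteq\Com(R\modl^\cta)\cap\Ac^\pctr(R\modl)$ is the routine direction: the exact inclusion $R\modl^\cta\rarrow R\modl$ sends short exact sequences to short exact sequences, hence absolutely acyclic complexes to absolutely acyclic ones, and products formed in $R\modl^\cta$ agree with those formed in $R\modl$, so the entire product-and-triangle closure defining $\Ac^\pctr$ is preserved. The reverse inclusion I would deduce formally from the equivalence just established: a fully faithful functor reflects zero objects, so a complex $C^\bu\in\Com(R\modl^\cta)$ lies in $\Ac^\pctr(R\modl^\cta)$ exactly when its image vanishes in $\sD^\pctr(R\modl)$, that is, when $C^\bu\in\Ac^\pctr(R\modl)$.

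The genuinely nontrivial content is exactly this reverse inclusion --- that a complex of contraadjusted modules which is contraacyclic in the full module category is already contraacyclic inside the exact category of contraadjusted modules --- and the hard work is what Proposition~\ref{finite-co-resol-dim-Positselski-second-kind}(a) absorbs. In a self-contained argument the obstacle would be to replace a witnessing presentation of contraacyclicity living in $R\modl$ by one living in $R\modl^\cta$, using the length-$1$ coresolutions together with a totalization argument of the kind underlying Lemma~\ref{acyclic-absolutely-acyclic}; here that difficulty is bypassed by full faithfulness of the equivalence.
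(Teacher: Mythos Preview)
Your proof is correct and follows essentially the same approach as the paper. Both argue the easy inclusion $\Ac^\pctr(R\modl^\cta)\subset\Com(R\modl^\cta)\cap\Ac^\pctr(R\modl)$ directly, invoke Proposition~\ref{finite-co-resol-dim-Positselski-second-kind}(a) using the bound on contraadjusted dimension from Theorem~\ref{very-flat-cotorsion-pair}(c) to obtain the equivalence $\sD^\pctr(R\modl^\cta)\simeq\sD^\pctr(R\modl)$, and then deduce the reverse inclusion from that equivalence; your version simply spells out the verification of products and the trivial choice of $\sF$ in more detail.
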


\begin{proof}
 The inclusion $\Ac^\pctr(R\modl^\cta)\subset
\Com(R\modl^\cta)\cap\Ac^\pctr(R\modl)$ holds by the definition,
and it follows that the triangulated functor between
the Verdier quotient categories $\sD^\pctr(R\modl^\cta)\rarrow
\sD^\pctr(R\modl)$ induced by the inclusion of exact categories
$R\modl^\cta\rarrow R\modl$ is well-defined.
 Since the contraadjusted dimensions of $R$\+modules never exceed~$1$,
\,Proposition~\ref{finite-co-resol-dim-Positselski-second-kind}(a)
tells that this triangulated functor is an equivalence, proving
the second assertion of the theorem.
 Then the first assertion of the theorem follows from the second one.
\end{proof}

\begin{lem} \label{dw-contraadjusted-are-dg-contraadjusted}
 Let $R$ be a commutative ring.
 Then, for any acyclic complex of very flat $R$\+modules $F^\bu$ with
very flat $R$\+modules of cocycles, and any complex of contraadjusted
$R$\+modules $C^\bu$, the complex of morphisms $\Hom_R(F^\bu,C^\bu)$
is acyclic.
\end{lem}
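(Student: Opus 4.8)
The plan is to imitate the structure of~\cite[Theorem~5.3]{BCE}, but to exploit the fact that the exact category $R\modl_\vfl$ has finite homological dimension---in fact $\le1$, since very flat $R$\+modules have projective dimension $\le1$ by Theorem~\ref{very-flat-cotorsion-pair}(d). This is not available in the flat/cotorsion situation of~\cite{BCE}, and it should make the argument considerably shorter by removing any need for periodicity. The starting observation is that, by the very definition of the very flat cotorsion pair (Theorem~\ref{very-flat-cotorsion-pair}), one has $\Ext^1_R(F,C)=0$ for every very flat $R$\+module $F$ and every contraadjusted $R$\+module~$C$. Consequently, for each fixed term $C^q$ of the complex $C^\bu$, the functor $\Hom_R({-},C^q)$ carries any short exact sequence of very flat $R$\+modules to a short exact sequence of abelian groups.

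Next I would upgrade the hypothesis on $F^\bu$ to absolute acyclicity. By assumption $F^\bu$ is an acyclic complex in the exact category $R\modl_\vfl$, i.e., an acyclic complex of very flat modules whose modules of cocycles are again very flat. Since $R\modl_\vfl$ has homological dimension $\le1$, Lemma~\ref{acyclic-absolutely-acyclic}(b) gives $F^\bu\in\Ac^\abs(R\modl_\vfl)$. Thus $F^\bu$ lies in the minimal thick subcategory of $\Hot(R\modl_\vfl)$ generated by the totalizations of termwise short exact sequences of complexes of very flat modules.

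The core of the argument is then a thick-subcategory reduction. The total $\Hom$ complex defines a triangulated functor $\Hom_R({-},C^\bu)\:\Hot(R\modl_\vfl)^\sop\rarrow\Hot(\Ab)$, and the preimage under it of the (thick) subcategory of acyclic complexes in $\Hot(\Ab)$ is a thick subcategory of $\Hot(R\modl_\vfl)$. Hence it suffices to check that $\Hom_R(T^\bu,C^\bu)$ is acyclic for each generator $T^\bu$, that is, for the totalization of a termwise short exact sequence $0\rarrow A^\bu\rarrow B^\bu\rarrow D^\bu\rarrow0$ of complexes of very flat modules. Applying $\Hom_R({-},C^\bu)$ and using the first paragraph degreewise, this totalization is carried to the totalization of a termwise short exact sequence $0\rarrow\Hom_R(D^\bu,C^\bu)\rarrow\Hom_R(B^\bu,C^\bu)\rarrow\Hom_R(A^\bu,C^\bu)\rarrow0$ of complexes of abelian groups (exactness of infinite products in $\Ab$ keeps this termwise exact even in the product total degrees). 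The totalization of such a sequence is absolutely acyclic by definition, hence acyclic by Lemma~\ref{acyclic-absolutely-acyclic}(a). This settles the generators, and the thick-subcategory argument then yields acyclicity of $\Hom_R(F^\bu,C^\bu)$ for the given $F^\bu$.

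The main obstacle I anticipate is purely the unboundedness of the complexes: a naive column-by-column computation of the cohomology of the bicomplex $\Hom_R(F^p,C^q)$ does not converge, since the total complex is formed using infinite products. Routing the argument through absolute acyclicity and the thick-subcategory formalism is exactly what sidesteps this convergence issue, and the reduction to finite homological dimension via Lemma~\ref{acyclic-absolutely-acyclic}(b) is what replaces the harder periodicity input of~\cite{BCE} needed in the cotorsion case.
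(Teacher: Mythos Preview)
Your argument is correct, but it follows a genuinely different route from the paper's.  The paper works on the $C^\bu$ side: it picks a quasi-isomorphism $C^\bu\rarrow J^\bu$ into a homotopy injective complex of injective modules, observes that $\Hom_R(F^\bu,J^\bu)$ is acyclic since $F^\bu$ is acyclic, and then deals with the cone $D^\bu$, which is an acyclic complex of contraadjusted modules.  The key point there is Theorem~\ref{very-flat-cotorsion-pair}(c): quotients of contraadjusted modules are contraadjusted, so $D^\bu$ has contraadjusted modules of cocycles, and then~\cite[Lemma~3.9]{Gil} gives acyclicity of $\Hom_R(F^\bu,D^\bu)$.  By contrast, you work on the $F^\bu$ side, using Lemma~\ref{acyclic-absolutely-acyclic}(b) to upgrade acyclicity of $F^\bu$ in $R\modl_\vfl$ to absolute acyclicity, and then run a thick-subcategory reduction on the generators.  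Your approach is more self-contained (it avoids the external citation to~\cite{Gil}) and makes the role of the bound on projective dimension of very flat modules more transparent; the paper's approach instead highlights the ``contraadjusted periodicity'' encoded in Theorem~\ref{very-flat-cotorsion-pair}(c), which is the mechanism that has no direct analogue in the flat/cotorsion case and is ultimately the reason why this lemma is easier than~\cite[Theorem~5.3]{BCE}.
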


\begin{proof}
 Choose a homotopy injective complex of injective $R$\+modules $J^\bu$
together with a quasi-isomorphism of complexes of $R$\+modules
$C^\bu\rarrow J^\bu$.
 Then the complex $\Hom_R(F^\bu,J^\bu)$ is acyclic, since $F^\bu$ is
an acyclic complex of $R$\+modules.
 Denote the cone of the morphism of complexes $C^\bu\rarrow J^\bu$
by~$D^\bu$.
 Then $D^\bu$ is an acyclic complex of contraadjusted $R$\+modules.
 By Theorem~\ref{very-flat-cotorsion-pair}(c), the $R$\+modules of
cocycles of the complex $D^\bu$ are contraadjusted.
 Now \cite[Lemma~3.9]{Gil} tells that the complex $\Hom_R(F^\bu,D^\bu)$
is acyclic.
 Thus the complex $\Hom_R(F^\bu,C^\bu)$ is acyclic as well.
\end{proof}

 We recall from Section~\ref{coderived-of-very-flats-subsecn}
that the very flat contraadjusted $R$\+modules are the projective
objects of the exact category of contraadjusted $R$\+modules
$R\modl^\cta$.

\begin{thm} \label{contraadjusted-in-all-coresolving-beck-contrader-thm}
 For any commutative ring $R$, the class of all Becker-contraacyclic
complexes in the exact category\/ $R\modl^\cta$ coincides with
the class of all complexes of contraadjusted $R$\+modules that are
Becker-contraacyclic as complexes in the abelian category $R\modl$,
$$
 \Ac^\bctr(R\modl^\cta)=\Com(R\modl^\cta)\cap\Ac^\bctr(R\modl).
$$
 The composition of triangulated functors\/ $\Hot(R\modl_\vfl^\cta)
\rarrow\Hot(R\modl^\cta)\rarrow\sD^\bctr(R\modl^\cta)$ is
a triangulated equivalence
$$
 \Hot(R\modl_\vfl^\cta)\simeq\sD^\bctr(R\modl^\cta).
$$
\end{thm}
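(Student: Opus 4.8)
The plan is to follow the proof of Theorem~\ref{cotorsion-in-all-coresolving-contraderived-thm} step by step, systematically replacing its flat/cotorsion ingredients by their very flat/contraadjusted counterparts: Theorems~\ref{contraderived-category-of-flats} and~\ref{coderived-category-of-flats} get replaced by Corollaries~\ref{contraderived-category-of-very-flats} and~\ref{coderived-category-of-very-flats}; the cotorsion periodicity input plays the role now taken by Lemma~\ref{dw-contraadjusted-are-dg-contraadjusted}; and the roles of $R\modl_\fl$, \,$R\modl^\cot$, \,$R\modl_\fl^\cot$ are played by $R\modl_\vfl$, \,$R\modl^\cta$, \,$R\modl_\vfl^\cta$. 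I expect the very flat case to be, if anything, easier than the cotorsion one: whereas the cotorsion version leans on genuine periodicity theorems, here the exact category $R\modl_\vfl$ has homological dimension~$\le 1$ by Theorem~\ref{very-flat-cotorsion-pair}(d), so Corollaries~\ref{contraderived-category-of-very-flats} and~\ref{coderived-category-of-very-flats} already encode everything that periodicity supplied before.

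For the first assertion, I would argue as follows. Composing the triangulated equivalences of Corollaries~\ref{contraderived-category-of-very-flats} and~\ref{coderived-category-of-very-flats}, both of which identify the relevant homotopy category with $\sD(R\modl_\vfl)$, shows that for every complex of projective $R$\+modules $P^\bu$ there is a complex of very flat contraadjusted $R$\+modules $Q^\bu$ together with a morphism $P^\bu\rarrow Q^\bu$ whose cone $E^\bu$ lies in $\Ac(R\modl_\vfl)$, and conversely every such $Q^\bu$ arises from some $P^\bu$ in this way. Now let $B^\bu\in\Com(R\modl^\cta)$. By Lemma~\ref{dw-contraadjusted-are-dg-contraadjusted}, the complex $\Hom_R(E^\bu,B^\bu)$ is acyclic, so applying $\Hom_R({-},B^\bu)$ to the triangle $P^\bu\rarrow Q^\bu\rarrow E^\bu\rarrow P^\bu[1]$ shows that $\Hom_R(P^\bu,B^\bu)$ is acyclic if and only if $\Hom_R(Q^\bu,B^\bu)$ is. Since $R\modl_\vfl^\cta$ is precisely the class of projective objects of $R\modl^\cta$, this equivalence of conditions, as $P^\bu$ and $Q^\bu$ range over the respective classes, says exactly that for $B^\bu\in\Com(R\modl^\cta)$ one has $B^\bu\in\Ac^\bctr(R\modl)$ if and only if $B^\bu\in\Ac^\bctr(R\modl^\cta)$, which is the asserted equality $\Ac^\bctr(R\modl^\cta)=\Com(R\modl^\cta)\cap\Ac^\bctr(R\modl)$.

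For the second assertion, the functor $\Hot(R\modl_\vfl^\cta)\rarrow\sD^\bctr(R\modl^\cta)$ is automatically fully faithful, since $R\modl_\vfl^\cta$ consists of the projectives of $R\modl^\cta$; so only essential surjectivity needs proof, and here I would reproduce the cotorsion-pair-in-complexes construction. The very flat modules form a deconstructible class by Theorem~\ref{eklof-trlifaj}, so by \cite[Proposition~4.3]{Sto0} the class $\Com(R\modl_\vfl)$ is deconstructible in $\Com(R\modl)$, and the Eklof--Trlifaj theorem furnishes a complete cotorsion pair $(\sF,\sC)$ in $\Com(R\modl)$ with $\sF=\Com(R\modl_\vfl)$. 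Testing against the disk complexes $\dotsb\rarrow 0\rarrow F\overset{\id}\rarrow F\rarrow 0\rarrow\dotsb$ with $F$ very flat (which lie in $\sF$) shows that every $C^\bu\in\sC$ is a complex of contraadjusted modules, and \cite[Lemma~5.1]{PS4} shows that $\Hom_R(G^\bu,C^\bu)$ is acyclic for all $G^\bu\in\Com(R\modl_\vfl)$, in particular for all $G^\bu\in\Com(R\modl_\vfl^\cta)$, whence $C^\bu\in\Ac^\bctr(R\modl^\cta)$. Given any $D^\bu\in\Com(R\modl^\cta)$, I take a special precover sequence $0\rarrow C^\bu\rarrow G^\bu\rarrow D^\bu\rarrow 0$ in $\Com(R\modl)$ with $C^\bu\in\sC$ and $G^\bu\in\sF$; then $G^\bu$ is termwise an extension of the contraadjusted $D^\bu$ by the contraadjusted $C^\bu$, so, the class of contraadjusted modules being closed under extensions, $G^\bu\in\Com(R\modl_\vfl^\cta)$. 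As the totalization of this short exact sequence is absolutely acyclic in $R\modl^\cta$ and $C^\bu\in\Ac^\bctr(R\modl^\cta)$, the cone of $G^\bu\rarrow D^\bu$ is Becker-contraacyclic, so $D^\bu\cong G^\bu$ in $\sD^\bctr(R\modl^\cta)$ with $G^\bu$ a complex of projectives, establishing essential surjectivity.

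The main obstacle is the bookkeeping in the second assertion: correctly identifying the right-hand class $\sC$ of the auxiliary cotorsion pair as consisting of complexes of contraadjusted modules that are Becker-contraacyclic in $R\modl^\cta$, and verifying that the precover object $G^\bu$ is genuinely termwise very flat contraadjusted. All of this rests on the very flat cotorsion pair being hereditary with right-hand class closed under extensions (Theorem~\ref{very-flat-cotorsion-pair}) together with Lemma~\ref{dw-contraadjusted-are-dg-contraadjusted}; once these are in hand the argument is a faithful transcription of the cotorsion case, with the finite homological dimension of $R\modl_\vfl$ removing the need for any periodicity theorem.
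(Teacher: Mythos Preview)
Your proposal is correct and follows essentially the same approach as the paper's own proof, which simply states that the argument is parallel to that of Theorem~\ref{cotorsion-in-all-coresolving-contraderived-thm}, replacing Theorems~\ref{contraderived-category-of-flats} and~\ref{coderived-category-of-flats} by Corollaries~\ref{contraderived-category-of-very-flats} and~\ref{coderived-category-of-very-flats}, and \cite[Theorem~5.3]{BCE} by Lemma~\ref{dw-contraadjusted-are-dg-contraadjusted}. You have carried out exactly these substitutions in full detail; the only minor imprecision is the appeal to Theorem~\ref{eklof-trlifaj} for deconstructibility of $R\modl_\vfl$, where strictly speaking one also needs the standard Hill-lemma argument that $\Fil(\sS)^\oplus$ is again of the form $\Fil(\sS')$ in a module category, but this is routine.
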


\begin{proof}
 The argument is similar to the proof of
Theorem~\ref{cotorsion-in-all-coresolving-contraderived-thm},
using Corollaries~\ref{contraderived-category-of-very-flats}
and~\ref{coderived-category-of-very-flats} instead of
Theorems~\ref{contraderived-category-of-flats}
and~\ref{coderived-category-of-flats}, respectively.
 Instead of~\cite[Theorem~5.3]{BCE}, use
Lemma~\ref{dw-contraadjusted-are-dg-contraadjusted}.
\end{proof}

\begin{lem} \label{homotopy-of-vfl-cta-as-contraderived-of-all}
 For any commutative ring $R$, the composition of triangulated functors
$\Hot(R\modl_\vfl^\cta)\rarrow\Hot(R\modl)\rarrow
\sD^\bctr(R\modl)$ is a triangulated equivalence
$$
 \Hot(R\modl_\vfl^\cta)\simeq\sD^\bctr(R\modl).
$$
\end{lem}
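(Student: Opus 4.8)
The plan is to follow the proof of Lemma~\ref{homotopy-of-flat-cotorsion-as-contraderived-of-all} verbatim in spirit, with the exact category $R\modl_\vfl$ of very flat modules playing the role of $R\modl_\fl$, the projective $R$\+modules again serving as its projective objects, and the very flat contraadjusted modules $R\modl_\vfl^\cta$ playing the role of the flat cotorsion modules $R\modl_\fl^\cot$. Concretely, I would invoke Corollaries~\ref{contraderived-category-of-very-flats} and~\ref{coderived-category-of-very-flats} in place of Theorems~\ref{contraderived-category-of-flats} and~\ref{coderived-category-of-flats}. First I recall the triangulated equivalence $\Hot(R\modl_\proj)\simeq\sD^\bctr(R\modl)$ of~\eqref{contraderived-of-all-modules}, which holds for any ring as recorded in the proof of Lemma~\ref{homotopy-of-flat-cotorsion-as-contraderived-of-all}.

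Next I would produce the middle equivalence, the very flat analogue of~\eqref{co-contra-derived-of-flats}. Corollary~\ref{contraderived-category-of-very-flats} asserts that the composite $\Hot(R\modl_\proj)\rarrow\Hot(R\modl_\vfl)\rarrow\sD(R\modl_\vfl)$ is a triangulated equivalence, while Corollary~\ref{coderived-category-of-very-flats} asserts that the composite $\Hot(R\modl_\vfl^\cta)\rarrow\Hot(R\modl_\vfl)\rarrow\sD(R\modl_\vfl)$ is a triangulated equivalence into the same target (note that the absolute, coderived, contraderived, and ordinary derived categories of $R\modl_\vfl$ all coincide, since its homological dimension is~$\le1$). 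Composing one with a quasi-inverse of the other yields a triangulated equivalence $\Hot(R\modl_\proj)\simeq\Hot(R\modl_\vfl^\cta)$.

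The assembly step is then a diagram chase. The composite triangulated functor $\Hot(R\modl_\vfl)\rarrow\Hot(R\modl)\rarrow\sD^\bctr(R\modl)$ annihilates every acyclic complex in the exact category $R\modl_\vfl$, so it factors through the Verdier quotient $\sD(R\modl_\vfl)$, say as $\bar{g}\colon\sD(R\modl_\vfl)\rarrow\sD^\bctr(R\modl)$. Precomposing $\bar{g}$ with the equivalence of Corollary~\ref{contraderived-category-of-very-flats} reproduces the equivalence~\eqref{contraderived-of-all-modules} — because the inclusion $R\modl_\proj\rarrow R\modl$ factors through $R\modl_\vfl$ — so $\bar{g}$ is itself an equivalence. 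Precomposing $\bar{g}$ with the equivalence of Corollary~\ref{coderived-category-of-very-flats} reproduces exactly the composite $\Hot(R\modl_\vfl^\cta)\rarrow\Hot(R\modl)\rarrow\sD^\bctr(R\modl)$ of the lemma, which is therefore a triangulated equivalence.

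The only step demanding care — the analogue of the closing clause in the proof of Lemma~\ref{homotopy-of-flat-cotorsion-as-contraderived-of-all} — is the factorization of $\bar{g}$, i.e., the claim that an acyclic complex of very flat modules is Becker-contraacyclic not merely in the exact category $R\modl_\vfl$ (which is what Corollary~\ref{contraderived-category-of-very-flats} delivers) but in the ambient abelian category $R\modl$. The hard part is thus purely a matter of matching up the two notions of Becker-contraacyclicity, and it is resolved by the observation (recorded in Section~\ref{contraderived-of-very-flats-subsecn}) that the projective objects of $R\modl_\vfl$ are exactly the projective $R$\+modules: testing acyclicity of $\Hom_R(P^\bu,{-})$ against complexes $P^\bu$ of projective objects of $R\modl_\vfl$ is the same as testing it against all complexes of projective $R$\+modules, so the two notions agree on complexes of very flat modules. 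Everything else is formal and parallel to the flat case.
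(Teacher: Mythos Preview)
Your proof is correct and follows precisely the approach the paper takes: it mirrors the proof of Lemma~\ref{homotopy-of-flat-cotorsion-as-contraderived-of-all}, substituting Corollaries~\ref{contraderived-category-of-very-flats} and~\ref{coderived-category-of-very-flats} for Theorems~\ref{contraderived-category-of-flats} and~\ref{coderived-category-of-flats}. Your explicit justification of the commutativity step (that Becker-contraacyclicity in $R\modl_\vfl$ and in $R\modl$ coincide for complexes of very flat modules because the projective objects agree) is a helpful elaboration of what the paper leaves implicit.
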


\begin{proof}
 Similar to the proof of
Lemma~\ref{homotopy-of-flat-cotorsion-as-contraderived-of-all}.
 Once again, Corollaries~\ref{contraderived-category-of-very-flats}
and~\ref{coderived-category-of-very-flats} need to be used instead of
Theorems~\ref{contraderived-category-of-flats}
and~\ref{coderived-category-of-flats}.
\end{proof}

\begin{cor} \label{contraadjusted-in-all-coresolving-beck-contrader-cor}
 For any commutative ring $R$, the inclusion of exact categories
$R\modl^\cta\rarrow R\modl$ induces a well-defined triangulated
functor between the Verdier quotient categories\/
$\sD^\bctr(R\modl^\cta)\rarrow\sD^\bctr(R\modl)$.
 The resulting functor is a triangulated equivalence of
Becker-contraderived categories
$$
 \sD^\bctr(R\modl^\cta)\simeq\sD^\bctr(R\modl).
$$
\end{cor}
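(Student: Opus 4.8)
The plan is to mirror the proof of Corollary~\ref{cotorsion-in-all-coresolving-contraderived-cor} almost verbatim, replacing ``flat cotorsion'' by ``very flat contraadjusted'' and ``cotorsion'' by ``contraadjusted'' throughout. All the substantive work has already been carried out in Theorem~\ref{contraadjusted-in-all-coresolving-beck-contrader-thm} and Lemma~\ref{homotopy-of-vfl-cta-as-contraderived-of-all}; the corollary is obtained by combining these two results formally, so no new computation is needed.

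For the first assertion, I would read off from the first assertion of Theorem~\ref{contraadjusted-in-all-coresolving-beck-contrader-thm} the identity $\Ac^\bctr(R\modl^\cta)=\Com(R\modl^\cta)\cap\Ac^\bctr(R\modl)$, and in particular the inclusion $\Ac^\bctr(R\modl^\cta)\subset\Ac^\bctr(R\modl)$. This inclusion says that the composite triangulated functor $\Hot(R\modl^\cta)\rarrow\Hot(R\modl)\rarrow\sD^\bctr(R\modl)$ annihilates every Becker-contraacyclic complex of contraadjusted modules, and hence factors uniquely through the Verdier quotient $\sD^\bctr(R\modl^\cta)=\Hot(R\modl^\cta)/\Ac^\bctr(R\modl^\cta)$. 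This factorization is the desired functor $\sD^\bctr(R\modl^\cta)\rarrow\sD^\bctr(R\modl)$, which proves the first assertion.

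To prove that this functor is an equivalence, I would exhibit the commutative triangle with apex $\Hot(R\modl_\vfl^\cta)$. The second assertion of Theorem~\ref{contraadjusted-in-all-coresolving-beck-contrader-thm} provides an equivalence $\Hot(R\modl_\vfl^\cta)\simeq\sD^\bctr(R\modl^\cta)$, while Lemma~\ref{homotopy-of-vfl-cta-as-contraderived-of-all} provides an equivalence $\Hot(R\modl_\vfl^\cta)\simeq\sD^\bctr(R\modl)$. Because the inclusion $R\modl_\vfl^\cta\hookrightarrow R\modl$ factors through $R\modl^\cta$, these two equivalences and the functor constructed in the previous step form a commutative triangle; two of its three edges are equivalences, so the third, $\sD^\bctr(R\modl^\cta)\rarrow\sD^\bctr(R\modl)$, is an equivalence as well. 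The only point in the whole argument requiring any care is checking that this triangle genuinely commutes, which is immediate from the factorization of the inclusion through $R\modl^\cta$ together with the universal property of the Verdier quotient that was used to define the induced functor. Thus there is no real obstacle at the level of the corollary itself: the entire difficulty resides upstream, in Theorem~\ref{contraadjusted-in-all-coresolving-beck-contrader-thm} and Lemma~\ref{homotopy-of-vfl-cta-as-contraderived-of-all}, which in turn rest on Corollaries~\ref{contraderived-category-of-very-flats} and~\ref{coderived-category-of-very-flats} and the finite-homological-dimension machinery of Section~\ref{exact-categories-fin-homol-dim-subsecn}.
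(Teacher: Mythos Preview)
Your proposal is correct and follows essentially the same approach as the paper: the paper's proof simply says to mimic the proof of Corollary~\ref{cotorsion-in-all-coresolving-contraderived-cor}, using Theorem~\ref{contraadjusted-in-all-coresolving-beck-contrader-thm} and Lemma~\ref{homotopy-of-vfl-cta-as-contraderived-of-all} in place of Theorem~\ref{cotorsion-in-all-coresolving-contraderived-thm} and Lemma~\ref{homotopy-of-flat-cotorsion-as-contraderived-of-all}. Your slightly more explicit unpacking of the commutative triangle and the Verdier-quotient factorization is exactly what that terse reference amounts to.
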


\begin{proof}
 Similar to the proof of
Corollary~\ref{cotorsion-in-all-coresolving-contraderived-cor}.
 Theorem~\ref{contraadjusted-in-all-coresolving-beck-contrader-thm}
and Lemma~\ref{homotopy-of-vfl-cta-as-contraderived-of-all}
need to be used instead of
Theorem~\ref{cotorsion-in-all-coresolving-contraderived-thm}
and Lemma~\ref{homotopy-of-flat-cotorsion-as-contraderived-of-all}.
\end{proof}

\subsection{Summary~I}
 Let us collect here explicit answers to the questions posed in
the second introductory paragraph of
Section~\ref{flat-modules-secn}.
 Let $U$ be an affine scheme and $R=\cO(U)$ a commutative ring.

 The inclusion of exact categories $U\ctrh=R\modl^\cta\rarrow R\modl$
induces equivalences of conventional derived categories
$$
 \sD^\star(R\modl^\cta)\simeq\sD^\star(R\modl)
$$
for all symbols $\star=\bb$, $+$, $-$, and~$\varnothing$
by Corollary~\ref{contraadjusted-in-all-coresolving-derived}.
 The same holds for the absolute derived categories, $\star=\abs$.

 The same inclusion of exact categories also induces equivalences
of the contraderived categories in the sense of Positselski and Becker,
\begin{align*}
 \sD^\pctr(R\modl^\cta) &\simeq\sD^\pctr(R\modl), \\
 \sD^\bctr(R\modl^\cta) &\simeq\sD^\bctr(R\modl)
\end{align*}
by Corollaries~\ref{contraadjusted-in-all-coresolving-pos-contrader-cor}
and~\ref{contraadjusted-in-all-coresolving-beck-contrader-cor} (see also
Theorem~\ref{contraadjusted-in-all-coresolving-beck-contrader-thm}).

 The inclusion of exact categories $U\ctrh^\lct=R\modl^\cot\rarrow
R\modl$ induces equivalences of conventional derived categories
$$
 \sD^\star(R\modl^\cot)\simeq\sD^\star(R\modl)
$$
for the symbols $\star=+$, and much more importantly and nontrivially,
$\star=\varnothing$ (i.~e., for the unbounded derived categories).
 The assertion for $\star=+$ is a particular case of the very general
Proposition~\ref{one-sided-bounded-resolving-subcategory-prop}(a).
 The assertion for $\star=\varnothing$ is
Theorem~\ref{cotorsion-in-all-coresolving-derived}.

 The same inclusion of exact categories also induces an equivalence of
the Becker-contraderived categories
$$
 \sD^\bctr(R\modl^\cot)\simeq\sD^\bctr(R\modl)
$$
by Corollary~\ref{cotorsion-in-all-coresolving-contraderived-cor}
(see also Theorem~\ref{cotorsion-in-all-coresolving-contraderived-thm}).

 The inclusion of exact categories $U\ctrh^\lin=R\modl^\inj\rarrow
R\modl$ induces equivalences of conventional derived categories
$$
 \Hot^+(R\modl^\inj)=\sD^+(R\modl^\inj)\simeq\sD^+(R\modl).
$$
 The triangulated functors
$$
 \Hot^\star(R\modl^\inj)=\sD^\star(R\modl^\inj)
 \lrarrow\sD^\star(R\modl)
$$
induced by the same inclusion of exact categories are usually
\emph{not} equivalences for $\star=\bb$ or $\star=\varnothing$
(see Example~\ref{thematic-counterex}).

 For $\star=-$, this triangulated functor is also \emph{not}
an equivalence for associative rings $R$ in
general~\cite[Proposition~1.2 together with Corollary~2.3, 4.1, 7.2,
or~7.3, or together with Example~8.4]{Pab}.
 It would be interesting to have a counterexample for a commutative
ring~$R$.
 
 The triangulated functors
\begin{align*}
 \Hot(R\modl^\inj)=\sD^\pctr(R\modl^\inj) &\lrarrow\sD^\pctr(R\modl), \\
 \Hot(R\modl^\inj)=\sD^\bctr(R\modl^\inj) &\lrarrow\sD^\bctr(R\modl)
\end{align*}
are also \emph{not} expected to be equivalences in general.
 The same counterexamples from the paper~\cite{Pab} work, providing
complexes of injective modules over certain associative rings that
are bounded above and acyclic, hence contraacyclic in the sense of
Positselski~\cite[Lemma~4.1]{Psemi}, \cite[Lemma~5.3(b)]{PSch}
(hence also contraacyclic in the sense of Becker) in the abelian
category $R\modl$, but not contractible.

 Let us offer some comment or explanation.
 Morally, the locally contraadjusted and locally cotorsion contraherent
cosheaves are two contraherent versions or ``dual analogues'' of 
\emph{arbitrary} quasi-coherent sheaves; while the locally injective
contraherent cosheaves are the dual analogues of \emph{flat}
quasi-coherent sheaves.
 So the unbounded derived categories $\sD(X\ctrh)$ and
$\sD(X\ctrh^\lct)$ \emph{are} expected to be directly comparable to
the derived category of quasi-coherent sheaves $\sD(X\qcoh)$ for
a good enough scheme $X$; while the derived category $\sD(X\ctrh^\lin)$
is \emph{not} expected to be compared to $\sD(X\qcoh)$.
 See~\cite[Theorem~4.6.6]{Pcosh}, cf.~\cite[Theorem~5.7.1]{Pcosh}.

\subsection{Summary~II}
 The thesis \emph{flat modules are not that much different from
projective modules} in the title of Section~\ref{flat-modules-secn}
is explained/confirmed by
Theorems~\ref{projectives-in-flats-resolving-derived},
\ref{contraderived-category-of-flats},
and~\ref{coderived-category-of-flats}.
 To elaborate,
\begin{itemize}
\item the inclusion of exact categories $R\modl_\proj\rarrow
R\modl_\fl$ induces an equivalence of the conventional unbounded
derived categories
$$
 \Hot(R\modl_\proj)=\sD(R\modl_\proj)\simeq\sD(R\modl_\fl);
$$
\item the inclusion of exact categories $R\modl_\proj\rarrow
R\modl_\fl$ induces an equivalence of the Becker-coderived
categories
$$
 \Hot(R\modl_\proj)=\sD^\bco(R\modl_\proj)\simeq\sD^\bco(R\modl_\fl);
$$
\item the classes of acyclic, Becker-coacyclic, and Becker-contraacyclic
complexes in the exact category $R\modl_\fl$ coincide,
$$
 \Ac^\bco(R\modl_\fl)=\Ac(R\modl_\fl)=\Ac^\bctr(R\modl_\fl).
$$
\end{itemize}

 These are instances of behaviour typical for resolving subcategories
of finite resolution dimension or exact categories of finite
homological dimension.
 Cf.\ Corollaries~\ref{projectives-in-very-flats-resolving-derived},
\ref{contraderived-category-of-very-flats},
and~\ref{coderived-category-of-very-flats}, which are based on
Propositions~\ref{enough-inj-proj-finite-homol-dim-prop}
and~\ref{finite-co-resol-dim-conventional-derived}(b).

 The same thesis \emph{flat modules are not that much different from
projective modules} is also explained/confirmed
by Theorem~\ref{cotorsion-in-all-coresolving-derived}
and Corollary~\ref{cotorsion-in-all-coresolving-contraderived-cor}.
 To repeat,
\begin{itemize}
\item the inclusion of exact categories $R\modl^\cot\rarrow
R\modl$ induces an equivalence of the conventional unbounded derived
categories
$$
 \sD(R\modl^\cot)\simeq\sD(R\modl);
$$
\item the inclusion of exact categories $R\modl^\cot\rarrow
R\modl$ induces an equivalence of the Becker-contraderived categories
$$
 \sD^\bctr(R\modl^\cot)\simeq\sD^\bctr(R\modl).
$$
\end{itemize}

 These are instances of behaviour typical for coresolving subcategories
of finite coresolution dimension.
 Cf.\ Corollaries~\ref{contraadjusted-in-all-coresolving-derived}
and~\ref{contraadjusted-in-all-coresolving-beck-contrader-cor},
which are based on
Proposition~\ref{enough-inj-proj-finite-homol-dim-prop},
Proposition~\ref{finite-co-resol-dim-conventional-derived}(a),
and Theorem~\ref{very-flat-cotorsion-pair}(c).
 See Corollary~\ref{projdims-of-flats-and-cotorsion-dimensions} for
the connection between the projective dimensions of flat modules and
the cotorsion dimensions of arbitrary modules.

\subsection{Conclusion}
 For the purposes of the derived, coderived, and contraderived
categories, finite homological dimension is ``harmless''.
 This statement actually refers two two assertions: firstly,
the passage to an exact subcategory of finite (co)resolution
dimension does not change the derived/coderived/contraderived
category; and secondly, for an exact category of finite homological
dimension, the derived and coderived/contraderived category agree
whenever they are well-defined.

 The discussion in this section illustrates the thesis that
the passage to the direct limit closure of an exact (sub)category is
almost as ``harmless'' as finite homological dimension
(see Section~\ref{periodicity-II-subsecn}).
 Thus, even though projective dimensions of flat modules may be
infinite, flat modules are ``not that much different'' from
projective ones.

 The typical covers in algebraic geometry are related to commutative
ring homomorphisms $R\rarrow S$ making $S$ a flat, but not a projective
$R$\+module.
 The main technical problem of the theory of contraherent cosheaves
consists in this fact, and various specific technical problems tend
to arise from this main one.
 The discussion in this section suggests that such problems are
surmountable.

\Section{Contraherent Proof of Quasi-Coherent Cotorsion Periodicity}
\label{cotorsion-periodicity-secn}

 The proof of the quasi-coherent cotorsion periodicity
theorem~\cite[Theorem~9.2]{PS6} given in the paper~\cite{PS6}
(mentioned in Sections~\ref{flats-as-direct-limits-subsecn}\+-%
\ref{periodicity-II-subsecn} above) is essentially global, based
as it is on applying the category-theoretic
result of~\cite[Corollary~8.2]{PS6} to the whole category of
quasi-coherent sheaves $X\qcoh$.
 The aim of this section is to suggest a \emph{local} proof of
quasi-coherent cotorsion periodicity, using reduction to the affine
case established in~\cite[Theorem~1.2(2), Proposition~4.8(2), or
Theorem~5.1(2)]{BCE}.
 The concept of a contraherent cosheaf is used in order to
accomplish such a reduction.

\subsection{Preliminary discussion}
 The interaction of local and global phenomena is a fundamental
aspect of geometry.
 Many properties of modules over commutative rings are \emph{local}
in the sense that they can be checked on an affine open
covering $U=\bigcup_{\alpha=1}^dU_\alpha$ of an affine scheme $U$
using the localization functors to pass from $\cO(U)$\+modules
to modules over the rings $\cO(U_\alpha)$.
 For example, the projectivity~\cite[\S\,II.3.1]{RG}, \cite{Pe},
flatness, and very flatness~\cite[Lemma~1.2.6(a)]{Pcosh},
\cite[Example~2.5]{Pal} are local properties of modules over
commutative rings.

 The property of a module over commutative ring to be cotorsion is
\emph{not} local, however~\cite[Example~2.7]{Pal}.
 This fact stands in the way of a direct attempt to reduce
the cotorsion periodicity theorem for quasi-coherent sheaves to
the similar theorem for modules over commutative rings.

 As pointed out in~\cite[Example~3.8]{Pal}, the class of cotorsion
modules is \emph{colocal} (if the contraadjustedness is presumed).
 Contraherent cosheaves are defined by gluing modules over commutative
rings using the colocalization functors.
 Hence the idea to use contraherent cosheaves in order to obtain
a local proof of the quasi-coherent cotorsion periodicity.

 The exposition in this section is based on the discussion of
local, colocal, and antilocal properties in the paper~\cite{Pal},
as well as on certain constructions with contraherent cosheaves
worked out in~\cite[Sections~2.5\+-2.6 and~4.6]{Pcosh}.
 The class of cotorsion modules over commutative rings is
\emph{both colocal and antilocal}~\cite[Examples~3.8, 6.2,
and~7.2]{Pal}; these facts are important for our argument.

\subsection{Contraadjusted and cotorsion quasi-coherent sheaves}
 Throughout the present Section~\ref{cotorsion-periodicity-secn},
\,$X$~denotes a quasi-compact semi-separated scheme.
 The notation $\Ext_X^*({-},{-})=\Ext_{X\qcoh}^*({-},{-})$ stands for
the Ext functors computed in the abelian category $X\qcoh$.
 The definition of a \emph{very flat quasi-coherent sheaf} was given
in Section~\ref{very-flat-sheaves-subsecn}.

 A quasi-coherent sheaf $\C$ on $X$ is said to be \emph{cotorsion}
if $\Ext^1_X(\F,\C)=0$ for every flat quasi-coherent sheaf $\F$ on~$X$.
 A quasi-coherent sheaf $\C$ on $X$ is said to be \emph{contraadjusted}
if $\Ext^1_X(\F,\C)=0$ for all very flat quasi-coherent sheaves $\F$
on~$X$.
 Similarly to the notation for modules, we will denoted the classes of
flat, very flat, cotorsion, and contraadjusted quasi-coherent sheaves
on $X$ by $X\qcoh_\fl$, \,$X\qcoh_\vfl$, \,$X\qcoh^\cot$, and
$X\qcoh^\cta$, respectively.

\begin{thm} \label{qcomp-qsep-flat-cotorsion-pair}
 For any quasi-compact semi-separated scheme $X$, the pair of classes of
objects (flat quasi-coherent sheaves, cotorsion quasi-coherent sheaves)
is a hereditary complete cotorsion pair in the abelian category
$X\qcoh$.
\end{thm}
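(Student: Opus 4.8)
The plan is to realize $(X\qcoh_\fl,\>X\qcoh^\cot)$ as a cotorsion pair \emph{generated by a set} of flat sheaves and then invoke the locally presentable form of the Eklof--Trlifaj theorem (Theorem~\ref{eklof-trlifaj}), whose proof is noted to carry over to any locally presentable abelian category, of which $X\qcoh$ is an example. By the very definition of a cotorsion sheaf one already has $X\qcoh^\cot=(X\qcoh_\fl)^{\perp_1}$, so the real content of the statement is the identity $X\qcoh_\fl={}^{\perp_1}(X\qcoh^\cot)$ together with completeness and heredity.

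The key input I would secure first is that the class $X\qcoh_\fl$ is \emph{deconstructible}, i.e.\ there is a \emph{set} $\sS\subset X\qcoh_\fl$ with $X\qcoh_\fl=\Fil(\sS)$. This is the technical heart of the argument and the step I expect to be the main obstacle. For modules it is Enochs' observation used in the proof of Theorem~\ref{flat-cotorsion-pair}; over the scheme $X$ one extracts it from the description of flat quasi-coherent sheaves as direct limits of locally countably presented flat ones (\cite[Theorems~2.4 and~3.5]{PS6}) by a transfinite-filtration argument. Alongside this I record that $X\qcoh_\fl$ is \emph{generating}: by \cite[Lemma~4.1.1]{Pcosh} every quasi-coherent sheaf on the quasi-compact semi-separated scheme $X$ is a quotient of a very flat, hence flat, sheaf. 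Enlarging $\sS$ by a set of flat generators obtained in this way, I may assume without loss that $\sS$ generates $X\qcoh$.

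With such an $\sS$ in hand, Theorem~\ref{eklof-trlifaj} in its locally presentable form produces a complete cotorsion pair generated by $\sS$, whose left-hand class is $\Fil(\sS)^\oplus$ (the generating set demanded by part~(b) having been absorbed into $\sS$). By deconstructibility $\Fil(\sS)=X\qcoh_\fl$, and since flat sheaves are closed under direct summands, this left-hand class equals $X\qcoh_\fl$; hence $X\qcoh_\fl={}^{\perp_1}(\sS^{\perp_1})$. Because $\sS\subset X\qcoh_\fl$ one gets $X\qcoh^\cot=(X\qcoh_\fl)^{\perp_1}\subseteq\sS^{\perp_1}$, and conversely any $C\in\sS^{\perp_1}$ satisfies $\Ext^1_X(F,C)=0$ for every flat $F\in{}^{\perp_1}(\sS^{\perp_1})=X\qcoh_\fl$, so $\sS^{\perp_1}\subseteq X\qcoh^\cot$. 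Therefore the generated pair is precisely $(X\qcoh_\fl,\>X\qcoh^\cot)$ and it is complete.

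Finally, for heredity I would verify condition~(1) of Lemma~\ref{garcia-rozas}: the class $X\qcoh_\fl$ is closed under kernels of admissible epimorphisms. Flatness is a local property, so this reduces to the module statement that in a short exact sequence $0\rarrow K\rarrow F\rarrow F'\rarrow0$ with $F$, $F'$ flat the kernel $K$ is flat, which is immediate from the long exact sequence of $\operatorname{Tor}$. Since $X\qcoh_\fl$ is generating and $X\qcoh^\cot$ is cogenerating (it contains all injective quasi-coherent sheaves and $X\qcoh$, being Grothendieck, has enough injectives), Lemma~\ref{garcia-rozas} applies and the pair is hereditary. The only genuinely hard ingredient is the deconstructibility of the flat class; granting it, everything else is the formal machinery of Sections~\ref{cotorsion-pairs-subsecn}--\ref{eklof-trlifaj-subsecn}.
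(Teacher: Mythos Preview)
Your overall strategy is correct and is essentially what underlies the paper's argument: deconstructibility of the flat class plus the Eklof--Trlifaj theorem in $X\qcoh$, together with the generating property of flats and condition~(1) of Lemma~\ref{garcia-rozas} for heredity. The paper simply packages the first two steps by citing the Enochs--Estrada result~\cite[Theorem~4.2]{EE}, which already delivers flat covers (equivalently, completeness of the cotorsion pair generated by flats) for quasi-coherent sheaves on any scheme; the quasi-compact semi-separated hypothesis enters only through the generating property of $X\qcoh_\fl$ (the paper cites~\cite[Section~2.4]{M-n} and~\cite[Lemma~A.1]{EP}, though your~\cite[Lemma~4.1.1]{Pcosh} works too). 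The paper also records an alternative route via~\cite[Section~4.1]{Pcosh}, which reduces to the module-level Theorem~\ref{flat-cotorsion-pair} by a finite \v Cech-type construction along a fixed affine open covering, avoiding any further set-theoretic input.

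One genuine wobble in your write-up: the references you give for deconstructibility are off. The theorems~\cite[Theorems~2.4 and~3.5]{PS6} say that flat quasi-coherent sheaves are \emph{direct limits} of locally countably presented flat ones; a direct-limit presentation does not by itself produce a transfinite \emph{filtration} with quotients in a fixed set, and ``by a transfinite-filtration argument'' hides real work. The deconstructibility of $X\qcoh_\fl$ is instead obtained via Hill-lemma techniques as in~\cite{Sto0} (or is implicit in the proof of~\cite[Theorem~4.2]{EE}); you should cite one of these rather than the direct-limit results. With that correction, your argument goes through and matches the paper's.
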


 The cotorsion pair from Theorem~\ref{qcomp-qsep-flat-cotorsion-pair}
is called the \emph{flat cotorsion pair} in the category of
quasi-coherent sheaves.

\begin{proof}
 First of all, it is important that the class of flat quasi-coherent
sheaves is generating in $X\qcoh$ \,\cite[Section~2.4]{M-n},
\cite[Lemma~A.1]{EP}.
 This is where the assumption that the scheme $X$ is quasi-compact and
semi-separated is needed (cf.~\cite[Theorem~2.2]{SS}).
 Otherwise, existence of flat covers and cotorsion envelopes in
the category of quasi-coherent sheaves is known for any
scheme~\cite[Theorem~4.2]{EE}.
 Combining these two results, one comes to the conclusion that
the flat cotorsion pair in $X\qcoh$ is complete.
 To show that it is hereditary, one can notice that condition~(1)
from Lemma~\ref{garcia-rozas} holds for flat quasi-coherent sheaves,
because it holds for flat modules.

 Alternatively, the results of~\cite[Lemma~4.1.8, Corollary~4.1.9, 
Lemma~4.1.10, and Corollary~4.1.11]{Pcosh} are applicable and tell
more than the theorem claims.
 This argument uses completeness of the flat cotorsion pair in
the categories of modules over commutative rings as a black box
(see Theorem~\ref{flat-cotorsion-pair}); then the rest of
the construction is more elementary, and no further use of
set-theoretical methods is needed.
 The construction proceeds from any fixed finite affine open
covering of the scheme~$X$.
 In the context of a finite principal affine open covering of
an affine scheme, one can also find this argument spelled out
in~\cite[proof of Proposition~4.3]{Pal}. 
\end{proof}

\begin{thm} \label{qcomp-qsep-very-flat-cotorsion-pair}
 For any quasi-compact semi-separated scheme $X$, the pair of classes of
objects (very flat quasi-coherent sheaves, contraadjusted
quasi-coherent sheaves) is a hereditary complete cotorsion pair in
the abelian category $X\qcoh$.
\end{thm}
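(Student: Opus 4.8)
The plan is to prove Theorem~\ref{qcomp-qsep-very-flat-cotorsion-pair} in exact parallel with the proof of Theorem~\ref{qcomp-qsep-flat-cotorsion-pair} just given, systematically replacing ``flat'' by ``very flat'' and ``cotorsion'' by ``contraadjusted'', and reducing the nontrivial points to the affine case recorded in Theorem~\ref{very-flat-cotorsion-pair}. Fix a finite affine open covering $X=\bigcup_{\alpha=1}^d U_\alpha$ with $U_\alpha=\Spec R_\alpha$; since $X$ is semi-separated, all intersections $U_{\alpha_0}\cap\dots\cap U_{\alpha_k}$ are again affine, and the open immersions $j_\alpha\:U_\alpha\rarrow X$ are affine morphisms, so that $j_{\alpha*}$ preserves quasi-coherence. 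The key structural input is that very flatness of quasi-coherent sheaves is a \emph{local} property (Section~\ref{very-flat-sheaves-subsecn}): a sheaf $\F$ lies in $X\qcoh_\vfl$ if and only if each $\F(U_\alpha)$ is a very flat $R_\alpha$\+module, and it suffices to test this on the fixed covering.

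First I would record the two easy halves of the cotorsion-pair identity. By definition $X\qcoh^\cta=(X\qcoh_\vfl)^{\perp_1}$ and $X\qcoh_\vfl\subseteq{}^{\perp_1}(X\qcoh^\cta)$. The content is the reverse inclusion ${}^{\perp_1}(X\qcoh^\cta)\subseteq X\qcoh_\vfl$, which I would obtain together with completeness from a deconstructibility statement: the class $X\qcoh_\vfl$ equals $\Fil(\sS)^\oplus$ for a suitable \emph{set} $\sS$ of very flat quasi-coherent sheaves generating the cotorsion pair. Here I would take $\sS$ to consist of sheaves assembled from the affine pieces --- concretely, built from the very flat generators $R_\alpha[s^{-1}]$ over each $R_\alpha$ via the direct images $j_{\alpha*}$ along the covering --- exploiting Example~\ref{open-immersion-very-flat} and the deconstructibility of very flat modules furnished by Theorem~\ref{very-flat-cotorsion-pair}(b). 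Once $X\qcoh_\vfl=\Fil(\sS)^\oplus$ is in hand, the Eklof lemma (Lemma~\ref{eklof-lemma}) and the Eklof--Trlifaj theorem (Theorem~\ref{eklof-trlifaj}), in their forms valid in a locally presentable abelian category as recorded after Theorem~\ref{eklof-trlifaj}, yield both ${}^{\perp_1}(X\qcoh^\cta)=X\qcoh_\vfl$ and the completeness of the pair.

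The generating and cogenerating hypotheses needed for these theorems are already available: every quasi-coherent sheaf on a quasi-compact semi-separated scheme is a quotient of a very flat one by \cite[Lemma~4.1.1]{Pcosh}, so $X\qcoh_\vfl$ is generating in $X\qcoh$; and $X\qcoh^\cta$ is cogenerating because $X\qcoh$ is Grothendieck, hence has enough injectives, and every injective quasi-coherent sheaf is contraadjusted. With these in place I would invoke Lemma~\ref{garcia-rozas} for heredity, verifying its condition~(1): if $0\rarrow\F'\rarrow\F\rarrow\F''\rarrow0$ is exact in $X\qcoh$ with $\F,\F''\in X\qcoh_\vfl$, then applying the exact functor $\F\mapsto\F(U_\alpha)$ gives a short exact sequence of $R_\alpha$\+modules whose outer terms are very flat, so its kernel term is very flat by heredity of the very flat cotorsion pair over $R_\alpha$ (Theorem~\ref{very-flat-cotorsion-pair}(a)); since very flatness is local, $\F'\in X\qcoh_\vfl$.

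The main obstacle is the deconstructibility step --- producing the set $\sS$ and proving $X\qcoh_\vfl=\Fil(\sS)^\oplus$. This is the global, set-theoretic heart of the argument, the analogue of Enochs' observation (\cite[Lemma~1]{BBE}) used for the flat pair, and it is exactly where finiteness of the affine covering and semi-separatedness of $X$ are essential; note also that, unlike the flat case, there is no ``very flat cover'' theorem of the type of \cite[Theorem~4.2]{EE} to fall back on, so the Eklof--Trlifaj route via $\sS$ appears unavoidable on this path. Alternatively, and perhaps more cleanly, I would bypass the set-theoretic machinery by following the elementary inductive construction over the fixed finite affine covering developed for the very flat pair in \cite[Section~4.1]{Pcosh} (parallel to the lemmas \cite[Lemma~4.1.8, Corollary~4.1.9, Lemma~4.1.10, and Corollary~4.1.11]{Pcosh} invoked in the flat case above), which takes completeness of the very flat cotorsion pair over commutative rings (Theorem~\ref{very-flat-cotorsion-pair}(a)) as a black box and glues the affine approximation sequences along the covering; heredity is then obtained exactly as in the previous paragraph.
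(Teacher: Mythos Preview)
Your proposal is correct and follows essentially the same two-track approach as the paper's proof: first the Eklof--Trlifaj route (generating class via \cite[Lemma~4.1.1]{Pcosh}, completeness via deconstructibility in the Grothendieck category $X\qcoh$, heredity via condition~(1) of Lemma~\ref{garcia-rozas} checked affine-locally), and second the alternative elementary construction from \cite[Section~4.1]{Pcosh}. The paper cites the precise very flat references for the latter as \cite[Lemma~4.1.1, Corollary~4.1.2, Lemma~4.1.3, and Corollary~4.1.4]{Pcosh}, rather than the flat-case analogues 4.1.8--4.1.11 you pointed to in parallel.
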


 The cotorsion pair from
Theorem~\ref{qcomp-qsep-very-flat-cotorsion-pair} is called
the \emph{very flat cotorsion pair} in the category of
quasi-coherent sheaves.

\begin{proof}
 As in the previous theorem, first of all one needs to show that
the class of very flat quasi-coherent sheaves is generating in $X\qcoh$
\,\cite[Lemma~4.1.1]{Pcosh}.
 Then completeness of the very flat cotorsion pair in $X\qcoh$ is
provable by applying a suitable version of the Eklof--Trlifaj theorem
(Theorem~\ref{eklof-trlifaj}) in the abelian category $X\qcoh$.
 To show that this cotorsion pair is hereditary, one deduces
property~(1) of Lemma~\ref{garcia-rozas} for very flat quasi-coherent
sheaves from the similar property of very flat modules
(as per Theorem~\ref{very-flat-cotorsion-pair}(a)).

 Alternatively, the results of~\cite[Lemma~4.1.1, Corollary~4.1.2,
Lemma~4.1.3, and Corollary~4.1.4]{Pcosh} are available and provide
more information than stated in the theorem.
 This argument uses completeness of the very flat cotorsion pair in
the categories of modules over commutative rings as a black box
(see Theorem~\ref{very-flat-cotorsion-pair}(a)), similarly to
the alternative proof of the previous
Theorem~\ref{qcomp-qsep-flat-cotorsion-pair}.
\end{proof}

 The additional information about the flat and very flat cotorsion
pairs in $X\qcoh$ that one gets from the proofs
in~\cite[Section~4.1]{Pcosh} is the \emph{antilocality} of
the classes of contraadjusted and cotorsion quasi-coherent sheaves
(in the terminology of~\cite{Pal}).

\subsection{The contraadjusted dimension is finite}
\label{contraadjusted-dimension-subsecn}
 The aim of this Section~\ref{cotorsion-periodicity-secn} is to
present a contraherent/colocal proof of the following theorem.
 Another proof can be found in~\cite[Corollary~9.4]{PS6}.

\begin{thm} \label{quasi-coherent-cotorsion-periodicity}
 Let $X$ be a quasi-compact semi-separated scheme, and let\/ $\C^\bu$
be an acyclic complex of cotorsion quasi-coherent sheaves on~$X$.
 Then the sheaves of cocycles of the complex\/ $\C^\bu$ are also
cotorsion.
\end{thm}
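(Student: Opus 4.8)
The plan is to reduce the statement to the affine cotorsion periodicity theorem \cite[Theorem~5.1(2)]{BCE} by transporting the problem from quasi-coherent sheaves (glued by localization) to contraherent cosheaves (glued by colocalization), where cotorsion becomes a well-behaved, colocal property. Fix a finite affine open covering $X=\bigcup_{\alpha=1}^N U_\alpha$; since $X$ is semi-separated, every finite intersection $U_I=\bigcap_{\alpha\in I}U_\alpha$ is affine and each open immersion $j_I\:U_I\rarrow X$ is an affine morphism. It suffices to prove that one sheaf of cocycles $\mathcal Z=\ker(\C^n\rarrow\C^{n+1})$ is cotorsion, i.e.\ that $\Ext^1_X(\F,\mathcal Z)=0$ for every flat quasi-coherent sheaf~$\F$. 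A naive dimension shift along the short exact sequences $0\rarrow\mathcal Z^n\rarrow\C^n\rarrow\mathcal Z^{n+1}\rarrow0$ only yields $\Ext^1_X(\F,\mathcal Z^{n+k})\cong\Ext^{1+k}_X(\F,\mathcal Z^n)$, which does not terminate because $X\qcoh$ may have infinite homological dimension; this is exactly why a genuine periodicity input is needed.

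The concrete technical heart is the following exactness fact. For an open affine immersion $V\subset U$ the $\cO(U)$\+module $\cO(V)$ is very flat by Example~\ref{open-immersion-very-flat}, so $\Ext^1_{\cO(U)}(\cO(V),C)=0$ for every contraadjusted $\cO(U)$\+module $C$ by Theorem~\ref{very-flat-cotorsion-pair}(a). Hence the colocalization functor $\Hom_{\cO(U)}(\cO(V),{-})$ is exact on contraadjusted modules, and by the colocality of the class of cotorsion modules \cite[Examples~3.8, 6.2, and~7.2]{Pal} it moreover carries cotorsion modules to cotorsion modules. By contrast, plain restriction of a cotorsion sheaf to an affine open need \emph{not} be cotorsion, since cotorsion is not a local property; thus the passage to the contraherent side, where restriction is replaced by this exact and cotorsion-preserving colocalization, is essential.

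Next I would translate the complex to the contraherent world. Using the colocal and antilocal descriptions of cotorsion modules from \cite{Pal}, together with the contraherent constructions of \cite[Sections~2.5\+-2.6 and~4.6]{Pcosh} and the completeness of the flat cotorsion pair in $X\qcoh$ from Theorem~\ref{qcomp-qsep-flat-cotorsion-pair}, one matches the cotorsion quasi-coherent sheaves $X\qcoh^\cot$ with the locally cotorsion contraherent cosheaves $X\ctrh^\lct$, compatibly with exact structures; on an affine scheme this is the identification~\eqref{cotorsion-on-affine}. Under this correspondence the acyclic complex $\C^\bu$ goes over to a complex $\P^\bu$ in $X\ctrh^\lct$ whose sections $\P^\bu[U_\alpha]$ over each affine $U_\alpha$ form an acyclic complex of cotorsion $\cO(U_\alpha)$\+modules. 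The affine case then applies verbatim: by \cite[Theorem~5.1(2)]{BCE} the module of cocycles of each $\P^\bu[U_\alpha]$ is cotorsion, so the contraherent cocycle $\mathfrak Z=\ker(\P^n\rarrow\P^{n+1})$ has cotorsion sections over every affine open and is therefore locally cotorsion. Transferring back through the correspondence identifies $\mathfrak Z$ with $\mathcal Z$ and shows that $\mathcal Z$ is cotorsion.

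The main obstacle I anticipate lies in the third step, namely making the correspondence between $X\qcoh^\cot$ and $X\ctrh^\lct$ precise enough and, above all, showing that the \emph{ambient} acyclicity of $\C^\bu$ in $X\qcoh$ (where the cocycles are a priori only quasi-coherent) becomes \emph{sectionwise} acyclicity of cotorsion modules on the contraherent side, at which point \cite[Theorem~5.1(2)]{BCE} manufactures the cotorsion cocycles. This is the crux where the colocal description is used to keep the sections cotorsion and exact under colocalization, while the antilocal description supplies the inverse passage back to quasi-coherent sheaves; verifying that both passages are exact and mutually inverse on the relevant complexes is the technical work, drawing on \cite{Pal} and \cite[Section~4.6]{Pcosh}.
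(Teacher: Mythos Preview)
Your overall strategy matches the paper's: transport to the contraherent side, apply the affine cotorsion periodicity of~\cite[Theorem~5.1(2)]{BCE} sectionwise, and transport back. But the obstacle you yourself flag in the last paragraph is real and unaddressed, and the paper's proof contains a concrete step that resolves it which your proposal is missing.

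The gap is this: the functor $\fHom_X(\cO_X,{-})$ that implements the passage to contraherent cosheaves is only defined, and only exact, on \emph{contraadjusted} quasi-coherent sheaves (see items~(1)--(2) in the paper's construction of $\fHom_X$). Your complex $\C^\bu$ has cotorsion (hence contraadjusted) terms, but its cocycles $\mathcal Z^n$ are a priori only quasi-coherent. Thus $\C^\bu$ is not yet known to be acyclic \emph{in the exact category} $X\qcoh^\cta$, and so you cannot yet conclude that the image complex is acyclic in $X\ctrh_\al$, which is what is needed for the sections over affine opens to be acyclic complexes of modules. The paper handles this with a preliminary step (Proposition~\ref{quasi-coherent-contraadjusted-periodicity}): the contraadjusted dimension of every quasi-coherent sheaf on $X$ is bounded by the number $d$ of affines in a covering (Lemma~\ref{contraadjusted-dimension-finite}), because very flat sheaves have projective dimension $\le d$ by Theorem~\ref{very-flat-cotorsion-pair}(d) and~\cite[Theorem~5.3(b)]{PS6}, and then Lemma~\ref{projective-and-coresolution-dimensions} applies. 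This finite-dimension bound immediately forces the cocycles of any acyclic complex of contraadjusted sheaves to be contraadjusted, so $\C^\bu$ is acyclic in $X\qcoh^\cta$, and the exact equivalence~\eqref{underived-co-contra} makes $\fHom_X(\cO_X,\C^\bu)$ acyclic in $X\ctrh_\al$, hence sectionwise acyclic.

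There is also a precision issue in your third step: the correspondence is not between $X\qcoh^\cot$ and all of $X\ctrh^\lct$, but between $X\qcoh^\cot$ and the \emph{antilocal} locally cotorsion contraherent cosheaves $X\ctrh^\lct_\al$ (equations~\eqref{underived-co-contra} and~\eqref{underived-co-contra-cotorsion}). This matters for the return trip: once~\cite{BCE} shows the sectionwise cocycles are cotorsion, you know the contraherent cocycles lie in $X\ctrh^\lct$; but to conclude they correspond to cotorsion quasi-coherent sheaves you need them in $X\ctrh^\lct_\al$. This follows because they were already in $X\ctrh_\al$ (as cocycles of an acyclic complex in the exact category $X\ctrh_\al$, which is resolving in $X\ctrh$), so the complex is acyclic in $X\ctrh^\lct_\al=X\ctrh_\al\cap X\ctrh^\lct$, and~\eqref{underived-co-contra-cotorsion} transports this back to acyclicity in $X\qcoh^\cot$.
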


 Im the present Section~\ref{contraadjusted-dimension-subsecn}
we prove the following proposition.

\begin{prop} \label{quasi-coherent-contraadjusted-periodicity}
 Let $X$ be a quasi-compact semi-separated scheme, and let\/ $\C^\bu$
be an acyclic complex of contraadjusted quasi-coherent sheaves on~$X$.
 Then the sheaves of cocycles of the complex\/ $\C^\bu$ are also
contraadjusted.
\end{prop}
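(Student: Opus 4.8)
The plan is to reduce the statement to the finiteness of the contraadjusted dimension on $X$ and then to run a dimension shift against the very flat cotorsion pair. Write $(\sF,\sC)=(X\qcoh_\vfl,\,X\qcoh^\cta)$ for the very flat cotorsion pair in $X\qcoh$, which is hereditary and complete by Theorem~\ref{qcomp-qsep-very-flat-cotorsion-pair}. The central claim I would grant for the moment (and prove separately, see below) is that there is a finite integer $d$ such that every quasi-coherent sheaf on $X$ has coresolution dimension $\le d$ with respect to $\sC$; equivalently, by Lemma~\ref{projective-and-coresolution-dimensions} applied to this cotorsion pair, that every very flat quasi-coherent sheaf has projective dimension $\le d$ in the abelian category $X\qcoh$.

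Granting this, the conclusion follows quickly. Let $Z^n=\ker(\C^n\rarrow\C^{n+1})$ be the sheaves of cocycles, so that acyclicity of $\C^\bu$ gives short exact sequences $0\rarrow Z^m\rarrow\C^m\rarrow Z^{m+1}\rarrow0$ for all $m$. Fix a very flat sheaf $\F$. Since the very flat cotorsion pair is hereditary, condition~(4) of Lemma~\ref{garcia-rozas} yields $\Ext^k_X(\F,\C^m)=0$ for all $k\ge1$, so the long exact sequences of $\Ext^*_X(\F,{-})$ applied to the displayed short exact sequences produce isomorphisms $\Ext^k_X(\F,Z^{m+1})\simeq\Ext^{k+1}_X(\F,Z^m)$ for every $k\ge1$. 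Iterating $d$ times,
\[
 \Ext^1_X(\F,Z^{n+1})\simeq\Ext^{d+1}_X(\F,Z^{n+1-d})=0 ,
\]
the last vanishing because $\F$ has projective dimension $\le d$ in $X\qcoh$. As $\F$ was an arbitrary very flat sheaf, $Z^{n+1}\in\sF^{\perp_1}=X\qcoh^\cta$, i.e.\ every cocycle sheaf is contraadjusted. Note that heredity is essential here: it is what kills \emph{all} higher $\Ext^k_X(\F,\C^m)$ and thereby makes the shift lossless.

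The main obstacle is therefore the finiteness claim itself, and this is where the antilocal/contraherent viewpoint of the section does its work. The plan is to reduce to the affine case and globalize over a finite affine covering. Over an affine open $\Spec R\subset X$, every very flat $R$\+module has projective dimension $\le1$ by Theorem~\ref{very-flat-cotorsion-pair}(d); equivalently, since any quotient of a contraadjusted $R$\+module is contraadjusted by Theorem~\ref{very-flat-cotorsion-pair}(c), every $R$\+module admits a contraadjusted coresolution of length~$1$. To assemble these local coresolutions into a single finite global one I would invoke the antilocality of the class of contraadjusted quasi-coherent sheaves, the extra information read off from the proof of Theorem~\ref{qcomp-qsep-very-flat-cotorsion-pair} (see \cite[Section~4.1]{Pcosh} and \cite{Pal}), which lets contraadjusted sheaves on $X$ be built from contraadjusted modules on the members of the covering. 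Combining the local bound with the length $\le N$ of the \v Cech machinery attached to a covering of $X$ by $N$ affine opens---all of whose intersections are affine, by semi-separatedness---yields a finite $d$ depending only on~$N$. The delicate point, and the one genuinely requiring semi-separatedness and the antilocal theory rather than a naive local-to-global spectral sequence, is that this globalization must be performed \emph{inside} $X\qcoh$, where there need be no projective objects at all; this is precisely the part of the argument for which the contraherent/antilocal formalism is designed.
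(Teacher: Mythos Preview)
Your proposal is correct and follows essentially the same line as the paper: both reduce to the claim that the contraadjusted (coresolution) dimension on $X$ is bounded by a fixed integer~$d$ (this is Lemma~\ref{contraadjusted-dimension-finite}), and then conclude by a dimension shift. Your explicit $\Ext$-shifting argument is just an unpacking of the paper's phrase ``the coresolution dimension does not depend on the choice of a coresolution.''

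The only noteworthy difference is in how the finiteness claim is established. You sketch it by invoking antilocality of contraadjusted quasi-coherent sheaves and allude to the contraherent formalism as the tool for globalizing the affine bound. The paper instead takes the dual (and cleaner) route: it bounds the \emph{projective} dimension of every very flat quasi-coherent sheaf by~$d$ via \cite[Theorem~5.3(b)]{PS6} (a \v Cech-type argument using the affine covering, no contraherent cosheaves needed), and then converts this into the coresolution-dimension bound via Lemma~\ref{projective-and-coresolution-dimensions}. So your last sentence overstates the role of the contraherent/antilocal machinery for \emph{this} proposition; in the paper that machinery is reserved for the genuinely harder cotorsion case (Theorem~\ref{quasi-coherent-cotorsion-periodicity}), where the local class is not closed under quotients and no uniform coresolution-dimension bound is available.
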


 The \emph{contraadjusted dimension} of a quasi-coherent sheaf $\M$
on the scheme $X$ is defined as its coresolution dimension with
respect to the coresolving subcategory of contraadjusted quasi-coherent
sheaves $X\qcoh^\cta\subset X\qcoh$.
 (See the beginning of
Section~\ref{finite-coresolution-dimension-subsecn}
for the definition of the coresolution dimension.)

 The following lemma can be found in~\cite[Lemma~4.6.1(a)]{Pcosh}.

\begin{lem} \label{contraadjusted-dimension-finite}
 Let $X$ be a quasi-compact semi-separated scheme, and let
$X=\bigcup_{\alpha=1}^d U_\alpha$ be a finite affine open covering
of~$X$.
 Then the contraadjusted dimensions of quasi-coherent sheaves on $X$
do not exceed~$d$.
\end{lem}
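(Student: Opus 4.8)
The plan is to bypass the construction of an explicit contraadjusted coresolution and instead reduce the statement, via the homological reformulation already available in the excerpt, to a single cohomological vanishing that can be read off a \v Cech spectral sequence. Since $X$ is quasi-compact and semi-separated, the very flat cotorsion pair $(X\qcoh_\vfl,X\qcoh^\cta)$ is hereditary and complete by Theorem~\ref{qcomp-qsep-very-flat-cotorsion-pair}, so Lemma~\ref{projective-and-coresolution-dimensions} applies with $\sE=X\qcoh$, $\sF=X\qcoh_\vfl$, and $\sC=X\qcoh^\cta$: the contraadjusted dimensions of all quasi-coherent sheaves on $X$ are $\le d$ if and only if every very flat quasi-coherent sheaf $\F$ has projective dimension $\le d$ in the abelian category $X\qcoh$, that is, $\Ext^n_X(\F,\M)=0$ for all quasi-coherent $\M$ and all $n>d$. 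Thus it suffices to establish this Ext-vanishing.

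To prove it, fix a very flat $\F$ and an arbitrary $\M$, and for each nonempty $S\subseteq\{1,\dots,d\}$ write $U_S=\bigcap_{\alpha\in S}U_\alpha$ with inclusion $j_S\colon U_S\rarrow X$. Semi-separatedness guarantees that each $U_S$ is affine, so each $j_S$ is an affine morphism and the augmented \v Cech complex
\[
 0\lrarrow\M\lrarrow\bigoplus_{|S|=1}(j_S)_*(\M|_{U_S})\lrarrow
 \bigoplus_{|S|=2}(j_S)_*(\M|_{U_S})\lrarrow\dotsb\lrarrow
 \bigoplus_{|S|=d}(j_S)_*(\M|_{U_S})\lrarrow0
\]
is an exact complex \emph{within} $X\qcoh$: its terms are quasi-coherent because $(j_S)_*$ preserves quasi-coherence for affine $j_S$, and the complex is exact because it is exact on stalks. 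Since it is concentrated in cohomological degrees $0,\dots,d-1$, applying $\Ext^\bullet_X(\F,{-})$ yields a hyper-Ext spectral sequence with $E_1^{p,q}=\bigoplus_{|S|=p+1}\Ext^q_X(\F,(j_S)_*(\M|_{U_S}))$ converging to $\Ext^{p+q}_X(\F,\M)$, in which $p$ is confined to $0\le p\le d-1$.

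It remains to collapse the $q$-direction. For affine $j_S$ the functor $(j_S)_*$ is exact, while $j_S^*$ is exact since $j_S$ is an open immersion; as $(j_S)_*$ is right adjoint to the exact $j_S^*$ it also preserves injectives, so pushing forward an injective resolution of $\M|_{U_S}$ gives a natural isomorphism $\Ext^q_X(\F,(j_S)_*(\M|_{U_S}))\simeq\Ext^q_{U_S}(\F|_{U_S},\M|_{U_S})$. Because $U_S$ is affine this equals the module group $\Ext^q_{\cO(U_S)}(\F(U_S),\M(U_S))$, and $\F(U_S)$ is a very flat $\cO(U_S)$-module, hence of projective dimension $\le1$ by Theorem~\ref{very-flat-cotorsion-pair}(d). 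Therefore $E_1^{p,q}=0$ for $q\ge2$, so $E_1^{p,q}$ is supported in the rectangle $0\le p\le d-1$, $0\le q\le1$, forcing $\Ext^n_X(\F,\M)=0$ for $n>d$. Feeding this back through Lemma~\ref{projective-and-coresolution-dimensions} delivers the claimed bound.

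I expect the main obstacle to be bookkeeping rather than any deep input. The two places demanding care are the exactness of the augmented \v Cech complex \emph{as a complex of quasi-coherent sheaves} — precisely where semi-separatedness is indispensable, through affineness of every $U_S$ and exactness of $(j_S)_*$ — and the identification $\Ext^q_X(\F,(j_S)_*\mathcal N)\simeq\Ext^q_{U_S}(\F|_{U_S},\mathcal N)$, which rests on the simultaneous exactness of $j_S^*$ and $(j_S)_*$ on quasi-coherent sheaves. The very flat hypothesis itself enters only through the single fact that very flat modules have projective dimension at most one, and this is exactly what confines the spectral sequence to total degree $\le d$.
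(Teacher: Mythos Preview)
Your proof is correct and follows the same overall strategy as the paper: both invoke Theorem~\ref{qcomp-qsep-very-flat-cotorsion-pair} and Lemma~\ref{projective-and-coresolution-dimensions} to reduce the statement to the claim that every very flat quasi-coherent sheaf on $X$ has projective dimension at most~$d$, and both ground that claim in the fact (Theorem~\ref{very-flat-cotorsion-pair}(d)) that very flat modules over commutative rings have projective dimension~$\le1$.

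The only difference is in how the passage from the local bound to the global one is made. The paper cites an external result, \cite[Theorem~5.3(b)]{PS6}, as a black box for this step. You instead supply a self-contained argument: the \v Cech coresolution of $\M$ with respect to the given covering, together with the adjunction $\Ext^q_X(\F,(j_S)_*\mathcal N)\simeq\Ext^q_{U_S}(\F|_{U_S},\mathcal N)$ (valid because $j_S$ is an affine open immersion, so both $j_S^*$ and $(j_S)_*$ are exact and $(j_S)_*$ preserves injectives), confines the hyper-Ext spectral sequence to the rectangle $0\le p\le d-1$, $0\le q\le1$. This is essentially what the proof of the cited theorem in~\cite{PS6} does, so you have unpacked the reference rather than taken a genuinely different route. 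The benefit of your version is self-containment; the paper's version is shorter and points to a statement of slightly greater generality.
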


\begin{proof}
 By Theorem~\ref{qcomp-qsep-very-flat-cotorsion-pair}, the pair of
full subcategories $(X\qcoh_\vfl$, $X\qcoh^\cta)$ is a hereditary
complete cotorsion pair in $X\qcoh$.
 By Theorem~\ref{very-flat-cotorsion-pair}(d), the projective
dimensions of very flat modules over commutative rings
do not exceed~$1$.
 By~\cite[Theorem~5.3(b)]{PS6}, it follows that the projective
dimensions of very flat quasi-coherent sheaves on $X$ do not
exceed~$d$.
 Now it remains to apply
Lemma~\ref{projective-and-coresolution-dimensions} to the cotorsion
pair $(X\qcoh_\vfl$, $X\qcoh^\cta)$ in $\sE=X\qcoh$.
\end{proof}

\begin{proof}[Proof of
Proposition~\ref{quasi-coherent-contraadjusted-periodicity}]
 The assertion follows immediately from
Lemma~\ref{contraadjusted-dimension-finite}.
 One has to use the fact that the coresolution dimension does not
depend on the choice of a coresolution; see the discussion and
references in the beginning of
Section~\ref{finite-coresolution-dimension-subsecn}.
\end{proof}

\subsection{Exact categories of contraherent cosheaves}
\label{exact-categories-of-contraherent-subsecn}
 We refer to Section~\ref{locally-contraadjusted-subsecn} for
the definition of the category of locally contraadjusted contraherent
cosheaves $X\ctrh$.
 The definitions of the full subcategories of locally cotorsion
contraherent cosheaves $X\ctrh^\lct$ and locally injective
contraherent cosheaves $X\ctrh^\lin$,
$$
 X\ctrh^\lin\subset X\ctrh^\lct\subset X\ctrh,
$$
can be found in Section~\ref{locally-cotorsion-subsecn}.

 It was mentioned in Remark~\ref{nonlocality-remark} and
Section~\ref{locally-cotorsion-subsecn} that the categories $X\ctrh$,
\,$X\ctrh^\lct$, and $X\ctrh^\lin$ have natural exact structures.
 Let us define these exact category structures now.

 A short sequence of contraherent cosheaves $0\rarrow\P\rarrow\Q
\rarrow\fR\rarrow0$ on $X$ is said to be admissible exact in $X\ctrh$
if, for every affine open subscheme $U\subset X$, the short sequence
of (contraadjusted) $\cO(U)$\+modules $0\rarrow\P[U]\rarrow\Q[U]
\rarrow\fR[U]\rarrow0$ is exact in $\cO(U)\modl$.
 The admissible short exact sequences in $X\ctrh^\lct$ and $X\ctrh^\lin$
are defined similarly.

 It is clear from the definition that the full subcategories
$X\ctrh^\lct$ and $X\ctrh^\lin$ are closed under extensions and
cokernels of admissible monomorphisms in $X\ctrh$ (because
the full subcategories of cotorsion and injective $\cO(U)$\+modules
are closed under extensions and cokernels of monomorphisms in
$\cO(U)\modl$).
 A more nontrivial assertion is that, on a quasi-compact semi-separated
scheme $X$, the full subcategories $X\ctrh^\lct$ and $X\ctrh^\lin$ are
coresolving.
 In other words, every contraherent cosheaf on $X$ is an admissible
subobject of a locally injective contraherent cosheaf.
 This is the result of~\cite[Lemma~4.2.2(a)]{Pcosh}; see
Theorem~\ref{locally-injective-cotorsion-pair} below for a stronger
assertion.

 It follows that, on a quasi-compact semi-separated scheme $X$,
the functors $\Ext$ computed in the exact categories $X\ctrh$,
\,$X\ctrh^\lct$, and $X\ctrh^\lin$ agree with each other.
 We will denote this functor by $\Ext^{X,*}({-},{-})=
\Ext_{X\ctrh}^*({-},{-})$.

\subsection{Antilocal contraherent cosheaves}
\label{antilocal-contraherent-subsecn}
 A contraherent cosheaf $\P$ on the scheme $X$ is called
\emph{antilocal} (``colocally projective'' in the terminology
of~\cite[Section~4.2]{Pcosh}) if $\Ext^{X,1}(\P,\fJ)=0$ for every
locally injective contraherent cosheaf $\fJ$ on $X$
(cf.~\cite[Corollary~4.2.3(a)]{Pcosh}).
 See Section~\ref{terminological-remark-subsecn} for
the terminological discussion.

 We will denote the full subcategory of antilocal contraherent cosheaves
by $X\ctrh_\al\subset X\ctrh$.
 (The notation $X\ctrh_\clp$ is used instead in
the preprint~\cite{Pcosh}.)
 The full subcategory of antilocal locally cotorsion contraherent
cosheaves will be denoted by $X\ctrh^\lct_\al=X\ctrh_\al\cap
X\ctrh^\lct\subset X\ctrh$.

 It follows from the next theorem that $X\ctrh_\al$ is a resolving
subcategory in $X\ctrh$ and $X\ctrh^\lct_\al$ is a resolving
subcategory in $X\ctrh^\lct$.

\begin{thm} \label{locally-injective-cotorsion-pair}
 For any quasi-compact semi-separated scheme $X$, the pair of classes of
objects (antilocal contraherent cosheaves, locally injective
contraherent cosheaves) is a hereditary complete cotorsion pair in
the exact category $X\ctrh$.
\end{thm}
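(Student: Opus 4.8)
The plan is to extract one inclusion of the cotorsion pair from the definition and to build everything else out of the finite affine geometry of $X$. By the definition of antilocality in Section~\ref{antilocal-contraherent-subsecn} we have, tautologically, $X\ctrh_\al={}^{\perp_1}(X\ctrh^\lin)$, which is the identity $\sF={}^{\perp_1}\sC$ for $\sF=X\ctrh_\al$ and $\sC=X\ctrh^\lin$. It remains to prove the opposite perpendicularity $(X\ctrh_\al)^{\perp_1}=X\ctrh^\lin$ and completeness. I would fix a finite affine open covering $X=\bigcup_{\alpha=1}^dU_\alpha$, observing that semi-separatedness makes every finite intersection $U_{\alpha_0}\cap\dots\cap U_{\alpha_k}$ affine and every inclusion $j\:U\hookrightarrow X$ of such an intersection an affine morphism. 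The whole argument then rests on the direct image functors $j_!$ for contraherent cosheaves along these affine embeddings, left adjoint to the inverse images $j^!$ (in the notation of~\cite[Section~2.3]{Pcosh}); on the affine level $j_!$ is restriction of scalars and $j^!=\Hom(-,-)$ is the colocalization.

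The two building facts I would establish first are that each $j_!$ carries an \emph{arbitrary} contraherent cosheaf on the affine piece to an \emph{antilocal} contraherent cosheaf on $X$, and that $j_!$ is exact on the relevant admissible sequences while $j^!$ preserves local injectivity. The first follows from the $(j_!,j^!)$\+adjunction $\Ext^{X,1}(j_!\P,\fJ)\cong\Ext^1(\P,j^!\fJ)$: for a locally injective $\fJ$ the restriction $j^!\fJ$ is injective over the affine piece, so for any $\P$ the right-hand group vanishes. Running the same adjunction backwards is exactly what gives $(X\ctrh_\al)^{\perp_1}\subseteq X\ctrh^\lin$: if $\C$ is right-orthogonal to every antilocal cosheaf then in particular $\Ext^{X,1}(j_{\alpha!}M,\C)\cong\Ext^1_{\cO(U_\alpha)}(M,\C[U_\alpha])=0$ for every contraadjusted $\cO(U_\alpha)$\+module $M$, which forces $\C[U_\alpha]$ to be an injective $\cO(U_\alpha)$\+module for each $\alpha$, i.e.\ $\C$ is locally injective. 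Here I would use that the injective objects of the exact category $\cO(U_\alpha)\modl^\cta$ are precisely the injective modules, a fact resting on Theorem~\ref{very-flat-cotorsion-pair}(c) (quotients of contraadjusted modules are contraadjusted).

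For completeness I would exploit the finiteness of the covering twice. The \v Cech-type resolution attached to $\{U_\alpha\}$ yields, for any $\Q$, an admissible exact sequence $0\to\bigoplus j_!j^!\Q\to\dots\to\bigoplus_\alpha j_{\alpha!}j_\alpha^!\Q\to\Q\to0$ of length $\le d$ whose terms are antilocal by the first fact above; this shows simultaneously that $X\ctrh_\al$ is generating and that every object has finite antilocal resolution dimension. Dually, Lemma~\ref{contraadjusted-dimension-finite} together with~\cite[Lemma~4.2.2(a)]{Pcosh} shows that $X\ctrh^\lin$ is cogenerating and that every contraherent cosheaf has locally injective coresolution dimension $\le d$. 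With a finite antilocal resolution and a finite locally injective coresolution both available, the special precover sequence $0\to\fJ'\to\P\to\Q\to0$ and the special preenvelope sequence $0\to\Q\to\fJ\to\P'\to0$ are assembled by the standard dimension-shifting argument for cotorsion pairs of finite dimension, as in the García--Rozas reasoning behind Lemma~\ref{garcia-rozas} and~\cite[Proposition~A.5.6]{Pcosh}. The hereditary property then comes for free: $X\ctrh_\al$ is generating, $X\ctrh^\lin$ is cogenerating, and $X\ctrh^\lin$ is coresolving, hence closed under cokernels of admissible monomorphisms, so condition~(2) of Lemma~\ref{garcia-rozas} is satisfied.

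The hard part will be the completeness construction, and specifically controlling the membership of \emph{both} terms of each approximation sequence at once: the \v Cech resolution produces antilocal objects and the injective coresolution produces locally injective ones, but splicing them so that the precover acquires a locally injective kernel and the preenvelope an antilocal cokernel is exactly where the uniform finiteness of the two dimensions (guaranteed by the finite affine covering and Lemma~\ref{contraadjusted-dimension-finite}) must be invoked. I also expect the verification that the \v Cech complex is admissible exact in the exact category $X\ctrh$ itself, rather than merely affine-locally, to require the nonlocality caveats of Remark~\ref{nonlocality-remark} to be handled with care, together with the genuine exactness of $j_!$ on the sequences in play.
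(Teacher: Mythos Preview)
Your overall strategy---the adjunction $(j_!,j^!)$, the identification of direct images from affine opens as antilocal, and the \v Cech resolution---is exactly the approach of~\cite[Section~4.2]{Pcosh}, and your arguments for the two perpendicularity inclusions and for the hereditary property are correct.

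The completeness argument, however, contains a genuine error. You assert that ``every contraherent cosheaf has locally injective coresolution dimension $\le d$'', citing Lemma~\ref{contraadjusted-dimension-finite} and~\cite[Lemma~4.2.2(a)]{Pcosh}. The first lemma concerns the contraadjusted dimension of \emph{quasi-coherent sheaves} and says nothing about injective coresolutions of contraherent cosheaves; the second only gives that $X\ctrh^\lin$ is cogenerating. The claim itself is false: on an affine scheme $U=\Spec R$, the locally injective coresolution dimension in $U\ctrh=R\modl^\cta$ equals the ordinary injective dimension in $R\modl$ (since quotients of contraadjusted modules are contraadjusted by Theorem~\ref{very-flat-cotorsion-pair}(c)), and this is unbounded whenever $R$ has infinite global dimension. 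For instance, over $R=k[x,y]/(x,y)^2$ every module is contraadjusted (all localizations $R[s^{-1}]$ are $0$ or $R$), yet the residue field has infinite injective dimension. So your symmetric ``both dimensions finite, then dimension-shift'' scheme cannot work.

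What does work, and is the content of~\cite[Corollary~4.2.5]{Pcosh}, uses only the antilocal side. The \v Cech resolution has finite length, and its leftmost term is a finite direct sum of objects $j_!M$ with $M$ contraadjusted on an affine open; each such object has a special preenvelope $0\to j_!M\to j_!J\to j_!Q\to0$ obtained by pushing forward an embedding $M\hookrightarrow J$ into an injective module (the cokernel $Q$ stays contraadjusted, and $j_!J$ is locally injective). One then climbs the resolution by iterated pushouts: given $0\to K\to F\to\Q\to0$ with $F$ antilocal and a special preenvelope $0\to K\to\fJ\to F'\to0$ of $K$, the pushout of $K\hookrightarrow F$ along $K\hookrightarrow\fJ$ yields a special precover $0\to\fJ\to P\to\Q\to0$, where $P$ is an extension of $F'$ by $F$ and hence antilocal. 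Salce's lemma then supplies the other approximation. No bound on locally injective coresolution dimension is needed; the finiteness of the \v Cech resolution together with the affine-local injective embeddings is what does all the work.
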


 The cotorsion pair from
Theorem~\ref{locally-injective-cotorsion-pair} is called
the \emph{locally injective cotorsion pair} in the category of
contraherent cosheaves.

\begin{proof}
 This is~\cite[Corollary~4.2.5(a\+-b)]{Pcosh}.
 Part~(c) of the same corollary provides additional information, viz.,
a description of antilocal contraherent cosheaves on~$X$.
 Furthermore, \cite[Corollary~4.2.6(c)]{Pcosh} provides a description
of antilocal locally cotorsion contraherent cosheaves on~$X$.
\end{proof}

 For completeness of the exposition, let us also give the following
definition.
 A contraherent cosheaf $\fF$ on the scheme $X$ is called
\emph{antilocally flat} (``colocally flat'' in the terminology
of~\cite[Section~4.3]{Pcosh}) if $\Ext^{X,1}(\fF,\Q)=0$ for every
locally cotorsion contraherent cosheaf $\Q$ on $X$
(cf.~\cite[Corollary~4.3.2(a)]{Pcosh}).

\begin{thm} \label{antilocally-flat-cotorsion-pair}
 For any quasi-compact semi-separated scheme $X$, the pair of classes of
objects (antilocally flat contraherent cosheaves, locally cotorsion
contraherent cosheaves) is a hereditary complete cotorsion pair in
the exact category $X\ctrh$.
\end{thm}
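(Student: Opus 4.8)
The plan is to follow the same route as the proof of Theorem~\ref{locally-injective-cotorsion-pair}, of which this statement is the ``locally cotorsion'' counterpart: there the right-hand class was the locally injective cosheaves and the left-hand class the antilocal ones, whereas here the right-hand class is enlarged to the locally cotorsion cosheaves and the left-hand class is correspondingly cut down to the antilocally flat ones. Three things have to be established: that the two classes form a cotorsion pair (two-sided orthogonality), that it is complete, and that it is hereditary. One half of the orthogonality is built into the definition, namely that the antilocally flat cosheaves are exactly ${}^{\perp_1}(X\ctrh^\lct)$ computed with $\Ext^{X,1}$ (which is unambiguous by the $\Ext$-agreement recorded in Section~\ref{exact-categories-of-contraherent-subsecn}). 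The reverse inclusion---that a cosheaf $\Q$ with $\Ext^{X,1}(\fF,\Q)=0$ for every antilocally flat $\fF$ is necessarily locally cotorsion---I expect to fall out of the completeness step, since the special precover sequences produced there will exhibit enough antilocally flat objects (in particular ones that restrict to flat modules on the affine pieces) to detect the cotorsion condition affine-locally, exactly as on $U=\Spec R$ the flat cotorsion pair of Theorem~\ref{flat-cotorsion-pair} detects cotorsion modules by $\Ext^1$-vanishing against flat ones.

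The hereditary property should be the easy part, and I would handle it via Lemma~\ref{garcia-rozas}. Its hypotheses ask that the left class be generating and the right class cogenerating in $X\ctrh$; cogeneration is already available, because by Theorem~\ref{locally-injective-cotorsion-pair} every contraherent cosheaf admits an admissible monomorphism into a locally injective, hence locally cotorsion, cosheaf, and generation will come from the special precover sequences constructed for completeness. Granting these, condition~(2) of Lemma~\ref{garcia-rozas} holds on the nose: the class $X\ctrh^\lct$ is closed under cokernels of admissible monomorphisms in $X\ctrh$, as recorded in Section~\ref{locally-cotorsion-subsecn}. This yields at once that the cotorsion pair is hereditary.

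The genuine work, and the point where I expect the main obstacle, is completeness. Because contraherence and antilocal flatness are \emph{nonlocal} (Remark~\ref{nonlocality-remark}), one cannot simply glue affine-local flat covers into a global antilocally flat cover; the approximation sequences must be assembled globally. My plan is to fix a finite affine open covering $X=\bigcup_{\alpha=1}^{d}U_\alpha$, available since $X$ is quasi-compact, and to use semi-separatedness, which guarantees that all the intersections $U_{\alpha_0}\cap\dots\cap U_{\alpha_k}$ are again affine, so that the direct-image functors along these affine open immersions behave well on contraherent cosheaves. Antilocal and antilocally flat cosheaves are precisely the objects assembled by such direct images from module-level data on the affine pieces, and the completeness of the flat cotorsion pair over each coordinate ring $\cO(U_\alpha)$ (Theorem~\ref{flat-cotorsion-pair}) furnishes the local input. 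The construction is then a \v Cech-type totalization: one takes flat approximations affine-locally, transports them by the direct images, and splices the pieces along the \v Cech differentials of the covering into a single admissible short exact sequence $0\to\Q'\to\fF\to\P\to0$ with $\fF$ antilocally flat and $\Q'$ locally cotorsion, together with the dual preenvelope sequence. The delicate point---and the heart of the obstacle---is to verify that the totalized object is \emph{globally} antilocally flat, i.e.\ $\Ext^{X,1}$-orthogonal to \emph{every} locally cotorsion cosheaf rather than merely flat on each $U_\alpha$; this is where the antilocality of the contraadjusted and cotorsion quasi-coherent classes established around Theorems~\ref{qcomp-qsep-flat-cotorsion-pair}--\ref{qcomp-qsep-very-flat-cotorsion-pair} must be brought to bear. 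This is the content developed in parallel with Theorem~\ref{locally-injective-cotorsion-pair} in \cite[Section~4.3]{Pcosh}, to which I would ultimately appeal.
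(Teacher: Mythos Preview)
Your proposal is correct and aligns with the paper's own proof, which simply cites \cite[Corollary~4.3.4(a\+-b)]{Pcosh}; you have sketched the content of that reference (the \v Cech-type assembly from affine-local flat approximations via direct images along the affine open immersions of a finite semi-separating covering, paralleling the locally injective case of \cite[Section~4.2]{Pcosh}) and then appealed to the same source. The only remark is that your exposition goes beyond what the paper itself supplies, since the paper treats the result as a black-box citation.
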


 The cotorsion pair from
Theorem~\ref{antilocally-flat-cotorsion-pair} is called
the \emph{antilocally flat cotorsion pair} in the category of
contraherent cosheaves.

\begin{proof}
 This is~\cite[Corollary~4.3.4(a\+-b)]{Pcosh}.
 Part~(c) of the same corollary provides additional information, viz.,
a description of antilocally flat contraherent cosheaves on~$X$.
\end{proof}

 The descriptions of the classes of antilocal (locally contraadjusted) 
contraherent cosheaves, antilocal locally cotorsion contraherent
cosheaves, and antilocally flat contraherent cosheaves mentioned in
the proofs of Theorems~\ref{locally-injective-cotorsion-pair}\+-%
\ref{antilocally-flat-cotorsion-pair} can be expressed by saying
that these classes of contraherent cosheaves are \emph{antilocal}
(just as their names suggest) in the terminology of~\cite{Pal}.

\subsection{Underived na\"\i ve co-contra correspondence}
 A \emph{co-contra correspondence} is an equivalence between
the coderived category of a comodule-like category and
the contraderived category of a contramodule-like category.
 A \emph{na\"\i ve co-contra correspondence} is an equivalence
between the conventional derived category of a comodule-like
category and the conventional derived category of a contramodule-like
category.
 We refer to the introductions to the papers~\cite{Pmgm,Pps} for
a detailed discussion of the philosophy of co-contra correspondence.

 In the context of quasi-coherent sheaves and contraherent cosheaves
over a quasi-compact semi-separated scheme $X$,
the \emph{underived na\"\i ve co-contra correspondence} is
an equivalence of exact categories~\cite[Lemma~4.6.7]{Pcosh}
\begin{equation} \label{underived-co-contra}
 X\qcoh^\cta\simeq X\ctrh_\al.
\end{equation}
 Furthermore, it is important for us that the category
equivalence~\eqref{underived-co-contra} restricts to an equivalence
between the full subcategories $X\qcoh^\cot\subset X\qcoh^\cta$
and $X\ctrh^\lct_\al\subset X\ctrh_\al$, providing an equivalence
of exact categories~\cite[proof of Corollary~4.6.8(a)]{Pcosh}
\begin{equation} \label{underived-co-contra-cotorsion}
 X\qcoh^\cot\simeq X\ctrh^\lct_\al.
\end{equation}

 The equivalence of categories~\eqref{underived-co-contra} is given
by the respective restrictions of the partially adjoint functors
$$
 \fHom_X(\cO_X,{-})\:X\qcoh^\cta\lrarrow X\ctrh
$$
and
$$
 \cO_X\ocn_X{-}\,\:X\ctrh\lrarrow X\qcoh
$$
defined in~\cite[Sections~2.5 and~2.6]{Pcosh}.
 It is instructive to look into the constructions of the co-contra
correspondence functors for corings over associative
rings~\cite[Sections~0.2.6 and~5.1]{Psemi}, \cite[Section~3.4]{Prev},
which served as an inspiration for the more complicated constructions
for sheaves and cosheaves over schemes.

\subsection{Details of the constructions}
 Let us briefly sketch the constructions of the functors $\fHom_X$
and~$\ocn_X$ here.

 The functor of contraherent $\fHom$ from a quasi-coherent sheaf $\M$
to a quasi-coherent sheaf $\cP$ on $X$ is defined by the rule
$$
 \fHom_X(\M,\cP)[U]=\Hom_X(j_U{}_*j_U^*\M,\cP)
$$
for all affine open subschemes $U\subset X$.
 Here $\Hom_X({-},{-})=\Hom_{X\qcoh}({-},{-})$ is the notation for
the groups of morphisms in the category of quasi-coherent sheaves
$X\qcoh$, while $j_U\:U\rarrow X$ is the identity open immersion
morphism.
 Given two embedded affine open subschemes $V\subset U\subset X$,
there is a natural morphism $j_U{}_*j_U^*\M\rarrow
j_V{}_*j_V^*\M$ of quasi-coherent sheaves on $X$, inducing
a homomorphism of $\cO(U)$\+modules $\fHom_X(\M,\cP)[V]\rarrow
\fHom_X(\M,\cP)[U]$.

 Similarly to the construction of the contraherent cosheaf
$\Cohom_X(\M,\P)$ in Section~\ref{cohom-subsecn}, the contraherence
condition~(i) from Section~\ref{locally-contraadjusted-subsecn}
holds for $\fHom_X(\M,\cP)$ for any pair of quasi-coherent sheaves
$\M$ and $\cP$ on~$X$.
 The contraadjustedness condition~(ii) for $\fHom_X(\M,\cP)$ holds under
certain assumptions on $\M$ and $\cP$, as per~\cite[Section~2.5]{Pcosh}.
 As a result, the contraherent cosheaf $\fHom_X(\M,\cP)$ is well-defined
in the following cases:
\begin{enumerate}
\item if $\F$ is a very flat quasi-coherent sheaf and $\cP$ is
a contraadjusted quasi-coherent sheaf on $X$, then
$\fHom_X(\F,\cP)$ is a locally contraadjusted contraherent cosheaf;
\item if $\F$ is a flat quasi-coherent sheaf and $\cP$ is
a cotorsion quasi-coherent sheaf, then $\fHom_X(\F,\cP)$ is a locally
cotorsion contraherent cosheaf;
\item if $\M$ is an arbitrary quasi-coherent sheaf and $\J$ is
an injective quasi-coherent sheaf, then $\fHom_X(\M,\J)$
is a locally cotorsion contraherent cosheaf;
\item if $\F$ is a flat quasi-coherent sheaf and $\J$ is
an injective quasi-coherent sheaf, then $\fHom_X(\F,\J)$
is a locally injective contraherent cosheaf.
\end{enumerate}
 So $\fHom_X\:X\qcoh^\sop\times X\qcoh\dasharrow X\ctrh$ is
a partially defined functor of two arguments
(cf.\ the discussion of partially defined functors taking values in
exact categories in Section~\ref{main-definition-discussion-subsecn}).

 For the purposes of the na\"\i ve co-contra correspondence, we
are interested in cases~(1\+-2) with the structure sheaf $\F=\cO_X$.
 Then one has
\begin{alignat}{2}
 &\fHom_X(\cO_X,{-})\:X\qcoh^\cta &&\lrarrow X\ctrh
\label{contraadjusted-to-locally-contraadjusted} \\
 &\fHom_X(\cO_X,{-})\:X\qcoh^\cot &&\lrarrow X\ctrh^\lct.
\label{cotorsion-to-locally-cotorsion}
\end{alignat}
  Both these functors are exact by construction (because
the quasi-coherent sheaves $j_U{}_*j_U^*\cO_X$ are very flat on~$X$).

 For any quasi-coherent sheaf $\M$ and contraherent cosheaf $\P$
on $X$, the \emph{contratensor product} $\M\ocn_X\P$ is constructed
as the (nondirected) colimit of the following diagram of quasi-coherent
sheaves on $X$, indexed by the poset of all affine open subschemes
$U\subset X$ ordered by inclusion.
 To every affine open subscheme $U\subset X$, the quasi-coherent sheaf
$j_U{}_*j_U^*\M\ot_{\cO_X(U)}\P[U]$ is assigned.
 Here the tensor product sign $\ot_{\cO_X(U)}$ means the tensor product
of an object $j_U{}_*j_U^*\M\in X\qcoh$ endowed with an action of
the ring $\cO_X(U)$ and a usual $\cO_X(U)$\+module~$\P[U]$.
 For any pair of embedded affine open subschemes $V\subset U\subset X$,
there is a natural isomorphism $j_V{}_*j_V^*\M\simeq
j_U{}_*j_U^*\M\ot_{\cO_X(U)}\cO_X(V)$ of quasi-coherent sheaves on $X$,
which allows to construct the morphisms in
the diagram~\cite[Section~2.6]{Pcosh}.

 The adjunction isomorphism~\cite[formula~(20) in Section~2.6]{Pcosh}
$$
 \Hom_X(\M\ocn_X\P,\>\J)\simeq\Hom^X(\P,\fHom_X(\M,\J))
$$
holds for all quasi-coherent sheaves $\M$ and $\J$ and all
contraherent cosheaves $\P$ on $X$ for which the contraherent
cosheaf $\fHom_X(\M,\J)$ is well-defined (as per items~(1\+-4) above).
 Here $\Hom^X({-},{-})=\Hom_{X\ctrh}({-},{-})$ is the notation for
the groups of morphisms in the category of contraherent cosheaves
$X\ctrh$.
 Taking $\M=\cO_X$ and making $\J$ range over the injective
quasi-coherent sheaves on $X$, one concludes from item~(4) that
the restriction of the functor $\cO_X\ocn_X{-}$ to the full subcategory
of antilocal contraherent cosheaves is an exact functor
\begin{equation} \label{antilocal-to-quasi-coherent}
 \cO_X\ocn_X{-}\,\:X\ctrh_\al\lrarrow X\qcoh
\end{equation}
\cite[proof of Lemma~4.6.7]{Pcosh}.

\subsection{Details of the arguments: antilocality}
 In order to finish the proofs of the exact category
equivalences~\eqref{underived-co-contra}
and~\eqref{underived-co-contra-cotorsion}, it still needs to be
explained that the essential image of
the functor~\eqref{contraadjusted-to-locally-contraadjusted}
is contained in the full subcategory $X\ctrh_\al\subset X\ctrh$,
the essential image of the functor~\eqref{antilocal-to-quasi-coherent}
is contained in the full subcategory $X\qcoh^\cta\subset X\qcoh$,
and the functor~\eqref{antilocal-to-quasi-coherent} takes the full
subcategory $X\ctrh^\lct_\al\subset X\ctrh_\al$ into
the full subcategory $X\qcoh^\cot\subset X\qcoh$.
 It also needs to be explained why the resulting pair of adjoint
functors between the categories $X\qcoh^\cta$ and $X\ctrh_\al$
is a category equivalence.

 The most important property of the classes of objects involved
on which these proofs are based in the \emph{antilocality}, and
the most important property of the functors involved is
the \emph{compatibility with the direct images}.

 First of all, for any affine morphism of quasi-compact semi-separated
schemes $f\:Y\rarrow X$, there is the exact functor of direct image of
quasi-coherent sheaves $f_*\:Y\qcoh\rarrow X\qcoh$ taking $Y\qcoh^\cta$
into $X\qcoh^\cta$ and $Y\qcoh^\cot$ into $X\qcoh^\cot$
\,\cite[Section~2.5]{Pcosh}.
 Dual-analogously, there is the exact functor of direct image of
contraherent cosheaves $f_!\:Y\ctrh\rarrow X\ctrh$
\,\cite[Sections~2.3 and~3.3]{Pcosh} taking $Y\ctrh^\lct$ into
$X\ctrh^\lct$ and $Y\ctrh_\al$ into $X\ctrh_\al$
\,\cite[Section~4.2]{Pcosh}.

 The direct image functors $f_*$ and~$f_*$ form a commutative square
diagram with the functors $\fHom_Y(\cO_Y,{-})$ and $\fHom_X(\cO_X,{-})$
\,\cite[formula~(44) in Section~3.8]{Pcosh}, as well as with
the functors $\cO_Y\ocn_Y{-}$ and $\cO_X\ocn_X{-}$
\,\cite[formula~(46) in Section~3.8]{Pcosh}.

 The antilocality property of classes of quasi-coherent sheaves and
contraherent cosheaves applies in the following setting.
 Let $X=\bigcup_{\alpha=1}^d U_\alpha$ be a finite affine open
covering of a quasi-compact semi-separated scheme~$X$.
 Then the contraadjusted quasi-coherent sheaves on $X$ are precisely
the direct summands of finitely iterated extensions of the direct images
of contraadjusted quasi-coherent sheaves with respect to the open
immersions $U_\alpha\rarrow X$ \,\cite[Corollary~4.1.4(c)]{Pcosh}.
 Similarly, the cotorsion quasi-coherent sheaves on $X$ are precisely
the direct summands of finitely iterated extensions of the direct
images of cotorsion quasi-coherent sheaves from the open subschemes
$U_\alpha\subset X$ \,\cite[Corollary~4.1.11(c)]{Pcosh}.

 Furthermore, the antilocal (locally contraadjusted) contraherent
cosheaves on $X$ are precisely the direct summands of finitely iterated
extensions of the direct images of contraherent cosheaves
from~$U_\alpha$ \,\cite[Corollary~4.2.5(c)]{Pcosh}.
 Similarly, the antilocal locally cotorsion contraherent cosheaves on
$X$ are precisely the direct summands of finitely iterated extensions
of the direct images of locally cotorsion contraherent cosheaves
from~$U_\alpha$ \,\cite[Corollary~4.2.6(c)]{Pcosh}.

 The proofs of the exact category
equivalences~\eqref{underived-co-contra}
and~\eqref{underived-co-contra-cotorsion} are based on these
observations.
 See~\cite[Lemma~4.6.7 and proof of Corollary~4.6.8(a)]{Pcosh}
for further details.

\subsection{Proof of quasi-coherent cotorsion periodicity}
 Now we can present our proof of the quasi-coherent cotorsion
periodicity theorem.

\begin{proof}[Proof of
Theorem~\ref{quasi-coherent-cotorsion-periodicity}]
 We are given an acyclic complex of cotorsion quasi-coherent sheaves
$\C^\bu$ on a quasi-compact semi-separated scheme~$X$.
 Obviously, any cotorsion quasi-coherent sheaf is contraadjusted.
 By Proposition~\ref{quasi-coherent-contraadjusted-periodicity},
the sheaves of cocycles of the complex $\C^\bu$ are, at least,
contraadjusted.

 Now the equivalence of exact
categories~\eqref{underived-co-contra} implies that the complex
of antilocal contraherent cosheaves $\fHom_X(\cO_X,\C^\bu)$ is
acyclic in the exact category $X\ctrh_\al$.
 Furthermore, by formula~\eqref{underived-co-contra-cotorsion},
the antilocal contraherent cosheaf $\fHom_X(\cO_X,\C^n)$ is locally
cotorsion for every $n\in\boZ$.

 For every affine open subscheme $U\subset X$, we have a complex
of $\cO(U)$\+modules $\fHom_X(\cO_X,\C^\bu)[U]$.
 By the definition of the exact structure on $X\ctrh$, which is
inherited by the resolving subcategory $X\ctrh_\al\subset X\ctrh$,
the complex of $\cO(U)$\+modules $\fHom_X(\cO_X,\C^\bu)[U]$ is acyclic.

 By the cotorsion periodicity theorem for modules over
rings~\cite[Theorem~5.1(2)]{BCE}, any acyclic complex
of cotorsion $\cO(U)$\+modules has cotorsion $\cO(U)$\+modules
of cocycles; so the $\cO(U)$\+modules of cocycles of the complex
$\fHom_X(\cO_X,\C^\bu)[U]$ are cotorsion.
 Now it is clear that the complex $\fHom_X(\cO_X,\C^\bu)[U]$ is
acyclic in the exact category of cotorsion $\cO(U)$\+modules
$\cO(U)\modl^\cot$.

 As this holds for every affine open subscheme $U\subset X$, we
can conclude that the complex of contraherent cosheaves
$\fHom_X(\cO_X,\C^\bu)$ is acyclic not only in the exact category
of antilocal locally contraadjusted contraherent cosheaves
$X\ctrh_\al$, but also in the exact category of antilocal locally
cotorsion contraherent cosheaves $X\ctrh_\al^\lct$.
 Hence the formula~\eqref{underived-co-contra-cotorsion} tells
that the complex $\C^\bu$ is acyclic in the exact category of
cotorsion quasi-coherent sheaves $X\qcoh^\cot$.
 The latter assertion means that the quasi-coherent sheaves of
cocycles of the complex $\C^\bu$ are cotorsion, as desired.
\end{proof}

\subsection{Conclusion}
 One can say, and indeed we argued in
Sections~\ref{comodules-and-contramodules-secn}\+-%
\ref{coderived-and-contraderived-secn}, that contraherent cosheaves
are an important device for globalizing contramodules and
contraderived categories.
 Then one needs contraadjusted and/or cotorsion modules over
commutative rings in order to define the contraherent cosheaves
and work with them.

 Alternatively, one may be interested in cotorsion modules for
some other, unrelated reasons.
 Then one may run into the problem that the class of cotorsion
modules is not preserved by the localization functors in commutative
algebra, and the class of cotorsion quasi-coherent sheaves on
schemes is not local.

 If one still wants to develop a local point of view on cotorsion
modules, one needs to use the colocalization functors instead of
the localizations.
 In this context, the contraherent cosheaves are indispensable.

\bigskip

\end{document}